\newlength\DX
\newlength\DY
\newtheorem{thm}{Theorem}[section]
\newtheorem{lem}[thm]{Lemma}
\newtheorem{prop}[thm]{Proposition}
\newtheorem{cor}[thm]{Corollary}
\theoremstyle{definition}
\newtheorem{defn}[thm]{Definition}
\newtheorem{example}[thm]{Example}
\newtheorem{rem}[thm]{Remark}
\newtheorem*{rem*}{Remark}
\numberwithin{equation}{section} 
\numberwithin{figure}{section}
\numberwithin{table}{section}
\newcommand{\id}{\mathbf{1}}
\newcommand{\gue}{G^N}
\newcommand{\xn}{X^N}
\newcommand{\gn}{G^N}
\newcommand{\hn}{H^N}
\newcommand{\xf}{X_{\rm F}}
\newcommand{\yn}{Y^N}
\newcommand{\haar}{U^N}
\newcommand{\haaradj}{U^{N*}}
\newcommand{\haaro}{U^N}
\newcommand{\haars}{V^N}
\newcommand{\wgo}{\widetilde{\wg}}
\newcommand{\trace}{\mathop{\mathrm{Tr}}}
\newcommand{\tr}{\mathop{\textnormal{tr}}}
\newcommand{\calP}{\mathcal{P}}
\newcommand{\calM}{\mathcal{M}}
\newcommand{\bZ}{\mathbb{Z}}
\newcommand{\bN}{\mathbb{N}}
\newcommand{\bC}{\mathbb{C}}
\renewcommand{\P}{\mathbf{P}}
\newcommand{\E}{\mathbf{E}}
\newcommand{\supp}{\mathop{\mathrm{supp}}}
\newcommand{\les}{\lesssim}
\newcommand{\bR}{\mathbb{R}}
\newcommand{\bH}{\mathbb{H}}
\newcommand{\bF}{\mathbf{F}}
\newcommand{\sobnorm}[3]{\|#1\|_{#3}}
\newcommand{\wg}{\mathrm{Wg}}
\begin{document}

\title[A new approach to strong convergence II]{A new approach to strong 
convergence II.\\The classical ensembles }

\author[Chen]{Chi-Fang Chen}
\address{EECS, University of California, Berkeley, CA 94720, USA
\newline\indent
Center for Theoretical Physics, Massachusetts Institute of 
Technology, Cambridge, MA 02139, USA}
\email{achifchen@gmail.com}

\author[Garza-Vargas]{Jorge Garza-Vargas}
\address{Department of Mathematics, Princeton University, Princeton, NJ
08544, USA}
\email{jgarzavargas@princeton.edu}

\author[Van Handel]{Ramon van Handel}
\address{Department of Mathematics, Princeton University, Princeton, NJ 
08544, USA}
\email{rvan@math.princeton.edu}

\begin{abstract}
The first paper in this series introduced a new approach to strong 
convergence of random matrices that is based primarily on soft arguments. 
This method was applied to achieve a refined qualitative and quantitative 
understanding of strong convergence of random permutation matrices and of 
more general representations of the symmetric group. In this paper, we 
introduce new ideas that make it possible to achieve stronger 
quantitative results and that facilitate the application of the method to 
new models.

When applied to the Gaussian GUE/GOE/GSE ensembles of dimension $N$, these 
methods achieve strong convergence for noncommutative polynomials with 
matrix coefficients of dimension $\exp(o(N))$. This provides a sharp form 
of a result of Pisier on strong convergence with coefficients in a 
subexponential operator space. Analogous results up to logarithmic factors 
are obtained for Haar-distributed random matrices in 
$\mathrm{U}(N)/\mathrm{O}(N)/\mathrm{Sp}(N)$. We further illustrate the 
methods of this paper in the following applications.\\
\begin{minipage}{0.8\textwidth}
\vspace*{.15cm}
\begin{enumerate}[1.]
\addtolength{\itemsep}{.5mm}
\item We obtain improved rates for
strong convergence of random permutations.
\item We obtain a
quantitative form of strong convergence of the model introduced by Hayes
for the solution of the Peterson-Thom conjecture.
\item We prove strong convergence of tensor GUE models of
$\Gamma$-independence.
\item We prove strong
convergence of irreducible representations of $\mathrm{U}(N)$ of
dimension up to $\exp(N^{1/3-\delta})$, improving a result of
Magee and de la Salle.
\end{enumerate}
\end{minipage}
\end{abstract}

\subjclass[2010]{60B20; % RMT: probability
                 15B52; % RMT: algebraic
                 46L53; % noncommutative probability
                 46L54} % free probability

\maketitle

\raggedbottom

\section{Introduction}
\label{sec:intro}

A sequence of $r$-tuples $\boldsymbol{X}^N=(X_1^N,\ldots,X_r^N)$
of random matrices is said to \emph{converge strongly} to an
$r$-tuple $\boldsymbol{x}=(x_1,\ldots,x_r)$ of elements of a $C^*$-algebra
if
$$
        \lim_{N\to\infty}\|P(X_1^N,\ldots,X_r^N,X_1^{N*},\ldots,
	X_r^{N*})\|
	=
        \|P(x_1,\ldots,x_r,x_1^*,\ldots,x_r^*)\|
$$
in probability for every noncommutative polynomial $P$.
In recent years, this 
notion has proved to have powerful consequences for 
problems of random graphs, random surfaces, and operator algebras, 
resulting in major breakthroughs in these areas; see, for example, 
\cite{BC24,BC20,chen2024new} for discussion and references.

Our previous paper \cite{chen2024new} introduced a new approach to strong 
convergence that is based primarily on soft arguments and requires limited 
problem-specific inputs, in contrast to earlier approaches that were 
heavily dependent on problem-specific analytic methods and/or delicate 
combinatorial estimates. The replacement of hard analysis by soft 
arguments has made it possible to establish strong convergence in new 
situations, and to access quantitative information about the strong 
convergence phenomenon that was previously out of reach. Both features 
were illustrated in \cite{chen2024new} in the context of random 
permutation matrices and of more general representations of the symmetric 
group. Another illustration is provided by the remarkable results of Magee 
and de la Salle \cite{magee2024quasiexp} and Cassidy \cite{Cas24} that 
establish strong convergence of extremely high-dimensional 
representations of $\mathrm{U}(N)$ and $\mathrm{S}_N$.

In this paper, we develop new ideas that advance the method of 
\cite{chen2024new} in two directions: they achieve considerably stronger 
(and in some respects nearly optimal) quantitative results; and they 
further eliminate the need for problem-specific computations in many 
situations, which facilitates the application of the method to new models. 
These new ingredients, which will be discussed in section \ref{sec:new} 
below, enable a particularly transparent and nearly parallel treatment of 
the classical invariant ensembles of random matrix theory that yields new 
quantitative information on the strong convergence phenomenon for these 
models. Beyond the classical ensembles, we will further illustrate these 
methods in several additional applications.

\subsection{Main results}

\subsubsection{The Gaussian ensembles}

In this paper, a GUE/GOE/GSE random matrix is an $N\times 
N$ self-adjoint Gaussian random matrix $G^N$ whose law is invariant under 
unitary/orthogonal/symplectic transformations, and whose entries above the 
diagonal have variance $\frac{1}{N}$.
The limiting operators associated to independent 
random matrices $G_1^N,\ldots,G_r^N$ from these ensembles form a free 
semicircular family $s_1,\ldots,s_r$.\footnote{%
	The reader is warned that this notation differs from 
	\cite{chen2024new}. In the present paper, a free semicircular 
	family is denoted as $\boldsymbol{s}=(s_1,\ldots,s_r)$, while 
	free Haar unitaries are denoted as $\boldsymbol{u}=(u_1,\ldots,u_r)$.}
Precise definitions of these notions will be recalled in section 
\ref{sec:rmtandfree}.

The following is the main result of this paper on the Gaussian ensembles. 
A more explicit form of the constant $c$ appears in the proof.

\begin{thm}
\label{thm:maingauss}
Let $\boldsymbol{G}^N=(G_1^N,\ldots,G_r^N)$ be i.i.d.\ GUE/GOE/GSE random 
matrices of dimension $N$, and let $\boldsymbol{s}=(s_1,\ldots,s_r)$ be a 
free semicircular family. Let $\varepsilon\in (0,1]$ and $q_0\in\mathbb{N}$. 
Then for every noncommutative polynomial $P\in\mathrm{M}_D(\mathbb{C})\otimes 
\mathbb{C}\langle\boldsymbol{s}\rangle$ of degree $q_0$ with matrix 
coefficients of dimension $D\le e^{cN\varepsilon^2}$, we have
$$
	\mathbf{P}\big[
	\|P(\boldsymbol{G}^N)\|\ge (1+\varepsilon)\|P(\boldsymbol{s})\|
	\big] \le
	\frac{N}{c\varepsilon}
	\,e^{-cN\varepsilon^2},
$$
where $c$ is a constant that depends only on $q_0$ and $r$.
\end{thm}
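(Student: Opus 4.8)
The plan is to combine three ingredients: a self-adjoint linearization reducing the statement to a spectral-edge estimate; Gaussian concentration of measure reducing the probability bound to a bound on an expectation; and a quantitative comparison between the spectral distribution of the linearized matrix model and that of its free limit that is \emph{uniform in the matrix-coefficient dimension} --- the last being where the new ideas of section~\ref{sec:new} enter.

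\emph{Linearization.} Replacing $P$ by the self-adjoint $2\times 2$ block matrix with off-diagonal entries $P$ and $P^*$ and applying a self-adjoint linearization, one obtains a dimension $D'\le C(q_0,r)\,D$ and fixed self-adjoint matrices $a_0,a_1,\dots,a_r\in\mathrm{M}_{D'}(\bC)$ whose norms are bounded in terms of $P$ and $q_0$ only --- in particular not in terms of $D$ --- such that controlling $\|P(\boldsymbol{G}^N)\|$ is equivalent to controlling how far the spectrum of the self-adjoint pencil $H_N:=a_0\otimes\id+\sum_{i=1}^r a_i\otimes G_i^N$ on $\bC^{D'}\otimes\bC^N$ can stray past the spectrum of its free limit $H_\infty:=a_0\otimes\id+\sum_{i=1}^r a_i\otimes s_i$. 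The goal becomes: outside an event of probability $\tfrac{N}{c\varepsilon}e^{-cN\varepsilon^2}$, the spectrum of $H_N$ lies within an $\varepsilon$-dilation of $\Spec(H_\infty)$.

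\emph{Concentration and reduction to an expectation.} On the event $\{\max_i\|G_i^N\|\le 3\}$, which has probability at least $1-r\,e^{-cN}$, the map from the Gaussian entries of $\boldsymbol{G}^N$ to $\mathrm{dist}\big(\Spec(H_N),\Spec(H_\infty)\big)$ is Lipschitz with constant $O(\max_i\|a_i\|/\sqrt N)$, so Gaussian concentration produces sub-Gaussian fluctuations at scale $\varepsilon$ with rate $e^{-cN\varepsilon^2}$. Hence it suffices to establish the corresponding statement in expectation: the expected maximal excursion of $\Spec(H_N)$ beyond $\Spec(H_\infty)$ is at most $(\varepsilon/2)\cdot(\text{extent of }\Spec(H_\infty))$ whenever $D\le e^{cN\varepsilon^2}$. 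The a priori event together with the concentration tail then yields the displayed rate and prefactor.

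\emph{The comparison estimate (the heart).} To bound the expectation one integrates in $t$ the tail $\P\big[\Spec(H_N)\text{ has a }t\text{-excursion beyond }\Spec(H_\infty)\big]$. For each $t$, fix a smooth $\varphi_t\ge 0$ that vanishes identically on a neighborhood of $\Spec(H_\infty)$ and satisfies $\varphi_t\ge\id$ on the set of $t$-excursions, so that $(\tr_{D'}\otimes\tau)\varphi_t(H_\infty)=0$; by Markov's inequality,
\[
  \P\big[\Spec(H_N)\text{ has a }t\text{-excursion}\big]\;\le\;D'N\cdot\Big(\E\big[(\tr_{D'}\otimes\tr_N)\varphi_t(H_N)\big]-(\tr_{D'}\otimes\tau)\varphi_t(H_\infty)\Big).
\]
The right-hand side is controlled by the quantitative comparison of section~\ref{sec:new}, specialized to the Gaussian ensembles: the difference of traces admits a topological expansion in powers of $1/N$ whose coefficients are distributions supported on $\Spec(H_\infty)$ --- and therefore annihilate $\varphi_t$ --- so only a non-perturbative remainder survives, which a careful analysis of the expansion (carried to an order growing with $N$, and with $\varphi_t$ chosen accordingly) bounds by $C(q_0,r)\,e^{-cNt^2}$ in all three cases, \emph{with constants independent of the matrix-coefficient dimension}. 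Since $D'N\le e^{cN\varepsilon^2}\cdot\mathrm{poly}(N)$, integrating $\min\big(1,\,D'N\,C(q_0,r)e^{-cNt^2}\big)$ over $t$ gives an expected excursion $\lesssim\sqrt{\log(D'N)/N}\lesssim\varepsilon$, as needed.

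\emph{Main obstacle.} Everything outside the comparison estimate is soft. The difficulty is precisely to prove that comparison with a remainder that is (i) \emph{exponentially} small in $N$, so that it survives multiplication by the factor $D'N\le e^{cN\varepsilon^2}\cdot\mathrm{poly}(N)$ coming from the Markov step, and (ii) equipped with constants depending only on $q_0$ and $r$ and \emph{not at all} on the matrix-coefficient dimension $D$. Dimension-freeness is exactly what lets $D$ be taken as large as $e^{cN\varepsilon^2}$: it requires organizing the topological expansion so that the $\mathrm{M}_{D'}$-component remains a passive ``spectator'' normalized trace at every step, never producing a factor that grows with $D'$, while simultaneously maintaining quantitative control of the remainder as the expansion order increases and the regularity demanded of $\varphi_t$ grows. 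Supplying such a dimension-free quantitative comparison for the Gaussian ensembles --- the Gaussian analogue of the master inequality of \cite{chen2024new} --- is the purpose of section~\ref{sec:new}; with it in hand, the argument above runs as stated.
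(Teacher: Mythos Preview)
Your sketch has the right endgame --- a smooth test function vanishing on the limit spectrum, an asymptotic expansion whose coefficients annihilate it, and a remainder that beats the dimension factor $DN$ --- but the architecture differs from the paper's in two substantive ways, and the step you treat as given is in fact the crux.

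\textbf{No linearization.} The paper works directly with $X^N=P(\boldsymbol{G}^N)$ for self-adjoint $P$ (the general case reduces to this via $P^*P$, cf.\ Remark~\ref{rem:selfadjoint}). One applies the expansion Theorem~\ref{thm:smasympexGUE} to a single test function $\chi$ from Lemma~\ref{lem:testfunctions} and bounds $\mathbf{P}[\|X^N\|\ge(1+\varepsilon)\|X_{\rm F}\|]\le DN\,\mathbf{E}[\tr\chi(X^N)]$ directly, choosing the expansion order $m\sim N\varepsilon^2$ (section~\ref{sec:pfmaingue}). There is no pencil $H_N$, no family $\varphi_t$, and no integration in $t$. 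Linearization is a legitimate alternative, but it forces you to control excursions from the generally complicated set $\Spec(H_\infty)$ rather than from an interval, and offers no simplification here since the polynomial encoding (Lemma~\ref{lem:polyencoding}) already handles arbitrary degree $q_0$.

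\textbf{The role of concentration.} You use concentration as the primary reduction from probability to expectation. The paper does not: the final quantitative bound in section~\ref{sec:pfmaingue} uses only the expansion and the test-function estimates, with constants depending solely on $q_0,r$. Concentration (Lemma~\ref{lem:concentrationaroundthemean}) appears only \emph{qualitatively}, inside the bootstrapping argument of section~\ref{sec:bootsgue}, and its $P$-dependent constant never enters the final inequality.

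\textbf{The gap.} You assert that the expansion coefficients are ``distributions supported on $\Spec(H_\infty)$'' and therefore annihilate $\varphi_t$. That statement is precisely Proposition~\ref{prop:supportofinfsGUE}, and proving it for \emph{all} $k$ is the heart of the matter. The paper's mechanism is: (i) $\nu_1=0$ by parity of the genus expansion (Lemma~\ref{lem:guenu01}), which already yields qualitative strong convergence (Corollary~\ref{cor:strongconv}); (ii) strong convergence plus concentration then forces $\mathbf{E}[\tr h(X^N)]=O(e^{-cN})$ for any $h$ vanishing near $[-\|X_{\rm F}\|,\|X_{\rm F}\|]$ (Corollary~\ref{cor:smallexpectations}); (iii) this and the expansion give $\nu_k(h)=0$ for all $k$ by induction. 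Your sketch invokes this support property as input; the paper earns it, and that is where the ``new ideas of section~\ref{sec:new}'' actually do their work.
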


Theorem \ref{thm:maingauss} settles a question that has motivated many 
recent works on strong convergence. In its basic form, strong convergence 
implies that 
$$
	\|P(\boldsymbol{G}^N)\|=(1+o(1))\|P(\boldsymbol{s})\|
	\quad\text{as}\quad N\to\infty
$$
when the polynomial $P$ and thus the coefficient dimension $D$ is fixed. 
However, much stronger implications could be obtained if $D$ is allowed to 
grow sufficiently rapidly with $N$ (a considerable strengthening of the 
strong convergence property). For example, Hayes \cite{hayes2020random} 
shows that the case $D=N$ already suffices to prove the Peterson-Thom 
conjecture in the theory of von Neumann algebras. The latter was settled 
in \cite{BC22,BC24,magee2024quasiexp}, and Theorem \ref{thm:maingauss} 
provides yet another proof of this conjecture. On the other hand, in his 
study of subexponential operator spaces, Pisier \cite{pisier2014random} 
shows that strong convergence holds \emph{up to a factor $2$} even when 
$D=e^{o(N)}$.

Theorem~\ref{thm:maingauss} closes the gap between these two extremes by 
showing that strong convergence holds whenever $D=e^{o(N)}$, providing a 
sharp form of Pisier's theorem.

\begin{cor}
\label{cor:gausssubexp}
Let $\boldsymbol{G}^N$ and $\boldsymbol{s}$ be defined as in Theorem 
\ref{thm:maingauss}. For any sequence of noncommutative polynomials 
$P_N\in\mathrm{M}_{D_N}(\mathbb{C})\otimes 
\mathbb{C}\langle\boldsymbol{s}\rangle$ of degree $O(1)$ and matrix 
coefficients of dimension $D_N=e^{o(N)}$, we have
$$
	\|P_N(\boldsymbol{G}^N)\|=(1+o(1))\|P_N(\boldsymbol{s})\|
	\quad\text{a.s.} \quad\text{as}\quad N\to\infty.
$$
Consequently, for any noncommutative polynomial 
$Q\in\mathbf{W}\otimes\mathbb{C}\langle\boldsymbol{s}\rangle$ with
coefficients in a subexponential operator space $\mathbf{W}$, we have
$$
	\frac{1}{C(\mathbf{W})}
	\|Q(\boldsymbol{s})\|_{\rm min} \le
	\liminf_{N\to\infty}\|Q(\boldsymbol{G}^N)\|
	\le
	\limsup_{N\to\infty}\|Q(\boldsymbol{G}^N)\|
	\le C(\mathbf{W}) \|Q(\boldsymbol{s})\|_{\rm min}
	\text{ a.s.},
$$
where $C(\mathbf{W})$ denotes the subexponential constant of
$\mathbf{W}$.\footnote{The definition of a subexponential
operator space is recalled in section \ref{sec:subexpon}.}
\end{cor}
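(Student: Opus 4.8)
The plan is to read the first assertion off Theorem~\ref{thm:maingauss} through a Borel--Cantelli argument, and then to deduce the operator-space statement from it by an elementary reduction to bounded coefficient dimension.

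For the upper half of the first assertion I would discard the trivial case $P_N=0$, normalize so that $\|P_N(\boldsymbol{s})\|=1$, fix a common degree bound $q_0$, and let $c=c(q_0,r)$ be the constant of Theorem~\ref{thm:maingauss}. Since $\log D_N=o(N)$, one can choose $\varepsilon_N\to 0$ in $(0,1]$ with $cN\varepsilon_N^2\ge\max(\log D_N,\,3\log N)$ for all large $N$. The first bound gives $D_N\le e^{cN\varepsilon_N^2}$, so Theorem~\ref{thm:maingauss} applies and yields $\mathbf{P}\big[\|P_N(\boldsymbol{G}^N)\|\ge 1+\varepsilon_N\big]\le\frac{N}{c\varepsilon_N}e^{-cN\varepsilon_N^2}\lesssim N^{-3/2}$ (using $\varepsilon_N\gtrsim N^{-1/2}$ from the second bound), which is summable; Borel--Cantelli then gives $\|P_N(\boldsymbol{G}^N)\|\le 1+\varepsilon_N$ for all large $N$ almost surely, that is, $\limsup_N\|P_N(\boldsymbol{G}^N)\|/\|P_N(\boldsymbol{s})\|\le 1$ a.s.

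The matching lower bound is the soft, weak-convergence direction, but it needs care because $D_N\to\infty$. For each fixed $m$, the expected moment $\mathbf{E}\big[\tr\big((P_N(\boldsymbol{G}^N)P_N(\boldsymbol{G}^N)^*)^m\big)\big]$, with $\tr$ the normalized trace on the ambient matrix algebra, agrees with $(\tr\otimes\tau)\big((P_N(\boldsymbol{s})P_N(\boldsymbol{s})^*)^m\big)$ up to an error $O_{m,q_0}(N^{-2})$ whose implicit constant does \emph{not} depend on $D_N$: in the genus expansion, each of the at most $C^{mq_0}$ non-planar pairings of the $O(mq_0)$ relevant factors contributes $N^{-2g}$ times a product of coefficient operator-norms, with no dependence on the coefficient dimension. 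Combined with Gaussian concentration of $\|P_N(\boldsymbol{G}^N)\|$, this gives, almost surely, $\liminf_N\|P_N(\boldsymbol{G}^N)\|\ge\liminf_N(\tr\otimes\tau)\big((P_N(\boldsymbol{s})P_N(\boldsymbol{s})^*)^m\big)^{1/2m}$ for every fixed $m$. The remaining step --- and what I expect to be the main obstacle --- is to pass $m\to\infty$ \emph{uniformly in $N$}: one needs that for every $\delta>0$ there is an $m_\delta$ depending only on $\delta$ and $q_0$ (not on $N$ or $D_N$) with $(\tr\otimes\tau)\big((P_N(\boldsymbol{s})P_N(\boldsymbol{s})^*)^{m_\delta}\big)^{1/2m_\delta}\ge(1-\delta)\|P_N(\boldsymbol{s})\|$ for all $N$; equivalently, the top of the spectrum of $P_N(\boldsymbol{s})P_N(\boldsymbol{s})^*$ must be detected by $\ast$-moments of bounded order, uniformly along the sequence. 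I would try to prove this by a soft argument that uses only the bounded degree $q_0$ --- for instance, linearizing $P_N$ to a self-adjoint affine expression in $\boldsymbol{s}$ and invoking a Haagerup-type norm estimate for free semicircular families --- rather than anything about the individual coefficient matrices.

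For the second assertion I would fix $Q$ and note that its finitely many coefficients span a finite-dimensional operator space $E\subseteq\mathbf{W}$. By the defining property of a subexponential operator space (recalled in section~\ref{sec:subexpon}), for every $\eta>0$ there are an integer $D$ and a linear isomorphism $u$ of $E$ onto a subspace of $\mathrm{M}_D(\mathbb{C})$ with $\|u\|_{\mathrm{cb}}\|u^{-1}\|_{\mathrm{cb}}\le C(\mathbf{W})+\eta$. The key point is that $P:=(u\otimes\mathrm{id})Q$ is then a \emph{fixed} noncommutative polynomial with matrix coefficients of the fixed dimension $D$, so $\|P(\boldsymbol{G}^N)\|\to\|P(\boldsymbol{s})\|$ almost surely by the first assertion applied to the constant sequence $P_N\equiv P$. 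Since $\|u\otimes\mathrm{id}_A\|\le\|u\|_{\mathrm{cb}}$ for any $C^*$-algebra $A$, applying $u$ and $u^{-1}$ coordinatewise gives $\|u\|_{\mathrm{cb}}^{-1}\|P(\,\cdot\,)\|\le\|Q(\,\cdot\,)\|\le\|u^{-1}\|_{\mathrm{cb}}\|P(\,\cdot\,)\|$ with $\cdot=\boldsymbol{G}^N$, and with $\cdot=\boldsymbol{s}$ in the minimal-tensor norm. Chaining these with $\|P(\boldsymbol{G}^N)\|\to\|P(\boldsymbol{s})\|$ yields
\[
  (C(\mathbf{W})+\eta)^{-1}\|Q(\boldsymbol{s})\|_{\mathrm{min}}\le\liminf_N\|Q(\boldsymbol{G}^N)\|\le\limsup_N\|Q(\boldsymbol{G}^N)\|\le(C(\mathbf{W})+\eta)\|Q(\boldsymbol{s})\|_{\mathrm{min}}
\]
almost surely, and letting $\eta\downarrow 0$ along a countable sequence completes the proof. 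Thus the only genuinely delicate point in the whole argument is the uniform-in-$N$ moment estimate of the third paragraph.
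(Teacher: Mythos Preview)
Your upper bound for the first assertion is correct and matches the paper's approach.

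There are two genuine issues.

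\textbf{Lower bound for the first assertion.} You correctly identify the gap --- passing $m\to\infty$ uniformly in $N$ --- but the Haagerup-type route you sketch is not how the paper closes it, and you have not actually carried it out. The paper instead exploits that the $C^*$-algebra generated by a free semicircular family is \emph{exact}: by \cite[Lemma~5.13]{magee2024quasiexp}, exactness reduces the lower bound for arbitrary $D_N$ to the case $D_N=O(1)$, and that case follows from the fixed-$P$ lower bound (Lemma~\ref{lem:gausssubexplowerfixed}) by a compactness argument. No uniform moment estimate is needed.

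\textbf{The subexponential operator space part.} Your argument misreads Definition~\ref{defn:subexp}. A $C$-subexponential operator space comes with embeddings $f_N:\mathbf{V}\to\mathrm{M}_{D_N}(\mathbb{C})$ that \emph{depend on $N$}, with $D_N=e^{o(N)}$ allowed to grow; what you describe --- a single embedding $u$ into a fixed $\mathrm{M}_D(\mathbb{C})$ --- is the definition of \emph{exactness}, and subexponential spaces need not be exact (Pisier gives non-exact examples in \cite{pisier2014random}). The paper's argument is the reverse of yours: it applies the $N$-dependent $f_N$ to form $P_N=(f_N\otimes\mathrm{id})(Q)$ with coefficients of growing dimension $D_N=e^{o(N)}$, invokes the full strength of part one to get $\|P_N(\boldsymbol{G}^N)\|=(1+o(1))\|P_N(\boldsymbol{s})\|$ a.s., and then uses exactness of the $C^*$-algebra generated by $\boldsymbol{s}$ (not of $\mathbf{W}$) to sandwich $\|P_N(\boldsymbol{s})\|$ between multiples of $\|Q(\boldsymbol{s})\|_{\min}$. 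The subexponential dimension growth is precisely why one needs the first assertion for $D_N=e^{o(N)}$ rather than just for fixed $D$.
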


Whether strong convergence could hold even beyond the subexponential 
regime is a tantalizing question. While we do not resolve this question, 
our results are optimal in the sense that they achieve the largest regime 
that is accessible by trace statistics, as will be discussed in section 
\ref{sec:discoptdim} below.

\begin{rem}[Previous bounds]
\label{rem:previous}
Prior to the present work, strong convergence of Gaussian 
ensembles was established only for $D=o(N/\log^3 N)$ \cite{BBV23} 
(following earlier works that achieved $D=o(N^{1/4})$ 
\cite{pisier2014random} and $D=o(N^{1/3})$ \cite{CGP22}). Thus even the 
linear dimension regime had remained out of reach in this setting.

For Haar unitary matrices (see section \ref{sec:mainclass}), analogous 
sublinear bounds appear in \cite{Par22,parraud2023asymptoticHaar}. In 
this setting, a major step forward \cite{BC24} achieved
strong convergence for $D\le \exp(N^{1/(32r+160)})$, breaking 
the linear dimension barrier. A significant improvement 
$D\le\exp(N^{1/2-o(1)})$ was obtained in \cite{magee2024quasiexp}. Most 
recently, in work concurrent with the present paper, the GUE case was 
revisited in \cite{Par24} where strong convergence was proved for 
$D=\exp(o(N^{2/3}))$. The methods of the present paper finally
make it possible to reach the entire subexponential regime.
\end{rem}

In the complementary regime where $D=N^{O(1)}$ is polynomial, Theorem 
\ref{thm:maingauss} yields a universal bound on the rate of strong 
convergence: a direct application of Theorem \ref{thm:maingauss} with 
$\varepsilon=C\sqrt{\log(N)/N}$ yields the following.

\begin{cor}
\label{cor:gaussrate}
Let $\boldsymbol{G}^N$ and $\boldsymbol{s}$ be defined as in Theorem
\ref{thm:maingauss}. For any sequence of noncommutative polynomials 
$P_N\in\mathrm{M}_{D_N}(\mathbb{C})\otimes
\mathbb{C}\langle\boldsymbol{s}\rangle$
with $D_N=N^{O(1)}$, we have\footnote{%
	The notation $Z_N = O_{\rm P}(z_N)$ denotes that
	$\{Z_N/z_N\}_{N\ge 1}$ is bounded in probability.}
$$
	\|P_N(\boldsymbol{G}^N)\|\le 
	\bigg(
	1
	+ O_{\mathrm{P}}\bigg(\sqrt{\frac{\log N}{N}}\bigg)
	\bigg)\|P_N(\boldsymbol{s})\|
	\quad\text{as}\quad N\to\infty.
$$
\end{cor}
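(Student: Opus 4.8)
The plan is to obtain Corollary \ref{cor:gaussrate} as a direct application of Theorem \ref{thm:maingauss}, taking the free parameter $\varepsilon$ as small as the coefficient-dimension constraint permits. Since the asserted estimate is one-sided and is of the asymptotic type $O_{\mathrm{P}}(\cdot)$ as $N\to\infty$, it suffices to show that for every $\eta>0$ there is an $M<\infty$ with
$$
  \mathbf{P}\Big[\|P_N(\boldsymbol{G}^N)\| \ge \big(1+M\sqrt{\log N/N}\,\big)\,\|P_N(\boldsymbol{s})\|\Big]\le\eta
$$
for all sufficiently large $N$; here $M$ may depend on $\eta$, on $r$, on a degree bound $q_0$ for the $P_N$ (which we take to be $O(1)$, as in Corollary \ref{cor:gausssubexp}, since the constant of Theorem \ref{thm:maingauss} depends on the degree), and on the exponent implicit in $D_N=N^{O(1)}$. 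We may assume $P_N\not\equiv 0$, so that $\|P_N(\boldsymbol{s})\|>0$.

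Fix $\eta>0$, let $c=c(q_0,r)>0$ be the constant furnished by Theorem \ref{thm:maingauss}, and write $D_N\le N^{a}$ for all large $N$. For a parameter $M$ to be chosen, set $\varepsilon_N:=M\sqrt{\log N/N}$, so that $\varepsilon_N\in(0,1]$ for all large $N$. The coefficient-dimension hypothesis $D_N\le e^{cN\varepsilon_N^2}=N^{cM^2}$ of the theorem then holds for all large $N$ provided $M\ge\sqrt{a/c}$, and Theorem \ref{thm:maingauss} gives
$$
  \mathbf{P}\Big[\|P_N(\boldsymbol{G}^N)\|\ge(1+\varepsilon_N)\|P_N(\boldsymbol{s})\|\Big]
  \le\frac{N}{c\,\varepsilon_N}\,e^{-cN\varepsilon_N^2}
  =\frac{N^{3/2-cM^2}}{c\,M\sqrt{\log N}}
$$
for all large $N$. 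Since $1+\varepsilon_N=1+M\sqrt{\log N/N}$, the left-hand side is precisely the probability we must control; and if $M$ is chosen so that also $cM^2>\tfrac32$, the right-hand side tends to $0$ as $N\to\infty$, hence is $\le\eta$ for all $N$ past some index, which is exactly the required bound. (If one takes $cM^2>\tfrac52$ the bounds become summable in $N$, so a Borel--Cantelli argument would even upgrade the conclusion to hold almost surely, though this is more than the corollary claims.)

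I do not foresee any real obstacle here: the argument is just the unwinding of the definition of $O_{\mathrm{P}}$ together with bookkeeping of constants. The one point that needs a moment's attention is that the two requirements on $M$ must hold simultaneously — $cM^2>\tfrac32$, so that the tail probability decays, and $cM^2\ge a$, so that a polynomial coefficient dimension $D_N=N^{O(1)}$ stays within the window $D\le e^{cN\varepsilon^2}$ in which Theorem \ref{thm:maingauss} is valid — and both are met as soon as $M$ is taken large enough, which is all the flexibility the $O_{\mathrm{P}}$ statement requires.
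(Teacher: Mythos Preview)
Your proof is correct and follows exactly the approach indicated in the paper, which states only that Corollary \ref{cor:gaussrate} follows by ``a direct application of Theorem \ref{thm:maingauss} with $\varepsilon=C\sqrt{\log(N)/N}$.'' You have simply spelled out the constant-tracking that makes this work, including the observation that a uniform degree bound is implicitly needed so that the constant $c=c(q_0,r)$ from Theorem \ref{thm:maingauss} is fixed.
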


A similar rate was obtained by Parraud \cite{parraud2023asymptotic} for
GUE (and Haar unitaries \cite{parraud2023asymptoticHaar}) 
with $D=1$, but the method used there does not extend to large $D$.

The $N^{-1/2}$ rate is not expected to be optimal: when $P$ is a linear 
polynomial with scalar coefficients, the optimal rate $N^{-2/3}$ follows 
from classical Tracy-Widom asymptotics. At present, however, Corollary 
\ref{cor:gaussrate} yields the best known rate for arbitrary polynomials 
$P$; see section \ref{sec:discoptrate} for further discussion.

\subsubsection{The classical compact groups}
\label{sec:mainclass}

For Haar-distributed random matrices from the classical compact groups 
$\mathrm{U}(N)/\mathrm{O}(N)/\mathrm{Sp}(N)$, the methods of this paper 
yield nearly parallel results to those obtained for the Gaussian 
ensembles. Our main result in this setting differs from
Theorem \ref{thm:maingauss} by a logarithmic factor.

\begin{thm}
\label{thm:mainhaar}
Let $\boldsymbol{U}^N=(U_1^N,\ldots,U_r^N)$ be i.i.d.\ Haar-distributed
random matrices in $\mathrm{U}(N)/\mathrm{O}(N)/\mathrm{Sp}(N)$,
and let $\boldsymbol{u}=(u_1,\ldots,u_r)$ be free Haar unitaries.
Let $\varepsilon\in [\frac{1}{c\sqrt{N}},1]$ and $q_0\in\mathbb{N}$. 
For every noncommutative polynomial $P\in\mathrm{M}_D(\mathbb{C})\otimes 
\mathbb{C}\langle\boldsymbol{u},\boldsymbol{u}^*\rangle$ of degree $q_0$ 
with matrix coefficients of dimension $D\le e^{cN\varepsilon^2/
\log^2(N\varepsilon^2)}$, we have
$$
	\mathbf{P}\big[
	\|P(
	\boldsymbol{U}^N,
	\boldsymbol{U}^{N*})\|\ge (1+\varepsilon)
	\|P(\boldsymbol{u},\boldsymbol{u}^*)\|
	\big] \le
	\frac{N}{c\varepsilon}
	\,e^{-cN\varepsilon^2/\log^2(N\varepsilon^2)}.
$$
Here $c$ is a constant that depends only on $q_0$ and $r$.
\end{thm}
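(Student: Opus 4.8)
The plan is to reduce Theorem \ref{thm:mainhaar} to the Gaussian case (Theorem \ref{thm:maingauss}) as much as possible, and then pinpoint where the extra logarithmic loss enters. The standard mechanism is the following: a Haar unitary on $\mathrm{U}(N)/\mathrm{O}(N)/\mathrm{Sp}(N)$ can be built from the Gaussian ensemble of the corresponding type via polar decomposition, or, more usefully for strong-convergence arguments, one can compare $\boldsymbol U^N$ with a suitable rational/analytic function of GUE/GOE/GSE matrices whose free limit is a free Haar family. The cleaner route, and the one I would follow, is to run the \emph{same} master argument of \cite{chen2024new} as refined in this paper, but with the word ``semicircular'' replaced by ``Haar unitary'' throughout, and see that the only structural difference is that the relevant interpolating family (Brownian motion on the unitary group, or the heat-kernel-smoothed object used to produce the polynomial-in-$N$ expansion of $\E[\tr P(\boldsymbol U^N)]$) has coefficients in its asymptotic expansion that grow only polynomially in the degree, not factorially — but with an extra $\log$ coming from the fact that the loop equations / Schwinger--Dyson recursion on the group carry an extra combinatorial factor relative to the flat Gaussian case.

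Concretely, the key steps are: (i) Set up the smoothing/interpolation. For the Gaussian case one uses that $t\mapsto \E\,\tr\big(f(\sqrt t\,\boldsymbol G^N + \text{correction})\big)$ admits an asymptotic expansion in $1/N^2$ with controlled coefficients; for the compact groups one uses the heat semigroup $e^{-t\Delta/N}$ on the group (equivalently Brownian motion $(U^N_t)_{t\ge 0}$ with $U^N_0 = \mathbf 1$ and $U^N_\infty$ Haar), and the corresponding master loop equation. (ii) Derive the polynomial-in-$N$ (genus) expansion for $\E\big[(\tr\otimes\tr)\,P(\boldsymbol U^N,\boldsymbol U^{N*})\big]$ with explicit bounds on the coefficients, uniform over polynomials of degree $q_0$ with $D\times D$ matrix coefficients; the dependence on $D$ must be purely multiplicative (a factor $D$ per trace), which is exactly what makes the subexponential-in-$N$ coefficient dimension accessible. (iii) Feed these bounds into the concentration step: establish a deviation inequality for $\|P(\boldsymbol U^N,\boldsymbol U^{N*})\|$ around $\|P(\boldsymbol u,\boldsymbol u^*)\|$ of Gaussian type with the stated constants, using (as in the Gaussian proof) a combination of the smoothed moment estimates with a net/linearization argument over the matrix coefficients and a concentration-of-measure input on the group (log-Sobolev on $\mathrm{U}(N)/\mathrm{O}(N)/\mathrm{Sp}(N)$ with the right scaling). (iv) Optimize $\varepsilon$ and translate the coefficient-growth bounds into the admissible range $D\le e^{cN\varepsilon^2/\log^2(N\varepsilon^2)}$, and the constraint $\varepsilon\ge 1/(c\sqrt N)$ which is where the group concentration scale becomes the bottleneck.

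The main obstacle — and the source of the $\log^2(N\varepsilon^2)$ in the exponent — is step (ii): on the compact groups the asymptotic expansion of Wilson loops / word moments in $1/N$ is genuinely harder to control than the Gaussian genus expansion, because the natural expansion parameter is not clean ($1/N$ vs.\ $1/N^2$, and the heat-kernel expansion on the group produces denominators that are only controlled up to logarithmic factors in the truncation order). In the Gaussian proof one can afford to truncate the $1/N^2$ expansion at order $\sim N\varepsilon^2$ and bound the remainder; on the group, the remainder bound after truncating at order $k$ degrades like $k^{O(k)}$ relative to the Gaussian $C^k$, so one can only push $k$ up to $\sim N\varepsilon^2/\log^2(N\varepsilon^2)$ before the tail overwhelms the gain, which propagates directly into the admissible $D$ and into the exponent of the probability bound. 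I would expect the rest of the argument — the linearization over matrix coefficients, the net argument, and the final concentration estimate — to be essentially identical to the Gaussian case, modulo bookkeeping, once the expansion with its (log-lossy) coefficient bounds is in hand. A secondary technical point is handling the three series $\mathrm{U}$, $\mathrm{O}$, $\mathrm{Sp}$ uniformly: the orthogonal and symplectic cases contribute additional lower-order terms in the expansion (odd powers of $1/N$, from the non-orientable surfaces), but these are strictly smaller and do not affect the leading analysis, so a single argument with case-dependent constants suffices.
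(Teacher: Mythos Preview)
Your proposal sketches a strategy that is quite different from the paper's, and several of the mechanisms you identify are off target.

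The paper does \emph{not} use heat kernels, Brownian motion on the group, or Schwinger--Dyson/loop equations. It uses Weingarten calculus directly: for $h\in\calP_q$ one has $\E[\tr h(X^N)]=\Psi_h(1/N)$ where $\Psi_h=f_h/g_{qq_0}$ is a \emph{rational} function (Lemma~\ref{lem:rationalencoding}), with explicit denominator $g_{qq_0}(x)=\prod_{j=1}^{qq_0}(1-(jx)^2)^{\lfloor qq_0/j\rfloor}$ coming from the poles of the Weingarten function. The degree of both numerator and denominator is $O(qq_0\log(qq_0))$, and \emph{this} logarithm --- arising from $\sum_{j\le L}\lfloor L/j\rfloor\asymp L\log L$ --- is the sole source of the $\log^2(N\varepsilon^2)$ loss (Remark~\ref{rem:loselog}). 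It is not a $k^{O(k)}$ blowup in a truncation remainder as you suggest.

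Because $\Psi_h$ is rational rather than polynomial, the paper proves a dedicated \emph{rational Bernstein inequality} (Lemma~\ref{lem:rationalbernstein}): one approximates $1/g_q$ by a polynomial of comparable degree so that Bernstein's inequality applies without losing an extra $m!$. Combined with the optimal $1/N$-sample interpolation (Proposition~\ref{prop:uniform_bound_1/N}), this yields the master inequality (Lemma~\ref{lem:ratesHaarU}) and smooth expansion (Theorem~\ref{thm:smasympexUN}). The supports $\supp\mu_k\subseteq[-\|\xf\|,\|\xf\|]$ are then obtained by the bootstrapping of section~\ref{sec:haarbootstrap} (concentration on the compact group plus qualitative strong convergence), and the final optimization takes $u=1/(1+\log m)$, $m\asymp N\varepsilon^2/\log^2(N\varepsilon^2)$.

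There is also no ``net/linearization argument over the matrix coefficients'': the dimension $D$ enters only through the trivial factor $DN$ in $\E[\trace h(X^N)]=DN\,\E[\tr h(X^N)]$, and the point is that the right-hand side is controlled \emph{uniformly in $D$}. A net over coefficients would not deliver this.

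An approach via unitary Brownian motion in the spirit of Parraud does exist, but as noted in Remark~\ref{rem:previous} it has so far achieved only sublinear $D$ for Haar unitaries; turning your sketch into a proof that reaches $D\le e^{cN\varepsilon^2/\log^2(N\varepsilon^2)}$ would require genuinely new estimates that you have not supplied.
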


Theorem \ref{thm:mainhaar} yields strong convergence whenever $D=
e^{o(N/\log^2 N)}$, and yields a universal rate
$O_{\rm P}((\frac{\log N}{N})^{1/2}\log\log N)$ when $D=N^{O(1)}$.

\begin{rem}
\label{rem:cmale}
Theorem \ref{thm:mainhaar} falls short by a logarithmic factor of reaching 
the full subexponential regime. However, as was pointed out to us by 
Mikael de la Salle, Corollary 
\ref{cor:gausssubexp} extends \emph{verbatim} to the setting of
Theorem \ref{thm:mainhaar} by using a coupling between the Gaussian and 
Haar-distributed ensembles due to Collins and Male \cite{CM14}.
This argument achieves the full subexponential regime for the
$\mathrm{U}(N)/\mathrm{O}(N)/\mathrm{Sp}(N)$ models, but does not provide 
any quantitative information.

This suggests that the logarithmic factor in Theorem \ref{thm:mainhaar} 
is likely an artefact of the proof and should not be necessary.
The logarithmic factor arises from a single point in 
the proof that is explained in Remark \ref{rem:loselog} below.
\end{rem}

\subsubsection{Further applications}
\label{sec:furtherapp}

While the new ingredients developed in this paper enable a particularly 
sharp treatment of the classical Gaussian and Haar-distributed ensembles, 
they are by no means restricted to this setting. To further illustrate the 
utility of these methods, we will develop four additional applications.
\smallskip
\begin{enumerate}[1.]
\itemsep\smallskipamount
\item
It was shown in \cite{chen2024new} that strong convergence of
random permutation matrices holds with a universal rate 
$O_{\rm P}((\frac{\log N}{N})^{1/8})$. We will obtain
$O_{\rm P}((\frac{\log N}{N})^{1/6})$
with no additional effort, improving the best known rate for this problem.
\item
In an influential paper that motivated much recent work on strong 
convergence, Hayes \cite{hayes2020random} proved a reduction of the 
Peterson-Thom conjecture to the statement that the family of
$N^2$-dimensional random matrices
$$
	G_1^N\otimes\id_N,~\ldots,~G_r^N\otimes\id_N,~\id_N\otimes 
	\tilde G_1^N,~\ldots,~\id_N\otimes \tilde G_r^N
$$
converges strongly as $N\to\infty$ to 
$$
	s_1\otimes\id,~\ldots,~s_r\otimes\id,~\id\otimes 
	s_1,~\ldots,~\id\otimes s_r,
$$
where $G_i^N,\tilde G_i^N$ are independent GUE matrices of dimension $N$ 
and $s_i$ are free semicircular variables. This strong convergence 
property follows readily from Corollary \ref{cor:gausssubexp} using 
exactness of the $C^*$-algebra generated by a free semicircular family, 
but this argument provides no quantitative information. We will develop a
quantitative form of strong convergence of Hayes' model.
\item
In contrast to the Hayes model, where GUE matrices act on distinct factors
of a tensor product, models where GUE matrices act on 
overlapping factors of a tensor product arise naturally in the study of 
quantum many-body systems and in random geometry. The limiting model
in this setting is described by the notion of $\Gamma$-independence 
\cite{SW16,CC21}. We will prove strong convergence of general tensor GUE 
models to a $\Gamma$-independent semicircular family, settling open 
problems formulated in \cite[Problem 1.6]{MT23} and \cite{CY24}.
Quantitative bounds on strong convergence for the Hayes 
model play a key role in the proof.
\item
Let $g_1,\ldots,g_r$ be i.i.d.\ Haar distributed elements of 
$\mathrm{SU}(N)$. Given a unitary representation $\pi_N$ of 
$\mathrm{SU}(N)$, we can define random matrices $U_i^{\pi_N} := 
\pi_N(g_i)$ of dimension $\dim(\pi_N)$. It was shown by Magee and de la 
Salle \cite{magee2024quasiexp} that the random matrices 
$U_1^{\pi_N},\ldots, U_r^{\pi_N}$ converge strongly to free Haar unitaries 
$u_1,\ldots,u_r$ uniformly over all nontrivial representations $\pi_N$ 
with $\dim(\pi_N)\le \exp(N^{1/42-\delta})$, achieving the first 
strong convergence result for representations of quasiexponential 
dimension (much lower dimensional representations were considered in 
earlier work of Bordenave and Collins \cite{BC20}). We will improve this 
conclusion to representations of dimension $\dim(\pi_N)\le 
\exp(N^{1/3-\delta})$.
\end{enumerate}
\smallskip
We postpone precise mathematical statements of these results to
section \ref{sec:applications},
where the above applications will be developed.

\subsection{New ingredients}
\label{sec:new}

A detailed overview of the soft approach to strong convergence introduced 
in our previous paper is given in \cite[\S 2.2]{chen2024new}. Here we 
summarily recall only the most basic steps of this method in order to 
enable the discussion of the new ideas developed in this paper. The reader 
who is new to the method is encouraged to review \emph{ibid.}\ prior to 
proceeding.

\subsubsection{Review of the basic method}
\label{sec:introreview}

Let $X^N$ (e.g., $P(\boldsymbol{U}^N,\boldsymbol{U}^{N*})$) be a 
self-adjoint random matrix of dimension $M_N$, and let $X_{\rm F}$ (e.g., 
$P(\boldsymbol{u},\boldsymbol{u}^*)$) be its limiting model in a 
$C^*$-probability space $(\mathcal{A},\tau)$.
For the present discussion, assume for simplicity that $\|X^N\|\le 
K$ a.s. To control the norm of $X^N$, we bound
\begin{equation}
\label{eq:conclosehere}
	\mathbf{P}\big[\|X^N\|\ge\|X_{\rm F}\|+\varepsilon\big]
	\le
	\mathbf{E}[\trace \chi(X^N)] =
	M_N\,\mathbf{E}[\tr \chi(X^N)],
\end{equation}
where $\tr$ denotes the normalized trace and $\chi\ge 0$ is a smooth test 
function so that $\chi(x)$ vanishes for  $|x|\le\|X_{\rm F}\|+ 
\frac{\varepsilon}{2}$ and equals one for $|x|\ge\|X_{\rm F}\|+\varepsilon$. 
Our aim is to show that the right-hand side of this bound is
$o(1)$.
While we are ultimately interested in smooth test functions $\chi$, our 
approach is based on the availability of powerful tools in the analytic 
theory of polynomials; the application of these tools to polynomial test 
functions will yield quantitative bounds
that are so strong that they can be lifted to smooth test functions a 
posteriori.

The
basic input for our approach is that for many models (including all those
considered in this paper), any polynomial $h$ of $X^N$ satisfies
$$
	\mathbf{E}[\tr h(X^N)] = \Phi_h\big(\tfrac{1}{N}\big),
$$
where $\Phi_h$ is a rational function whose degree is bounded in terms of 
that of $h$. Using only this fact and the trivial bound
$|\Phi_h(\frac{1}{N})|\le \|h\|_{[-K,K]}:= \sup_{|x|\le K}|h(x)|$, we can
apply classical polynomial inequalities due to 
A.\ and V.\ Markov to obtain an asymptotic expansion of $\mathbf{E}[\tr 
h(X^N)]$ of the form
\begin{equation}
\label{eq:introexp}
	\Bigg|
	\mathbf{E}[\tr h(X^N)] - \tau(h(X_{\rm F})) -
	\sum_{k=1}^{m-1} \frac{\nu_k(h)}{N^k}
	\Bigg|
	\le
	\frac{C(m)}{N^m}\,q^{\beta m}\|h\|_{[-K,K]}
\end{equation}
for all $N,m,q\in\mathbb{N}$ and real polynomials $h$ of degree at most 
$q$. Here $C(m)$ is a constant that depends only on $m$, $\beta$ is a 
universal constant, and $\nu_k$ are linear functionals on the space of 
real univariate polynomials.

The key feature of \eqref{eq:introexp} is that the error bound is 
sufficiently strong that it can efficiently control the expansion of any 
smooth function into Chebyshev polynomials. In particular, a 
Fourier-analytic argument shows that the linear functionals $\nu_k$
extend to compactly supported distributions, and that
the expansion \eqref{eq:introexp} remains valid for any smooth 
function $h$ when $q^{\beta m}\|h\|_{[-K,K]}$ is replaced by 
$\|h\|_{C^{\lceil\beta m\rceil+1}[-K,K]}$ on the right-hand side.
If we could furthermore show that
\begin{equation}
\label{eq:suppincl}
	\supp \nu_k \subseteq [-\|X_{\rm F}\|,\|X_{\rm F}\|]
	\quad\text{for }k=1,\ldots,m-1,
\end{equation}
then $\chi$ in \eqref{eq:conclosehere} would satisfy 
$\nu_k(\chi)=0$ for $k\le m-1$ and the expansion yields 
$$
	M_N\,\mathbf{E}[\tr \chi(X^N)] = O\bigg(\frac{C'(m)M_N}{N^m}\bigg).
$$
Thus 
we achieve strong convergence provided that \eqref{eq:suppincl} can be 
established for $m$ sufficiently large that the above bound is $o(1)$.
It is shown in \cite{chen2024new} how the problem of bounding the support
of $\nu_k$ can be reduced to a moment computation; the latter is the main 
part of the method that relies on a problem-specific analysis.

\begin{rem}
In the context of strong convergence, asymptotic expansions for smooth 
test functions were first used by Schultz \cite{schultz2005non}, and were 
systematically developed by Parraud 
\cite{parraud2023asymptotic,parraud2023asymptoticHaar,Par24} for GUE and 
Haar unitary matrices. These works rely on specialized analytic 
tools and explicit computations that are available for these models. A key 
feature of the polynomial method is that it applies to models
for which such specialized tools are not available. At the same time, our
main results
yield stronger quantitative information even for the classical ensembles.
\end{rem}

The basic approach described above achieves not only strong 
convergence, but also strong quantitative bounds that 
yield much stronger implications. The quantitative features of the method 
are controlled by three parameters:
\begin{enumerate}[1.]
\item The value of $\beta$ in \eqref{eq:introexp};
\item The dependence of $C(m)$ in \eqref{eq:introexp} on $m$;
\item The largest $m$ for which \eqref{eq:suppincl} can be established.
\end{enumerate}
The main contributions of this paper are several (independent) new 
ingredients that yield significant improvements to the method of 
\cite{chen2024new} in each of these parameters. We describe these 
new ingredients in the remainder of this section. The combination of all 
these ingredients is key to achieving our main results.

\subsubsection{Optimal polynomial interpolation}
\label{sec:introrakh}

The proof of \eqref{eq:introexp} is based on the observation that its 
left-hand side is merely the Taylor expansion to order $m-1$ of the 
rational function $\Phi_h(\frac{1}{N})$, so that it is bounded by the 
remainder term $\frac{1}{m! N^m}\|\Phi_h^{(m)}\|_{[0,\frac{1}{N}]}$. The 
problem with this bound is that it depends on $\Phi_h(x)$ for $x$ not of 
the form $\frac{1}{N}$, so that the connection with random matrices is 
lost. We surmount this using the classical fact that bounding a polynomial 
on a sufficiently dense discrete set already suffices to achieve uniform 
control of its derivatives.

A key step in the argument is that we must bound the rational function 
$\Phi_h$ in between the points $\frac{1}{N}$ by interpolating its values 
at these points. To this end, \cite{chen2024new} relies on a classical 
result on polynomial interpolation, which states that for any real 
polynomial $h$ of degree $q$, we have $\|h\|_{[0,\delta]}\lesssim 
\max_{x\in I}|h(x)|$ for any set $I\subseteq[0,\delta]$ with spacing at 
most $O(\frac{\delta}{q^2})$ between its points. The latter condition is 
optimal for a general set $I$ \cite{CR92}. When applied to the set $I_M 
:=\{\frac{1}{N}:N\ge M\}$ that is of interest in the present setting, the 
spacing condition limits us to considering only $N\gtrsim q^2$, which 
results in a quantitative loss in the analysis.

Surprisingly, this restriction turns out to be suboptimal in the present 
setting due to the special structure of the set $I_M$: even though 
$O(\frac{\delta}{q^2})$ spacing is necessary for general $I$, we will 
prove in section \ref{sec:optimaluniformbounds} that $O(\frac{\delta}{q})$ 
spacing suffices (and is optimal) for $I_M$, so that we can in fact work 
with $N\gtrsim q$ in the analysis. While this is in itself purely a 
statement about polynomials that is unrelated to random matrices, it 
yields a crucial improvement of the constant $\beta$ in 
\eqref{eq:introexp} in essentially every application of the polynomial 
method in the random matrix context.

The special feature of the set $I_M$ is that it becomes increasingly dense 
near zero. This enables us to exploit a result of Rakhmanov 
\cite{rakhmanov2007bounds}, which states that $O(\frac{\delta}{q})$ 
uniformly spaced points suffice to interpolate a real polynomial of degree 
$q$ strictly in the interior of the interval $[0,\delta]$, in a multiscale 
manner.

\subsubsection{High-order expansions}

The strong quantitative results of this paper rely on asymptotic expansion 
to very large order $m$ (e.g., $m\propto N$ for Gaussian ensembles), which 
requires an essentially optimal constant $C(m)$ in \eqref{eq:introexp}. To 
this end, we must overcome two distinct obstacles that arise in different 
models.

For Haar-distributed models, we aim to apply inequalities for polynomials 
to the rational function $\Phi_h$. This is accomplished in 
\cite{chen2024new} by applying the chain rule to express $\Phi_h^{(m)}$ in 
terms of the derivatives of the numerator and denominator and bounding 
each term separately, resulting in lossy estimates. In 
section~\ref{sec:rationalbernstein}, we show instead that the rational 
function $\Phi_h$ can be approximated to very high precision by a 
polynomial of nearly the same degree, which enables us to apply polynomial 
inequalities directly without incurring a quantitative loss.

For Gaussian ensembles, the function $\Phi_h$ is itself a polynomial 
(known as the genus expansion), so that the above issue does not arise. In 
this setting, however, the random matrices are not uniformly bounded, so 
that the assumption $\|X^N\|\le K$ a.s.\ that was made for simplicity in 
section \ref{sec:introreview} does not apply. Surmounting this issue 
requires a truncation argument. The challenge in implementing such an 
argument is that this must be done without incurring any quantitative loss 
in the final bounds. The methods to do so will be developed in section
\ref{sec:thepolymethodforGUE}.

\subsubsection{Support and concentration}
\label{sec:introboot}

The quantitative features of our bounds are controlled not only by the 
asymptotic expansion \eqref{eq:introexp}, but also by the number of 
distributions $\nu_k$ whose supports can be bounded as in 
\eqref{eq:suppincl}. In general, \eqref{eq:suppincl} need not hold for 
arbitrarily large $m$; for example, this is the case for models based 
on random permutation matrices. The latter phenomenon has a precise
probabilistic interpretation: $x\in\supp\nu_k$ for some $|x|>\|X_{\rm 
F}\|$ detects the presence of an outlier in the spectrum of $X^N$ with 
probability $\sim N^{1-k}$, cf.\ \cite[\S 3.2.2]{chen2024new}.

Unlike random permutation matrices, however, the norms of random matrices 
constructed from Gaussians or the classical compact groups are 
subject to the concentration of measure phenomenon \cite{Led01}, which 
ensures that the 
probability that the norm deviates from its median by a fixed amount is 
\emph{exponentially} small in $N$. Thus if strong convergence holds 
in a qualitative sense $\mathrm{med}(\|X^N\|)= \|X_F\|+o(1)$, then the 
presence of an outlier in the spectrum with probability $N^{-c}$ 
is automatically ruled out. In other words, whenever \eqref{eq:introexp}
holds, we have the formal implication
\begin{multline*}
	\text{concentration of measure} + 
	\text{qualitative strong convergence}
	\Longrightarrow \\
	\supp\nu_k \subseteq [-\|X_{\rm F}\|,\|X_{\rm F}\|]
	\quad\text{for all }k\ge 1,
\end{multline*}
cf.\ section \ref{sec:bootsgue}.
When applicable, this simple observation 
controls the supports of $\nu_k$ in a soft manner, avoiding the need for
problem-specific moment computations such as those used in 
\cite{chen2024new} for random permutations. Let us note that a 
variant of this idea appears in the work of Parraud 
\cite[pp.\ 285--286]{parraud2023asymptotic}.

It should be emphasized that the above argument cannot in itself prove 
strong convergence, as it requires strong convergence as input. However, 
establishing strong convergence generally only requires the validity of 
\eqref{eq:suppincl} for small $m$: for example, when $X^N$ has dimension 
$M_N=N$, we must only understand $\nu_1$ ($m=2$) to achieve strong 
convergence. The concentration argument then automatically extends the 
conclusion to $\nu_k$ for all $k>1$, resulting in far stronger 
quantitative bounds. Thus the concentration method serves as a 
powerful bootstrapping argument to deduce strong quantitative bounds from 
weak ones.

\subsubsection{Supersymmetric duality}
\label{sec:supersymm}

In contrast to the above techniques that are broadly applicable, we 
finally discuss an idea that is special to the classical random matrix 
ensembles. Despite its limited range of applicability, a special property 
of these models can be very fruitfully exploited when it is present.

A remarkable property of the classical ensembles is that the rational 
function $\mathbf{E}[\tr h(X^N)] = \Phi_h(\frac{1}{N})$ still has a 
spectral interpretation if we replace $N$ by $-N$: there exists a 
``dual'' random matrix model $Y^N$ so that 
$$
	\mathbf{E}[\tr h(Y^N)] =
	\Phi_h(-\tfrac{1}{N}).
$$
In particular, GUE and $\mathrm{U}(N)$ models are self-dual, while GOE and 
$\mathrm{O}(N)$ models are dual to GSE and $\mathrm{Sp}(N)$ models
\cite{bryc2009duality,magee2024matrix}.
We presently explain how this yields stronger bounds using a classical 
property of polynomials \cite{borwein2012polynomials}.

Recall (cf.\ section \ref{sec:introrakh}) that the proof of 
\eqref{eq:introexp} aims to bound the remainder term 
$\|\Phi_h^{(m)}\|_{[0,\frac{1}{N}]}$ in the Taylor expansion of $\Phi_h$, 
while polynomial interpolation yields a uniform bound on $\Phi_h$ itself. 
This is achieved using the Markov inequality: if a real 
polynomial $h$ of degree $q$ is uniformly bounded on an interval, its 
derivative is bounded by $O(q^2)$ everywhere on that interval.

While the Markov inequality is optimal if we aim to bound the derivative 
of a polynomial everywhere on the interval, a much better $O(q)$ bound 
holds strictly in the interior of the interval by the Bernstein 
inequality. Unfortunately, this is not applicable in our setting, as we 
can only control $\Phi_h$ in a positive interval $[0,\delta]$ and we aim 
to control its derivatives near a boundary point $0$ of this interval. 
However, the existence of a dual random matrix model enables us to bound 
$\Phi_h$ on a symmetric interval $[-\delta,\delta]$. Thus in this case we 
can apply the Bernstein inequality to achieve a crucial improvement to the 
constant $\beta$ in \eqref{eq:introexp}.

Aside from this quantitative improvement, we will also exploit the duality 
property in an entirely different manner: by an elementary observation, 
the existence of a dual random matrix model automatically implies the 
validity of \eqref{eq:suppincl} for $m=2$, cf.\ section 
\ref{sec:nu1supersymmetry}. Somewhat surprisingly, this completely 
eliminates the need for any problem-specific moment estimates from the 
proofs of our main results.

\subsection{Discussion}

\subsubsection{The optimal dimension of matrix coefficients}
\label{sec:discoptdim}

Corollary \ref{cor:gausssubexp} states that the Gaussian ensembles exhibit 
strong convergence for polynomials with matrix coefficients of 
subexponential dimension 
$D=e^{o(N)}$. Whether or not this conclusion is the best possible is a 
tantalizing question of Pisier \cite{pisier2014random}. Pisier 
shows\footnote{%
	As this is not stated explicitly in \cite{pisier2014random}, we 
	include a short self-contained proof in Appendix 
	\ref{app:pisierexpn2}.}
that strong convergence can fail in the subgaussian regime $D=e^{O(N^2)}$;
what happens in between the subexponential and subgaussian regimes 
remains open.

However, the results of the present paper are already optimal in a weaker 
sense: the subexponential regime is the 
largest one that is accessible by trace methods. 
This phenomenon is best illustrated by means of a simple example.
Let $G^N$ be a GUE matrix of dimension $N$, and consider the random matrix 
$$
	X^N = \id_D \otimes G^N.
$$
We aim to show that $\|X^N\|=2+o(1)$, which is obvious due to 
the special structure of the model. However, in 
general we have no way of reasoning directly about the norm;
instead, we bound the norm by a trace statistic such as 
$\mathbf{E}[\trace \chi(X^N)]$ in \eqref{eq:conclosehere}, which is 
amenable to computation.
But in the present example,
$$
	\mathbf{E}[\trace \chi(X^N)] \ge
	\mathbf{E}[\#\{\text{eigenvalues of }X^N\text{ in }
	[-(2+\varepsilon),2+\varepsilon]^c\}] \ge D e^{-cN}
$$
for a universal constant $c$, where we used $\mathbf{P}[\|G^N\|\ge
2+\varepsilon] \ge \mathbf{P}[G^N_{11}\ge 2+\varepsilon] \ge e^{-cN}$ 
and that any eigenvalue of $G^N$ gives rise to an eigenvalue of $X^N$ of 
multiplicity $D$. Thus $\mathbf{E}[\trace \chi(X^N)]=o(1)$ can only occur 
when $D=e^{O(N)}$.

This example shows that the main results of this paper are the best 
possible in the sense that they capture the optimal regime where the 
expected number of outlier eigenvalues is $o(1)$. The reason for this, 
however, is that when matrix coefficients are very high dimensional, 
outlier eigenvalues can appear with very large multiplicity. This does not 
rule out the possibility that strong convergence holds in the 
super-exponential regime, but presents a fundamental obstacle to the 
application of \eqref{eq:conclosehere} or of any other standard trace 
method (e.g., the moment method).

\subsubsection{The optimal rate of convergence}
\label{sec:discoptrate}

While the new ideas of this paper have made it possible to implement most 
of the ingredients of the basic method of \cite{chen2024new} in a nearly 
optimal manner, one significant inefficiency remains: the $N^{-1/2+o(1)}$ 
rate of strong convergence achieved by Corollary \ref{cor:gaussrate} is 
not expected to be optimal. For linear polynomials $P$, the optimal rate
$N^{-2/3}$ follows from classical Tracy-Widom asymptotics, and heuristic 
universality principles of random matrix theory suggest that the same rate 
should extend to arbitrary $P$.

The source of this inefficiency is the parameter $\beta$ in the asymptotic 
expansion \eqref{eq:introexp}. For both Gaussian and Haar-distributed 
ensembles, the methods of this paper yield $\beta=2$. An $N^{-2/3+o(1)}$ 
rate would follow if this could be improved to $\beta=\frac{3}{2}$. In the 
very special
case of linear polynomials $P$ of GUE matrices, such an expansion has in 
fact been established by Haagerup and Thorbj{\o}rnsen \cite{HT12} by using 
explicit differential equations satisfied by the GUE density, lending 
credence to the validity of such an expansion in the more general setting.

In our approach, $\beta$ is ultimately controlled by the Bernstein 
inequality from the analytic theory of polynomials 
\cite{borwein2012polynomials}, which is optimal for arbitrary polynomials. 
The conjectured validity of an improved $\beta$ therefore suggests that 
the polynomials that arise from the function $\Phi_h$ in random matrix 
models are somewhat better behaved than the worst case polynomials. It is 
unclear, however, how the latter may be captured. There are known 
improvements of the Bernstein inequality for special classes of 
polynomials (e.g., those with no roots in the unit disc \cite{Erd40}) that 
would imply $\beta=\frac{3}{2}$ if they were applicable, but numerical 
evidence suggests that the polynomials that arise here do not satisfy the 
requisite assumptions.

\subsubsection{Beyond the classical ensembles}
\label{sec:beyond}

An unexpected feature of the present paper is that the entire analysis of 
the classical Gaussian and Haar-distributed ensembles uses only 
qualitative properties of the rational function $\Phi_h$ and general tools 
such as concentration of measure. Beyond these basic ingredients, no 
problem-specific arguments are used in the analysis. It should be 
emphasized, however, that this simple analysis is enabled by the
serendipitous coincidence of two special properties of the classical 
ensembles: they are of dimension $N$, so that \eqref{eq:suppincl} need 
only be established for $m=2$ to achieve strong convergence; and they 
admit a dual model as in section \ref{sec:supersymm}, which yields the 
latter property automatically.

These properties are by no means needed for the application of our 
approach, and both the methods of \cite{chen2024new} and of the present 
paper are much more broadly applicable. In general, however, it should be 
expected that \eqref{eq:suppincl} must be established at least for small 
$m$ by means of a problem-specific moment computation as in 
\cite{chen2024new}. Such moment estimates are greatly facilitated by a 
technique developed by Magee and de la Salle \cite[\S 
6.2]{magee2024quasiexp}, which shows that it is often possible to reduce 
such estimates to the special case of polynomials $P$ with nonnegative 
coefficients.

Let us finally note that while the unexpected appearance of dual random 
matrix models for the classical ensembles might raise the hope that this 
phenomenon arises more generally, that does not appear to be the case. For 
example, it is unlikely that random permutation models admit dual random 
matrix models, as the Deligne category $\mathrm{Rep}(\mathrm{S}_t)$ is 
semisimple abelian for all $t<0$ \cite[Theorem 2.18 and \S 9.5]{Del07}.

\subsection{Organization of this paper}

The rest of this paper is organized as follows. 

In section \ref{sec:prelim}, we recall some basic definitions and analytic 
tools that will be used throughout the paper. Section 
\ref{sec:optimaluniformbounds} develops an optimal polynomial 
interpolation bound for $\frac{1}{N}$ samples that will be used in all 
results in this paper.

In section \ref{sec:thepolymethodforGUE}, we implement the polynomial 
method to achieve an asymptotic expansion for smooth spectral statistics 
of GUE. This is combined in section \ref{sec:bootstrapping} with a 
bootstrapping argument to prove Theorem \ref{thm:maingauss} in the GUE 
case. The analysis is extended to GOE/GSE matrices in section 
\ref{sec:supersymmetryGOEGSE}, concluding the proof of 
Theorem~\ref{thm:maingauss}. The proof of Theorem \ref{thm:mainhaar} is 
contained in sections \ref{sec:Unitarymatrices} and 
\ref{sec:O(N)andSp(N)}, which develop the corresponding arguments for the 
$\mathrm{U}(N)$ and $\mathrm{O}(N)/\mathrm{Sp}(N)$ models, respectively.

Section \ref{sec:applications} develops applications to subexponential 
operator spaces (Corollary~\ref{cor:gausssubexp}), random permutation 
models, Hayes' model of the Peterson-Thom conjecture, tensor GUE models, 
and high-dimensional representations of $\mathrm{U}(N)$.

The paper concludes with three appendices. Appendix \ref{app:pisierexpn2} 
discusses Pisier's upper bound on the dimension of matrix coefficients for 
which strong convergence can hold. Appendix \ref{app:tensor} develops an 
approximation result for $\Gamma$-independent semicircular families that 
is used in the treatment of tensor models. Appendix \ref{app:magee} 
contains a result of Magee on duality of stable representations of 
$\mathrm{U}(N)$.

\subsection{Notation}

Throughout this paper, $a\lesssim b$ denotes that $a\le Cb$ for a 
universal constant $C>0$. Unless otherwise specified, $C,c>0$ denote 
universal constants that may change from line to line in proofs. We use 
the convention that
$\mathbb{Z}_+:=\{0,1,2,\ldots\}$, $\mathbb{N}:=\{1,2,\ldots\}$, and
$[r]:=\{1,\ldots,r\}$ for $r\in\bN$.

We denote by $\calP$ the space of all real univariate 
polynomials, and by $\calP_q\subset\calP$ the polynomials of degree at 
most $q$. We denote by $h^{(m)}$ the $m$th derivative of $h$, and write 
$\sobnorm{h}{0}{I} := \sup_{x\in I} |h(x)|$ and $\|h\|_{C^m(I)}:= 
\sum_{j=0}^m \|h^{(j)}\|_I$ for $I\subseteq\mathbb{R}$.

We denote by $\mathrm{M}_N(\mathcal{A})$ the space of $N\times N$ matrices 
with entries in $\mathcal{A}$. The unnormalized and normalized traces of 
$M\in \mathrm{M}_N(\bC)$ are denoted as $\trace M$ and $\tr M := 
\frac{1}{N}\trace M$, respectively. The identity matrix or operator is 
denoted as $\id$.

\section{Preliminaries}
\label{sec:prelim}

The aim of this section is to recall a number of basic tools that 
will be used throughout this paper, as well as to recall the precise 
definitions of the classical random matrix ensembles and their limiting 
models.

\subsection{Polynomial inequalities}

If a real polynomial is bounded in a finite interval, we can control its 
derivatives inside that interval and its growth outside the interval. The 
first property is captured by the Bernstein inequality, which plays a 
fundamental role throughout this paper; it replaces the use of the Markov 
brothers inequality in \cite{chen2024new}. We recall here a version for 
higher derivatives.

\begin{lem}[Bernstein inequality]
\label{lem:polybernstein} 
For any $h\in \calP_q$ and $\delta>0$, we have
$$
	|h^{(m)}(x)| \leq 
	\left( \frac{2q}{\delta \sqrt{1-(x/\delta)^2}} \right)^m 
	\sobnorm{h}{0}{[-\delta, \delta]}
	\quad\text{for all }x\in (-\delta, \delta).
$$
\end{lem}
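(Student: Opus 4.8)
\emph{Proof sketch.} The plan is to rescale to a normalized interval, dispose of two trivial ranges of $m$, and treat the remaining range by Cauchy's integral formula applied on a circle whose radius is optimized against a Bernstein--Walsh bound. Replacing $h$ by $h(\delta\,\cdot\,)$ reduces the claim to $\delta=1$, so it suffices to prove $|h^{(m)}(x_0)|\le\big(2q/\sqrt{1-x_0^2}\,\big)^m\,\sobnorm{h}{0}{[-1,1]}$ for all $h\in\calP_q$ and $x_0\in(-1,1)$; normalize $\sobnorm{h}{0}{[-1,1]}=1$ and set $s:=\sqrt{1-x_0^2}\in(0,1]$. If $m>q$ then $h^{(m)}\equiv 0$ and there is nothing to prove, and for $m=1$ the assertion is weaker than the classical Bernstein inequality $|h'(x_0)|\le q/s$. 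So from now on $2\le m\le q$.

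The key analytic input is the Bernstein--Walsh lemma \cite{borwein2012polynomials}: with $\phi(z):=z+\sqrt{z^2-1}$ the conformal map of $\mathbb{C}\setminus[-1,1]$ onto $\{|w|>1\}$, one has $|h(z)|\le|\phi(z)|^q$ for every $z\in\mathbb{C}$ (the case $\deg h<q$ being absorbed since $|\phi|\ge 1$). The level curves $\{|\phi|=\rho\}$ are the confocal ellipses $E_\rho=\{\cosh(t+i\theta):\theta\in[0,2\pi)\}$ with $t=\log\rho$, and for $z=\cosh(t+i\theta)\in E_\rho$ a short computation (expand, use $\cosh^2 t-\sinh^2 t=1$, complete the square in $\cos\theta$) gives
\[
	|z-x_0|^2=(\cos\theta-x_0\cosh t)^2+s^2\sinh^2 t\ \ge\ s^2\sinh^2 t ,
\]
so that $\operatorname{dist}(x_0,E_\rho)\ge s\sinh(\log\rho)$. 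Hence, for any $x$ with $|x-x_0|=r$, writing $\rho:=|\phi(x)|\ge 1$ we have $x\in E_\rho$ and $r\ge s\sinh(\log\rho)$, so $\log\rho\le\operatorname{arcsinh}(r/s)\le r/s$, and therefore
\[
	\sup_{|x-x_0|=r}|h(x)|\ \le\ \sup_{|x-x_0|=r}|\phi(x)|^q\ \le\ e^{qr/s}\qquad\text{for every }r>0 .
\]

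Finally I would apply the Cauchy estimate on the circle $\{|x-x_0|=r\}$, obtaining $|h^{(m)}(x_0)|\le m!\,r^{-m}\sup_{|x-x_0|=r}|h(x)|\le m!\,r^{-m}e^{qr/s}$ for every $r>0$, and then optimize by taking $r=ms/q$ (note $r\le s$, though no upper constraint on $r$ is actually used). This yields
\[
	|h^{(m)}(x_0)|\ \le\ \frac{m!\,e^{m}}{m^{m}}\,\Big(\frac{q}{s}\Big)^m ,
\]
and the elementary Stirling bound $m!\,e^{m}/m^{m}\le e\sqrt m$ together with $e\sqrt m\le 2^m$ (valid for all $m\ge 2$) gives $|h^{(m)}(x_0)|\le(2q/s)^m=\big(2q/\sqrt{1-x_0^2}\,\big)^m$, which is the assertion.

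I do not anticipate a genuine obstacle. The only computation that requires any care is the identity for $|z-x_0|^2$ on $E_\rho$ --- equivalently, the lower bound $\operatorname{dist}(x_0,E_\rho)\ge\sqrt{1-x_0^2}\,\sinh(\log\rho)$ for the distance from an interior point of $[-1,1]$ to a confocal ellipse --- and the only real idea is to invoke Cauchy's formula on an optimally chosen \emph{circle} rather than on the ellipse itself. It is precisely this choice of radius, combined with Stirling's formula, that cancels the spurious $m!$ and leaves behind only a factor $\sqrt m$, which is harmless given the slack between the sharp constant $q$ and the stated constant $2q$.
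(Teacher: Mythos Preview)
Your argument is correct. The paper does not give a proof at all: it simply cites \cite[p.\ 260]{borwein2012polynomials} for the case $\delta=1$ and notes that general $\delta$ follows by rescaling. Your proposal supplies a complete self-contained proof in place of that citation.

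The route you take---Bernstein--Walsh to bound $|h|$ on circles centered at $x_0$, then Cauchy's estimate with the radius $r=ms/q$ chosen to balance $r^{-m}$ against $e^{qr/s}$, and finally Stirling to absorb the resulting $m!\,e^m/m^m\le e\sqrt m$ into the slack between $q$ and $2q$---is a standard and clean approach to higher-order Bernstein inequalities. The key computation $|z-x_0|^2=(\cos\theta-x_0\cosh t)^2+s^2\sinh^2 t$ on $E_\rho$ is correct (expand $|\cosh t\cos\theta+i\sinh t\sin\theta-x_0|^2$, use $\cosh^2 t-\sinh^2 t=1$, and complete the square), and the separate treatment of $m=1$ is genuinely needed since $e\sqrt m\le 2^m$ fails there. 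The only thing your sketch assumes is the classical $m=1$ Bernstein inequality and the Bernstein--Walsh bound, both of which are in the same reference the paper cites; so nothing circular is happening.
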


\begin{proof}
The statement is given for $\delta=1$ in 
\cite[p.\ 260]{borwein2012polynomials}, and follows for arbitrary 
$\delta>0$ by a straightforward scaling argument.
\end{proof}

The second property is captured by the following extrapolation lemma.

\begin{lem}
\label{lem:magextrapolation}
For any $h\in \calP_q$ and $K>0$, we have
$$
	|h(x)|   \leq 
	\left(\frac{2|x|}{K}\right)^q  \sobnorm{h}{0}{[-K, K]}
	\quad \text{for all } x\in \bR \backslash [-K, K].
$$
\end{lem}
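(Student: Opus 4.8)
The plan is to deduce this extrapolation bound from the classical extremal growth property of Chebyshev polynomials. Let $T_q$ denote the Chebyshev polynomial of the first kind of degree $q$. First I would reduce to the case $K=1$: replacing $h(x)$ by $h(Kx)$ preserves the degree and, after relabelling $x/K$ as $x$, turns the desired inequality into $|h(x)|\le (2|x|)^q\,\sobnorm{h}{0}{[-1,1]}$ for $|x|>1$. We may also assume $\sobnorm{h}{0}{[-1,1]}\ne 0$ (the claim being vacuous otherwise) and rescale so that $\sobnorm{h}{0}{[-1,1]}=1$.

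The key input is the Chebyshev inequality (see \cite{borwein2012polynomials}): every $h\in\calP_q$ with $\sobnorm{h}{0}{[-1,1]}\le 1$ satisfies $|h(x)|\le |T_q(x)|$ for all $|x|\ge 1$. (If $\deg h=m<q$ this follows from the same statement applied with $T_m$, together with the elementary monotonicity $|T_m(x)|\le |T_q(x)|$ for $|x|\ge 1$ and $m\le q$.) It then remains to estimate $|T_q(x)|$ on $|x|\ge 1$, which is elementary: since $|T_q(x)| = \cosh\!\big(q\,\mathrm{arccosh}|x|\big)\le e^{q\,\mathrm{arccosh}|x|} = \big(|x|+\sqrt{x^2-1}\big)^q \le (2|x|)^q$, the claim follows upon combining this with the reductions of the previous paragraph.

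There is no real obstacle here: the only substantive ingredient is the Chebyshev extremal property, which is entirely classical and available in \cite{borwein2012polynomials}, and everything else is the routine scaling normalization together with the above one-line bound on $|T_q|$. The single point requiring a modicum of care is the degree bookkeeping when $\deg h<q$, handled above by monotonicity of $|T_m|$ in $m$ outside $[-1,1]$.
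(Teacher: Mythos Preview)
Your proof is correct. The paper itself does not give a proof but simply cites \cite[p.~247]{borwein2012polynomials}; what you have written is precisely the standard argument one finds there (reduce by scaling to $K=1$, invoke the Chebyshev extremal growth property $|h(x)|\le |T_q(x)|$ for $|x|\ge 1$, and bound $|T_q(x)|\le (2|x|)^q$), so the approaches coincide.
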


The proof can be found in \cite[p.\ 247]{borwein2012polynomials}.

\subsection{Some analytic tools}

\subsubsection{Chebyshev expansions}
\label{sec:chebexp}

Let $h\in \calP_q$ and fix $K>0$. Then we can express
\begin{equation}
    \label{eq:chebyshevexpansion}
    h(x) = \sum_{j=0}^q a_j T_j(K^{-1}x)
\end{equation}
for some real coefficients $a_j$, where $T_j$ denotes the Chebyshev 
polynomial of the first kind of degree $j$ defined by 
$T_j(\cos\theta)=\cos(j\theta)$.
The following is classical.

\begin{lem} 
\label{lem:zygmund}
Let $h$ be as in \eqref{eq:chebyshevexpansion} and define
$f(\theta):= h(K \cos(\theta))$. Then
$$
	|a_0| \leq \sobnorm{h}{0}{[-K, K]},
$$
and for every $m\in \bZ_+$
$$
	\sum_{j=1}^q j^m  |a_j| \les \sobnorm{f^{(m+1)}}{0}{[0, 2\pi]}.
$$
\end{lem}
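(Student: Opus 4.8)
The plan is to translate everything into Fourier series on the circle. Writing $f(\theta) = h(K\cos\theta) = \sum_{j=0}^q a_j T_j(\cos\theta) = \sum_{j=0}^q a_j \cos(j\theta)$, we see that the Chebyshev coefficients $a_j$ are (up to the usual factor of $2$) exactly the Fourier cosine coefficients of the even, $2\pi$-periodic trigonometric polynomial $f$. The first bound is immediate: $a_0 = \frac{1}{2\pi}\int_0^{2\pi} f(\theta)\,d\theta$, so $|a_0| \le \sup_\theta |f(\theta)| = \sobnorm{h}{0}{[-K,K]}$, since $\cos$ maps $[0,2\pi]$ onto $[-1,1]$.

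For the second bound, the key step is integration by parts $m+1$ times: for $j \ge 1$,
$$
	a_j = \frac{1}{\pi}\int_0^{2\pi} f(\theta)\cos(j\theta)\,d\theta
	= \pm\frac{1}{\pi j^{m+1}}\int_0^{2\pi} f^{(m+1)}(\theta)\,
	\{\cos\text{ or }\sin\}(j\theta)\,d\theta,
$$
where the boundary terms vanish by periodicity of $f$ and its derivatives. Hence $j^{m+1}|a_j|$ is bounded by a constant times the $j$-th Fourier coefficient (in absolute value) of $f^{(m+1)}$. So $j^m |a_j| \lesssim \frac{1}{j}|\widehat{f^{(m+1)}}(j)|$, and summing over $1 \le j \le q$ and applying Cauchy–Schwarz gives
$$
	\sum_{j=1}^q j^m|a_j| \lesssim
	\Bigg(\sum_{j=1}^q \frac{1}{j^2}\Bigg)^{1/2}
	\Bigg(\sum_{j=1}^q |\widehat{f^{(m+1)}}(j)|^2\Bigg)^{1/2}
	\lesssim \|f^{(m+1)}\|_{L^2[0,2\pi]}
	\lesssim \sobnorm{f^{(m+1)}}{0}{[0,2\pi]},
$$
using $\sum 1/j^2 \le \pi^2/6$ and Parseval/Bessel for the middle factor. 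This is precisely the claimed inequality (the implied constant is absolute, independent of $m$ and $q$).

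The only point requiring a little care is the passage $T_j(\cos\theta) = \cos(j\theta)$ combined with the normalization of Fourier coefficients — a factor of $2$ for $j\ge 1$ versus $j = 0$ — but this is exactly why the statement isolates $a_0$ separately and is harmless in the $\lesssim$ bound. I do not anticipate a genuine obstacle here: the lemma is classical (a form of the Bernstein–Zygmund estimates for trigonometric polynomials), and the argument above is the standard one. If one prefers to avoid Cauchy–Schwarz, an alternative is to bound $|\widehat{f^{(m+1)}}(j)| \le \|f^{(m+1)}\|_{L^1}/\pi \le 2\|f^{(m+1)}\|_{C^0}$ directly, but then one loses the summability in $j$ and must instead integrate by parts $m+2$ times to gain an extra power of $j$; either route works, and the Cauchy–Schwarz version is cleanest.
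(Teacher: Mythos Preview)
Your proof is correct and follows essentially the same approach as the paper's: both identify the $a_j$ as Fourier coefficients of $f$, handle $a_0$ by the integral formula, and for the second bound apply Cauchy--Schwarz together with Parseval to $\sum_j j^{2(m+1)}|a_j|^2 \lesssim \|f^{(m+1)}\|_{L^2}^2$, then bound $L^2$ by $C^0$. Your integration-by-parts step is just the explicit justification of the Parseval identity the paper invokes directly.
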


\begin{proof}
Note that $a_j$ in \eqref{eq:chebyshevexpansion} are the Fourier 
coefficients of $f$. Thus the first inequality 
follows from $a_0 = \frac{1}{2\pi}\int_0^{2\pi} f(\theta)\,d\theta$.
The second inequality follows as
$$
	\sum_{j=1}^q j^m |a_j| \le
	\Bigg(\sum_{j=1}^q \frac{1}{j^2}\Bigg)^{1/2}
	\Bigg(\sum_{j=1}^q  j^{2(m+1)} |a_j|^2\Bigg)^{1/2} 
	\lesssim
	\|f^{(m+1)}\|_{L^2[0,2\pi]}
$$
by Cauchy-Schwarz and Parseval, and using that
$\|g\|_{L^2[0,2\pi]}\le \sqrt{2\pi}\|g\|_{[0,2\pi]}$.
\end{proof}

As the bounds of Lemma \ref{lem:zygmund} are independent of $q$, it 
follows that every function $h:[-K,K]\to\mathbb{R}$ with 
$\|h^{(m+1)}\|_{[-K,K]}<\infty$ has a uniformly convergent Chebyshev 
expansion \eqref{eq:chebyshevexpansion} (with $q=\infty$). Thus Lemma 
\ref{lem:zygmund} extends to any such function $h$.

\subsubsection{Taylor expansions}

While Chebyshev expansions are useful for the analysis of smooth 
functions, Taylor expansions are often more convenient for the analysis of 
analytic functions due to the following standard estimate.

\begin{lem}
\label{lem:taylorcoeff}
Let $f:\mathbb{C}\to\mathbb{C}$ be holomorphic in a neighborhood of 
$\{z\in\mathbb{C}:|z|\le r\}$. Then $f(z)=\sum_{k=0}^\infty a_k 
z^k$ is absolutely convergent for $|z|<r$ with
$$
	|a_k| \le r^{-k} \max_{|z|=r} |f(z)|.
$$
\end{lem}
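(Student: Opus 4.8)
The statement is the classical Cauchy estimate for Taylor coefficients of a holomorphic function, so the plan is to invoke Cauchy's integral formula on a circle of radius $r$. First I would recall that since $f$ is holomorphic in a neighborhood of the closed disc $\{|z|\le r\}$, its Taylor series $f(z)=\sum_{k=0}^\infty a_k z^k$ about the origin has radius of convergence strictly greater than $r$, hence converges absolutely for $|z|\le r$ (in particular for $|z|<r$, which is all that is claimed). The coefficients are given by $a_k = \frac{f^{(k)}(0)}{k!}$, but more usefully by the Cauchy integral representation
$$
	a_k = \frac{1}{2\pi i}\oint_{|z|=r}\frac{f(z)}{z^{k+1}}\,dz.
$$

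From here the bound is immediate by the standard $ML$-estimate for contour integrals: parametrizing $z=re^{i\theta}$, one has $|dz| = r\,d\theta$ and $|z^{k+1}| = r^{k+1}$, so
$$
	|a_k| \le \frac{1}{2\pi}\int_0^{2\pi}\frac{|f(re^{i\theta})|}{r^{k+1}}\,r\,d\theta
	\le \frac{1}{2\pi}\cdot\frac{1}{r^k}\cdot 2\pi\,\max_{|z|=r}|f(z)|
	= r^{-k}\max_{|z|=r}|f(z)|.
$$
This is exactly the asserted inequality, and the absolute convergence for $|z|<r$ then follows since $|a_k z^k|\le (|z|/r)^k\max_{|w|=r}|f(w)|$ is dominated by a convergent geometric series.

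There is no real obstacle here; the only point requiring a word of care is the passage from "holomorphic in a neighborhood of the closed disc" to "the Taylor series converges on the closed disc," which is standard (choose $r'>r$ with $f$ holomorphic on $\{|z|<r'\}$ and apply the elementary theory of power series). Alternatively, one could sidestep convergence on the boundary entirely by fixing any $\rho<r$, running the Cauchy estimate on $\{|z|=\rho\}$, and then letting $\rho\uparrow r$ using continuity of $\max_{|z|=\rho}|f(z)|$ in $\rho$ (which holds as $f$ is continuous on the compact closed disc); this avoids even mentioning the boundary behavior. Either route gives the lemma in a few lines, so I would simply cite Cauchy's estimate and include the short computation above for completeness.
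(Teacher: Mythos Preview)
Your proof is correct and follows exactly the same approach as the paper: both invoke the Cauchy integral formula $a_k = \frac{1}{2\pi i}\oint_{|y|=r} f(y)\,y^{-(k+1)}\,dy$ and bound the integrand directly. You supply a bit more detail on the absolute convergence, but the core argument is identical.
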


\begin{proof}
The conclusion follows readily by estimating the integrand in the 
Cauchy integral formula
$a_k = \frac{1}{k!}f^{(k)}(0) =
\frac{1}{2\pi i}\oint_{\{|y|=r\}} f(y)\,y^{-(k+1)}\,dy$.
\end{proof}

\subsubsection{Test functions}

The following nearly optimal construction of smooth test functions will be 
used repeatedly throughout this paper.

\begin{lem}
\label{lem:testfunctions}
Fix $m\in \bZ_+$  and $K, \rho, \varepsilon>0$ so that $\rho+\varepsilon < 
K$. Then there exists a function $\chi \in C^\infty(\bR)$ with the 
following properties.
\smallskip
\begin{enumerate}[1.]
\itemsep\abovedisplayskip
\item \label{item:testfunction} 
$\chi(x)=0$ for $|x|\leq \rho+\frac{\varepsilon}{2}$, and $\chi(x)=1$ for 
$|x|\geq \rho+\varepsilon$. 
\item \label{item:differentiability} Let $f(\theta):= \chi( K \cos 
\theta)$. Then for every $k\leq m$, we have
$$
	\sobnorm{f^{(k+1)}}{0}{[0, 2\pi]} \leq 
	8^{k+1} m^k \left( \frac{K}{\varepsilon} \right)^{k+1}.
$$
\end{enumerate}
\end{lem}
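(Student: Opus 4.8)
The plan is to construct $\chi$ as a smooth mollification of the indicator of $\{|x|\ge \rho+\frac{3\varepsilon}{4}\}$, with mollification scale comparable to $\varepsilon$, and then bound the derivatives of $f(\theta)=\chi(K\cos\theta)$ by the chain rule together with Fa\`a di Bruno's formula. First I would fix a fixed (universal) bump profile: choose $\phi\in C^\infty(\mathbb{R})$ with $0\le\phi\le1$, $\phi(t)=0$ for $t\le\frac14$, $\phi(t)=1$ for $t\ge\frac34$, and $\sup_t|\phi^{(j)}(t)|\le A^j j!$ (or some such bound with a universal $A$; a Gevrey-type bump works, or one can simply take the standard bump and absorb the combinatorial loss into the constant $8$). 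Then set
$$
\chi(x) := \phi\!\left(\frac{|x|-\rho-\tfrac{\varepsilon}{4}}{\varepsilon/2}\right),
$$
which is identically $0$ for $|x|\le\rho+\frac{\varepsilon}{4}$ and identically $1$ for $|x|\ge\rho+\frac{\varepsilon}{4}+\frac{3}{8}\varepsilon$; choosing the offsets so that the ``off'' region contains $|x|\le\rho+\frac\varepsilon2$ and the ``on'' region contains $|x|\ge\rho+\varepsilon$ gives property \ref{item:testfunction}. Note $\chi$ is smooth across $x=0$ because it vanishes identically near the origin (since $\rho>0$), so the use of $|x|$ causes no regularity problem.

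The substance is property \ref{item:differentiability}. Write $f(\theta)=\chi(K\cos\theta)$. On any interval where $\cos\theta$ has a definite sign, $|K\cos\theta| = \pm K\cos\theta$, so $f(\theta) = \phi\big(g(\theta)\big)$ where $g(\theta) = \frac{\pm K\cos\theta - \rho - \varepsilon/4}{\varepsilon/2}$ is a smooth function of $\theta$ whose derivatives satisfy $|g^{(j)}(\theta)|\le \frac{2K}{\varepsilon}$ for all $j\ge1$ (and this bound is uniform, since derivatives of $\cos$ are bounded by $1$). Near the points $\theta$ where $\cos\theta=0$, the function $|K\cos\theta|$ has a corner, but there $K\cos\theta$ is close to $0$, hence $\ge\rho+\varepsilon$ in absolute value is violated and in fact $|K\cos\theta|$ is small, so $\chi$ and all its derivatives vanish identically in a neighborhood of such $\theta$ (provided $\rho+\varepsilon<K$, which is the hypothesis — this is exactly where that assumption is used, as it guarantees the transition region $\{\rho+\varepsilon/4\le|x|\le\rho+\varepsilon\}$ stays away from $x=0$). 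Thus $f$ is genuinely $C^\infty$ on all of $[0,2\pi]$ and the chain-rule bound is valid globally.

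By Fa\`a di Bruno, for $k+1\le m+1$,
$$
f^{(k+1)}(\theta) = \sum \frac{(k+1)!}{\prod_i b_i!\,(i!)^{b_i}}\,\phi^{(|b|)}(g(\theta))\prod_i \big(g^{(i)}(\theta)\big)^{b_i},
$$
the sum over $(b_1,\dots,b_{k+1})$ with $\sum i\,b_i = k+1$ and $|b|=\sum b_i$. Using $|\phi^{(|b|)}|\le A^{|b|}|b|!$ and $|g^{(i)}|\le 2K/\varepsilon$ (so $\prod_i|g^{(i)}|^{b_i}\le (2K/\varepsilon)^{|b|}\le (2K/\varepsilon)^{k+1}$ since $|b|\le k+1$ and $2K/\varepsilon\ge1$ — the latter because $\varepsilon<K$), every term is bounded by $A^{k+1}(2K/\varepsilon)^{k+1}$ times a Bell-number-type combinatorial factor, and the number of terms times that factor is at most $(k+1)^{k+1}\le m^{k+1}$ or so. Collecting constants and being generous (replacing $A$, the $2$, and the combinatorial overhead by a single constant) yields $\|f^{(k+1)}\|_{[0,2\pi]}\le 8^{k+1}m^k(K/\varepsilon)^{k+1}$; note the exponent on $m$ is only $k$ rather than $k+1$ because one power of $(k+1)\le m$ can be traded against the slack in the constant, or because $\phi$ can be chosen with $|\phi^{(j)}|\le C\cdot B^j (j!)$ and the $k+1$ in $(k+1)!/\text{stuff}$ combinatorics is already accounted for. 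The only mildly delicate point is verifying that $f$ has no hidden loss of smoothness at the corners of $|K\cos\theta|$; this is handled, as explained, by the hypothesis $\rho+\varepsilon<K$, which confines all the ``action'' of $\chi$ to $\theta$ bounded away from $\pm\pi/2,\pm3\pi/2$.
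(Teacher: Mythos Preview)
Your overall architecture---build $\chi$ from a one-dimensional transition profile $\phi$ and then control $f^{(k+1)}$ by Fa\`a di Bruno, using that all derivatives of $\theta\mapsto K\cos\theta$ are bounded by $K$---is exactly the right shape, and your handling of the potential singularity of $|x|$ at $x=0$ is correct. But there is a genuine gap in the choice of $\phi$.

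You propose a \emph{fixed} $\phi\in C^\infty$ satisfying $|\phi^{(j)}(t)|\le A^{j}j!$ for all $j$, with a universal $A$. No such nonconstant transition function exists: a function obeying $\sup_t|\phi^{(j)}(t)|\le A^{j}j!$ for every $j$ is real-analytic (Gevrey class $1$), and an analytic function that is constant on an interval is constant everywhere. So the very profile you invoke cannot satisfy $\phi\equiv 0$ on $(-\infty,\tfrac14]$ and $\phi\equiv 1$ on $[\tfrac34,\infty)$. Your fallback---``take the standard bump and absorb the loss into the constant $8$''---does not rescue the argument either: any genuine $C^\infty$ bump is at best Gevrey-$s$ for some $s>1$, i.e.\ $|\phi^{(j)}|\le A^{j}(j!)^{s}$, and feeding this through Fa\`a di Bruno produces an extra factor of order $((k+1)!)^{s-1}$; for $k$ close to $m$ this is roughly $m^{(s-1)m}$, which cannot be absorbed into $8^{k+1}$.

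The point you are missing is that $\chi$ (hence $\phi$) must depend on $m$. The statement only asks for bounds on $f^{(k+1)}$ for $k\le m$, so it suffices that $\phi$ be $C^{m+1}$, not $C^\infty$. A standard choice is to take $\phi$ to be the antiderivative of an $(m{+}1)$-fold self-convolution of a normalized indicator (a B-spline), which yields $|\phi^{(j)}|\le (Cm)^{j}$ for $1\le j\le m{+}1$ on a unit-width transition. With this $m$-dependent profile your Fa\`a di Bruno computation goes through cleanly and gives a bound of the form $C^{k+1}m^{k}(K/\varepsilon)^{k+1}$, matching the lemma. This is the construction used in the reference the paper cites (\cite[Lemma~4.10]{chen2024new}); the paper's own proof simply appeals to that lemma rather than repeating it.
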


\begin{proof}
The result follows readily from the proof of \cite[Lemma 
4.10]{chen2024new}.
\end{proof}

\subsubsection{Distributions}
\label{sec:distributionsprelims} 

Throughout this paper, we only consider distributions on $\mathbb{R}$.
We adopt the following definition; see, e.g., \cite[\S 
2.2--2.3]{hormander2003}.

\begin{defn} 
\label{def:distribution}
A linear functional $\nu$ on $C^{\infty}(\bR)$ is called a \emph{compactly 
supported distribution} if there exist $C,K\geq 0$ and $m\in \bZ_+$ so 
that
$$
	|\nu(f)| \leq 
	C\max_{0\leq k \leq m} \sobnorm{f^{(k)}}{0}{[-K, K]}
	\quad\text{for all }f\in C^{\infty}(\bR).
$$
The \emph{support} $\supp\nu$ of a compactly
supported distribution $\nu$ is the smallest closed set 
$A\subseteq\mathbb{R}$ so that $\nu(f)=0$ for all $f\in C^{\infty}(\bR)$
that vanish in a neighborhood of $A$.
\end{defn}

The linear functionals that arise in this paper are defined \emph{a 
priori} only on the space $\mathcal{P}$ of real polynomials. The following 
criterion enables us to extend these functionals to compactly supported 
distributions, cf.\ \cite[Lemma 4.7]{chen2024new}.

\begin{lem}
\label{lem:extensiontoadistribution}
Let $\nu$ be a linear functional on $\calP$. If there exist
$C,K,m \geq 0$ so that
$$
	|\nu(h)| \leq C q^m \sobnorm{h}{0}{[-K, K]}
	\quad\text{for all } h \in \calP_q,~q\in\bN,
$$
then $\nu$ extends to a compactly supported distribution
with $|\nu(h)| \lesssim \|h\|_{C^{m+1}[-K,K]}$.
\end{lem}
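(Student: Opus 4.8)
The statement is a standard "polynomial growth bound $\Rightarrow$ extends to a compactly supported distribution with one extra derivative" lemma, and the natural route is through Chebyshev expansions, using the machinery of Lemma~\ref{lem:zygmund} that was set up precisely for this purpose. First I would fix $K>0$ and, for a polynomial $h\in\calP$, write its Chebyshev expansion $h(x)=\sum_{j=0}^q a_j T_j(K^{-1}x)$ as in \eqref{eq:chebyshevexpansion}. Since $\nu$ is linear on $\calP$, we have $\nu(h)=\sum_{j=0}^q a_j\,\nu(T_j(K^{-1}\cdot))$, so the key is to bound the numbers $b_j:=\nu(T_j(K^{-1}\cdot))$. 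Each $T_j(K^{-1}\cdot)$ is a polynomial of degree $j$ bounded by $1$ on $[-K,K]$, so the hypothesis gives $|b_j|\le C j^m$ (and $|b_0|\le C$). This is the one genuinely substantive input; everything else is soft.

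Having the bound $|b_j|\lesssim C(1+j)^m$, I would then define the extension: for $f\in C^\infty(\bR)$ with Chebyshev coefficients $a_j(f)$ on $[-K,K]$ (guaranteed to exist and decay rapidly by the discussion following Lemma~\ref{lem:zygmund}), set $\nu(f):=\sum_{j=0}^\infty a_j(f)\,b_j$. To see this converges and gives the claimed bound, apply Lemma~\ref{lem:zygmund} with the exponent $m$: writing $g(\theta):=f(K\cos\theta)$, we get $\sum_{j\ge 1} j^m|a_j(f)|\lesssim \|g^{(m+1)}\|_{[0,2\pi]}$ and $|a_0(f)|\le\|f\|_{[-K,K]}$. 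Since $g=f\circ(K\cos)$ is a composition of $f$ with a smooth function, the chain rule bounds $\|g^{(m+1)}\|_{[0,2\pi]}$ by a constant (depending on $K$ and $m$) times $\max_{0\le k\le m+1}\|f^{(k)}\|_{[-K,K]}=\|f\|_{C^{m+1}[-K,K]}$. Combining,
$$
|\nu(f)|\le |b_0|\,|a_0(f)|+\sum_{j\ge 1}|b_j|\,|a_j(f)|
\lesssim C\,\|f\|_{[-K,K]}+C\sum_{j\ge 1}j^m|a_j(f)|
\lesssim C\,\|f\|_{C^{m+1}[-K,K]}.
$$
This is exactly the estimate required by Definition~\ref{def:distribution} (with $m$ there replaced by $m+1$), so $\nu$ is a compactly supported distribution, and it also yields the quantitative bound $|\nu(h)|\lesssim\|h\|_{C^{m+1}[-K,K]}$ claimed in the statement. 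Finally I would check consistency: on a polynomial $h$ the Chebyshev series terminates, so the newly-defined $\nu(h)$ agrees with the original value $\sum_j a_j b_j$, i.e. the extension restricts to the given functional on $\calP$.

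**Main obstacle.** The only step requiring any care is controlling $\|g^{(m+1)}\|_{[0,2\pi]}$ for $g(\theta)=f(K\cos\theta)$ by $\|f\|_{C^{m+1}[-K,K]}$ — a Faà di Bruno / chain-rule estimate. It is entirely routine (the inner function $K\cos\theta$ has all derivatives bounded by $K$, and $\theta\mapsto K\cos\theta$ maps into $[-K,K]$ where the $C^{m+1}$ norm of $f$ lives), but one must be slightly attentive that the implied constant depends only on $m$ and $K$, not on $f$ or on the degree $q$ — the latter point being exactly why Lemma~\ref{lem:zygmund} was stated with a $q$-independent bound. One could also simply cite \cite[Lemma 4.7]{chen2024new}, which the excerpt flags as the source of this criterion, and present the above as the self-contained argument behind it.
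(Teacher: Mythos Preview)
Your proposal is correct and is precisely the standard argument underlying the cited \cite[Lemma 4.7]{chen2024new}; the paper does not give its own proof here but simply refers to that lemma. The Chebyshev-expansion route via Lemma~\ref{lem:zygmund} that you outline is exactly the intended mechanism, and your treatment of the Fa\`a di Bruno step is appropriately careful.
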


\subsection{Random matrices and asymptotic freeness} 
\label{sec:rmtandfree}

\subsubsection{Unitary and orthogonal invariant ensembles}

We begin by recalling the definitions of the standard complex and real 
Gaussian ensembles. We denote the real Gaussian distribution as 
$N(0,\sigma^2)$, and define the complex Gaussian distribution 
$N_{\mathbb{C}}(0,\sigma^2)$ as the distribution of $\xi_1+i\xi_2$ where 
$\xi_1,\xi_2$ are i.i.d.\ $N(0,\frac{\sigma^2}{2})$.

\begin{defn}
Let $X$ be an $N\times N$ self-adjoint random matrix with independent 
entries $(X_{ij})_{i\le j}$ on and above the diagonal.
\begin{enumerate}[a.]
\itemsep\smallskipamount
\item
$X$ is called a \emph{GUE matrix} if $X_{ij}\sim 
N_\mathbb{C}(0,\frac{1}{N})$ 
for $i\ne j$ and $X_{ii}\sim N(0,\frac{1}{N})$.
\item
$X$ is called a \emph{GOE matrix} if
$X_{ij}\sim N(0,\frac{1}{N})$ for $i\ne j$ and
$X_{ii}\sim N(0,\frac{2}{N})$.
\end{enumerate}
\end{defn}

The defining property of GUE and GOE models is that they are the Gaussian 
ensembles whose distributions are invariant under conjugation by unitary 
and orthogonal matrices, respectively. Aside from the Gaussian ensembles, 
we will develop parallel results for random unitary and orthogonal matrices 
drawn from the normalized Haar measure on $\mathrm{U}(N)$ and 
$\mathrm{O}(N)$, respectively.

\subsubsection{Symplectic invariant ensembles}

To define the symplectic analogues of the above ensembles,
we must first recall some basic facts.

We denote by $\bH$ the skew-field of quaternions. Recall that 
$z\in\bH$ is represented as 
$z=z_0\mathbf{1}+z_1\mathbf{i}+z_2\mathbf{j}+z_3\mathbf{k}$, where 
$z_i\in\bR$ and $\mathbf{i},\mathbf{j},\mathbf{k}$ satisfy the relations
$$
	\mathbf{i}^2=\mathbf{j}^2=\mathbf{k}^2=\mathbf{ijk}=-\mathbf{1}.
$$
The conjugate is $\bar z = 
z_0\mathbf{1}-z_1\mathbf{i}-z_2\mathbf{j}-z_3\mathbf{k}$, and the real 
part is $\mathrm{Re}\,z=z_0$.

For a quaternionic matrix $A\in\mathrm{M}_N(\bH)$, the 
adjoint $A^*$ is defined as the conjugate transpose as for complex 
matrices. We denote by
$$
	\mathrm{Sp}(N) := \{U\in \mathrm{M}_N(\bH):
	UU^*=U^*U=\id\}
$$
the group of $N\times N$ symplectic (i.e., quaternionic unitary) matrices. 

We define the quaternionic Gaussian distribution $N_\bH(0,\sigma^2)$ as 
the distribution of 
$\xi_0\mathbf{1}+\xi_1\mathbf{i}+\xi_2\mathbf{j}+\xi_3\mathbf{k}$ where 
$\xi_0,\ldots,\xi_3$ are i.i.d.\ $N(0,\frac{\sigma^2}{4})$. We can now 
recall the definition of the standard quaternionic Gaussian ensemble.

\begin{defn} 
An $N\times N$ self-adjoint random matrix with 
independent entries $(X_{ij})_{i\ge j}$ is
called a \emph{GSE matrix} if $X_{ij}\sim N_\bH(0,\frac{1}{N})$ for $i\ne 
j$ and $X_{ii}\sim N(0,\frac{1}{2N})$.
\end{defn}

The defining property of the GSE model is that it is the Gaussian ensemble 
whose distribution is invariant under conjugation by symplectic matrices. 
We will develop parallel results for random symplectic matrices drawn from 
the normalized Haar measure on the compact group $\mathrm{Sp}(N)$.

For the purposes of linear algebra, working directly with quaternions
is somewhat awkward; for example, we cannot apply
noncommutative polynomials with complex coefficients to them, as 
the quaternions form an algebra over the reals. Instead, we will identify 
$\mathbb{H}$ with the subring of $\mathrm{M}_2(\bC)$ generated by 
$$
	\mathbf{1} = \begin{bmatrix} 1 & 0\\ 0 & 1 \end{bmatrix},
	\qquad
	\mathbf{i} = \begin{bmatrix} i & 0\\ 0 & -i \end{bmatrix},
	\qquad
	\mathbf{j} = \begin{bmatrix} 0 & 1\\ -1 & 0 \end{bmatrix},
	\qquad
	\mathbf{k} = \begin{bmatrix} 0 & i \\ i & 0 \end{bmatrix}.
$$
In this manner, $\mathrm{M}_N(\mathbb{H})$ is naturally identified with a 
subring of $\mathrm{M}_{2N}(\bC)$. In this paper, we will always interpret 
linear algebra operations on $M\in\mathrm{M}_N(\mathbb{H})$ as being 
applied to the associated complex representations; for example, $\trace 
M$ will denote the trace of the $2N$-dimensional complex representation of 
$M$.

\subsubsection{Asymptotic freeness}

For the purposes of this paper, a $C^*$-probability space 
$(\mathcal{A},\tau)$ is defined by a
unital $C^*$-algebra $\mathcal{A}$ and a faithful trace $\tau$.

\begin{defn}
Let $(\mathcal{A},\tau)$ be a $C^*$-probability space.
\begin{enumerate}[a.]
\itemsep\smallskipamount
\item $s_1,\ldots,s_r\in\mathcal{A}$ form a \emph{free semicircular 
family} if the spectral distribution of each $s_i$ is the standard 
semicircle distribution, and $s_1,\ldots,s_r$ are freely independent.
\item $u_1,\ldots,u_r\in\mathcal{A}$ are \emph{free Haar unitaries} if
the spectral distribution of each $u_i$ is uniformly distributed on the 
unit circle, and $u_1,\ldots,u_r$ are freely independent.
\end{enumerate}
\end{defn}

We do not recall here the definition of free independence, which is 
discussed in detail in the excellent text \cite{NS06}. The significance of 
the above definition is that it provides a limiting model as $N\to\infty$ 
for many random matrix models.

\begin{lem}[Weak asymptotic freeness]
\label{lem:waf}
Let $\boldsymbol{G}^N=(G_1^N,\ldots,G_r^N)$ be i.i.d.\ GUE/GOE/GSE random 
matrices of dimension $N$, and let $\boldsymbol{s}=(s_1,\ldots,s_r)$ be a 
free semicircular family. Then
$$
	\lim_{N\to\infty}
	\mathbf{E}[\tr P(\boldsymbol{G}^N)] =
	(\tr\otimes\tau)[P(\boldsymbol{s})]
$$
for every noncommutative polynomial
$P\in\mathrm{M}_D(\bC)\otimes\bC\langle\boldsymbol{s}\rangle$.

Similarly, let $\boldsymbol{U}^N=(U_1^N,\ldots,U_r^N)$ be i.i.d.\ 
Haar-distributed random matrices in 
$\mathrm{U}(N)/\mathrm{O}(N)/\mathrm{Sp}(N)$, and let 
$\boldsymbol{u}=(u_1,\ldots,u_r)$ be free Haar unitaries. Then
$$
	\lim_{N\to\infty}
	\mathbf{E}[\tr P(\boldsymbol{U}^N,\boldsymbol{U}^{N*})] =
	(\tr\otimes\tau)[P(\boldsymbol{u},\boldsymbol{u}^*)]
$$
for every noncommutative polynomial  
$P\in\mathrm{M}_D(\bC)\otimes\bC\langle\boldsymbol{u},\boldsymbol{u}^*\rangle$.
\end{lem}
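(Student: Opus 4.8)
The plan is to prove the two statements (Gaussian ensembles and Haar-distributed ensembles) by the classical moment/genus-expansion method, reducing everything to the scalar-coefficient case, which is standard. First, I would observe that it suffices to prove the convergence $\mathbf{E}[\tr P(\boldsymbol{G}^N)]\to(\tr\otimes\tau)[P(\boldsymbol{s})]$ for scalar noncommutative monomials $P\in\bC\langle\boldsymbol{s}\rangle$, i.e.\ for words $w(G_{i_1}^N,\ldots)=G_{i_1}^N G_{i_2}^N\cdots G_{i_k}^N$. Indeed, a general $P\in\mathrm{M}_D(\bC)\otimes\bC\langle\boldsymbol{s}\rangle$ is a finite linear combination $\sum_\alpha A_\alpha\otimes w_\alpha$ with $A_\alpha\in\mathrm{M}_D(\bC)$, and since $\tr$ on $\mathrm{M}_D(\bC)\otimes\mathrm{M}_N(\bC)$ factors as $\tr_D\otimes\tr_N$, we get $\mathbf{E}[\tr P(\boldsymbol{G}^N)]=\sum_\alpha \tr_D(A_\alpha)\,\mathbf{E}[\tr_N w_\alpha(\boldsymbol{G}^N)]$, and similarly on the free side; so convergence for each word $w_\alpha$ gives convergence for $P$. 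The same reduction applies verbatim to the Haar case with $w_\alpha$ a word in $\boldsymbol{u},\boldsymbol{u}^*$.

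For the Gaussian case, I would then invoke (or reprove via Wick's formula / genus expansion) the classical computation: for a word $w$ of length $k$ in the independent GUE/GOE/GSE matrices $G_1^N,\ldots,G_r^N$, $\mathbf{E}[\tr_N w(\boldsymbol{G}^N)]$ equals a sum over pairings of the positions carrying each fixed index $i$, weighted by $N^{-(\text{genus-type quantity})}$, and the leading $N^0$ term is exactly the count of noncrossing pair partitions respecting the indices — which is precisely $(\tr\otimes\tau)[w(\boldsymbol{s})]=\tau(w(\boldsymbol{s}))$, the mixed moment of a free semicircular family (see \cite{NS06}). For GOE/GSE the pairing structure differs (one also pairs an entry with itself or with the transpose), but the extra terms are all of order $N^{-1}$ or smaller, so the $N\to\infty$ limit is unchanged. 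This is where I would cite the standard references rather than grind through Wick's theorem. For the Haar case, the analogous statement is the Weingarten-calculus computation of $\mathbf{E}[\tr_N w(\boldsymbol{U}^N,\boldsymbol{U}^{N*})]$: the asymptotics of the Weingarten function $\mathrm{Wg}$ (which the paper's macro $\wg$ anticipates) show that the leading term counts noncrossing structures matching each $U_i^N$ with a $U_i^{N*}$, which is the mixed moment of free Haar unitaries; words that are not "balanced" in each letter have vanishing expectation on both sides. Again this is classical and I would cite it.

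The only genuine subtlety — and the main thing worth spelling out — is the quaternionic/GSE and $\mathrm{Sp}(N)$ bookkeeping, because of the $2N$-dimensional complex representation convention fixed earlier in the excerpt: one must check that with $\trace M$ interpreted as the trace of the $2N\times 2N$ complex matrix and the normalization $\tr M:=\frac{1}{N}\trace M$ (note: $\frac{1}{N}$, not $\frac{1}{2N}$, per the variance choices $X_{ii}\sim N(0,\frac{1}{2N})$ and $X_{ij}\sim N_\bH(0,\frac{1}{N})$), the limiting normalized trace of a word in GSE matrices is still the semicircular mixed moment and not twice it. This is a routine but care-demanding check that the factor of $2$ from the complex representation is exactly cancelled by the normalization; I would state it and verify it on a single word (e.g.\ $\mathbf{E}[\tr (G^N)^2]\to 1$) and then note the general case follows from the same genus expansion. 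Everything else is standard, so I expect no real obstacle beyond this normalization/representation housekeeping. (Remark: Lemma~\ref{lem:waf} is only a \emph{weak} convergence statement about expected traces, not a strong convergence statement about norms, so no concentration or polynomial-method input is needed here.)
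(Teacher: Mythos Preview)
Your approach is correct and is essentially the standard one; the paper itself does not give a proof but simply cites Voiculescu \cite{voiculescu1991limit}, noting that the symplectic case is analogous. Your outline (reduction to scalar words by linearity of trace, then genus expansion / Weingarten asymptotics) is precisely how that classical result is proved.

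One small correction on the normalization bookkeeping you flag: under the paper's conventions, a GSE matrix of dimension $N$ is identified with an element of $\mathrm{M}_{2N}(\bC)$, and the notation section defines $\tr M := \frac{1}{N}\trace M$ for $M\in\mathrm{M}_N(\bC)$; so for the $2N$-dimensional complex representation one has $\tr = \frac{1}{2N}\trace$, not $\frac{1}{N}\trace$ as you wrote. Your instinct that this factor must be checked against the GSE variance normalization (diagonal variance $\frac{1}{2N}$) is exactly right, and the check $\mathbf{E}[\tr (G^N)^2]\to 1$ you propose is the correct sanity test --- you just have the factor on the wrong side of the ledger.
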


Lemma \ref{lem:waf} is a special case of a celebrated result of Voiculescu 
\cite{voiculescu1991limit} (Voiculescu does not consider the symplectic 
ensembles, but the proofs are entirely analogous).

\section{Polynomial interpolation from $\frac{1}{N}$ samples}
\label{sec:optimaluniformbounds}

It is classical \cite{CR92} that for $h\in\mathcal{P}_q$, we have 
$\|h\|_{[0,\delta]}\lesssim \max_{x\in I}|h(x)|$ for any set 
$I\subseteq[0,\delta]$ with spacing at most $O(\frac{\delta}{q^2})$ 
between its points; this is optimal for general sets $I$. When applied to 
the set $I_M :=\{\frac{1}{N}:N\ge M\}$ that arises in random matrix 
problems, this enables us to bound $\|h\|_{[0,\delta]}$ only for 
$\delta=O(\frac{1}{q^2})$. The aim of this section is to prove that a much 
better bound can be achieved in this case.

\begin{prop}[Interpolation from $\frac{1}{N}$ samples]
\label{prop:uniform_bound_1/N}
We have
$$
        \sobnorm{h}{0}{[0, \delta]} \le C
	\sup_{\frac{1}{N}\le 2\delta} |h(\tfrac{1}{N})|
$$    
for every $q\in\mathbb{N}$, $h\in \calP_q$, and $0 \le \delta \le 
\frac{1}{24q}$, where $C$ is a universal constant.
\end{prop}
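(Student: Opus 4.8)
The plan is to exploit the fact that the sample set $I = \{\frac{1}{N} : \frac{1}{N}\le 2\delta\}$ becomes increasingly dense near $0$, so that the classical $O(\frac{\delta}{q^2})$ spacing requirement for polynomial interpolation can be relaxed to $O(\frac{\delta}{q})$ in a multiscale fashion. Concretely, I would invoke the result of Rakhmanov \cite{rakhmanov2007bounds} alluded to in section \ref{sec:introrakh}: a real polynomial of degree $q$ is controlled uniformly on a subinterval strictly in the interior of $[0,2\delta]$ by its values on $O(\frac{\delta}{q})$ uniformly spaced points lying in a slightly larger interval. The key point is that the spacing requirement for interpolation on the \emph{full} interval $[0,\delta']$ is $O(\frac{\delta'}{q^2})$, but for interpolation on a dyadic subinterval $[2^{-j-1}\cdot 2\delta,\, 2^{-j}\cdot 2\delta]$ of length $\sim 2^{-j}\delta$, one only needs spacing $O(\frac{2^{-j}\delta}{q^2})$ there — and since the points $\frac{1}{N}$ in that dyadic band are spaced roughly $\frac{1}{N^2}\sim \frac{(2^{-j}\delta)^2}{1}$... wait, more carefully: in the band where $\frac{1}{N}\sim 2^{-j}\delta$, consecutive samples $\frac{1}{N}$ and $\frac{1}{N+1}$ differ by $\frac{1}{N(N+1)}\sim (2^{-j}\delta)^2$, which is much smaller than the band width $2^{-j}\delta$ itself whenever $2^{-j}\delta \ll 1$. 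So on each sufficiently fine dyadic band the samples have relative spacing $\to 0$, giving uniform control there; one then only needs $O(\log q)$-many such bands before reaching a band of width $\sim \frac{1}{q}$, and the condition $\delta\le\frac{1}{24q}$ ensures the \emph{largest} sample $\frac{1}{N_{\min}}$ is itself already at the scale $\sim\delta\lesssim\frac{1}{q}$.

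The steps I would carry out, in order: (1) Reduce to controlling $\|h\|_{[0,\delta]}$ on the single interval $[0,\delta]$, noting $\delta\le 2\delta$ so all relevant samples lie in range. (2) Set up a dyadic decomposition $[0,\delta]=\{0\}\cup\bigcup_{j\ge 0}[2^{-j-1}\delta, 2^{-j}\delta]$; on each dyadic piece $[a,2a]$ with $a = 2^{-j-1}\delta$, observe that the samples $\frac{1}{N}$ falling into a fixed enlargement $[\frac{a}{2}, 4a]$ are spaced at most $\sim a^2 \le a\delta \le \frac{a}{24q} \ll \frac{a}{q^2}$ apart once we check $a\le\delta\le\frac{1}{24q}$ forces $a^2/a = a \le \frac{1}{24q}\le \frac{1}{q^2}$ for $q\ge 24$ (and handle small $q$ trivially) — hence by the classical interpolation bound on $[\frac{a}{2},4a]$ (rescaled), $\|h\|_{[a,2a]}\lesssim \max_{x\in I\cap[\frac{a}{2},4a]}|h(x)|$. (3) Take the supremum over $j$: since the right-hand side is always bounded by $\sup_{x\in I}|h(x)|$, and the left-hand sides cover all of $(0,\delta]$, we get $\|h\|_{(0,\delta]}\lesssim \sup_{x\in I}|h(x)|$; the value at $0$ follows by continuity. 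The role of the Rakhmanov-type input, as opposed to a naive application of \cite{CR92} band-by-band, is to make the \emph{constant} uniform in $j$ — i.e., to ensure the implied constant does not degrade as the bands shrink, which is exactly the content of a multiscale/Remez-type estimate rather than a single-scale one.

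The main obstacle I anticipate is step (2): verifying that the sample spacing within each dyadic band is genuinely below the classical interpolation threshold \emph{for that band}, uniformly over all bands, and packaging this so that the interpolation constant $C$ does not accumulate over the $O(\log q)$ bands. A single application of the $O(\frac{\delta}{q^2})$-spacing lemma on a band of width $w$ costs a fixed multiplicative constant; chaining $O(\log q)$ bands naively would give $C^{O(\log q)} = q^{O(1)}$, which is \emph{too weak} — it would reintroduce the polynomial-in-$q$ loss the proposition is designed to avoid. So the real work is to apply Rakhmanov's bound in a genuinely global multiscale form (his theorem controls $\|h\|$ on an interior subinterval in one stroke with an absolute constant), rather than iterating a single-scale bound. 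I expect the proof to set up the sample set $I$, verify Rakhmanov's hypotheses (uniform spacing $O(\frac{\delta}{q})$ on each relevant scale, with the interior condition guaranteed by $\delta\le\frac{1}{24q}$ keeping us away from the boundary relative to the polynomial degree), and then read off the conclusion with the absolute constant $C$ directly from his theorem.
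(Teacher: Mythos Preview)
Your intuition is right---multiscale plus Rakhmanov---but the proposal has a genuine gap at the key step, and the paper fills it with an idea you are missing.

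First, your per-band classical interpolation (step (2)) fails on the largest bands. You need spacing $\lesssim \frac{w}{q^2}$ on a band of width $w\sim a$; the $\frac{1}{N}$ samples there have spacing $\sim a^2$, so you need $a\lesssim \frac{1}{q^2}$. But $a$ ranges up to $\sim\delta\sim\frac{1}{q}$, not $\frac{1}{q^2}$. Your check ``$\frac{1}{24q}\le\frac{1}{q^2}$ for $q\ge 24$'' has the inequality backwards. (Separately, your $C^{O(\log q)}$ concern is a red herring: the bands are controlled in parallel, not chained, so the constants do not multiply.)

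Second, and more importantly, you cannot apply Rakhmanov directly either: his theorem (as stated and used in the paper, Theorem~\ref{thm:Rakhmanov}) requires \emph{equispaced} samples, and the $\frac{1}{N}$ points are not equispaced on any band. The paper's key idea is the change of variables $x\mapsto\frac{1}{2M(2+x)}$, which sends the $2M$ equispaced points $-1+\frac{2k-1}{2M}$ in $[-1,1]$ exactly to the samples $\frac{1}{2M+2k-1}$. This converts the problem to bounding the rational function $r(x)=h\big(\frac{1}{2M(2+x)}\big)$ on $[-\frac{1}{2},\frac{1}{2}]$ from its values at equispaced points. Since $r$ is not a polynomial, Rakhmanov still does not apply directly; the paper closes the gap with Lemma~\ref{lem:polyapprox}, which shows $r$ is sandwiched (up to absolute constants) by a polynomial $g\in\mathcal{P}_{8q}$, to which Rakhmanov now applies with $M\ge 8q$. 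This yields $\|h\|_{[\frac{1}{5M},\frac{1}{3M}]}\le C\max_k|h(\frac{1}{2M+2k-1})|$ for each $M\ge 8q$, and the intervals $[\frac{1}{5M},\frac{1}{3M}]$ for $M\ge m=\lfloor\frac{1}{3\delta}\rfloor$ cover $(0,\delta]$ with no gaps. The change of variables and the polynomial approximation of $\frac{1}{(2+x)^q}$ are the ideas your proposal is missing.
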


This is optimal up to the values of the constants.

\begin{example}[Optimality]
Let $h_q(x)=T_q(qx)\prod_{j=1}^q (1-jx)$, where $T_q$ is
the Chebyshev polynomial of the first kind of degree $q$. Then 
$h_q\in\mathcal{P}_{2q}$ and
$|h_q(\frac{1}{N})|\le 1$ for all $N\ge 1$.
But for $x=\frac{1}{m+1/2}$ with
$m\in\{1,\ldots,q-1\}$, we can estimate
$$
	|h_q(x)| = 
	\frac{\big|T_q\big(\frac{q}{m+1/2}\big)\big|}{(4m+2)^q}
	\frac{(2q)!}{q!} \frac{{q\choose m}}{{2q\choose 2m}}
	\ge
	\bigg(\frac{Cq}{m}\bigg)^q
$$
for a universal constant $C$, where we used $|T_q(x)|\ge\frac{1}{2}x^q$ 
for $x\ge 1$. Thus there is a universal constant $c$ so that 
$\|h_q\|_{[0,\frac{c}{q}]} \ge 2^q$ for all
$q$. This shows that the conclusion of Proposition 
\ref{prop:uniform_bound_1/N} must fail if the assumption 
$\delta\le\frac{1}{24q}$ is replaced by $\delta\le\frac{2c}{q}$.
\end{example}

Proposition \ref{prop:uniform_bound_1/N} is based on a powerful result of 
Rakhmanov \cite{rakhmanov2007bounds}: if $h\in\mathcal{P}_q$ is bounded at 
equispaced points $I$ in the interval $[-1,1]$, then $O(\frac{1}{q})$ 
spacing suffices to achieve a uniform bound on $h$ strictly in the 
interior of the interval, even though $O(\frac{1}{q^2})$ spacing is 
necessary for a uniform bound in the entire interval. The distinct 
behavior in the interior and near the edges of the interval is reminiscent 
of the distinction between the Bernstein and Markov inequalities.

\begin{thm}[Rakhmanov]
\label{thm:Rakhmanov}
Let $M, q\in \bN$ with $q \leq M$, and let $h\in \calP_q$.
Then
$$
	\sobnorm{h}{0}{[-\frac{1}{2}, \frac{1}{2}]} 
	\leq C \max_{k=1,\ldots,2M} 
	\left| h\left( -1 + \tfrac{2k-1}{2 M} \right) \right|,
$$
where $C$ is a universal constant.
\end{thm}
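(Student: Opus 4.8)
\textbf{Plan of proof (Theorem \ref{thm:Rakhmanov}).}
The plan is to reduce the equispaced-sampling bound on $[-\tfrac12,\tfrac12]$ to a discretized Bernstein/Remez-type inequality by the standard route: a polynomial bounded at sufficiently dense sample points is automatically bounded everywhere, \emph{provided} one can control how much it can grow between samples. The crucial quantitative input is that near the center of $[-1,1]$ the natural ``density'' at which equispaced points with spacing $1/M$ sit is comparable to the Chebyshev density, which is bounded below by a constant there; this is exactly what makes $O(1/q)$ spacing (rather than $O(1/q^2)$) sufficient in the interior. Concretely, I would first rescale so the sample points are $x_k = -1 + \tfrac{2k-1}{2M}$, $k=1,\dots,2M$, i.e.\ the midpoints of a uniform partition of $[-1,1]$ into $2M$ cells of width $1/M$, and set $A := \max_k |h(x_k)|$; the goal is $\|h\|_{[-1/2,1/2]} \le CA$.

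The key steps, in order: (1) Use a global a priori bound. Since $h\in\calP_q$ and $|h(x_k)|\le A$ at $2M \ge 2q$ points, a Lagrange-interpolation estimate (or a Chebyshev-alternation/Remez argument) gives a crude bound $\|h\|_{[-1,1]} \le B(M,q)\,A$ for an explicit, possibly large, factor $B(M,q)$ — this just prevents $h$ from being enormous and is not yet the desired conclusion. (2) Apply the Bernstein inequality (Lemma \ref{lem:polybernstein} with $\delta=1$) on a slightly larger interval, say $[-\tfrac34,\tfrac34]$, to get $\|h'\|_{[-3/4,3/4]} \lesssim q\,\|h\|_{[-1,1]} \le q\,B(M,q)\,A$. (3) The naive interpolation argument — ``$|h(x)| \le A + \tfrac{1}{2M}\|h'\|$'' — is too lossy because $q B(M,q)/M$ need not be small. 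Rakhmanov's insight is to \emph{iterate/bootstrap}: let $A_\ast := \|h\|_{[-1/2,1/2]}$; on the sub-interval one has the sharper Bernstein bound $|h'(x)| \le \dfrac{2q}{\sqrt{1-(2x/3)^2}}\,A_\ast' $ where $A_\ast'$ is the sup over the slightly larger interval, and feeding this back along a chain of nested intervals $[-r_j, r_j]$ with $r_j \uparrow$ controls $\|h\|_{[-1/2,1/2]}$ in terms of the sample values on cells that \emph{overlap} $[-1/2,1/2]$, using that in the center the cell width $1/M$ is $\le c/q$ times the local Bernstein scale $\sqrt{1-x^2}/q$. Carrying this telescoping estimate through, the large prefactor $B(M,q)$ gets damped at each stage and one is left with a \emph{geometric} series in a ratio $\le \tfrac12$ (say), yielding $\|h\|_{[-1/2,1/2]} \le CA$ with $C$ universal.

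The main obstacle is step (3): making the bootstrap quantitative with a \emph{universal} constant independent of $M$ and $q$. The difficulty is two-sided: one must (a) choose the nested intervals and the number of iterations so that the Bernstein gain per step beats the possible loss from the cell width, uniformly in how $M$ compares to $q$ (the borderline case $M \asymp q$ is the tight one), and (b) handle the transition region between $[-1/2,1/2]$ and the edges of $[-1,1]$ where the Chebyshev density degrades — here one can afford a Markov-type bound because one never needs pointwise control past $[-1/2,1/2]$, but the constants must be tracked carefully. An alternative, cleaner packaging of the same idea would be via the Remez inequality combined with the observation that the set where $|h| > CA$ is small (a union of short intervals, one near each sample cell), contradicting Remez if $C$ is large enough; this avoids explicit iteration but still requires the same density comparison in the interior. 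Either way, the heart of the matter is the interior density estimate, and everything else — rescaling, Lagrange bound, Bernstein on nested intervals — is routine.
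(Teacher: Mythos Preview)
The paper does not actually prove this theorem: its entire proof is the single sentence ``Apply \cite[eq.\ (1.6)]{rakhmanov2007bounds} with $\delta=1$ and $N=2M$.'' So you are attempting something the authors deliberately outsourced to Rakhmanov's original paper, and your proposal should be read as a sketch of a proof of Rakhmanov's result itself rather than as a reconstruction of the paper's argument.

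As such a sketch, your bootstrap in step~(3) has a real gap in the critical regime $M\asymp q$. Write $I_j=[-r_j,r_j]$ with $r_0=\tfrac12$ and $r_n\le 1$, and set $A_j=\|h\|_{I_j}$. Bernstein on $I_{j+1}$ (rescaled) gives $|h'(x)|\le q\,A_{j+1}/\sqrt{r_{j+1}^2-r_j^2}$ for $x\in I_j$, hence
\[
A_j \;\le\; A \;+\; \frac{q}{M\sqrt{r_{j+1}^2-r_j^2}}\,A_{j+1}.
\]
For a contraction ratio $\rho\le\tfrac12$ you need $r_{j+1}^2-r_j^2\ge (2q/M)^2$, so with $M\asymp q$ each step costs $\Omega(1)$ in $r_j^2$ and you can afford only $n=O(1)$ iterations before hitting $r_n=1$. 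But the only available bound on $A_n=\|h\|_{[-1,1]}$ is the crude Lagrange/Lebesgue estimate $B(M,q)\,A$, which for equispaced nodes is genuinely exponential in $q$; killing this requires $\rho^n\le e^{-cq}$, i.e.\ $n\gtrsim q$, which is incompatible with $n=O(1)$. So the telescoping sum does \emph{not} close, and the claim that ``the large prefactor $B(M,q)$ gets damped at each stage'' is exactly the point that fails. Your Remez alternative is more promising in spirit---Rakhmanov's actual proof does rely on comparison/extremal-polynomial arguments with Chebyshev-type weights---but turning it into a complete proof requires substantially more than the density observation you describe; this is why the paper cites the result rather than reproving it.
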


\begin{proof}
Apply \cite[eq.\ (1.6)]{rakhmanov2007bounds}  with $\delta = 1$ 
and $N=2M$. 
\end{proof}

The setting of Proposition \ref{prop:uniform_bound_1/N} differs 
considerably from that of Rakhmanov's theorem: the $\frac{1}{N}$ samples 
are highly nonuniform in the interval $[0,\delta]$, while we aim to bound 
$h$ near the endpoint $0$ of the interval rather than strictly in its 
interior. However, the fact that the $\frac{1}{N}$ samples become 
increasingly dense near $0$ will enable us to apply Rakhmanov's theorem 
in a multiscale manner: we can cover $[0,\delta]$ by a sequence of 
intervals so that the $\frac{1}{N}$ samples in each interval are 
approximately uniform, and apply Theorem \ref{thm:Rakhmanov} to each 
interval.

The main difficulty in the proof is that the samples in each 
interval must be mapped to equispaced samples in order to apply Theorem 
\ref{thm:Rakhmanov}. Concretely, suppose we aim to bound
$h\in\mathcal{P}_q$ based on its values at the $2M$ sample points
$$
	\left\{\tfrac{1}{2M+2k-1}: k=1, \ldots, 2M \right\}.
$$
As we have
$$
	h\big(\tfrac{1}{2M+2k-1}\big) =
	r\big(-1 + \tfrac{2k-1}{2M}\big)
	\quad\text{with}\quad
	r(x) :=
	h\big(\tfrac{1}{2M(2+x)}\big),
$$
the problem is equivalent to bounding the rational function $r$ at 
equispaced points as in Theorem \ref{thm:Rakhmanov}. However $r$ is no 
longer a polynomial, so that Theorem \ref{thm:Rakhmanov} does not apply. 
To surmount this issue, we will use that $r$ can be approximated by a 
polynomial while only losing a constant factor in its magnitude and 
degree.

\begin{lem}
\label{lem:polyapprox}
Let $M,q\in\bN$, let $h\in\mathcal{P}_q$, and define $r(x) 
:= h\big(\tfrac{1}{2M(2+x)}\big)$. Then there exists a polynomial
$g\in\mathcal{P}_{8q}$ so that
$$
	\tfrac{4}{7}|g(x)| \le |r(x)| \le 4|g(x)|\quad\text{for all }x\in[-1,1].
$$
\end{lem}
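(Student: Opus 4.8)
The plan is to reduce this to a scalar approximation problem. We may assume $q\ge 1$ (the case $q=0$, where $r$ is constant, is trivial). Writing $h(y)=\sum_{k=0}^{q}c_{k}y^{k}$ with $c_{k}\in\bR$, we get
$$
r(x)=\sum_{k=0}^{q}\frac{c_{k}}{(2M)^{k}}\,(2+x)^{-k}=\frac{p(x)}{(2+x)^{q}},
\qquad
p(x):=\sum_{k=0}^{q}\frac{c_{k}}{(2M)^{k}}\,(2+x)^{q-k}\in\calP_{q}.
$$
Since $(2+x)^{q}>0$ on $[-1,1]$, there $|r(x)|=|p(x)|\,(2+x)^{-q}$, so it suffices to produce a real polynomial $w$ of degree at most $7q$ that is positive on $[-1,1]$ and satisfies $w(x)\le(2+x)^{-q}\le\tfrac{5}{4}w(x)$ for all $x\in[-1,1]$: then $g:=pw\in\calP_{8q}$ has $|g(x)|=|p(x)|\,w(x)$, hence $|g(x)|\le|r(x)|\le\tfrac{5}{4}|g(x)|$ on $[-1,1]$, which is stronger than the asserted inequalities since $\tfrac{4}{7}\le 1$ and $\tfrac{5}{4}\le 4$. (If $h\equiv 0$, take $g\equiv 0$.)

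To construct $w$ I would exploit the factorization
$$
(2+x)^{-q}=\Big(\tfrac{2-x}{4}\Big)^{q}\Big(1-\tfrac{x^{2}}{4}\Big)^{-q},
$$
which splits off an exact polynomial factor of degree $q$ and leaves the factor $(1-s)^{-q}$ at $s=x^{2}/4$. The key point is that $s\in[0,\tfrac14]$ stays a fixed distance from the singularity at $s=1$, so the binomial series $(1-s)^{-q}=\sum_{n\ge 0}\binom{q+n-1}{n}s^{n}$ — a sum of nonnegative terms — converges geometrically at such $s$ and truncates after $O(q)$ terms with only a constant-factor loss. Concretely, put $\Sigma(s):=\sum_{n=0}^{q+1}\binom{q+n-1}{n}s^{n}$. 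Then $\Sigma(s)\ge\Sigma(0)=1$ for $s\ge 0$, while the estimate $\binom{q+n-1}{n}\le 2^{q+n-1}$ together with $s\le\tfrac14$ gives
$$
0\le(1-s)^{-q}-\Sigma(s)=\sum_{n\ge q+2}\binom{q+n-1}{n}s^{n}\le 2^{q-1}\sum_{n\ge q+2}2^{-n}=\tfrac14\le\tfrac14\,\Sigma(s),
$$
so $\Sigma(s)\le(1-s)^{-q}\le\tfrac{5}{4}\Sigma(s)$ on $[0,\tfrac14]$. Setting $w(x):=\big(\tfrac{2-x}{4}\big)^{q}\Sigma\big(\tfrac{x^{2}}{4}\big)$ then yields a real polynomial of degree $q+2(q+1)=3q+2\le 7q$ that is positive on $[-1,1]$ and satisfies $w(x)\le(2+x)^{-q}\le\tfrac{5}{4}w(x)$ there, as needed.

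The only step that is not entirely mechanical is finding the right way to approximate $(2+x)^{-q}$: replacing it by a constant costs a factor $3^{q}$, and approximating $(2+x)^{-1}$ before taking the $q$th power accumulates the error into a factor that grows exponentially in $q$. Once one isolates the exact polynomial part $(\tfrac{2-x}{4})^{q}$, what remains is a negative-binomial generating function evaluated strictly inside its disc of convergence, and the remaining estimates are elementary. In particular the constants $8q$, $\tfrac{4}{7}$, $4$ in the statement are comfortably met — this construction actually achieves the multiplicative window $[1,\tfrac54]$.
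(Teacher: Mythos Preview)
Your proof is correct. Both you and the paper start from the same reduction $r(x)=p(x)/(2+x)^q$ with $p\in\calP_q$ and then approximate $(2+x)^{-q}$ by a polynomial up to a constant multiplicative factor, but the approximation schemes differ. The paper expands $(2+z)^{-q}$ directly as a power series about $z=0$, bounds the Taylor coefficients by the Cauchy estimate (Lemma~\ref{lem:taylorcoeff}) on the disc $|z|\le\tfrac32$, and truncates at degree $7q$; this yields the window $[\tfrac47,4]$ and $g\in\calP_{8q}$ on the nose. You instead factor $(2+x)^{-q}=\big(\tfrac{2-x}{4}\big)^q(1-\tfrac{x^2}{4})^{-q}$, which pulls out an exact polynomial of degree $q$ and leaves the negative-binomial series $(1-s)^{-q}$ with \emph{positive} coefficients evaluated at $s\in[0,\tfrac14]$; truncating that series after $q+2$ terms gives the tighter window $[1,\tfrac54]$ and a smaller degree $4q+2$ for $g$. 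Your route is more elementary---it avoids complex analysis entirely---and the positivity of the binomial coefficients makes the error one-sided, which is why you get sharper constants. Either argument suffices for the application in Proposition~\ref{prop:uniform_bound_1/N}, where only the $O(q)$ degree and the $O(1)$ multiplicative window matter.
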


\begin{proof}
We can clearly write $r(x) = \frac{u(x)}{(2+x)^q}$ for a polynomial
$u\in\mathcal{P}_q$. Note that
$$
	\big|\tfrac{1}{(2+z)^q}\big| \le 2^q
	\quad\text{for all }z\in\mathbb{C},~|z|\le \tfrac{3}{2}.
$$
Thus $\tfrac{1}{(2+z)^q}=\sum_{k=0}^\infty a_k z^k$ with $|a_k| \le
2^q (\frac{3}{2})^{-k}$ by Lemma \ref{lem:taylorcoeff}. Therefore
$$
	\big|\tfrac{1}{(2+x)^q} - t(x)|
	\le
	\sum_{k=7q+1}^\infty |a_k| 
	\le 
	4^{-q}
	\le
	\tfrac{3}{4} \tfrac{1}{(2+x)^q}
	\quad\text{for }x\in[-1,1],
$$
where we defined $t(x):=\sum_{k=0}^{7q} a_k x^k$ and we used that
$3^{-q}\le\tfrac{1}{(2+x)^q}$ for $|x|\le 1$.
In particular, we have shown that 
$\frac{4}{7}t(x)\le\tfrac{1}{(2+x)^q} 
\le 4t(x)$ for all $x\in[-1,1]$, and
the conclusion follows readily by choosing
$g(x)=u(x)t(x)$.
\end{proof}

We can now complete the proof of Proposition \ref{prop:uniform_bound_1/N}.

\begin{proof}[Proof of Proposition \ref{prop:uniform_bound_1/N}]
Fix $q\in\mathbb{N}$ and $h\in\mathcal{P}_q$ throughout the proof.
We first apply Theorem 
\ref{thm:Rakhmanov} to the polynomial $g$ in Lemma \ref{lem:polyapprox}
to estimate
$$
	\|h\|_{[\frac{1}{5M},\frac{1}{3M}]} 
	\le
	C\max_{k=1,\ldots,2M} 
	\big|h\big(\tfrac{1}{2M+2k-1}\big)\big|
$$
for any $M\in\mathbb{N}$ with $M\ge 8q$, where $C$ is a universal 
constant.
Now let $m = \lfloor \frac{1}{3\delta}\rfloor$. Then $m\ge 8q$ by the 
assumption $\delta\le \frac{1}{24q}$, and it is readily verified that 
$$
	(0,\delta]\subseteq (0,\tfrac{1}{3m}] 
	= \bigcup_{M\ge m}[\tfrac{1}{5M},\tfrac{1}{3M}]
$$
as there are no gaps between the intervals 
$[\frac{1}{5M},\frac{1}{3M}]$ for $M\ge 2$. We therefore obtain
$$
	\|h\|_{[0,\delta]} \le
	\sup_{M\ge m}
	\|h\|_{[\frac{1}{5M},\frac{1}{3M}]}
	\le
	C\sup_{N\ge 2m+1}
	\big|h\big(\tfrac{1}{N}\big)\big|,
$$
and the conclusion follows as $\frac{1}{2m+1}\le 2\delta$ (because
$2m+1 \ge \frac{2}{3\delta} - 1 \ge \frac{1}{2\delta}$).
\end{proof}

\section{Asymptotic expansion for GUE}
\label{sec:thepolymethodforGUE}

The aim of this section is to establish an asymptotic expansion of smooth 
trace statistics of polynomials of GUE matrices. This expansion will be 
used in section~\ref{sec:bootstrapping} to prove Theorem 
\ref{thm:maingauss} in the GUE case, while the requisite modifications in 
the case of GOE/GSE matrices will be developed in section 
\ref{sec:supersymmetryGOEGSE}.

The following will be fixed throughout this section. Let 
$\boldsymbol{G}^N=(G_1^N,\ldots,G_r^N)$ be independent GUE matrices of 
dimension $N$, and let $\boldsymbol{s}=(s_1,\ldots,s_r)$ be a free 
semicircular family. We will further fix a self-adjoint\footnote{%
	In the present context, we view $x_1,\ldots,x_r$ as 
	\emph{self-adjoint} noncommuting variables. Thus, for
	example, $P(x_1,x_2)=A\otimes x_1x_2 + A^*\otimes x_2x_1$
	is a self-adjoint noncommutative polynomial.
} 
noncommutative 
polynomial $P\in \mathrm{M}_D(\bC) \otimes \bC \langle 
x_1,\ldots,x_r\rangle$ of degree $q_0$ with matrix coefficients of 
dimension $D$. For simplicity of notation, we will denote by
$$
	\xn := P(\boldsymbol{G}^N),\qquad\quad
	X_\mathrm{F} := P(\boldsymbol{s})
$$
the random matrix of interest and its limiting model.

The main result of this section is as follows.

\begin{thm}[Smooth asymptotic expansion for GUE]
\label{thm:smasympexGUE} 
There exist universal constants $C,c>0$, and a compactly supported 
distribution $\nu_k$ for every $k\in\bZ_+$, such that the following 
holds. Fix any bounded $h\in C^\infty(\bR)$, and define
$$
	f(\theta):= h(K\cos\theta)
	\qquad\text{with}\qquad
	K := (Cr)^{q_0}\|X_{\rm F}\|.
$$
Then for every $m, N\in \bN$ with $m\le\frac{N}{2}$, we have
\begin{multline*}
       \left| \mathbf{E}[\tr h(\xn)]
	- \sum_{k=0}^{m-1} \frac{\nu_k(h)}{N^k} \right| 
       \le
	\frac{(Cq_0)^{2m}}{m! N^m} \|f^{(2m+1)}\|_{[0, 2\pi]} 
	\\ +
	Cre^{-cN}\big(\sobnorm{h}{0}{(-\infty,\infty)}
		+ \sobnorm{f^{(1)}}{0}{[0, 2\pi]} \big).
\end{multline*}
\end{thm}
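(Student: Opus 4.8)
The plan is to combine three ingredients: (i) the genus expansion, which says that $\mathbf{E}[\tr h(\xn)]$ is a \emph{polynomial} in $\frac{1}{N}$ evaluated at $\frac1N$ when $h$ is a polynomial; (ii) the Chebyshev expansion of $h$ on $[-K,K]$ together with Lemma \ref{lem:zygmund} to reduce from smooth $h$ to its polynomial truncations with control on the error in terms of $\|f^{(m+1)}\|$-type norms; and (iii) the interpolation bound of Proposition \ref{prop:uniform_bound_1/N} together with the Bernstein inequality (Lemma \ref{lem:polybernstein}) to bound the Taylor remainder of the genus-expansion polynomial. The constant $K=(Cr)^{q_0}\|X_\mathrm{F}\|$ is chosen so that, up to an event of exponentially small probability, $\|\xn\|\le K$; this is where the truncation argument enters and produces the $e^{-cN}$ term.

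\medskip

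\noindent\textbf{Step 1: Truncation.} Since GUE matrices are not uniformly bounded, I would first introduce the event $\mathcal{E}_N := \{\|\boldsymbol{G}^N\|_{\max}\le K_0\}$ on which $\|\xn\|\le K$ for a suitable $K_0$ depending on $r,q_0$; by standard Gaussian concentration (or a crude union bound on entries plus operator-norm bounds), $\mathbf{P}[\mathcal{E}_N^c]\le Cre^{-cN}$. Then
$$
\big|\mathbf{E}[\tr h(\xn)] - \mathbf{E}[\tr h(\xn)\,\id_{\mathcal{E}_N}]\big| \le \sobnorm{h}{0}{(-\infty,\infty)}\,\mathbf{P}[\mathcal{E}_N^c] \le Cre^{-cN}\sobnorm{h}{0}{(-\infty,\infty)},
$$
which is the first half of the $e^{-cN}$ error term. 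The subtlety, which I expect to be the main obstacle, is that $\mathbf{E}[\tr h(\xn)\,\id_{\mathcal{E}_N}]$ is \emph{not} a polynomial in $\frac1N$, so one cannot directly invoke the genus expansion on the truncated quantity. The resolution is to keep working with the \emph{untruncated} $\mathbf{E}[\tr h(\xn)]$ for the polynomial-in-$\frac1N$ structure, and only use truncation when one needs the a priori bound $|\mathbf{E}[\tr h(\xn)]|\lesssim \|h\|_{[-K,K]} + e^{-cN}(\dots)$ to feed into the interpolation argument; one must check carefully that the genus-expansion polynomial $\Phi_h$ inherits this bound on the relevant discrete set $\{\frac1N : N\in\bN\}$ without the exponential tail destroying the estimate, using that the tail contribution is itself $N$-dependent but uniformly $O(e^{-cN})$ smaller than $\|h\|_{[-K,K]}$ can be taken (after replacing $h$ by a polynomial this needs $\|h\|$ replaced by $\|f^{(1)}\|$ via Lemma \ref{lem:zygmund}'s first bound on $|a_0|$ and the derivative bound — hence the $\|f^{(1)}\|_{[0,2\pi]}$ term).

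\medskip

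\noindent\textbf{Step 2: Polynomial case and interpolation.} For $h\in\mathcal{P}_q$, write $h(x)=\sum_{j=0}^q a_j T_j(K^{-1}x)$ and set $\Phi_h(x) := \mathbf{E}[\tr h(\xn)]$ viewed via the genus expansion as a polynomial in $x=\frac1N$ of degree $\lesssim q$ (degree controlled by $q$ and $q_0,r$). Its Taylor expansion at $0$ to order $m-1$ has $k$-th coefficient $\nu_k(h):=\frac{1}{k!}\Phi_h^{(k)}(0)$, and the remainder is $\le \frac{1}{m!}\frac1{N^m}\|\Phi_h^{(m)}\|_{[0,\frac1N]}$. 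To bound $\|\Phi_h^{(m)}\|_{[0,1/N]}$, apply Bernstein (Lemma \ref{lem:polybernstein}) on a slightly larger interval $[-\delta,\delta]$ with $\delta\asymp \frac1{q\, q_0}$ or so, getting $\|\Phi_h^{(m)}\|_{[0,1/N]}\le (Cq q_0/\delta)^m \|\Phi_h\|_{[-\delta,\delta]}$ — but wait, we only control $\Phi_h$ on $[0,\delta]$, not on $[-\delta,\delta]$; here (unlike the duality sections) I would instead use the one-sided Bernstein/Markov-type bound valid on $[0,\delta]$ near the endpoint, which costs a $q^2$ rather than $q$. Actually, re-reading: since this GUE section does not yet invoke supersymmetric duality, I expect $\beta=2$ here, i.e. the bound is $(Cq_0)^{2m}$, consistent with the stated theorem. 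So: use Proposition \ref{prop:uniform_bound_1/N} to bound $\|\Phi_h\|_{[0,\delta]}\le C\sup_{1/N\le 2\delta}|\Phi_h(\frac1N)| = C\sup_{1/N\le 2\delta}|\mathbf{E}[\tr h(\xn)]|$ for $\delta\le\frac1{24\deg\Phi_h}$, then bound each $|\mathbf{E}[\tr h(\xn)]|$ via Step 1's truncation by $\|h\|_{[-K,K]} + Cre^{-cN}(\dots)$, then apply the Markov-brothers inequality for the $m$-th derivative at an endpoint to get the $(Cq_0)^{2m}$ factor, and finally convert $q^{2m}\|h\|_{[-K,K]}$ to $\|f^{(2m+1)}\|_{[0,2\pi]}$ using Lemma \ref{lem:zygmund} (this is the standard Chebyshev-coefficient bookkeeping: $\|h\|_{[-K,K]}\le |a_0| + \sum_{j\ge1}|a_j| \lesssim \|f^{(1)}\|$, and $q^{2m}$ times this is dominated by $\sum j^{2m}|a_j|\lesssim \|f^{(2m+1)}\|$).

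\medskip

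\noindent\textbf{Step 3: Extension to smooth $h$ and assembling.} First establish, via Lemma \ref{lem:extensiontoadistribution} applied to the linear functionals $h\mapsto \nu_k(h)$ on $\mathcal{P}$ (using the bound $|\nu_k(h)|\le \frac1{k!}|\Phi_h^{(k)}(0)|\le (Cq_0)^{2k}\|h\|_{[-K,K]}$ obtained as in Step 2 with $m=k,N\to\infty$), that each $\nu_k$ extends to a compactly supported distribution with $|\nu_k(h)|\lesssim \|h\|_{C^{2k+1}[-K,K]}$. Then for general bounded $h\in C^\infty(\bR)$, approximate by the partial sums $h_q$ of its Chebyshev expansion on $[-K,K]$ (convergent by the remark after Lemma \ref{lem:zygmund}, since $h$ is $C^\infty$); apply the polynomial estimate of Step 2 to each $h_q$, observe that $f_q(\theta):=h_q(K\cos\theta)$ is the Fourier partial sum of $f$ so $\|f_q^{(k)}\|\to\|$... more precisely $\|f_q^{(2m+1)}\|_{[0,2\pi]}\le \|f^{(2m+1)}\|_{[0,2\pi]}$ up to the usual Fourier-partial-sum subtleties (better to phrase via the absolutely-convergent series bounds of Lemma \ref{lem:zygmund}, which are uniform in $q$), take $q\to\infty$, and use $\nu_k(h_q)\to\nu_k(h)$ by the distributional bound plus $C^\infty$ convergence, together with $\mathbf{E}[\tr h_q(\xn)]\to\mathbf{E}[\tr h(\xn)]$. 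The truncation error terms involving $h$ and $f^{(1)}$ pass to the limit directly. Collecting Steps 1–3 gives exactly the claimed inequality; the only genuine difficulty, as flagged, is making the interaction between the exponentially small truncation tail and the polynomial-in-$\frac1N$ structure airtight — i.e.\ ensuring the a priori bound fed into Proposition \ref{prop:uniform_bound_1/N} is on the \emph{genus-expansion polynomial} and not on the truncated (non-polynomial) expectation.
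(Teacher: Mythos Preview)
There are two genuine gaps.

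\textbf{Gap 1 (Step 2): you miss GUE self-duality, and Markov alone does not give $(Cq_0)^{2m}$.} You write ``we only control $\Phi_h$ on $[0,\delta]$, not on $[-\delta,\delta]$'' and propose the Markov inequality at the endpoint. But the genus expansion for GUE (Lemma~\ref{lem:polyencoding}) gives more: $\Phi_h(\tfrac1N)=\Phi_h(-\tfrac1N)$, i.e.\ $\Phi_h$ is \emph{even}. This is precisely the self-duality of GUE alluded to in \S\ref{sec:supersymm}. Hence Proposition~\ref{prop:uniform_bound_1/N} bounds $\|\Phi_h\|_{[-\delta,\delta]}$, and Bernstein (Lemma~\ref{lem:polybernstein}) on the symmetric interval yields $\|\Phi_h^{(m)}\|\le (Cqq_0)^{2m}\|h\|_{[-K,K]}$. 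If you use only the one-sided interval and Markov, the scaling gives $(Cqq_0)^{3m}$ (exactly as in the permutation case, \S\ref{sec:permu}), which is strictly too weak to match the stated $(Cq_0)^{2m}$ and would degrade the eventual rate from $N^{-1/2}$ to $N^{-1/3}$.

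\textbf{Gap 2 (Steps 1 and 3): the a priori bound and the extension to smooth $h$.} Your truncation in Step~1 is stated for \emph{bounded} $h$; for polynomial $h$ one has $\|h\|_{(-\infty,\infty)}=\infty$, so that bound is vacuous, and you cannot feed it into Proposition~\ref{prop:uniform_bound_1/N}. The paper instead proves directly (Lemma~\ref{lem:apriori}) that $|\mathbf{E}[\tr h(\xn)]|\le 2\|h\|_{[-K,K]}$ for $h\in\mathcal{P}_q$ with $q\le N/q_0$, with \emph{no} $e^{-cN}$ error, via the extrapolation Lemma~\ref{lem:magextrapolation} combined with moment bounds on $\|G^N\|$. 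This restriction $q\lesssim N/q_0$ is essential and propagates: the master inequality \eqref{eq:rateofconvergence} only holds for $q\le N/(Cq_0)$. Consequently your Step~3 --- ``apply the polynomial estimate to each $h_q$, take $q\to\infty$'' --- cannot work as written, since for fixed $N$ the estimate fails once $q\gtrsim N/q_0$. The paper's extension (\S\ref{sec:theextensionproblemGUE}) instead splits the Chebyshev series at $B\asymp N/q_0$: the low-degree part uses \eqref{eq:rateofconvergence}, while the high-degree tail is controlled separately using \eqref{eq:infinitesimalbound} for $\nu_k(h-h_0)$ and the truncation bounds \eqref{eq:XNlessthanK}--\eqref{eq:secondapriori} for $\mathbf{E}[\tr(h-h_0)(\xn)]$. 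This splitting is where the $e^{-cN}(\|h\|_{(-\infty,\infty)}+\|f^{(1)}\|)$ term actually originates.
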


\begin{rem}
The conclusion of Theorem \ref{thm:smasympexGUE} extends readily by 
continuity to any test function $h\in C_b(\mathbb{R})$ so that 
$\|f^{(2m+1)}\|_{[0, 2\pi]}<\infty$. In particular, the proof of Theorem 
\ref{thm:maingauss} will apply this theorem to the test functions provided 
by Lemma \ref{lem:testfunctions}. This observation will be used without 
further comment in the sequel.
\end{rem}

The remainder of this section is devoted to the proof of Theorem 
\ref{thm:smasympexGUE}.

\subsection{A priori bounds}
\label{sec:aprioriGUE}

The general approach to Theorem \ref{thm:smasympexGUE} follows the basic 
method outlined in section \ref{sec:introreview}. However, for the 
Gaussian ensembles, a significant complication arises from the 
unboundedness of the Gaussian distribution. To surmount this issue, we 
begin by proving a priori bounds that will be used to truncate the model. 
The challenge in the remainder of the proof will be to apply these bounds 
without incurring any quantitative loss.

\begin{lem}[A priori bounds]
\label{lem:apriori}
There exist universal constants $C, c>0$ such that 
\begin{equation}
\label{eq:XNlessthanK}
	\mathbf{P}[\|\xn\|>K] \le Cr e^{-cN},
\end{equation}
where $K:=(Cr)^{q_0}\|X_{\rm F}\|$. Moreover, we have
\begin{equation}
\label{eq:firstapriori}
	| \mathbf{E}[ \tr h(\xn)]| \le 2\sobnorm{h}{0}{[-K,K]}
\end{equation}
and
\begin{equation}
\label{eq:secondapriori}
	|\mathbf{E}[\tr  h(\xn) \cdot 1_{\{\|\xn\|>K \}} ] | 
	\leq C \sqrt{r} e^{-cN} \sobnorm{h}{0}{[-K, K]}
\end{equation}
for every $h \in \calP_q$ with $q\le \frac{N}{q_0}$.
\end{lem}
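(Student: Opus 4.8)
The plan is to start from standard concentration and norm estimates for GUE matrices and propagate them through the polynomial $P$. First I would recall that for a single GUE matrix $G_i^N$ one has $\mathbf{P}[\|G_i^N\|>3]\le Ce^{-cN}$ (e.g.\ by a net argument plus Gaussian concentration, or from the known non-asymptotic bounds on the operator norm of Wigner matrices). Taking a union bound over $i=1,\ldots,r$ gives $\mathbf{P}[\max_i\|G_i^N\|>3]\le Cre^{-cN}$. On the event $\{\max_i\|G_i^N\|\le 3\}$, the triangle inequality and submultiplicativity of the operator norm applied monomial-by-monomial to $P$ yield $\|P(\boldsymbol{G}^N)\|\le (C'r)^{q_0}\max(1,\|P\|_{\mathrm{coeff}})$, where the implicit constant absorbs the number and size of the coefficients; after reconciling this with the fact that $\|X_{\rm F}\|=\|P(\boldsymbol{s})\|$ already controls the coefficient data up to a factor depending only on $q_0,r$ (since a free semicircular family has operators of norm $2$ and the coefficients can be recovered from $P(\boldsymbol{s})$ up to such a factor), one gets $\|\xn\|\le K:=(Cr)^{q_0}\|X_{\rm F}\|$ on this event, which is exactly \eqref{eq:XNlessthanK}. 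The one subtlety here is making the comparison between $\|X_{\rm F}\|$ and the raw coefficient norms clean; I expect this to be the main obstacle, and it is handled by the observation that the norm in $\mathrm{M}_D(\mathbb{C})\otimes\mathbb{C}\langle\boldsymbol{s}\rangle$ dominates a fixed multiple (depending only on $q_0,r$) of the norm of any individual matrix coefficient, so that the whole polynomial is controlled by $\|X_{\rm F}\|$ up to a $q_0,r$-dependent constant — this constant is then folded into $C$.

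Next, \eqref{eq:firstapriori}: split $\mathbf{E}[\tr h(\xn)] = \mathbf{E}[\tr h(\xn)\,1_{\{\|\xn\|\le K\}}] + \mathbf{E}[\tr h(\xn)\,1_{\{\|\xn\|> K\}}]$. On the first event all eigenvalues of $\xn$ lie in $[-K,K]$, so $|\tr h(\xn)|\le\sobnorm{h}{0}{[-K,K]}$ and that term is bounded by $\sobnorm{h}{0}{[-K,K]}$. The second term is exactly what \eqref{eq:secondapriori} controls, and since $C\sqrt{r}e^{-cN}\le 1$ for $N$ large (and the statement is vacuous or trivial for small $N$ after adjusting constants), $|\mathbf{E}[\tr h(\xn)]|\le 2\sobnorm{h}{0}{[-K,K]}$ follows. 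So \eqref{eq:firstapriori} is a consequence of \eqref{eq:secondapriori} together with the trivial bound on the complementary event.

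Finally, \eqref{eq:secondapriori} is where the polynomial degree restriction $q\le N/q_0$ enters. Here I would use the extrapolation Lemma \ref{lem:magextrapolation}: for $h\in\calP_q$ and $|x|>K$ one has $|h(x)|\le (2|x|/K)^q\sobnorm{h}{0}{[-K,K]}$. Applying this with $x$ ranging over the eigenvalues of $\xn$ on the event $\{\|\xn\|>K\}$ gives
$$
|\tr h(\xn)\,1_{\{\|\xn\|>K\}}| \le \left(\frac{2\|\xn\|}{K}\right)^q \sobnorm{h}{0}{[-K,K]}\,1_{\{\|\xn\|>K\}}.
$$
Taking expectations, it remains to show $\mathbf{E}\big[(2\|\xn\|/K)^q\,1_{\{\|\xn\|>K\}}\big]\le C\sqrt{r}e^{-cN}$. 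For this I would bound $\|\xn\|\le (C'r)^{q_0}\max_i\|G_i^N\|^{q_0}$ pointwise (again by expanding $P$ monomial-wise), so that $(2\|\xn\|/K)^q \lesssim (C''\max_i\|G_i^N\|)^{q q_0}$ up to absorbing $\|X_{\rm F}\|$-dependent constants, and then use the tail bound $\mathbf{P}[\max_i\|G_i^N\|>t]\le Cre^{-cNt}$ for $t$ large together with the layer-cake formula. The exponential decay rate $\sim Nt$ in the GUE norm tail beats the polynomial-in-$t$ growth $t^{qq_0}$ precisely because $qq_0\le N$, so the integral $\int t^{qq_0} e^{-cNt}\,dt$ over $t\gtrsim 1$ is $O(e^{-c'N})$; summing the $r$ terms gives the factor $r$ (or $\sqrt r$ after adjusting the constant in the exponent). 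The hypothesis $q\le N/q_0$ is exactly the condition that makes this competition come out in our favor, and verifying the exponential-moment computation carefully — making sure no factor of $q!$ or $q^q$ sneaks in to spoil the bound — is the other place requiring a little care, though it is routine given the sharp Gaussian tail.
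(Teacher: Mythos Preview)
Your approach to \eqref{eq:XNlessthanK} is essentially the paper's, though you are vague about the mechanism for comparing coefficient norms to $\|X_{\rm F}\|$. The paper makes this concrete by expanding $P$ in the Chebyshev-of-the-second-kind basis $U_{\boldsymbol{i},\boldsymbol{j}}$, so that the operators $U_{\boldsymbol{i},\boldsymbol{j}}(\boldsymbol{s})$ are \emph{orthonormal} in $L^2(\tau)$; this gives $\max_{\boldsymbol{i},\boldsymbol{j}}\|A_{\boldsymbol{i},\boldsymbol{j}}\|\le\|X_{\rm F}\|$, and the factor $(Cr)^{q_0}$ just counts terms.

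The real gap is in your treatment of \eqref{eq:firstapriori} and \eqref{eq:secondapriori}. You reverse the paper's order, proving \eqref{eq:secondapriori} first by layer cake and then deducing \eqref{eq:firstapriori}. But your layer-cake argument does not close: after extrapolation you face $\mathbf{E}\big[(2\|X^N\|/K)^q\,1_{\{\|X^N\|>K\}}\big]$, and your pointwise bound produces a factor of the form $(C'')^{qq_0}$ with $qq_0$ as large as $N$. Unless $C''\le 1$ (which requires a specific, large choice of the constant in $K$ that you do not make), this is an $e^{cN}$ factor competing against the tail. Your claimed tail $e^{-cNt}$ is also incorrect --- it is Gaussian, $e^{-cN(t-\kappa)^2}$ --- and even with the correct tail the integral only wins if the constants are tuned carefully. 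Saying the computation is ``routine'' is exactly where the argument fails.

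The paper avoids this entirely by proving \eqref{eq:firstapriori} \emph{first} and \emph{directly}: it defines $L:=2C_1^{q_0}\sum_{\boldsymbol{i},\boldsymbol{j}}\|A_{\boldsymbol{i},\boldsymbol{j}}\|$, uses the moment bound $\mathbf{E}[\|G^N\|^p]\le(\kappa+C\sqrt{p/N})^p$ for $p\le 2N$ to get $\mathbf{E}[\|U_{\boldsymbol{i},\boldsymbol{j}}(\boldsymbol{G}^N)\|^q]\le C_1^{qq_0}$, and then applies Jensen to the convex combination $\|X^N\|/\sum\|A_{\boldsymbol{i},\boldsymbol{j}}\|$ to conclude $\mathbf{E}[(2\|X^N\|/L)^q]\le 1$. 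This gives \eqref{eq:firstapriori} with no constant-tracking headaches. Then \eqref{eq:secondapriori} follows in one line by Cauchy--Schwarz:
\[
|\mathbf{E}[\tr h(X^N)\,1_{\{\|X^N\|>K\}}]|\le \mathbf{E}[\tr h(X^N)^2]^{1/2}\,\mathbf{P}[\|X^N\|>K]^{1/2},
\]
applying \eqref{eq:firstapriori} to $h^2\in\mathcal{P}_{2q}$ (this is why the hypothesis is $q\le N/q_0$ rather than $2N/q_0$) and \eqref{eq:XNlessthanK} to the probability. The $\sqrt{r}$ comes from the square root of $Cr$.
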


The remainder of this section is devoted to the proof of this result. We 
begin by recalling a crude tail bound on the norm of GUE matrices.

\begin{lem}
\label{lem:subgaussiannorm}
There exist universal constants $C,c,\kappa>0$ such that for any   
GUE matrix $\gue$ of dimension $N$, we have
$$
	\mathbf{P}[\|\gue\| \geq  \kappa + t] 
	\leq C e^{-cNt^2}\quad\text{for all }t\ge 0,
$$
and
$$
	\mathbf{E}[\|\gue\|^p] \le 
	\left(\kappa+ C \sqrt{\frac{p}{N}}\right)^p
	\quad\text{for all }p\in\bN.
$$
\end{lem}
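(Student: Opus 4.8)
The goal is to prove Lemma \ref{lem:subgaussiannorm}: a subgaussian concentration bound on the operator norm of a GUE matrix, together with the resulting moment bound.

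\textbf{Plan of proof.} The strategy is the standard combination of the Gaussian concentration inequality with a bound on the expected norm. First I would view a GUE matrix $G^N$ of dimension $N$ as a function of a standard Gaussian vector in $\mathbb{R}^{N^2}$: the independent real and imaginary parts of the entries (each of variance $\frac{1}{2N}$ off-diagonal, $\frac{1}{N}$ on-diagonal), so that $G^N = \sum_i \xi_i A_i$ for fixed self-adjoint matrices $A_i$ and i.i.d.\ standard Gaussians $\xi_i$. The map $\xi \mapsto \|G^N(\xi)\|$ is Lipschitz with respect to the Euclidean norm on the coefficient vector, with Lipschitz constant $L$ of order $\frac{1}{\sqrt N}$ --- this follows because the operator norm is $1$-Lipschitz with respect to the Hilbert--Schmidt norm on matrices, and the Hilbert--Schmidt norm of $\sum_i \xi_i A_i$ is controlled by the Euclidean norm of $\xi$ times the appropriate scaling factor. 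Then the Gaussian concentration inequality (see \cite{Led01}) gives
$$
	\mathbf{P}\big[\,\big|\,\|G^N\| - \mathbf{E}\|G^N\|\,\big| \ge t\,\big]
	\le 2e^{-t^2/(2L^2)} \le 2e^{-cNt^2}.
$$
Second, I would invoke a standard bound $\mathbf{E}\|G^N\| \le \kappa_0$ for a universal constant $\kappa_0$ (one can take $\kappa_0$ slightly larger than $2$; this is classical, e.g.\ via the non-commutative Khintchine / moment method of Bandeira--van Handel, or simply via a crude $\varepsilon$-net argument giving $\mathbf{E}\|G^N\|\le C$). Combining the two and absorbing the constant shift into $\kappa$ yields the first display with $\kappa := \kappa_0 + 1$ (or any convenient universal choice), after noting that for $t$ in a bounded range the bound is trivially true by adjusting $C$.

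\textbf{From the tail bound to the moment bound.} For the second display I would integrate the tail. Writing $Z = \|G^N\|$ and using $\mathbf{P}[Z \ge \kappa + t]\le Ce^{-cNt^2}$,
$$
	\mathbf{E}[Z^p] = \int_0^\infty p s^{p-1}\,\mathbf{P}[Z\ge s]\,ds
	\le \kappa^p + \int_\kappa^\infty p s^{p-1} C e^{-cN(s-\kappa)^2}\,ds.
$$
Substituting $s = \kappa + u$ and bounding $(\kappa+u)^{p-1}$ one reduces to Gaussian moment integrals $\int_0^\infty u^{j} e^{-cNu^2}\,du \lesssim (C/\sqrt{N})^{j+1}$, which after collecting the binomial terms gives
$$
	\mathbf{E}[Z^p] \le \Big(\kappa + C\sqrt{\tfrac{p}{N}}\Big)^p
$$
for a suitable universal $C$; the factor $\sqrt{p/N}$ rather than $\sqrt{1/N}$ arises precisely because the $p$-th root of $\sum_{j} \binom{p}{j}\kappa^{p-j}(C/\sqrt N)^{j} j!^{1/2}$-type sums contributes an extra $\sqrt p$ per unit of ``Gaussian width.'' (Alternatively one can get this more slickly: by concentration, $Z - \mathbf{E}Z$ is subgaussian with proxy variance $\lesssim 1/N$, so $\|Z - \mathbf{E}Z\|_{L^p} \lesssim \sqrt{p/N}$ by the standard equivalence for subgaussian variables, and then $\|Z\|_{L^p} \le \mathbf{E}Z + \|Z-\mathbf{E}Z\|_{L^p} \le \kappa_0 + C\sqrt{p/N} \le \kappa + C\sqrt{p/N}$, and raise to the $p$-th power.) I would use the second, cleaner route.

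\textbf{Main obstacle.} None of the steps is genuinely hard; the only point requiring a little care is pinning down the Lipschitz constant of $\xi\mapsto\|G^N(\xi)\|$ with the correct $N$-dependence, i.e.\ verifying that the normalization by $\frac1N$ in the variances translates into $L^2 \asymp \frac1N$, and then tracking constants so that the additive shift from $\mathbf{E}\|G^N\|$ can be absorbed into a single universal $\kappa$ valid for \emph{all} $N$ (including small $N$, where one may simply enlarge $C$). The appearance of $\sqrt{p/N}$ rather than $\sqrt{1/N}$ in the moment bound is the one place where a reader might expect $\sqrt{1/N}$; I would make explicit that this is exactly the $L^p$-norm scaling of a subgaussian variable and is what is needed downstream (it is used with $p \asymp N$, where it contributes only an $O(1)$ factor).
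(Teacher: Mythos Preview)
Your proposal is correct. The only noteworthy difference from the paper is in the first inequality: you obtain the subgaussian tail via the Gaussian concentration inequality for Lipschitz functions (which then requires a separate bound on $\mathbf{E}\|G^N\|$ to fix the centering $\kappa$), whereas the paper simply cites the $\varepsilon$-net argument of \cite[\S 2.3.1]{Tao12}, which delivers the tail bound $\mathbf{P}[\|G^N\|\ge \kappa+t]\le Ce^{-cNt^2}$ directly, with the centering and concentration obtained in one step. Both routes are standard and equivalent for this purpose; the $\varepsilon$-net argument is slightly more self-contained (no separate mean bound needed), while your route has the mild advantage that it makes the subgaussian structure of $\|G^N\|-\mathbf{E}\|G^N\|$ explicit, which is exactly what you then exploit for the moment bound. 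For the second inequality, your ``cleaner route'' via $\|Z\|_{L^p}\le \mathbf{E}Z+\|Z-\mathbf{E}Z\|_{L^p}$ and the subgaussian $L^p$-norm bound is essentially the same as the paper's, which writes $\mathbf{E}[\|G^N\|^p]^{1/p}\le \kappa+\mathbf{E}[(\|G^N\|-\kappa)_+^p]^{1/p}$ and integrates the tail (citing \cite[Proposition 1.10]{Led01}).
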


\begin{proof}
The first inequality follows from a simple $\varepsilon$-net argument 
\cite[\S 2.3.1]{Tao12}. The second inequality follows directly using
$\mathbf{E}[\|\gue\|^p]^{\frac{1}{p}} \le \kappa + 
\mathbf{E}[(\|\gue\|-\kappa)_+^p]^{\frac{1}{p}}$ and integrating the
first inequality
(see, e.g., \cite[Proposition 1.10]{Led01}).
\end{proof}

\iffalse
\begin{rem}
Much stronger bounds are available in the literature that achieve 
optimal constants and tail behavior. However, this would not yield 
any improvement of Theorem \ref{thm:smasympexGUE}.
The advantage of Lemma \ref{lem:subgaussiannorm} is that it follows from 
a soft argument and that its statement and proof extend 
\emph{verbatim} to GOE and GSE 
matrices. 
\end{rem}
\fi

To proceed, it will be useful to choose a convenient representation of the 
noncommutative polynomial $P$. To this end, denote by $U_j$ the Chebyshev 
polynomial of the second kind of degree $j$ defined by 
$U_j(\cos\theta)\sin\theta = \sin((j+1)\theta)$. Moreover, define the 
noncommutative polynomial $U_{\boldsymbol{i},\boldsymbol{j}}\in
\bC\langle x_1,\ldots,x_r\rangle$ as
$$
	U_{\boldsymbol{i},\boldsymbol{j}}(x_1,\ldots,x_r) :=
	U_{j_1}(\tfrac{1}{2}x_{i_1})U_{j_2}(\tfrac{1}{2}x_{i_2})
	\cdots U_{j_k}(\tfrac{1}{2}x_{i_k})
$$
for every $k\ge 0$, $\boldsymbol{i}=(i_1,\ldots,i_k)$, and 
$\boldsymbol{j}=(j_1,\ldots,j_k)$ such that $j_1,\ldots,j_k\in\mathbb{N}$ 
and $i_1,\ldots,i_k\in [r]$ with $i_1\ne i_2,i_2\ne i_3,\ldots, 
i_{k-1}\ne i_k$ (where
$U_{\boldsymbol{i},\boldsymbol{j}}(\boldsymbol{x}):=\id$ for $k=0$).
Then we can represent $P$ uniquely as
$$
	P(x_1,\ldots,x_r) = \sum_{\boldsymbol{i},\boldsymbol{j}}
	A_{\boldsymbol{i},\boldsymbol{j}}\otimes
	U_{\boldsymbol{i},\boldsymbol{j}}(x_1,\ldots,x_r),
$$
where $A_{\boldsymbol{i},\boldsymbol{j}}\in\mathrm{M}_D(\bC)$ are matrix 
coefficients and the sum ranges over all $\boldsymbol{i},\boldsymbol{j}$ 
as above with $0\le k\le q_0$ and $j_1+\cdots+j_k\le q_0$.
The significance of this representation is that when 
$U_{\boldsymbol{i},\boldsymbol{j}}$ are applied to a 
free semicircular family $\boldsymbol{s}$, the operators
$\{U_{\boldsymbol{i},\boldsymbol{j}}(\boldsymbol{s})\}$ form an 
orthonormal system in $L^2(\tau)$,
cf.\ \cite[\S 5.1]{BS98}. The latter enables us to bound the norms of 
the coefficients $\|A_{\boldsymbol{i},\boldsymbol{j}}\|$ by $\|\xf\|$.

\begin{lem}
\label{lem:chebsecondkind}
For every $\boldsymbol{i},\boldsymbol{j}$ as above
and self-adjoint operators $x_1,\ldots,x_r$, we have
\begin{equation}
\label{eq:chebsecondkind1}
	\|U_{\boldsymbol{i},\boldsymbol{j}}(x_1,\ldots,x_r)\| \le
	2^k
	(\|x_{i_1}\|^{j_1}\vee 1)
	(\|x_{i_2}\|^{j_2}\vee 1)\cdots
	(\|x_{i_k}\|^{j_k}\vee 1).
\end{equation}
Moreover, there is a universal constant $C$ so that
\begin{equation}
\label{eq:chebsecondkind2}
	\sum_{\boldsymbol{i},\boldsymbol{j}} 
	\|A_{\boldsymbol{i},\boldsymbol{j}}\|
	\le
	(Cr)^{q_0} \|X_{\rm F}\|.
\end{equation}
\end{lem}

\begin{proof}
Note that $|T_j(\frac{1}{2}x)|\le |x|^j\vee 1$ for all 
$x\in\mathbb{R}$ by Lemma 
\ref{lem:magextrapolation}, where $T_j$ is the Chebyshev polynomial of the 
first kind. As $U_j(x)=\sum_{a=0}^j x^a T_{j-a}(x)$
\cite[p.\ 37]{borwein2012polynomials}, 
$$
	|U_j(\tfrac{1}{2}x)| \le
	\sum_{a=0}^j 2^{-a}(|x|^j\vee |x|^a) 
	\le
	2(|x|^j\vee 1)
$$
for all $x\in\mathbb{R}$. Thus \eqref{eq:chebsecondkind1} follows 
directly.

Next, note that as $X_{\rm F}:=P(\boldsymbol{s})$
and
$\{U_{\boldsymbol{i},\boldsymbol{j}}(\boldsymbol{s})\}$ are orthonormal
in $L^2(\tau)$, we have
$$
	\|X_{\rm F}\|^2 \ge
	\|(\mathrm{id}\otimes{\tau})(X_{\rm F}^*X_{\rm F})\|
	= \Bigg\|
	\sum_{\boldsymbol{i},\boldsymbol{j}} 
	A_{\boldsymbol{i},\boldsymbol{j}}^*
	A_{\boldsymbol{i},\boldsymbol{j}}
	\Bigg\|
	\ge 
	\max_{\boldsymbol{i},\boldsymbol{j}}
	\|A_{\boldsymbol{i},\boldsymbol{j}}\|^2.
$$
Now \eqref{eq:chebsecondkind2} follows as there are at most $(Cr)^{q_0}$ 
terms in the sum.
\end{proof}

We are now ready to prove Lemma \ref{lem:apriori}.

\begin{proof}[Proof of Lemma \ref{lem:apriori}]
Let $\kappa$ be as in Lemma \ref{lem:subgaussiannorm}.
If $\|\gue_i\|\le \kappa+1$ for all $i$, then
$$
	\|\xn\| \le 
	2^{q_0}
	(\kappa+1)^{q_0}
	\sum_{\boldsymbol{i},\boldsymbol{j}}
	\|A_{\boldsymbol{i},\boldsymbol{j}}\|
	\le
	(Cr)^{q_0}\|X_{\rm F}\| =: K
$$
for a universal constant $C$ 
by the triangle inequality and Lemma \ref{lem:chebsecondkind}.
As
$$
	\mathbf{P}[\|\gue_i\|> \kappa+1\text{ for some }i\in[r]]
	\le
	Cr e^{-cN}
$$
by the union bound and the first inequality of Lemma
\ref{lem:subgaussiannorm}, we proved \eqref{eq:XNlessthanK}.

To proceed, fix $h\in\mathcal{P}_q$ with $q\le \frac{2N}{q_0}$. Note
first that
$$
	\max_{\boldsymbol{i},\boldsymbol{j}}
	\E[\|U_{\boldsymbol{i},\boldsymbol{j}}(\boldsymbol{G}^N)\|^q]
	\le
	2^{qq_0} \E[\|\gue\|^{qq_0}\vee 1] \le
	C_1^{qq_0}
$$
for a universal constant $C_1$
using Lemma \ref{lem:chebsecondkind}, H\"older's inequality, and the 
second inequality of Lemma \ref{lem:subgaussiannorm} (here we used that
$q\le \frac{2N}{q_0}$).
Now define the constant
$L:=2C_1^{q_0}\sum_{\boldsymbol{i},\boldsymbol{j}}\|A_{\boldsymbol{i},\boldsymbol{j}}\|$,
and apply Lemma \ref{lem:magextrapolation} to estimate
$$
	|\mathbf{E}[\tr h(\xn)]| \le
	\mathbf{E}\bigg[\sup_{|x|\le\|\xn\|}|h(x)|\bigg] \le
	\bigg(1 + \E\bigg[\bigg(\frac{ 2\|\xn\|}{L}\bigg)^q\bigg]
	\bigg)\sobnorm{h}{0}{[-L,L]}.
$$
As 
$$
	\E\bigg[\bigg(\frac{2\|\xn\|}{L}\bigg)^q\bigg]
	\le
	\frac{1}{C_1^{qq_0}}\,
	\E\left[\left(
	\frac{\sum_{\boldsymbol{i},\boldsymbol{j}}
	\|A_{\boldsymbol{i},\boldsymbol{j}}\|\,
	\|U_{\boldsymbol{i},\boldsymbol{j}}(\boldsymbol{G}^N)\|}
	{\sum_{\boldsymbol{i},\boldsymbol{j}}
	\|A_{\boldsymbol{i},\boldsymbol{j}}\|}
	\right)^q\right]
	\le
	1
$$
by Jensen's inequality, we have shown that $|\mathbf{E}[\tr h(\xn)]|\le 
2\|h\|_{[-L,L]}$. To prove \eqref{eq:firstapriori}, it remains to note 
that $L\le K$ by Lemma \ref{lem:chebsecondkind}, provided that the 
universal constant $C$ 
in the definition of $K$ is chosen sufficiently large.

Finally, we now suppose $h\in\mathcal{P}_q$ and $q\le \frac{N}{q_0}$. Then 
we can estimate
\begin{align*}
	|\E[\tr h(\xn) \cdot 1_{\{\|\xn\|>K \}} ] |
	&\le
	\E[\tr \, h(\xn)^2 ]^{\frac{1}{2}} \, \P[\|\xn\|>K]^{\frac{1}{2}} 
\\	&\le
	\sqrt{2Cr}\,e^{-cN/2} \sobnorm{h}{0}{[-K, K]}
\end{align*}
by Cauchy-Schwarz, \eqref{eq:XNlessthanK}, and \eqref{eq:firstapriori},
where we used that 
$h^2\in\mathcal{P}_{2q}$ and $2q\le\frac{2N}{q_0}$.
Redefining the constants concludes the proof of \eqref{eq:secondapriori}.
\end{proof}

\subsection{The master inequality}

Our next aim is to prove a form of the asymptotic expansion of Theorem 
\ref{thm:smasympexGUE} for polynomial test functions $h$.

\begin{lem}[Master inequality]
\label{lem:ratesofconv}
There exists a linear functional $\nu_m$ on $\calP$ for every $m\in\bZ_+$ 
such that for every $q\in\bN$ and $h\in \calP_q$
\begin{equation}
\label{eq:infinitesimalbound}
	|\nu_m(h)| \le \frac{(Cqq_0)^{2m}}{m!} 
	\sobnorm{h}{0}{[-K, K]},
\end{equation}
and such that if in addition $q \leq \frac{N}{Cq_0}$, we have
\begin{equation}
\label{eq:rateofconvergence}
	\left| \E[\tr h(\xn)]- \sum_{k=0}^{m-1} \frac{\nu_k(h)}{N^k} 
	\right| \leq \frac{(Cqq_0)^{2m}}{m! N^{m}} 
	\sobnorm{h}{0}{[-K, K]}.
\end{equation}
Here $C$ is a universal constant and $K:=(Cr)^{q_0}\|X_{\rm F}\|$.
\end{lem}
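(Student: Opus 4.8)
\textbf{Proof plan for Lemma \ref{lem:ratesofconv}.}

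The starting point is the genus expansion: for a fixed polynomial test function $h$, the map $N \mapsto \E[\tr h(\xn)]$ is (the restriction to $\tfrac1N$ of) a rational function of $\tfrac1N$, and in fact for GUE it is a genuine polynomial in $\tfrac1N$ whose degree is controlled in terms of $\deg h$ and $q_0$. Concretely, I would introduce $\Phi_h(z) := \E[\tr h(\xn)]$ evaluated formally at ``$\tfrac1N = z$'', verify via weak asymptotic freeness (Lemma \ref{lem:waf}) that $\Phi_h(0) = (\tr\otimes\tau)[h(\xf)] = \tau(h(\xf))$, and \emph{define} $\nu_k(h)$ to be $k!$ times the $k$-th Taylor coefficient of $\Phi_h$ at $0$. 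Then \eqref{eq:rateofconvergence} is just Taylor's theorem with remainder: the left side equals $\tfrac{1}{m!\,N^m}\,\Phi_h^{(m)}(\xi)$ for some $\xi\in[0,\tfrac1N]$, so the whole task reduces to bounding $\|\Phi_h^{(m)}\|_{[0,1/N]}$ by $(Cqq_0)^{2m} N^{-m}\cdot 0$... no — by $(Cqq_0)^{2m}\,\|h\|_{[-K,K]}$, independent of $N$. The same bound on $\Phi_h^{(m)}$ at $z=0$, divided by $m!$, gives \eqref{eq:infinitesimalbound}, so both inequalities follow from a single derivative estimate.

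The key difficulty — and the reason for the a priori bounds of the previous subsection and the interpolation result of section \ref{sec:optimaluniformbounds} — is that one cannot bound $\Phi_h^{(m)}$ directly: $\Phi_h$ is a polynomial, so Markov/Bernstein-type inequalities control its derivatives on an interval in terms of its sup-norm on that interval, but a priori we only control $\Phi_h$ at the discrete points $z = \tfrac1N$, and only through random-matrix quantities. The plan is: (i) use the a priori bound \eqref{eq:firstapriori}, valid for all $q\le \tfrac{N}{q_0}$, to get $|\Phi_h(\tfrac1N)| = |\E[\tr h(\xn)]| \le 2\|h\|_{[-K,K]}$ for every sufficiently large $N$ — i.e. a uniform bound at the sample points $\tfrac1N$; here it is important that the genus expansion makes $\Phi_h$ a \emph{fixed} polynomial (of degree $\lesssim qq_0$, say), so its values at $\tfrac1N$ for all large $N$ determine it; (ii) invoke Proposition \ref{prop:uniform_bound_1/N}, which upgrades the pointwise bound at $\{\tfrac1N : \tfrac1N \le 2\delta\}$ to a sup-norm bound $\|\Phi_h\|_{[0,\delta]} \le C\sup|\Phi_h(\tfrac1N)| \le 2C\|h\|_{[-K,K]}$, valid for any $\delta \le \tfrac{1}{24 \deg\Phi_h}$; (iii) apply the Bernstein inequality (Lemma \ref{lem:polybernstein}) on a slightly larger interval, or rather exploit that $\Phi_h$ is bounded on $[-\delta,\delta]$ — wait, we only have $[0,\delta]$ — so in the GUE case one shifts: apply Bernstein on $[0,\delta]$ recentered, or more carefully, since we want derivatives at the endpoint $0$, use that $[0,\delta] \supseteq [\tfrac\delta4, \tfrac{3\delta}4]$-type interior estimates combined with Taylor; the cleanest route is to note $\|\Phi_h^{(m)}\|_{[0,\delta/2]} \le (Cq\deg\Phi_h/\delta)^m \|\Phi_h\|_{[0,\delta]}$ via Bernstein after recentering on an interval of length $\delta$ containing $[0,\delta/2]$ in its interior, then choose $\delta \asymp 1/(qq_0)$ so that $N^{-m} \le (\delta/2)^m$ is automatic for $N \ge 2/\delta$, and the $(1/\delta)^m$ factor becomes $(Cqq_0)^m$; squaring the degree (since $\deg\Phi_h \lesssim qq_0$ times another factor) produces the claimed $(Cqq_0)^{2m}$. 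The $\tfrac1{m!}$ is the Taylor denominator.

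I expect the main obstacles to be bookkeeping obstacles rather than conceptual ones: (a) pinning down that $\deg\Phi_h \lesssim qq_0$ — this needs the structure of the genus expansion for products of GUE matrices, namely that $\E[\tr W(\boldsymbol{G}^N)]$ for a word $W$ of length $\ell$ is a polynomial in $\tfrac1N$ of degree $\lesssim \ell$, and $h(\xn) = h(P(\boldsymbol{G}^N))$ is a polynomial in the $\gue_i$ of degree $\le q q_0$; (b) handling the constraint $q \le \tfrac{N}{Cq_0}$ correctly — it enters because \eqref{eq:firstapriori} requires $q \le \tfrac{N}{q_0}$, and Proposition \ref{prop:uniform_bound_1/N} needs $\delta \le \tfrac1{24\deg\Phi_h}$ with enough sample points $\tfrac1N \le 2\delta$, i.e. $N \gtrsim \deg\Phi_h \gtrsim qq_0$; choosing the constant $C$ in $q\le \tfrac{N}{Cq_0}$ large enough reconciles these; (c) the endpoint-versus-interior issue for Bernstein, which for GUE I would sidestep by recentering onto a symmetric interval around a point slightly inside $(0,\delta)$, at the cost of a harmless constant — the truly symmetric-interval trick via supersymmetric duality is deferred to section \ref{sec:supersymmetryGOEGSE} and is what eventually removes a factor, but is not needed here. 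Finally I would remark that \eqref{eq:secondapriori} is what guarantees the truncated and untruncated expectations differ by $O(e^{-cN})$, so that the ``clean'' rational function $\Phi_h$ (computed for the genuine, untruncated GUE) is the right object throughout; this is used in passing from Lemma \ref{lem:ratesofconv} to Theorem \ref{thm:smasympexGUE} rather than within the present proof.
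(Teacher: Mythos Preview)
Your overall architecture is correct --- define $\nu_m(h)=\Phi_h^{(m)}(0)/m!$, reduce both inequalities to a single bound on $\|\Phi_h^{(m)}\|$ near $0$, and feed the a~priori bound \eqref{eq:firstapriori} through Proposition~\ref{prop:uniform_bound_1/N} to control $\|\Phi_h\|_{[0,\delta]}$ with $\delta\asymp 1/(qq_0)$. The gap is in step~(c).

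You claim the endpoint issue for Bernstein can be ``sidestepped by recentering onto a symmetric interval around a point slightly inside $(0,\delta)$,'' and that the symmetric-interval trick via duality ``is not needed here.'' This is wrong. Any interval of length $\delta$ containing $0$ in its \emph{interior} necessarily extends into $(-\epsilon,0)$ for some $\epsilon>0$, and Proposition~\ref{prop:uniform_bound_1/N} by itself gives you no control of $\Phi_h$ there. Without that extension you are stuck with Markov on $[0,\delta]$, which for $\deg\Phi_h\le qq_0$ and $\delta\asymp 1/(qq_0)$ yields only $\|\Phi_h^{(m)}\|_{[0,\delta]}\lesssim (Cqq_0)^{3m}\|h\|_{[-K,K]}$ (up to factorials), not the $(Cqq_0)^{2m}$ required by the statement.

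The paper closes this gap by using GUE self-duality \emph{already here}: Lemma~\ref{lem:polyencoding} records that $\Phi_h(\tfrac1N)=\Phi_h(-\tfrac1N)$, i.e.\ $\Phi_h$ is even, so the bound on $[0,\delta]$ transfers for free to $[-\delta,0]$. Then $0$ is the center of $[-\delta,\delta]$ and Bernstein (Lemma~\ref{lem:polybernstein}) applies with $\sqrt{1-(x/\delta)^2}=1$, giving $\|\Phi_h^{(m)}\|_{[-\delta/2,\delta/2]}\le (Cqq_0/\delta)^m\|\Phi_h\|_{[-\delta,\delta]}\le (Cqq_0)^{2m}\|h\|_{[-K,K]}$. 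What is deferred to section~\ref{sec:supersymmetryGOEGSE} is not the symmetric-interval idea itself but the replacement of $\Phi_h(-x)=\Phi_h(x)$ by the GOE/GSE duality when evenness fails.
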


The proof is a straightforward application of the polynomial method of
\cite{chen2024new}. We exploit the well-known fact that 
$\E[\tr h(\xn)]$ is a polynomial of $\frac{1}{N}$.

\begin{lem}[Polynomial encoding]
\label{lem:polyencoding}
For any $h\in \calP_q$, there is $\Phi_h\in\calP_{qq_0}$
so that
$$
	\E[\tr (h(\xn))] = \Phi_h(\tfrac{1}{N}) =
	\Phi_h(-\tfrac{1}{N}).
$$
\end{lem}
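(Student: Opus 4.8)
The plan is to establish Lemma~\ref{lem:polyencoding} in two steps: first, the existence of a polynomial $\Phi_h \in \calP_{qq_0}$ representing $\E[\tr h(\xn)]$ as a function of $\frac{1}{N}$ (the genus expansion), and second, the symmetry $\Phi_h(x) = \Phi_h(-x)$ (supersymmetric self-duality of GUE). For the first step, I would reduce to monomials: writing $h(x) = \sum_{\ell=0}^q c_\ell x^\ell$, it suffices to show $\E[\tr(\xn)^\ell]$ is a polynomial in $\frac{1}{N}$ of degree at most $\ell q_0$. Now $(\xn)^\ell = P(\boldsymbol{G}^N)^\ell$ is a noncommutative polynomial in the GUE matrices of degree at most $\ell q_0$, with matrix coefficients in $\mathrm{M}_D(\bC)$; applying $\tr$ (which I interpret as $\frac{1}{ND}\trace$ on $\mathrm{M}_D(\bC)\otimes\mathrm{M}_N(\bC)$, or more precisely $\tr\otimes\mathrm{tr}_D$ as appropriate — this normalization point should be checked against the conventions fixed at the start of section~\ref{sec:thepolymethodforGUE}) and taking expectations, each monomial $\trace(A_{i_1}G_{k_1}^N\cdots A_{i_p}G_{k_p}^N)$ has expectation computed by Wick's formula: it is a sum over pairings of the Gaussian entries, and each pairing contributes a monomial in $\frac{1}{N}$ whose exponent is $\frac{p}{2}$ minus the number of closed loops in the associated surface, hence ranges between $-1$ and something bounded by $p \le \ell q_0$ in absolute value. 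The key classical fact (Wick/genus expansion, see e.g.\ \cite{NS06}) is that $\E[\tr(\xn)^\ell]$ is a Laurent polynomial in $N$; that it is in fact a polynomial in $\frac{1}{N}$ (no positive powers of $N$ survive) follows because $\tr$ is normalized and the leading contributions cancel — equivalently, one invokes that $\sup_N|\E[\tr(\xn)^\ell]| < \infty$ by weak asymptotic freeness (Lemma~\ref{lem:waf}), which forces the Laurent polynomial to have no poles at $N=\infty$, i.e.\ to be a polynomial in $\frac{1}{N}$.

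The degree bound $qq_0$ on $\Phi_h$ then follows by tracking the worst-case power of $\frac{1}{N}$ in the genus expansion of a degree-$p$ trace monomial: each such monomial contributes powers $N^{-k}$ with $k \le p$, and since $p \le \ell q_0 \le qq_0$, we get $\Phi_h \in \calP_{qq_0}$. The cleanest way to argue this rigorously, rather than re-deriving the genus expansion, is to cite that $\E[\tr Q(\boldsymbol{G}^N)]$ for a degree-$p$ noncommutative polynomial $Q$ with scalar coefficients is a polynomial in $\frac{1}{N^2}$ of degree $\le \lfloor p/2\rfloor$ — this is the classical topological expansion — and that matrix coefficients of dimension $D$ do not change this (they only enter through the fixed coefficient matrices, which are absorbed into constants). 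Combined with $p \le qq_0$, this gives the degree bound, and in fact gives the stronger statement that $\Phi_h$ is a polynomial in $\frac{1}{N^2}$, which immediately implies $\Phi_h(x) = \Phi_h(-x)$.

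So the second assertion is really automatic from the structure of the genus expansion: GUE matrices are Hermitian with complex Gaussian entries, and Wick pairing produces only orientable surfaces, whose Euler characteristic is even, so only even powers of $\frac{1}{N}$ appear — this is the ``self-duality'' alluded to in section~\ref{sec:supersymm}. I would state this as: the expansion $\E[\tr h(\xn)] = \sum_k \Phi_h^{(k)} N^{-k}$ has $\Phi_h^{(k)} = 0$ for odd $k$, hence $\Phi_h(-\tfrac1N) = \Phi_h(\tfrac1N)$. If one prefers a reference-based argument, cite \cite{bryc2009duality} or \cite{magee2024matrix} for the $N \mapsto -N$ duality of GUE directly.

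The main obstacle is being careful about the normalization and the matrix-coefficient bookkeeping: one must make sure that the degree bound $qq_0$ (not $qq_0$ in $\frac{1}{N^2}$, which would be a factor of $2$ better, or $2qq_0$, which would be worse) is exactly what the genus expansion delivers, and that the dimension $D$ of the matrix coefficients genuinely plays no role in the degree — it enters only through the constants $A_{\boldsymbol{i},\boldsymbol{j}}$ and the outer normalized trace on $\mathrm{M}_D$, neither of which introduces powers of $N$. A secondary subtlety is justifying that the Laurent polynomial has no positive powers of $N$ without doing the full surface-counting argument; here invoking boundedness via Lemma~\ref{lem:waf} is the soft shortcut, consistent with the paper's philosophy. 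I would present the proof in three short paragraphs: (i) $(\xn)^\ell$ is a noncommutative polynomial in $\boldsymbol{G}^N$ of degree $\le \ell q_0$, so $\E[\tr h(\xn)]$ is a universal finite linear combination of $\E[\tr Q(\boldsymbol{G}^N)]$ over noncommutative monomials $Q$ of degree $\le qq_0$; (ii) each such expectation is, by the Wick formula, a polynomial in $\frac{1}{N^2}$ of degree $\le \lfloor qq_0/2\rfloor$; (iii) assembling, $\Phi_h \in \calP_{qq_0}$ is a polynomial in $\frac{1}{N^2}$, whence both claimed identities.
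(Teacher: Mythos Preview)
Your proposal is correct and takes essentially the same approach as the paper: both reduce to the classical genus expansion for GUE, which states that $\E[\tr G^N_{i_1}\cdots G^N_{i_k}]$ is a polynomial in $\tfrac{1}{N^2}$ of bounded degree (the paper cites \cite{mingo2017free}), from which both the degree bound $\Phi_h\in\calP_{qq_0}$ and the parity $\Phi_h(x)=\Phi_h(-x)$ follow at once. Your additional remarks on normalization and on using boundedness via Lemma~\ref{lem:waf} to rule out positive powers of $N$ are correct but unnecessary once the genus expansion is invoked directly, and the paper's proof is accordingly a single sentence.
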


\begin{proof}
This is an immediate consequence of the genus expansion for GUE, which 
states that $\E[\tr \gue_{i_1}\cdots\gue_{i_k}]$ is a polynomial of 
$\frac{1}{N^2}$ of degree at most $\frac{k}{4}$ for every
$i_1,\ldots,i_k\in[r]$;
for example, see the proof of \cite[\S 1.10, Lemma 9]{mingo2017free}.
\end{proof}

We can now prove Lemma \ref{lem:ratesofconv}.

\begin{proof}[Proof of Lemma \ref{lem:ratesofconv}]
Fix $q\in\bN$ and $h\in \calP_q$.
Throughout the proof, we adopt the notation of Lemma 
\ref{lem:polyencoding} without further comment.
Lemma \ref{lem:apriori} implies that
$$
	|\Phi_h(\tfrac{1}{N})| \leq 2 \sobnorm{h}{0}{[-K, K]}
	\quad\text{for }N\ge qq_0.
$$
Thus Proposition \ref{prop:uniform_bound_1/N} yields
$$
	\sobnorm{\Phi_h}{0}{[0, \delta]} \le
	C\sobnorm{h}{0}{[-K, K]}
$$
with $\delta := \frac{1}{24qq_0}$. Since $\Phi_h(-\tfrac{1}{N}) = 
\Phi_h(\tfrac{1}{N})$, we obtain the same bound for
$\sobnorm{\Phi_h}{0}{[-\delta,0]}$. We can therefore apply 
Bernstein's inequality (Lemma \ref{lem:polybernstein}) to estimate
\begin{equation}
\label{eq:guemasterbern}
	\|\Phi_h^{(m)}\|_{[-\frac{\delta}{2},\frac{\delta}{2}]} \le
	(Cqq_0)^{2m} \sobnorm{h}{0}{[-K, K]}
\end{equation}
for all $m\in\bZ_+$, where $C$ is a universal constant. Now define
$$
	\nu_m(h) := \frac{\Phi_h^{(m)}(0)}{m!},
$$ 
so that
$$
	\left| \E[\tr h(\xn)]- \sum_{k=0}^{m-1} \frac{\nu_k(h)}{N^k} 
	\right| = 
	\left| \Phi_h(\tfrac{1}{N})- \sum_{k=0}^{m-1} 
	\frac{\Phi_h^{(k)}(0)}{k!N^k} 
	\right| 
	\leq 
	\frac{\|\Phi^{(m)}\|_{[0,\frac{\delta}{2}]}}{m!N^m}
$$
whenever $\frac{1}{N}\le\frac{\delta}{2}=\frac{1}{48qq_0}$
by Taylor's theorem.
Both parts of the lemma now follow immediately from 
\eqref{eq:guemasterbern}, concluding the proof.
\end{proof}

\subsection{Extension to smooth functions}
\label{sec:theextensionproblemGUE}

To complete the proof of Theorem \ref{thm:smasympexGUE}, it remains to 
extend the expansion of Lemma \ref{lem:ratesofconv} from polynomial to 
smooth test functions $h$. The difficulty here is that, unlike in 
\cite{chen2024new}, the expansion \eqref{eq:rateofconvergence} cannot hold 
for arbitrarily large $q\in\bN$ due to the unboundedness of the Gaussian 
distribution. To surmount this problem, we must provide a separate 
treatment of the low- and high-degree terms in the Chebyshev expansion of 
$h$.

\begin{proof}[Proof of Theorem \ref{thm:smasympexGUE}]
We first note that the linear functional $\nu_m$ of Lemma~\ref{lem:ratesofconv} 
extends to a compactly supported distribution with 
$|\nu_m(h)| \lesssim \|h\|_{C^{2m+1}[-K,K]}$ for every $m\in\bZ_+$ by 
Lemma \ref{lem:extensiontoadistribution} and the inequality 
\eqref{eq:infinitesimalbound}.

In the following, we fix any bounded $h\in C^\infty(\mathbb{R})$ and 
denote by
\begin{equation}
\label{eq:chebyshevexpansionforh}
	h(x) = \sum_{q=0}^\infty a_q\,T_q(K^{-1}x)\quad 
	\text{for } x\in [-K, K]
\end{equation}
its Chebyshev expansion on the interval $[-K,K]$ (cf.\ section 
\ref{sec:chebexp}).
Note that as $\E[\tr h(\xn)]=c$ is independent of $N$ whenever 
$h\equiv c$ is a constant function, we have $\nu_0(c)=c$ and $\nu_k(c)=0$ 
for all $k\ge 1$. This implies that the theorem statement is invariant 
under the replacement $h\leftarrow h-a_0$. We will therefore assume 
without loss of generality in the rest of the proof that $a_0=0$.

Let $B\le\frac{N}{q_0}$ be the largest integer $q$ for which 
\eqref{eq:rateofconvergence} has been established, and let
$$
	h_0(x) := \sum_{q=1}^B a_q\,T_q(K^{-1}x).
$$
Then we can estimate
\begin{multline*}
	\left| \E[\tr h(\xn)]-  \sum_{k=0}^{m-1} \frac{\nu_k(h)}{N^k} 
	\right| \leq 
	\left| \E[\tr h_0(\xn)]- \sum_{k=0}^{m-1} 
	\frac{\nu_k(h_0)}{N^k} \right| \\ 
	+
	\sum_{k=0}^{m-1}  \frac{|\nu_k(h-h_0)|}{N^k}   + 
	|\E[\tr (h(\xn)-h_0(\xn))]| .
\end{multline*}
We now bound each term on the right-hand side.

\medskip

\noindent\textbf{First term.} Using \eqref{eq:rateofconvergence}, 
we readily estimate
\begin{align*}
	\left| \E[\tr h_0(\xn)]- \sum_{k=0}^{m-1} 
	\frac{\nu_k(h_0)}{N^k} \right| 
	& \leq \sum_{q=1}^{B} |a_q| \left| \E[\tr T_q(K^{-1}\xn) ]- 
	\sum_{k=0}^{m-1} \frac{\nu_k(T_q(K^{-1}x))}{N^k} \right|  
 \\ 
	&\leq \frac{(Cq_0)^{2m}}{ m! N^m} \sum_{q=1}^{B} q^{2m} |a_q|.
\end{align*}

\smallskip

\noindent\textbf{Second term.}
Because $|\nu_k(h)| \lesssim \|h\|_{C^{2k+1}[-K,K]}$ for all 
$k$, we are able to substitute the Chebyshev expansion 
\eqref{eq:chebyshevexpansionforh} into $\nu_k(h-h_0)$. This yields
$$
	\sum_{k=0}^{m-1}  \frac{|\nu_k(h-h_0)|}{N^k}  \leq 
	\sum_{k=0}^{m-1} \frac{1}{N^k} \sum_{q> B} |a_q| 
	\, |\nu_k(T_q(K^{-1}x))| 
	\leq  
	\sum_{k=0}^{m-1} \frac{(Cq_0)^{2k}}{k! N^k} 
	\sum_{q> B} q^{2k}|a_q|
$$
using \eqref{eq:infinitesimalbound}.
Now note that 
$q^{2k} \le \frac{q^{2m}}{B^{2(m-k)}} \le
(\frac{Cq_0}{N})^{2(m-k)}q^{2m}$
for any $k<m$ and $q>B$, 
where we used that $B\gtrsim \frac{N}{q_0}$ by Lemma 
\ref{lem:ratesofconv}. We therefore obtain
$$
	\sum_{k=0}^{m-1}  \frac{|\nu_k(h-h_0)|}{N^k}  \leq 
	\Bigg(\sum_{k=0}^{m-1} \frac{1}{k!N^{m-k}}\Bigg)
	\frac{(Cq_0)^{2m}}{N^m}
	\sum_{q> B} q^{2m}|a_q|.
$$

\smallskip

\noindent\textbf{Third term.}
We estimate
\begin{align*}
	&|\E[\tr (h(\xn)-h_0(\xn))]| 
%\\&
\le
	|\E[\tr (h(\xn)-h_0(\xn))\cdot 1_{\{\|\xn\|\leq K\}}]|  
\\
	&\qquad\qquad\qquad
	+
	|\E[\tr h(\xn)\cdot 1_{\{\|\xn\|> K\}}]| +
	|\E[\tr h_0(\xn)\cdot 1_{\{\|\xn\|> K\}}]|  
\\
	& \le 
	\frac{(Cq_0)^{2m}}{N^{2m}}
	\sum_{q>B}q^{2m}|a_q| +
	Cre^{-cN}
	\Bigg(
	\sobnorm{h}{0}{(-\infty,\infty)} +
	\sum_{q\le B}|a_q|\Bigg)
\end{align*}
using \eqref{eq:chebyshevexpansionforh} and $1\le 
(\frac{Cq_0}{N})^{2m}q^{2m}$, 
\eqref{eq:XNlessthanK}, and
\eqref{eq:secondapriori}, 
respectively.

\medskip

Combining the above estimates yields
\begin{multline*}
	\left| \E[\tr h(\xn)]-  \sum_{k=0}^{m-1} \frac{\nu_k(h)}{N^k} 
	\right| \\ \leq 
	\Bigg(\sum_{k=0}^{m} \frac{1}{k!N^{m-k}}\Bigg)
	\frac{(Cq_0)^{2m}}{N^m}
	\sum_{q=1}^\infty q^{2m}|a_q|
	+
	Cre^{-cN}
	\Bigg(
	\sobnorm{h}{0}{(-\infty,\infty)} +
	\sum_{q=1}^\infty|a_q|\Bigg).
\end{multline*}
To conclude the proof, it remains to
apply Lemma \ref{lem:zygmund} and to note that we can estimate
$
	\sum_{k=0}^{m} \frac{1}{k!N^{m-k}}
	\le
	\frac{1}{m!}
	\sum_{k=0}^{m} (\frac{m}{N})^{m-k}
	\le
	\frac{2}{m!}
$
for $m\le \frac{N}{2}$.
\end{proof}

\section{Strong convergence for GUE}
\label{sec:bootstrapping}

The aim of this section is to complete the proof of 
Theorem~\ref{thm:maingauss} for the case that 
$\boldsymbol{G}^N=(G_1^N,\ldots,G_r^N)$ are GUE matrices. With the 
asymptotic expansion of Theorem~\ref{thm:smasympexGUE} in hand, it remains 
to control the supports of the infinitesimal distributions $\nu_k$ as in 
\eqref{eq:suppincl}. To this end, we will first prove a basic form of 
strong convergence in section~\ref{sec:guebasiccvg}, which only requires 
control of $\nu_1$. We will then apply a bootstrapping argument in 
section~\ref{sec:bootsgue} to extend the conclusion to all $\nu_k$. 
Finally, we complete the proof of Theorem~\ref{thm:maingauss} for GUE 
in section~\ref{sec:pfmaingue}.

Throughout this section, we will always assume without further comment 
that the setting and notations of section \ref{sec:thepolymethodforGUE} 
are in force.

\subsection{Strong convergence}
\label{sec:guebasiccvg}

Establishing strong convergence for GUE matrices is especially simple due 
to the following observation.

\begin{lem}
\label{lem:guenu01}
In the setting of Theorem \ref{thm:smasympexGUE}, we have
$$
	\nu_0(h) = ({\tr\otimes\tau})(h(\xf))
	\quad\text{and}\quad
	\nu_1(h)=0\quad
	\text{for all }h\in C^\infty(\mathbb{R}).
$$
\end{lem}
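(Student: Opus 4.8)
The claim has two parts. The first, $\nu_0(h) = (\tr\otimes\tau)(h(\xf))$, should follow by combining the asymptotic expansion of Theorem~\ref{thm:smasympexGUE} with weak asymptotic freeness. Indeed, for a fixed smooth $h$, Theorem~\ref{thm:smasympexGUE} gives $\E[\tr h(\xn)] = \nu_0(h) + O(1/N)$ as $N\to\infty$ (taking $m=1$, so that the $f^{(3)}$ error term and the $e^{-cN}$ term both vanish in the limit). On the other hand, Lemma~\ref{lem:waf} (weak asymptotic freeness) together with a standard approximation of $h$ on $[-K,K]$ by polynomials — or, more cleanly, an application of the already-established polynomial expansion \eqref{eq:rateofconvergence} term by term in the Chebyshev expansion of $h$, exactly as in the proof of Theorem~\ref{thm:smasympexGUE} — shows $\E[\tr h(\xn)] \to (\tr\otimes\tau)(h(\xf))$. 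Comparing the two limits identifies $\nu_0(h)$ with $(\tr\otimes\tau)(h(\xf))$. One should take a little care that $h$ is only continuous and bounded, not compactly supported, but since $\|\xn\|\le K$ with probability $1 - O(e^{-cN})$ by \eqref{eq:XNlessthanK} and $\|\xf\|\le K$ deterministically, only the values of $h$ on $[-K,K]$ matter up to exponentially small errors, and polynomial approximation on this compact interval suffices.

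The second part, $\nu_1(h) = 0$, is where the self-duality of GUE enters, and this is the main point. The key input is Lemma~\ref{lem:polyencoding}: for $h\in\calP_q$ one has $\E[\tr h(\xn)] = \Phi_h(\tfrac1N) = \Phi_h(-\tfrac1N)$, i.e.\ $\Phi_h$ is an \emph{even} polynomial. Since $\nu_k(h) = \Phi_h^{(k)}(0)/k!$ by the construction in the proof of Lemma~\ref{lem:ratesofconv}, evenness of $\Phi_h$ forces all odd-order Taylor coefficients at $0$ to vanish; in particular $\nu_1(h) = \Phi_h'(0) = 0$ for every polynomial $h$. To pass from polynomials to arbitrary smooth $h$, I would invoke that $\nu_1$ extends to a compactly supported distribution (Lemma~\ref{lem:extensiontoadistribution}, via the bound \eqref{eq:infinitesimalbound} with $m=1$), and then approximate $h$ on a neighborhood of $[-K,K]$ in $C^2$ by polynomials (e.g.\ truncating its Chebyshev expansion, using Lemma~\ref{lem:zygmund} to control the tail), so that $\nu_1(h) = \lim \nu_1(h_n) = 0$.

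I do not anticipate a serious obstacle here — the argument is essentially a bookkeeping exercise combining three already-established facts (the expansion, weak asymptotic freeness, and the parity of $\Phi_h$). The one place that requires mild attention is the justification that both $\nu_0$ and $\nu_1$, \emph{a priori} defined only on $\calP$, agree with the claimed expressions on all of $C^\infty(\bR)$: this is handled uniformly by the distributional extension in Lemma~\ref{lem:extensiontoadistribution} together with the observation (already used in the proof of Theorem~\ref{thm:smasympexGUE}) that constants are in the kernel of $\nu_k$ for $k\ge 1$ and are reproduced by $\nu_0$, so one may reduce to $a_0 = 0$ and work with the uniformly convergent Chebyshev expansion. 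With that in place, the continuity of $\nu_0,\nu_1$ as distributions and the density of polynomials in $C^2[-K',K']$ for $K' > K$ close the argument.
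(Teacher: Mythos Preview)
Your proposal is correct and follows essentially the same route as the paper: identify $\nu_0$ via weak asymptotic freeness (Lemma~\ref{lem:waf}) and deduce $\nu_1=0$ from the evenness of $\Phi_h$ in Lemma~\ref{lem:polyencoding}, then pass from polynomials to smooth $h$ by distributional continuity. The paper's proof is more terse, dispatching the extension step in a single sentence (``It suffices to consider $h\in\mathcal{P}$'') and working directly with $\nu_0(h)=\lim_{N\to\infty}\E[\tr h(\xn)]$ for polynomial $h$ rather than invoking Theorem~\ref{thm:smasympexGUE}, but the substance is identical.
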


\begin{proof}
It suffices to consider $h\in\mathcal{P}$. Then Lemma \ref{lem:waf} yields
$$
	\nu_0(h) = \lim_{N\to\infty} \E[\tr \, h(\xn)] =
	({\tr\otimes\tau})(h(\xf)).
$$
Now note that as 
$\Phi_h(-\frac{1}{N})=\Phi_h(\frac{1}{N})$ in Lemma 
\ref{lem:polyencoding}, the polynomial $\Phi_h(x)$ can contain only 
monomials of even degree. Thus $\nu_1(h) = \Phi_h'(0)=0$.
\end{proof}

This yields the following.

\begin{cor}[Strong convergence]
\label{cor:strongconv}
$\|\xn\|\to\|X_{\rm F}\|$ in probability as $N\to\infty$.
\end{cor}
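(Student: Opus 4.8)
The plan is to combine the asymptotic expansion of Theorem~\ref{thm:smasympexGUE} with the vanishing of $\nu_0-(\tr\otimes\tau)$ and $\nu_1$ established in Lemma~\ref{lem:guenu01}, applied to a single well-chosen smooth test function. First I would fix $\varepsilon\in(0,1)$ and let $\chi$ be the test function of Lemma~\ref{lem:testfunctions} with $\rho=\|X_{\rm F}\|$ and the interval $[-K,K]$ of Theorem~\ref{thm:smasympexGUE}, so that $\chi$ vanishes on $[-\|X_{\rm F}\|-\varepsilon/2,\|X_{\rm F}\|+\varepsilon/2]$ and equals $1$ outside $[-\|X_{\rm F}\|-\varepsilon,\|X_{\rm F}\|+\varepsilon]$. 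By \eqref{eq:conclosehere}-type reasoning,
$$
	\mathbf{P}\big[\|\xn\|\ge\|X_{\rm F}\|+\varepsilon\big]
	\le
	N\,\mathbf{E}[\tr\chi(\xn)].
$$
Now I would apply Theorem~\ref{thm:smasympexGUE} with $m=2$: since $\nu_0(\chi)=(\tr\otimes\tau)(\chi(X_{\rm F}))=0$ because $\chi$ vanishes on a neighborhood of $\spec(X_{\rm F})\subseteq[-\|X_{\rm F}\|,\|X_{\rm F}\|]$, and $\nu_1(\chi)=0$ by Lemma~\ref{lem:guenu01}, the expansion gives
$$
	\mathbf{E}[\tr\chi(\xn)]
	\le
	\frac{(Cq_0)^4}{2N^2}\,\|f^{(5)}\|_{[0,2\pi]}
	+
	Cre^{-cN}\big(1+\|f^{(1)}\|_{[0,2\pi]}\big),
$$
where $f(\theta)=\chi(K\cos\theta)$. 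Bounding $\|f^{(k+1)}\|_{[0,2\pi]}$ by Lemma~\ref{lem:testfunctions} (with $m=4$) by a constant depending only on $q_0,r,\varepsilon$, I get $\mathbf{E}[\tr\chi(\xn)]=O(N^{-2})$, hence $N\,\mathbf{E}[\tr\chi(\xn)]=O(N^{-1})\to 0$. Thus $\mathbf{P}[\|\xn\|\ge\|X_{\rm F}\|+\varepsilon]\to 0$ for every $\varepsilon>0$.

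It remains to prove the matching lower bound $\mathbf{P}[\|\xn\|\le\|X_{\rm F}\|-\varepsilon]\to 0$. Here I would invoke Lemma~\ref{lem:waf} (weak asymptotic freeness): for any fixed noncommutative polynomial $Q$, $\mathbf{E}[\tr Q(\boldsymbol{G}^N)]\to(\tr\otimes\tau)(Q(\boldsymbol{s}))$, and in fact standard concentration (or a second-moment computation, noting that the variance of $\tr Q(\boldsymbol{G}^N)$ is $O(D^2/N^2)=o(1)$ for fixed $P$, hence fixed $D$) gives $\tr Q(\boldsymbol{G}^N)\to(\tr\otimes\tau)(Q(\boldsymbol{s}))$ in probability. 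Applying this to $Q=(P^{2p})$ for a fixed large integer $p$ yields $\tr (X^N)^{2p}\to (\tr\otimes\tau)((X_{\rm F})^{2p})$ in probability, and therefore $\|\xn\|\ge (\tr (X^N)^{2p})^{1/2p}\to ((\tr\otimes\tau)((X_{\rm F})^{2p}))^{1/2p}$ in probability. Letting $p\to\infty$, the right-hand side converges to $\|X_{\rm F}\|$ (using faithfulness of $\tau$), so for any $\varepsilon>0$ we can choose $p$ large enough that $\liminf_{N\to\infty}\mathbf{P}[\|\xn\|\ge\|X_{\rm F}\|-\varepsilon]=1$. Combining the two directions gives $\|\xn\|\to\|X_{\rm F}\|$ in probability.

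The only genuinely delicate point is making sure the upper-bound argument is legitimately allowed to take $m=2$: this requires $\supp\nu_1\subseteq[-\|X_{\rm F}\|,\|X_{\rm F}\|]$, which is exactly what Lemma~\ref{lem:guenu01} supplies (indeed $\nu_1\equiv 0$), so no problem-specific moment computation is needed here — the self-duality $\Phi_h(-1/N)=\Phi_h(1/N)$ does all the work. Everything else is routine: the test-function estimates come verbatim from Lemma~\ref{lem:testfunctions}, and the lower bound uses only weak asymptotic freeness plus the elementary fact that $\ell^{2p}$-moments of the empirical spectral distribution converge to the operator norm. I expect the main obstacle, such as it is, to be purely bookkeeping: tracking that the constant $C$ in Theorem~\ref{thm:smasympexGUE} depends only on $q_0,r$ so that the $O(N^{-2})$ bound holds with $\varepsilon$ fixed (it does, since for fixed $\varepsilon$ the factors $(K/\varepsilon)^{k+1}$ from Lemma~\ref{lem:testfunctions} are absorbed into the implied constant), and confirming that $\chi$ indeed vanishes on a neighborhood of $[-\|X_{\rm F}\|,\|X_{\rm F}\|]$ so that $\nu_0(\chi)=0$.
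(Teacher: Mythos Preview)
Your proof is correct and follows essentially the same route as the paper: the upper bound via Theorem~\ref{thm:smasympexGUE} at $m=2$ with the test function of Lemma~\ref{lem:testfunctions}, using $\nu_0(\chi)=\nu_1(\chi)=0$ from Lemma~\ref{lem:guenu01}, is exactly what the paper does (the paper writes $DN$ rather than $N$ in the trace bound, but $D$ is fixed here so this is immaterial). You are in fact more explicit than the paper about the lower bound, which the paper's proof of this corollary leaves implicit; your moment argument via Lemma~\ref{lem:waf} is the standard one and appears later in the paper as Lemma~\ref{lem:gausssubexplowerfixed}.
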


\begin{proof}
Fix $\varepsilon>0$ sufficiently small, and let $h$ be the test function 
of Lemma \ref{lem:testfunctions} with $m=4$, 
$K=(Cr)^{q_0}\|X_{\rm F}\|$, and $\rho=\|X_{\rm F}\|$. Then 
$\nu_0(h)=\nu_1(h)=0$ by Lemma~\ref{lem:guenu01} and as $h$ vanishes on
$[-\|\xf\|-\frac{\varepsilon}{2},\|\xf\|+\frac{\varepsilon}{2}]$. Thus 
Theorem~\ref{thm:smasympexGUE} with $m=2$ yields
$$
	\mathbf{P}[\|\xn\|>\|\xf\|+\varepsilon]
	\le
	\mathbf{E}[\trace h(X^N)] 
	= DN\,\mathbf{E}[\tr h(X^N)] 
	= O\bigg(\frac{D}{N}\bigg),
$$
where we used that $h(x)\in[0,1]$ for all $x$ and $h(x)=1$ for 
$|x|>\|\xf\|+\varepsilon$ in the first inequality. As $\varepsilon$ may be 
chosen arbitrarily small, the conclusion follows.
\end{proof}

\subsection{Bootstrapping}
\label{sec:bootsgue}

While Corollary \ref{cor:strongconv} has been stated in a qualitative 
form, its proof shows that strong convergence remains valid for sequences 
of noncommutative polynomials $P$ with matrix coefficients of dimension 
$D=o(N)$. The information obtained so far does not suffice, however, to 
capture coefficients of exponential dimension; to this end, the 
higher-order infinitesimal distributions $\nu_m$ must be controlled as 
well. The aim of this section is to prove the following.

\begin{prop}
\label{prop:supportofinfsGUE}
In the setting of Theorem \ref{thm:smasympexGUE}, we have
$$
	\supp\nu_m \subseteq[-\|\xf\|, \|\xf\|]
	\quad\text{for all }
	m\in\bZ_+.
$$
\end{prop}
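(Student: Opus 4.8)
The plan is to combine two ingredients already assembled in the paper: the concentration of measure phenomenon for GUE matrices, and the qualitative strong convergence established in Corollary~\ref{cor:strongconv}. The strategy follows the ``concentration + qualitative strong convergence'' implication sketched in section~\ref{sec:introboot}: we argue by contradiction, supposing that some point $x_0$ with $|x_0| > \|\xf\|$ lies in $\supp\nu_m$ for some $m$, and show this forces the spectrum of $\xn$ to contain an outlier near $x_0$ with probability $\gtrsim N^{1-m}$, which is incompatible with the exponential concentration of $\|\xn\|$ around its median (which itself converges to $\|\xf\|$).

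First I would record the concentration statement: since each entry of $\gue_i$ is Gaussian and $P$ is a fixed polynomial, $\xn = P(\boldsymbol{G}^N)$ is a Lipschitz function of the Gaussian entries on the event $\{\|\gue_i\|\le \kappa+1 \ \forall i\}$ (whose complement has probability $\le Cre^{-cN}$ by Lemma~\ref{lem:subgaussiannorm}), so by Gaussian concentration $\|\xn\|$ concentrates around its median $\mathrm{med}(\|\xn\|)$ at exponential scale: $\mathbf P[\,|\|\xn\| - \mathrm{med}(\|\xn\|)| > t\,] \le Cre^{-cNt^2} $ for $t$ in a fixed range. Combined with Corollary~\ref{cor:strongconv}, this gives $\mathrm{med}(\|\xn\|) \le \|\xf\| + o(1)$, hence for any fixed $\eta>0$,
\begin{equation*}
	\mathbf P[\|\xn\| \ge \|\xf\| + \eta] \le Cre^{-cN}
\end{equation*}
for all $N$ large enough (depending on $\eta$).

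Next I would suppose for contradiction that $\supp\nu_m \not\subseteq [-\|\xf\|,\|\xf\|]$ for some $m\ge 1$; pick $x_0$ in the support with $|x_0| = \|\xf\| + 3\eta$ for some $\eta>0$. By definition of support there is a test function $\psi\in C^\infty(\mathbb R)$ supported in $(|x_0|-\eta,\,|x_0|+\eta) \cup (-|x_0|-\eta,\,-|x_0|+\eta)$ with $0\le\psi\le 1$ and $\nu_m(\psi)\ne 0$; by Lemma~\ref{lem:guenu01} we already know $\nu_0(\psi)=\nu_1(\psi)=0$, and $\psi$ also vanishes on $[-\|\xf\|,\|\xf\|]$ so that $\nu_k(\psi)=0$ for $k\le m-1$ would follow if we knew the support claim for smaller $m$ — so it is cleanest to take $m$ \emph{minimal} with the support violation, so that $\nu_k(\psi)$ vanishes for all $k<m$. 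Applying Theorem~\ref{thm:smasympexGUE} with this $\psi$ and with order $m+1$ in place of $m$ (note $\psi$ is compactly supported so all the $C^j$ norms are finite and $N$-independent) gives
\begin{equation*}
	\mathbf E[\tr\psi(\xn)] = \frac{\nu_m(\psi)}{N^m} + O\!\left(\frac{1}{N^{m+1}}\right),
\end{equation*}
so $\mathbf E[\trace\psi(\xn)] = DN\,\mathbf E[\tr\psi(\xn)] \sim DN^{1-m}|\nu_m(\psi)| \gg 0$; in particular, since $\trace\psi(\xn)$ counts (a lower bound on) the number of eigenvalues of $\xn$ in the support of $\psi$, we get $\mathbf P[\exists \text{ eigenvalue of } \xn \text{ in } \supp\psi] \ge \mathbf E[\trace\psi(\xn)]/\|\psi\|_\infty \cdot (\text{something}) $ — more carefully, $\trace\psi(\xn)\le \#\{\text{eigenvalues in }\supp\psi\}$, and also $\trace\psi(\xn) \le M_N = DN$ always, so $\mathbf P[\trace\psi(\xn)\ge 1] \ge \mathbf E[\trace\psi(\xn)]/(DN) \gtrsim N^{-m}$. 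But any eigenvalue in $\supp\psi$ has absolute value $\ge |x_0|-\eta = \|\xf\|+2\eta$, forcing $\|\xn\| \ge \|\xf\|+2\eta$, an event of probability $\le Cre^{-cN}$ by the previous paragraph. Since $N^{-m} \gg e^{-cN}$, this is the desired contradiction, completing the proof.

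The main obstacle I anticipate is not in the logical skeleton above but in the bookkeeping around two points: first, making the ``minimal $m$'' induction airtight — one must ensure that for the chosen $\psi$ all lower-order $\nu_k(\psi)$ genuinely vanish, which requires knowing $\supp\nu_k\subseteq[-\|\xf\|,\|\xf\|]$ for $k<m$ and hence an induction on $m$ with the base cases $k=0,1$ handled by Lemma~\ref{lem:guenu01}; and second, verifying that Theorem~\ref{thm:smasympexGUE} is actually applicable to a compactly supported $\psi$ not vanishing near $\pm\|\xf\|$ — this is fine because that theorem holds for any bounded smooth $h$, and the error terms involve only $\|f^{(j)}\|_{[0,2\pi]}$ and $\|h\|_\infty$, which are finite and independent of $N$ for fixed $\psi$, so the $e^{-cN}$ term is negligible against $N^{-(m+1)}$. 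A secondary subtlety is the direction of the inequality relating $\trace\psi(\xn)$ to the outlier count and to $DN$; one should simply use $0 \le \trace\psi(\xn) \le \#\{\text{outliers}\}\wedge DN$ together with Markov to extract $\mathbf P[\text{outlier exists}] \gtrsim \mathbf E[\trace\psi(\xn)]/(DN)$, and there is a compensating subtlety if $\nu_m(\psi) < 0$ — but one can always arrange $\nu_m(\psi)>0$ by replacing $\psi$ with a suitable nonnegative bump, or simply note that $|\mathbf E[\trace\psi(\xn)]|$ small and $\mathbf E[\trace\psi(\xn)]\ge 0$ together still yield the bound since the leading term is $\sim \nu_m(\psi)N^{1-m}$ and must be nonnegative, forcing $\nu_m(\psi)\ge 0$, hence $>0$.
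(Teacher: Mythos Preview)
Your proposal is correct and follows essentially the same approach as the paper: induction on $m$, using qualitative strong convergence (Corollary~\ref{cor:strongconv}) together with Gaussian concentration (Lemma~\ref{lem:concentrationaroundthemean}) to show that test functions supported outside $[-\|\xf\|,\|\xf\|]$ have exponentially small expected trace. The paper's execution is slightly more direct: rather than arguing by contradiction via an outlier-probability lower bound, it simply notes that for any bounded $h$ vanishing on $[-\|\xf\|-\varepsilon,\|\xf\|+\varepsilon]$ one has $|\E[\tr h(\xn)]|\le \|h\|_\infty\,\mathbf{P}[\|\xn\|>\|\xf\|+\varepsilon]=O(e^{-cN})$, whence $|\nu_m(h)|\le N^m|\E[\tr h(\xn)]|+O(N^{-1})\to 0$ --- this sidesteps the nonnegativity/sign subtlety you flag at the end, since no restriction to $h\ge 0$ is ever needed.
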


As was explained in section \ref{sec:introboot}, we will prove this 
theorem by combining Corollary \ref{cor:strongconv} with concentration of 
measure. This enables a bootstrapping argument that uses only information 
on $\nu_0,\nu_1$ in Lemma \ref{lem:guenu01} to achieve control of $\nu_k$ 
for all $k$.

\begin{rem}
Proposition \ref{prop:supportofinfsGUE} for GUE recovers a result of 
Parraud 
\cite{parraud2023asymptotic} in the special case $D=1$ (that is, without 
matrix coefficients). The argument given here holds for any $D\in\bN$
and will extend almost verbatim beyond the GUE case.
\end{rem}

\subsubsection{Concentration of measure}
\label{sec:concentrationaroundthemedian}

Before we develop the bootstrapping argument, we recall the requisite 
concentration property in a convenient form for our purposes. Similar 
results are well known, see, e.g., \cite[Lemma 7.6]{pisier2014random}. We 
include the proof as it is short and carries over to GOE and GSE 
matrices without any changes. Here and in the sequel, $\mathrm{med}(Z)$ 
denotes the median of a random variable $Z$.

\begin{lem}
\label{lem:concentrationaroundthemean}
For any $\varepsilon>0$ and $N\in\bN$, we have
$$
	\P[|\|\xn\|-\mathrm{med}(\|\xn\|)|>\varepsilon] \leq 
	Cr e^{-cN} +  Ce^{-c_P N \varepsilon^2}
$$
for a constant $c_P>0$ that depends only on $P$ and universal 
constants $C,c>0$. 
\end{lem}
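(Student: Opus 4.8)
The plan is to combine the standard Gaussian concentration inequality with the a priori tail bound \eqref{eq:XNlessthanK} from Lemma \ref{lem:apriori}. First I would observe that $\|X^N\| = \|P(\boldsymbol{G}^N)\|$ is a function of the Gaussian entries of $\boldsymbol{G}^N$, viewed as a point in a real Euclidean space of dimension $\sim rN^2$ equipped with the standard Gaussian measure (of variance $\frac{1}{N}$ per coordinate, after the appropriate identifications for the complex/real/quaternionic cases). On the event $\{\|\boldsymbol{G}^N\|_{\rm op}\le \kappa+1\}$ — more precisely on the set where each $\|G_i^N\|\le\kappa+1$ — the map $\boldsymbol{G}^N\mapsto\|P(\boldsymbol{G}^N)\|$ is Lipschitz with respect to the Hilbert--Schmidt norm, with a Lipschitz constant $L_P$ depending only on $P$ (and $r,q_0$): this follows because $P$ is a fixed noncommutative polynomial, so on a bounded operator-norm ball its value is a Lipschitz function of the entries, and $\|\cdot\|$ is $1$-Lipschitz on matrices. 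Rescaling by the variance $\frac1N$ turns this into a Lipschitz constant of order $L_P/\sqrt N$ with respect to the standard (variance-one) Gaussian.

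Next I would apply the Gaussian concentration inequality (e.g.\ \cite[Ch.\ 1]{Led01}) to a suitable Lipschitz \emph{extension} $F$ of $\boldsymbol{G}^N\mapsto\|P(\boldsymbol{G}^N)\|$ off the bounded ball — for instance, compose with a truncation of the entries at operator norm $\kappa+1$, or use the fact that $P(\boldsymbol{G}^N)$ itself is globally Lipschitz of polynomially-growing constant and handle the tail separately. The cleanest route: let $F$ be globally $(L_P/\sqrt N)$-Lipschitz and agree with $\|P(\boldsymbol{G}^N)\|$ on $\bigcap_i\{\|G_i^N\|\le\kappa+1\}$. Gaussian concentration gives
$$
\mathbf{P}\big[|F - \mathrm{med}(F)| > \varepsilon/2\big] \le 2e^{-c_P N\varepsilon^2}
$$
for a constant $c_P$ depending only on $P$ (absorbing $L_P$). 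On the good event, $F$ coincides with $\|X^N\|$, and that event has probability $\ge 1 - Cre^{-cN}$ by the first inequality of Lemma \ref{lem:subgaussiannorm} together with a union bound (exactly the estimate used in the proof of \eqref{eq:XNlessthanK}). It remains to compare $\mathrm{med}(F)$ with $\mathrm{med}(\|X^N\|)$: since $F$ and $\|X^N\|$ differ only on an event of probability $\le Cre^{-cN} < \frac14$ (for $N$ large), their medians differ by at most $\varepsilon/2$ whenever, say, $e^{-cN}\le \varepsilon^2$; in the complementary small-$N$ or tiny-$\varepsilon$ regime the claimed bound $Cre^{-cN}+Ce^{-c_PN\varepsilon^2}$ is $\ge 1$ after adjusting constants and there is nothing to prove. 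Combining the three contributions — the concentration bound for $F$, the bad-event probability, and the median comparison — yields
$$
\mathbf{P}\big[\,|\|X^N\| - \mathrm{med}(\|X^N\|)| > \varepsilon\,\big] \le Cre^{-cN} + Ce^{-c_P N\varepsilon^2},
$$
as desired.

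The main obstacle is the Lipschitz estimate for $\boldsymbol{G}^N\mapsto\|P(\boldsymbol{G}^N)\|$: one must be careful that $P$ is not globally Lipschitz with a dimension-free constant (its gradient grows polynomially with the operator norms of the $G_i^N$), so the argument genuinely needs the truncation to the bounded ball, and one must verify that the truncated/extended function can be taken globally Lipschitz with constant $O(L_P/\sqrt N)$ without the constant depending on $N$ beyond this scaling. For GOE and GSE the same argument applies verbatim once the entries are parametrized by i.i.d.\ real Gaussians (four real coordinates per quaternionic off-diagonal entry, etc.), and the operator-norm tail bound of Lemma \ref{lem:subgaussiannorm} holds in these cases too, as already noted there. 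Everything else is routine.
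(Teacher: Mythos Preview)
Your proposal is correct and follows essentially the same approach as the paper: restrict to the event $\Omega=\{\|G_i^N\|\le\kappa+1\text{ for all }i\}$ where $\boldsymbol{G}^N\mapsto\|P(\boldsymbol{G}^N)\|$ is $L_P$-Lipschitz, use the union bound from Lemma~\ref{lem:subgaussiannorm} to control $\mathbf{P}[\Omega^c]$, and combine with Gaussian concentration. The only difference is that the paper invokes a prepackaged ``local concentration'' lemma (Lemma~\ref{lem:localconcentration}, taken from \cite{aubrun2014entanglement}) which directly gives
\[
\mathbf{P}\big[|f(Z)-\mathrm{med}(f(Z))|>\varepsilon\big]\le \mathbf{P}[Z\notin\Omega]+Ce^{-c\varepsilon^2/L^2}
\]
for functions $f$ that are $L$-Lipschitz on $\Omega$ with $\mathbf{P}[Z\in\Omega]\ge\tfrac34$, thereby bypassing your Lipschitz-extension and median-comparison steps; your argument is essentially what the proof of that lemma does internally.
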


The proof uses a Gaussian concentration inequality
for non-Lipschitz functions.

\begin{lem}[Gaussian concentration] 
\label{lem:localconcentration}
Let $Z\sim N(0, \id_d)$ be a 
$d$-dimensional
standard Gaussian vector. 
Let $\Omega\subseteq \bR^d$ be a measurable set with 
$\P[Z\in \Omega] \geq \frac{3}{4}$, and let $f: \bR^{d}\to \bR$ be a
function whose restriction to $\Omega$ is $L$-Lipschitz.
Then
$$
	\P[|f(Z)-\mathrm{med}(f(Z))| > \varepsilon] 
	\leq \P[Z\notin \Omega] + C e^{-c \varepsilon^2/L^2}
$$
for any $\varepsilon>0$, where $C, c>0$ are universal constants. 
\end{lem}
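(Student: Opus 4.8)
The plan is to derive Lemma~\ref{lem:localconcentration} from the standard Gaussian concentration of measure for Lipschitz functions by a truncation/extension argument. The obstacle is that $f$ is only Lipschitz on $\Omega$, not globally, so we cannot apply Gaussian concentration to $f$ directly; we must first replace $f$ by a global Lipschitz surrogate and then transfer the conclusion back.

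First I would recall the classical Gaussian concentration inequality: if $g:\bR^d\to\bR$ is globally $L$-Lipschitz and $Z\sim N(0,\id_d)$, then $\P[|g(Z)-\mathrm{med}(g(Z))|>t]\le Ce^{-ct^2/L^2}$ for universal $C,c>0$ (e.g., \cite{Led01}). Next, set $g(x):=\inf_{y\in\Omega}\big(f(y)+L\|x-y\|\big)$, the infimal convolution (McShane--Whitney extension) of $f|_\Omega$; this $g$ is globally $L$-Lipschitz on $\bR^d$ and agrees with $f$ on $\Omega$ (since $f|_\Omega$ is $L$-Lipschitz). Apply Gaussian concentration to $g$: $\P[|g(Z)-\mathrm{med}(g(Z))|>\varepsilon]\le Ce^{-c\varepsilon^2/L^2}$.

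It remains to compare $\mathrm{med}(g(Z))$ with $\mathrm{med}(f(Z))$ and to replace $g$ by $f$ inside the probability. Here I would use that $\P[Z\in\Omega]\ge\frac34>\frac12$: on the event $\{Z\in\Omega\}$ we have $f(Z)=g(Z)$, and a short argument shows $|\mathrm{med}(f(Z))-\mathrm{med}(g(Z))|\le 2\varepsilon$ whenever $\P[|g(Z)-\mathrm{med}(g(Z))|>\varepsilon]<\frac14$ (if not, simply absorb this into the constants by taking $\varepsilon$ small; for large $\varepsilon$ the bound is trivial after adjusting $C$). Concretely, on $\{Z\in\Omega\}\cap\{|g(Z)-\mathrm{med}(g(Z))|\le\varepsilon\}$, which has probability $\ge\frac34-\frac14=\frac12$ when the concentration term is $<\frac14$, one has $|f(Z)-\mathrm{med}(g(Z))|\le\varepsilon$, forcing $\mathrm{med}(f(Z))$ to lie within $\varepsilon$ of $\mathrm{med}(g(Z))$. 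Then
$$
\P[|f(Z)-\mathrm{med}(f(Z))|>2\varepsilon]\le \P[Z\notin\Omega]+\P[|g(Z)-\mathrm{med}(g(Z))|>\varepsilon]\le\P[Z\notin\Omega]+Ce^{-c\varepsilon^2/L^2},
$$
using $f(Z)=g(Z)$ on $\Omega$ and the triangle inequality $|f(Z)-\mathrm{med}(f(Z))|\le|g(Z)-\mathrm{med}(g(Z))|+|\mathrm{med}(g(Z))-\mathrm{med}(f(Z))|$ on that event. Rescaling $\varepsilon\mapsto\varepsilon/2$ and adjusting the universal constants $C,c$ gives the claimed bound. The main point to be careful about is the threshold argument for the medians; since all estimates are only needed up to universal constants, the case distinction between "small $\varepsilon$" and "large $\varepsilon$" is harmless and can be folded into $C$.
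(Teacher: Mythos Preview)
Your proof is correct. The paper takes a different route: it simply cites \cite[Lemma~2.2]{aubrun2014entanglement}, which proves the analogous statement for the uniform measure on the sphere, and then invokes the Poincar\'e limit (the standard Gaussian as the weak limit of coordinate projections of the uniform measure on $S^{d-1}$ as $d\to\infty$) to transfer the conclusion to the Gaussian setting.

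Your approach via the McShane--Whitney extension is more self-contained: it works directly with the Gaussian measure and only needs the classical concentration inequality for globally Lipschitz functions, avoiding the sphere detour entirely. In fact, the Aubrun--Szarek argument on the sphere is essentially the same extension-plus-median-comparison scheme you wrote out, so you have effectively reproduced that proof in the Gaussian setting rather than importing it. One minor wrinkle: your parenthetical case analysis is phrased backwards---the condition $\P[|g(Z)-\mathrm{med}(g(Z))|>\varepsilon]<\tfrac14$ holds for \emph{large} $\varepsilon$ (this is the regime where the median comparison is needed), while for \emph{small} $\varepsilon$ the term $Ce^{-c\varepsilon^2/L^2}$ is at least $\tfrac14$ and the claimed bound is trivial after replacing $C$ by $4C$. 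This does not affect the correctness of the argument.
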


\begin{proof}
The result is stated in \cite[Lemma 2.2]{aubrun2014entanglement} for
random vectors on the unit sphere. The Gaussian case follows immediately 
by the Poincar\'e limit \cite[eq.\ (3.3)]{Bor75}.
\end{proof}

Lemma \ref{lem:concentrationaroundthemean} is then a direct consequence of 
the above result.

\begin{proof}[Proof of Lemma \ref{lem:concentrationaroundthemean}]
When restricted to the set 
$\Omega = \{\|G_i^N\|\leq \kappa+1 \text{ for all }i\in[r]\}$,
it is clear that $\|\xn\|=\|P(G_1^N,\ldots,G_r^N)\|$ is an $L_P$-Lipschitz 
function of the real and imaginary parts of the entries of
$G_1^N,\ldots,G_r^N$ on and above the diagonal, where $L_P$ 
depends only on $P$. As these entries are independent Gaussians with 
variance $\frac{1}{N}$, the conclusion follows readily from Lemma 
\ref{lem:localconcentration} and Lemma \ref{lem:subgaussiannorm}.
\end{proof}

\subsubsection{Proof of Proposition \ref{prop:supportofinfsGUE}} 
\label{sec:proofofsupportofinfsGUE}

The key observation behind the proof is that strong convergence and 
concentration of measure imply that any spectral statistic that vanishes 
in a neighborhood of $[-\|\xf\|,\|\xf\|]$ is exponentially small.

\begin{cor}
\label{cor:smallexpectations}
Fix $\varepsilon>0$ and any bounded function $h\in C^{\infty}(\bR)$ 
that vanishes on the interval
$[-\|\xf\|-\varepsilon,\|\xf\|+\varepsilon]$.
Then we have
$$
	|\E[ \tr h(\xn)]| = O(e^{-cN})
	\quad\text{as}\quad N\to\infty,
$$
where $c>0$ may depend on $P$ and $\varepsilon$.
\end{cor}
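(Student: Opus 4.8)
The plan is to combine the concentration estimate of Lemma~\ref{lem:concentrationaroundthemean} with the basic strong convergence statement of Corollary~\ref{cor:strongconv}. First I would observe that since $h\in C^\infty(\mathbb{R})$ is bounded and vanishes on $[-\|\xf\|-\varepsilon,\|\xf\|+\varepsilon]$, we have the pointwise bound $|\tr h(\xn)| \le \|h\|_{C^0(\mathbb{R})}\,\mathbf{1}_{\{\|\xn\|>\|\xf\|+\varepsilon\}}$, so that
$$
	|\E[\tr h(\xn)]| \le \|h\|_{C^0(\mathbb{R})}\,\P[\|\xn\|>\|\xf\|+\varepsilon].
$$
Thus the task reduces to showing that $\P[\|\xn\|>\|\xf\|+\varepsilon]$ decays exponentially in $N$.

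The key point is to promote the in-probability convergence of Corollary~\ref{cor:strongconv} to an exponential tail bound via concentration. By Corollary~\ref{cor:strongconv}, $\|\xn\|\to\|\xf\|$ in probability, which in particular forces $\mathrm{med}(\|\xn\|)\to\|\xf\|$ as $N\to\infty$. Hence for all $N$ sufficiently large (depending on $P$ and $\varepsilon$) we have $\mathrm{med}(\|\xn\|) \le \|\xf\|+\frac{\varepsilon}{2}$. On this event of large $N$, the inclusion $\{\|\xn\|>\|\xf\|+\varepsilon\}\subseteq\{\,|\|\xn\|-\mathrm{med}(\|\xn\|)|>\frac{\varepsilon}{2}\,\}$ holds, so Lemma~\ref{lem:concentrationaroundthemean} gives
$$
	\P[\|\xn\|>\|\xf\|+\varepsilon] \le Cr\,e^{-cN} + Ce^{-c_P N\varepsilon^2/4},
$$
which is $O(e^{-cN})$ with a constant $c>0$ depending on $P$ and $\varepsilon$ (for the finitely many small $N$ not covered, the bound is trivial after enlarging the implicit constant). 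Combining with the pointwise estimate above yields the claim.

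The only subtle point — which is really just a matter of care rather than a genuine obstacle — is that one must first extract convergence of the median from convergence in probability before invoking the Gaussian concentration inequality; the inequality of Lemma~\ref{lem:concentrationaroundthemean} is centered at the median, not at $\|\xf\|$, so a qualitative input is needed to locate the median near $\|\xf\|$. Everything else is routine: the bounded test function reduces the expectation to a tail probability, and the tail probability is handled by concentration. I do not anticipate any real difficulty here, which is exactly the point — the soft ingredients (qualitative strong convergence plus concentration of measure) do all the work, and no moment computation is required.
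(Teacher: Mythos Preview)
Your proof is correct and follows essentially the same approach as the paper: reduce to a tail probability via the pointwise bound, use Corollary~\ref{cor:strongconv} to locate the median within $\tfrac{\varepsilon}{2}$ of $\|\xf\|$ for large $N$, and then apply the concentration inequality of Lemma~\ref{lem:concentrationaroundthemean}.
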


\begin{proof}
By the assumption on $h$, we have
$$
	|\mathbb{E}[\tr h(\xn)]| 
	= |\mathbb{E}[\tr h(\xn) 1_{\|\xn\|> \|\xf\|+\varepsilon}]| 
	\le 
	\|h\|_{(-\infty,\infty)}\, \P[\|\xn\|> \|\xf\|+\varepsilon].
$$
Now note that $\mathrm{med}(\|\xn\|)\le \|\xf\|+\frac{\varepsilon}{2}$
for all $N$ sufficiently large by Corollary~\ref{cor:strongconv}. 
It follows that for all $N$ sufficiently large
$$
	|\mathbb{E}[\tr h(\xn)]| 
	\le 
	\|h\|_{(-\infty,\infty)}\, \P[\|\xn\|-\mathrm{med}(\|\xn\|)
	> \tfrac{\varepsilon}{2}],
$$
and the conclusion follows from Lemma \ref{lem:concentrationaroundthemean}. 
\end{proof}

We can now prove Proposition \ref{prop:supportofinfsGUE}.

\begin{proof}[Proof of Proposition \ref{prop:supportofinfsGUE}]
Fix any $\varepsilon>0$ and bounded $h\in C^{\infty}(\bR)$
that vanishes on $[-\|\xf\|-\varepsilon,\|\xf\|+\varepsilon]$.
We show by induction that $\nu_m(h) =0$ for all $m$.

That $\nu_0(h)=0$ follows immediately from Lemma \ref{lem:guenu01}.
Now let $m\geq 1$ and assume that we have shown $\nu_0(h)=\cdots  
=\nu_{m-1}(h) =0$. Then Theorem \ref{thm:smasympexGUE} yields
$$
	\bigg| \E[\tr h(\xn)] - \frac{\nu_m(h)}{N^m}\bigg| =
	O\bigg(\frac{1}{N^{m+1}}\bigg)
$$
as $N\to\infty$. Thus 
$$
	|\nu_m(h)| \le 
	N^m|\E[\tr h(\xn)]| +
	O\bigg(\frac{1}{N}\bigg)
	=
	O\bigg(\frac{1}{N}\bigg)
$$
as $N\to\infty$
by the triangle inequality and Corollary \ref{cor:smallexpectations}.
As the left-hand side is independent of $N$, it follows that $\nu_m(h)=0$.
\end{proof}

\subsection{Proof of Theorem \ref{thm:maingauss}: GUE case}
\label{sec:pfmaingue}

We can now prove Theorem \ref{thm:maingauss} in the case that 
$\boldsymbol{G}^N$ are GUE matrices by combining Theorem 
\ref{thm:smasympexGUE} and Proposition \ref{prop:supportofinfsGUE}.

\begin{proof}[Proof of Theorem \ref{thm:maingauss}: GUE case]
We may assume without loss of generality that $P$ is self-adjoint
(see Remark \ref{rem:selfadjoint} below).
Fix $\varepsilon\in(0,1]$, $K=(Cr)^{q_0}\|\xf\|$, and 
$m\in\bN$ with $m\le\frac{N}{2}$ that will be chosen at the end of the 
proof. 

Let $h$ be the test function provided by Lemma \ref{lem:testfunctions} 
with $m\leftarrow 2m$, $\rho\leftarrow \|\xf\|$, and 
$\varepsilon\leftarrow\varepsilon\|\xf\|$.
Then $\nu_k(h)=0$ for all $k\in\bZ_+$ by Proposition 
\ref{prop:supportofinfsGUE}, and thus
\begin{multline*}
	\mathbf{P}[\|\xn\| \ge (1+\varepsilon)\|\xf\|]
	\le
	\mathbf{E}[\trace h(\xn)] =
	DN\,\mathbf{E}[\tr h(\xn)]
\\
	\le
	DN \bigg[
	\frac{(Cq_0m)^{2m}}{m! N^m} 
	\bigg(\frac{(Cr)^{q_0}}{\varepsilon}\bigg)^{2m+1}
	+
	Cre^{-cN}\bigg(1 + \frac{(Cr)^{q_0}}{\varepsilon}\bigg)
	\bigg]
\end{multline*}
for a universal constant $C$
by Theorem \ref{thm:smasympexGUE} and Lemma \ref{lem:testfunctions}.
In particular, if we assume that $D\le e^m$, then we can further estimate
$$
	\mathbf{P}[\|\xn\| \ge (1+\varepsilon)\|\xf\|]
	\le
	\frac{(Cr)^{q_0+1}N}{\varepsilon}
	\bigg[
	\bigg(\frac{(Cr)^{2q_0}q_0^2m}{N\varepsilon^2}\bigg)^{m}
	+
	e^{m-cN} 
	\bigg]
$$
for a universal constant $C$, where we used that
$\frac{1}{m!}\le(\frac{e}{m})^m$.

Now choose $m=\lfloor \frac{N\varepsilon^2}{L} \rfloor$ with
$L:= \max\{e (Cr)^{2q_0}q_0^2,\frac{2}{c}\}$.
If $m\ge 2$, then we have $\frac{N\varepsilon^2}{2L} \le m \le 
\frac{N\varepsilon^2}{L}$, and the above estimate yields
$$
	\mathbf{P}[\|\xn\| \ge (1+\varepsilon)\|\xf\|]
	\le
	\frac{(Cr)^{q_0+1}N}{\varepsilon}
	\big(e^{-N\varepsilon^2 /2L}
	+ e^{-cN\varepsilon^2/2}
	\big),
$$
when $D\le e^{N\varepsilon^2 /2L}$,
where we used that $m-cN\le (\frac{1}{L}-c)N\varepsilon^2 
\le -\frac{cN\varepsilon^2}{2}$ as $\varepsilon\le 1$.

On the other hand, if $m<2$, then $\frac{N\varepsilon^2}{L}<2$ and thus
$$
	\mathbf{P}[\|\xn\| \ge (1+\varepsilon)\|\xf\|] \le
	1 
	\le 
	e^2 e^{-N\varepsilon^2/L}
	\le 
	\frac{e^2N}{\varepsilon} e^{-N\varepsilon^2/L},
$$
where we used that $\frac{N}{\varepsilon}\ge 1$.
This concludes the proof.
\end{proof}

\begin{rem}
\label{rem:selfadjoint}
Throughout this section, we assumed that $P$ is a \emph{self-adjoint} 
noncommutative polynomial, and proved Theorem \ref{thm:maingauss} in this 
case. However, the conclusion extends immediately to arbitrary $P$ by 
applying the self-adjoint case to $P^*P$. Thus the restriction to 
self-adjoint $P$ entails no loss of generality. We apply the 
same observation in the remainder of the paper without further comment. 
\end{rem}

\section{Strong convergence for GOE and GSE}
\label{sec:supersymmetryGOEGSE}

The aim of this section is to complete the proof of Theorem 
\ref{thm:maingauss} for the GOE and GSE ensembles. Most of the proof in 
the GUE case extends \emph{verbatim} to the present setting, so that we 
will focus attention only on the necessary modifications. The key 
difference in the case of GOE and GSE is that it is no longer true that 
$\Phi_h(x)=\Phi_h(-x)$ as in Lemma \ref{lem:polyencoding}, and thus that 
$\nu_1=0$ as in Lemma \ref{lem:guenu01}. Instead the arguments in the 
proof where these properties were used will be adapted to the GOE 
and GSE cases by exploiting supersymmetric duality.

The following setting and notations will be fixed throughout this section. 
Let $\boldsymbol{G}^N=(G_1^N,\ldots,G_r^N)$ and 
$\boldsymbol{H}^N=(H_1^N,\ldots,H_r^N)$ be independent GOE and GSE 
matrices of dimension $N$, respectively, and  
$\boldsymbol{s}=(s_1,\ldots,s_r)$ be a free semicircular family. We
fix a self-adjoint noncommutative polynomial $P\in 
\mathrm{M}_D(\bC) \otimes \bC \langle x_1,\ldots,x_r\rangle$ of degree 
$q_0$ with matrix coefficients of dimension $D$, and denote by
$$
	\xn := P(\boldsymbol{G}^N),\qquad\quad
	\yn := P(\boldsymbol{H}^N),\qquad\quad
	X_\mathrm{F} := P(\boldsymbol{s})
$$
the random matrices of interest and the limiting model.

\subsection{Supersymmetric duality}
\label{sec:goegsesupersymm}

The key fact we will use is that the GOE and GSE ensembles are dual in the 
sense that their moments are encoded by the same polynomial at 
positive and negative values. This is captured by the following result, 
which replaces Lemma \ref{lem:polyencoding} in the present setting.

\begin{lem}[Polynomial encoding]
\label{lem:polyencodingGOEGSE}
For any $h\in \calP_q$, there is $\Phi_h\in\calP_{qq_0}$
so that
\begin{alignat*}{2}
	&\E[\tr h(\xn)] &&= \Phi_h(\tfrac{1}{N}),\\
	&\E[\tr h(\yn)] &&= \Phi_h(-\tfrac{1}{2N}).
\end{alignat*}
\end{lem}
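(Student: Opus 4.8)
The plan is to follow the GUE argument of Lemma~\ref{lem:polyencoding}: first reduce both expectations to normalized traces of words in the entries of the $G_i^N$ (resp.\ $H_i^N$), then feed in the real topological expansion and the orthogonal--symplectic duality. For the reduction, since $h\in\calP_q$ and $P$ has degree $q_0$, write
$$
	h\big(P(x_1,\ldots,x_r)\big)=\sum_{\boldsymbol{i}}
	C_{\boldsymbol{i}}\otimes x_{i_1}\cdots x_{i_{\ell(\boldsymbol{i})}},
$$
a finite sum in which each multi-index $\boldsymbol{i}=(i_1,\ldots,i_{\ell(\boldsymbol{i})})$ has length $\ell(\boldsymbol{i})\le qq_0$ and the coefficients $C_{\boldsymbol{i}}\in\mathrm{M}_D(\bC)$ depend only on $h$ and $P$, not on $N$ or on the ensemble. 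Applying the normalized trace, which equals $\tr_D\otimes\tr_N$ on $\mathrm{M}_D(\bC)\otimes\mathrm{M}_N(\bC)$ and $\tr_D\otimes\tr_{2N}$ on the $2N$-dimensional complex model of $\mathrm{M}_D(\bC)\otimes\mathrm{M}_N(\bH)$, and taking expectations yields
\begin{align*}
	\E[\tr h(\xn)] &= \sum_{\boldsymbol{i}}\tr_D(C_{\boldsymbol{i}})\,
	\E\big[\tr_N\, G^N_{i_1}\cdots G^N_{i_{\ell}}\big],\\
	\E[\tr h(\yn)] &= \sum_{\boldsymbol{i}}\tr_D(C_{\boldsymbol{i}})\,
	\E\big[\tr_{2N}\, H^N_{i_1}\cdots H^N_{i_{\ell}}\big],
\end{align*}
with the \emph{same} scalar coefficients $\tr_D(C_{\boldsymbol{i}})$. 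Hence it suffices to produce, for each word $w=x_{i_1}\cdots x_{i_\ell}$ with $\ell\le qq_0$, a polynomial $\psi_w\in\calP_{\lfloor\ell/2\rfloor}$ such that $\E[\tr_N w(\boldsymbol{G}^N)]=\psi_w(\tfrac1N)$ and $\E[\tr_{2N} w(\boldsymbol{H}^N)]=\psi_w(-\tfrac1{2N})$; then $\Phi_h:=\sum_{\boldsymbol{i}}\tr_D(C_{\boldsymbol{i}})\psi_{w_{\boldsymbol{i}}}\in\calP_{qq_0}$ has the two required properties.

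The two inputs are classical. First, $\E[\tr_N w(\boldsymbol{G}^N)]$ is a polynomial in $\tfrac1N$ of degree at most $\lfloor\ell/2\rfloor$, the real analogue of the GUE genus expansion: Wick's formula gives $\E[\tr_N w(\boldsymbol{G}^N)]=\tfrac1N\sum_\pi W_\pi$, the sum over pair partitions $\pi$ of $[\ell]$ that only pair positions carrying the same matrix label; the real covariance $\E[(G^N_i)_{ab}(G^N_i)_{cd}]=\tfrac1N(\delta_{ad}\delta_{bc}+\delta_{ac}\delta_{bd})$ splits each pair into an orientable and a twisted contraction, and contracting the free indices turns each resulting gluing into $N^{c-\ell/2}$ with $c\ge1$ the number of index loops, so $\E[\tr_N w(\boldsymbol{G}^N)]=\psi_w(\tfrac1N)$ with $\psi_w\in\calP_{\lfloor\ell/2\rfloor}$ (cf.\ the real case in \cite{mingo2017free}). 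Second, $\E[\tr_{2N} w(\boldsymbol{H}^N)]=\psi_w(-\tfrac1{2N})$ is exactly the orthogonal--symplectic ($\beta=1\leftrightarrow\beta=4$) duality of \cite{bryc2009duality,magee2024matrix}: running the same Wick expansion for GSE in the $2N$-dimensional complex model, the quaternionic contraction differs from the real one only by a sign on the twisted term and by the replacement of the loop weight $N$ by $2N$, and these bundle together precisely into the substitution $N\mapsto -2N$ applied diagram by diagram. Summing over diagrams gives the identity, and with $\Phi_h$ as above we conclude $\E[\tr h(\yn)]=\Phi_h(-\tfrac1{2N})$.

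I expect the only real work to be in the bookkeeping for the duality. The statements of \cite{bryc2009duality,magee2024matrix} are typically recorded for traces of powers of a single ensemble, so one must check that they pass verbatim to words in several independent matrices --- which they do, since independence annihilates all cross-covariances and the Wick expansion factors over the matrix labels --- and, more delicately, that our normalization of the GSE (self-adjoint $N\times N$ quaternionic, off-diagonal variance $\tfrac1N$, diagonal variance $\tfrac1{2N}$, realized as a $2N\times2N$ complex matrix with normalized trace $\tfrac1{2N}\trace$) is the one for which the duality reads \emph{exactly} $\tfrac1N\mapsto-\tfrac1{2N}$ and not some other affine reparametrization. A convenient sanity check is $w=x^2$: a direct computation gives $\E[\tr_N (G^N)^2]=1+\tfrac1N$ and $\E[\tr_{2N}(H^N)^2]=1-\tfrac1{2N}$, so $\psi_{x^2}(t)=1+t$, consistent with the claimed substitution. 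As in Lemma~\ref{lem:polyencoding}, the matrix coefficients of $P$ never enter beyond the $N$-independent scalars $\tr_D(C_{\boldsymbol{i}})$, so they are inert for both the polynomiality and the duality.
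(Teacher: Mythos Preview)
Your proposal is correct and follows essentially the same route as the paper: reduce by linearity to expected normalized traces of words, then invoke the GOE genus expansion and the GOE--GSE duality of \cite{bryc2009duality} to see that both are given by the same polynomial evaluated at $\tfrac{1}{N}$ and $-\tfrac{1}{2N}$, respectively. The paper is slightly more explicit in that it cites the specific results \cite[Theorem~B and Theorem~4.1]{bryc2009duality}, which already cover words in several independent matrices and give the Euler-characteristic form $\sum_\Gamma N^{\chi(\Gamma)-2}$ versus $\sum_\Gamma(-2N)^{\chi(\Gamma)-2}$ directly, so your worry about passing from single-matrix powers to multi-matrix words is moot once you look there.
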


\begin{proof}
This is a direct consequence of the genus expansions for GOE and GSE 
established in \cite{bryc2009duality}. In the present notation,
\cite[Theorem B]{bryc2009duality} states that
$$
	\E[\tr \gn_{i_1}\cdots \gn_{i_{2n}}] =
	\sum_\Gamma N^{\chi(\Gamma)-2}
$$
for every $n\in\bN$ and $i_1,\ldots,i_{2n}\in[r]$, while
\cite[Theorem 4.1]{bryc2009duality} states that
$$
	\E[\tr \hn_{i_1}\cdots \hn_{i_{2n}}] =
	\sum_\Gamma (-2N)^{\chi(\Gamma)-2}
$$
for every $n\in\bN$ and $i_1,\ldots,i_{2n}\in[r]$.\footnote{%
	The reader should beware that the notation of 
	\cite{bryc2009duality} differs from that of the present paper: in 
	\cite{bryc2009duality}, GOE and GSE matrices are normalized so 
	that their off-diagonal elements have variance $1$ and $4$,
        respectively, and $\tr$ denotes the sum of
	the diagonal entries viewed as elements of $\mathbb{H}$.}
Here the sums range over a certain family (that is determined by the 
choice of $i_1,\ldots,i_{2n}$) of closed connected surfaces
$\Gamma$ that have a cell decomposition with
$1$ vertex and $n$ edges, and 
$\chi(\Gamma)$ denotes the Euler characteristic.
Since the stated properties of $\Gamma$ ensure that
$-n\le\chi(\Gamma)-2 \le 0$, 
the right-hand sides of the above equations are defined by the
same polynomial of degree at most $n$ applied to
$\frac{1}{N}$ and $-\frac{1}{2N}$, respectively.

Now note that, by linearity, $\E[\tr h(\xn)]$ and $\E[\tr h(\yn)]$ are 
linear combinations of expected traces of words of length 
at most $qq_0$ in $\boldsymbol{G}^N$ and $\boldsymbol{H}^N$, respectively. 
We have shown that words of even length yield a polynomial as in the 
statement, while words of odd length vanish as the Gaussian distribution 
is symmetric.
Finally, note that $\E[\tr h(\xn)]$ is real as 
$P$ is self-adjoint, so $\Phi_h$ is a real polynomial.
\end{proof}

\subsection{Asymptotic expansion}

With Lemma \ref{lem:polyencodingGOEGSE} in hand, we can now repeat the 
proof of Theorem \ref{thm:smasympexGUE} with only trivial modifications.

\begin{thm}[Smooth asymptotic expansion for GOE/GSE]
\label{thm:smasympexGOEGSE} 
There exist universal constants $C,c>0$, and a compactly supported 
distribution $\nu_k$ for every $k\in\bZ_+$, such that the following 
hold. Fix any bounded $h\in C^\infty(\bR)$, and define
$$
	f(\theta):= h(K\cos\theta)
	\qquad\text{with}\qquad
	K := (Cr)^{q_0}\|X_{\rm F}\|.
$$
Then for every $m, N\in \bN$ with $m\le\frac{N}{2}$, we have
\begin{multline*}
       \left| \mathbf{E}[\tr h(\xn)]
	- \sum_{k=0}^{m-1} \frac{\nu_k(h)}{N^k} \right| 
	~\vee~
       \left| \mathbf{E}[\tr h(\yn)]
	- \sum_{k=0}^{m-1} \frac{\nu_k(h)}{(-2N)^k} \right| 
\\
       \le
	\frac{(Cq_0)^{2m}}{m! N^m} \|f^{(2m+1)}\|_{[0, 2\pi]} 
	+
	Cre^{-cN}\big(\sobnorm{h}{0}{(-\infty,\infty)}
		+ \sobnorm{f^{(1)}}{0}{[0, 2\pi]} \big).
\end{multline*}
\end{thm}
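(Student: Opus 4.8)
The proof is a line-by-line repetition of that of Theorem~\ref{thm:smasympexGUE}; the only genuinely new ingredient is that the self-duality $\Phi_h(-x)=\Phi_h(x)$ used in the GUE argument (which came from Lemma~\ref{lem:polyencoding}) is replaced by the GOE/GSE duality of Lemma~\ref{lem:polyencodingGOEGSE}. The plan is to run the same three steps. First, I would establish the analogue of the a priori bounds of Lemma~\ref{lem:apriori} for \emph{both} $\xn=P(\boldsymbol{G}^N)$ and $\yn=P(\boldsymbol{H}^N)$. The proof of Lemma~\ref{lem:apriori} uses only the crude tail bound of Lemma~\ref{lem:subgaussiannorm}, the representation of $P$ in terms of the Chebyshev polynomials $U_{\boldsymbol{i},\boldsymbol{j}}$, and Lemma~\ref{lem:chebsecondkind}; none of these uses the complex structure of GUE, so the argument carries over verbatim (with the harmless bookkeeping of the $2N$-dimensional complex representation in the GSE case). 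This yields $\mathbf{P}[\|\xn\|>K]\vee\mathbf{P}[\|\yn\|>K]\le Cre^{-cN}$ with $K=(Cr)^{q_0}\|\xf\|$, together with the analogues of \eqref{eq:firstapriori} and \eqref{eq:secondapriori} for $h\in\calP_q$, $q\le\frac{N}{q_0}$, with $\xn$ replaced by either $\xn$ or $\yn$.

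The second and key step is the master inequality. By Lemma~\ref{lem:polyencodingGOEGSE} we have $\Phi_h(\tfrac1N)=\E[\tr h(\xn)]$ and $\Phi_h(-\tfrac1{2N})=\E[\tr h(\yn)]$, and the a priori bounds give $|\Phi_h(\tfrac1N)|,|\Phi_h(-\tfrac1{2N})|\le 2\|h\|_{[-K,K]}$ for $N\ge qq_0$. Applying Proposition~\ref{prop:uniform_bound_1/N} to the polynomial $x\mapsto\Phi_h(x)$ on $[0,\delta]$ (using the GOE samples $\tfrac1N$) controls $\Phi_h$ on $[0,\delta]$, and applying it to the polynomial $x\mapsto\Phi_h(-\tfrac x2)$ on $[0,\delta]$ (whose values at $\tfrac1N$ are precisely the GSE expectations) controls $\Phi_h$ on $[-\tfrac\delta2,0]$; here $\delta=\tfrac1{24qq_0}$. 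Hence $\|\Phi_h\|_{[-\delta/2,\delta]}\le C\|h\|_{[-K,K]}$, so $\Phi_h$ is bounded on the \emph{symmetric} interval $[-\tfrac\delta2,\tfrac\delta2]$, and Bernstein's inequality (Lemma~\ref{lem:polybernstein}) gives $\|\Phi_h^{(m)}\|_{[-\delta/4,\delta/4]}\le(Cqq_0)^{2m}\|h\|_{[-K,K]}$. Setting $\nu_m(h):=\Phi_h^{(m)}(0)/m!$ yields \eqref{eq:infinitesimalbound}, and Taylor's theorem applied to $\Phi_h$ at $0$ — evaluated at $\tfrac1N$ for $\xn$ and at $-\tfrac1{2N}$ for $\yn$, both of which lie in $[-\tfrac\delta4,\tfrac\delta4]$ once $q\le\frac{N}{Cq_0}$ — gives the two-sided version of \eqref{eq:rateofconvergence}: the \emph{same} functionals $\nu_k$ appear in the expansion of $\E[\tr h(\xn)]$ in powers of $\tfrac1N$ and of $\E[\tr h(\yn)]$ in powers of $-\tfrac1{2N}$, with remainder $\frac{(Cqq_0)^{2m}}{m!N^m}\|h\|_{[-K,K]}$ in each case (the GSE remainder is in fact smaller).

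The third step, extension to smooth test functions, is identical to the GUE case. I would extend each $\nu_m$ to a compactly supported distribution with $|\nu_m(h)|\lesssim\|h\|_{C^{2m+1}[-K,K]}$ via Lemma~\ref{lem:extensiontoadistribution} and \eqref{eq:infinitesimalbound}; expand $h$ in Chebyshev polynomials on $[-K,K]$ (reducing to $a_0=0$ since constant functions are reproduced exactly by $\nu_0$ and annihilated by all $\nu_k$, $k\ge1$); split off the low-degree part $h_0$ of degree $\le B\sim N/q_0$; and run the same three-term estimate — first term from \eqref{eq:rateofconvergence}, second term from \eqref{eq:infinitesimalbound} together with the gain $q^{2k}\le B^{-2(m-k)}q^{2m}\le(\tfrac{Cq_0}{N})^{2(m-k)}q^{2m}$ for $q>B$, third term from \eqref{eq:secondapriori} and the tail bound \eqref{eq:XNlessthanK} — now carried out in parallel for $\xn$ and $\yn$, and conclude using Lemma~\ref{lem:zygmund} and $\sum_{k=0}^m\frac1{k!N^{m-k}}\le\frac2{m!}$ for $m\le\frac N2$.

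I do not expect a real obstacle: the only place anything changes from the GUE proof is the master-inequality step, where one must invoke the GSE encoding (rather than self-duality) to recover control of $\Phi_h$ on the negative half of the interval. This is exactly the supersymmetric duality input of Lemma~\ref{lem:polyencodingGOEGSE}, and it is the reason the theorem bounds the two asymptotic expansions simultaneously by a common right-hand side.
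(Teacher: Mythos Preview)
Your proposal is correct and is exactly the approach taken in the paper: the proof of Theorem~\ref{thm:smasympexGUE} is repeated with Lemma~\ref{lem:polyencodingGOEGSE} in place of Lemma~\ref{lem:polyencoding}, the master inequality being obtained by interpolating $\Phi_h$ on the positive side from the GOE samples and on the negative side from the GSE samples, then Taylor expanding at $\tfrac1N$ and $-\tfrac1{2N}$. The remaining steps (a priori bounds, extension to smooth functions) carry over verbatim, just as you describe.
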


\smallskip

\begin{proof}
The only modification that must be made to the arguments of section 
\ref{sec:thepolymethodforGUE} is that we apply 
Lemma~\ref{lem:polyencodingGOEGSE} instead of
Lemma~\ref{lem:polyencoding} in the proof of Lemma 
\ref{lem:ratesofconv}; the master inequalities for GOE and GSE are then 
obtained by Taylor expanding
$\Phi_h(x)$ at the points $x=\frac{1}{N}$ and $x=-\frac{1}{2N}$, 
respectively. The remaining results and proofs in section 
\ref{sec:aprioriGUE} extend \emph{verbatim} to the case of GOE and GSE.
\end{proof}

\subsection{The first-order distribution}
\label{sec:nu1supersymmetry}

While Lemma \ref{lem:waf} directly yields
$$
	\nu_0(h) = \lim_{N\to\infty} \E[\tr \, h(\xn)] =
	({\tr\otimes\tau})(h(\xf))
$$ 
as for GUE, it is no longer true in the present setting that we have 
$\nu_1(h)=0$ as in Lemma~\ref{lem:guenu01}. Nonetheless, we can exploit 
the duality between the GOE and GSE ensembles to give a very simple proof 
of the following fact.

\begin{lem}
\label{lem:GOEGSEnu1}
In the setting of Theorem \ref{thm:smasympexGOEGSE}, we have
$$
	\supp \nu_1 \subseteq [-\|\xf\|,\|\xf\|].
$$
\end{lem}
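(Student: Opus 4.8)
The plan is to exploit the fact that $\nu_1$ enters the GOE and GSE expansions of Theorem~\ref{thm:smasympexGOEGSE} with opposite signs, combined with the positivity of the spectral statistics $\mathbf{E}[\tr h(\xn)]$ and $\mathbf{E}[\tr h(\yn)]$ for nonnegative $h$. Since $\supp\nu_1$ is characterized through test functions vanishing in a neighborhood of a set, it suffices to show $\nu_1(h)=0$ for every $h\in C^\infty(\mathbb{R})$ vanishing on a neighborhood of $[-\|\xf\|,\|\xf\|]$. By \eqref{eq:infinitesimalbound} and Lemma~\ref{lem:extensiontoadistribution}, $\nu_1$ is a compactly supported distribution with $\supp\nu_1\subseteq[-K,K]$ for $K=(Cr)^{q_0}\|\xf\|$; multiplying $h$ by a cutoff $\eta\in C_c^\infty(\mathbb{R})$ that is $\equiv1$ near $[-K,K]$ does not change $\nu_1(h)$, since $(1-\eta)h$ vanishes on a neighborhood of $\supp\nu_1$, so I may assume $h$ is compactly supported.

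First I would treat the case where, in addition, $h\ge0$ and $h$ vanishes on $[-\|\xf\|-\varepsilon,\|\xf\|+\varepsilon]$ for some $\varepsilon>0$. Then $\nu_0(h)=(\tr\otimes\tau)(h(\xf))=0$, since the self-adjoint element $\xf$ has spectrum contained in $[-\|\xf\|,\|\xf\|]$, where $h$ vanishes, so Theorem~\ref{thm:smasympexGOEGSE} with $m=2$ yields
$$\mathbf{E}[\tr h(\xn)]=\tfrac{\nu_1(h)}{N}+O(N^{-2}),\qquad \mathbf{E}[\tr h(\yn)]=-\tfrac{\nu_1(h)}{2N}+O(N^{-2})$$
as $N\to\infty$. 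Since $P$ is self-adjoint, $\xn$ and $\yn$ are self-adjoint matrices, so $h\ge0$ forces both expectations to be nonnegative. Multiplying the first expansion by $N$ and the second by $-2N$ and letting $N\to\infty$ gives $\nu_1(h)\ge0$ and $\nu_1(h)\le0$ respectively; hence $\nu_1(h)=0$.

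To drop the positivity assumption, suppose $h\in C_c^\infty(\mathbb{R})$ vanishes on $[-\|\xf\|-2\varepsilon,\|\xf\|+2\varepsilon]$ for some $\varepsilon>0$. Pick $\psi\in C_c^\infty(\mathbb{R})$ with $0\le\psi\le1$, $\psi\equiv0$ on $[-\|\xf\|-\varepsilon,\|\xf\|+\varepsilon]$, and $\psi\equiv1$ on $\supp h$ (possible as $\supp h$ is compact and disjoint from $[-\|\xf\|-2\varepsilon,\|\xf\|+2\varepsilon]$), and set $C:=\sup|h|$. Then $C\psi$ and $C\psi+h$ are nonnegative elements of $C_c^\infty(\mathbb{R})$ vanishing on $[-\|\xf\|-\varepsilon,\|\xf\|+\varepsilon]$, so by the previous step $\nu_1(C\psi)=\nu_1(C\psi+h)=0$, whence $\nu_1(h)=0$. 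This gives $\supp\nu_1\subseteq[-\|\xf\|,\|\xf\|]$.

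The substantive mechanism — that the $+\tfrac1N$ versus $-\tfrac1{2N}$ asymmetry between the GOE and GSE expansions, together with nonnegativity of the traces, squeezes $\nu_1$ to zero on all test functions supported away from $[-\|\xf\|,\|\xf\|]$ — is extremely short, and uses neither a moment computation nor concentration of measure (in contrast to the GUE bootstrap, strong convergence for GOE/GSE is not yet available at this stage). I expect the only mildly delicate points to be the routine reductions to compactly supported and then nonnegative test functions, which constitute the main, but still minor, obstacle.
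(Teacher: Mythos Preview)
Your proof is correct and the substantive mechanism is exactly that of the paper: use Theorem~\ref{thm:smasympexGOEGSE} with $m=2$, note $\nu_0(h)=0$ for $h$ vanishing near $[-\|\xf\|,\|\xf\|]$, and let the opposite signs $\tfrac{1}{N}$ and $-\tfrac{1}{2N}$ in the GOE and GSE expansions squeeze $\nu_1(h)$ to zero for nonnegative $h$.

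The only difference is in the passage from nonnegative to arbitrary test functions. The paper isolates this step as a standalone Lemma~\ref{lem:distv}, proved by invoking that a distribution which is nonnegative on nonnegative functions is a positive measure (H\"ormander, Theorem~2.1.7). Your route is more elementary and self-contained: you write a general compactly supported $h$ vanishing away from $[-\|\xf\|,\|\xf\|]$ as the difference $(C\psi+h)-C\psi$ of two nonnegative such functions and apply linearity of $\nu_1$. This avoids any appeal to distribution theory beyond the definition of support, at the cost of the preliminary reduction to compactly supported $h$ (which the paper's abstract lemma does not require). Both arguments are short and equivalent in strength here.
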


\smallskip

In the proof we will need the following basic fact about distributions.

\begin{lem}
\label{lem:distv}
Let $\nu$ be a compactly supported distribution and $A\subseteq\mathbb{R}$
be a closed set. If $\nu(h)=0$ for all \emph{nonnegative} bounded 
functions $h\in C^\infty(\mathbb{R})$ that vanish in a 
neighborhood of $A$, then we have $\supp\nu\subseteq A$.
\end{lem}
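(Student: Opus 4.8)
The plan is to verify directly the characterization of the support from Definition~\ref{def:distribution}: one must show that $\nu(f)=0$ for every $f\in C^\infty(\bR)$ that vanishes in a neighborhood of $A$. The key idea is that any such $f$ can be written as a difference $f=g_1-g_2$ of two \emph{nonnegative} smooth bounded functions, each of which still vanishes in a neighborhood of $A$; the hypothesis then gives $\nu(g_1)=\nu(g_2)=0$, and hence $\nu(f)=0$.

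First I would reduce to the case that $f$ is compactly supported. Since $\nu$ is a compactly supported distribution, there are $C,K\ge 0$ and $m\in\bZ_+$ with $|\nu(g)|\le C\max_{0\le k\le m}\sobnorm{g^{(k)}}{0}{[-K,K]}$. Fixing $\psi\in C_c^\infty(\bR)$ with $0\le\psi\le 1$ and $\psi\equiv 1$ on a neighborhood of $[-K,K]$, the function $f-\psi f$ vanishes together with all its derivatives on $[-K,K]$, so $\nu(f)=\nu(\psi f)$; moreover $\psi f$ vanishes on the same open set $U\supseteq A$ on which $f$ does. Replacing $f$ by $\psi f$, I may therefore assume $f\in C_c^\infty(\bR)$ with $f\equiv 0$ on an open set $U\supseteq A$.

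Next, since $f$ vanishes on the open set $U\supseteq A$, the compact set $\supp f$ is disjoint from $A$, so $\delta:=\operatorname{dist}(\supp f,A)>0$. Mollifying the indicator of the $\tfrac{\delta}{2}$-neighborhood of $A$ with a mollifier supported in $(-\tfrac{\delta}{4},\tfrac{\delta}{4})$ produces $\varphi\in C^\infty(\bR)$ with $0\le\varphi\le 1$, with $\varphi\equiv 1$ on the $\tfrac{\delta}{4}$-neighborhood $A'$ of $A$, and with $\varphi\equiv 0$ on $\supp f$ (as $\supp f$ lies at distance $\ge\delta>\tfrac{3\delta}{4}$ from $A$). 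Note also that $A'$ is disjoint from $\supp f$, so $f\equiv 0$ on $A'$. Now set
$$
	g_2:=\sobnorm{f}{0}{(-\infty,\infty)}\,(1-\varphi),
	\qquad
	g_1:=f+g_2 .
$$
Both are smooth and bounded; both are nonnegative, since $g_2\ge 0$ because $\varphi\le 1$, and $g_1\ge 0$ because on $\supp f$ one has $\varphi=0$ and hence $g_1=f+\sobnorm{f}{0}{(-\infty,\infty)}\ge 0$, while off $\supp f$ one has $g_1=g_2\ge 0$; and both vanish on the neighborhood $A'$ of $A$, where $\varphi\equiv 1$ forces $g_2=0$ and $f\equiv 0$. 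The hypothesis therefore yields $\nu(g_1)=\nu(g_2)=0$, whence $\nu(f)=\nu(g_1)-\nu(g_2)=0$, which is exactly what is needed.

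The only step requiring genuine care is the decomposition $f=g_1-g_2$: one must arrange that \emph{both} summands are simultaneously smooth, nonnegative, and vanishing near $A$, which rules out the naive splitting $f=\max(f,0)-\max(-f,0)$ since those pieces are not smooth. The remedy above is to add the smooth buffer $\sobnorm{f}{0}{(-\infty,\infty)}(1-\varphi)$, which is $\ge\sobnorm{f}{0}{(-\infty,\infty)}$ off a neighborhood of $A$ (so that it absorbs the negative part of $f$) yet vanishes near $A$. The reduction to compactly supported $f$ and the construction of $\varphi$ are routine.
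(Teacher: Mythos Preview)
Your proof is correct, and it takes a genuinely different route from the paper's argument. The paper observes that the hypothesis makes the restricted distribution $\nu|_{\bR\backslash A}$ nonnegative on nonnegative test functions, invokes H\"ormander's theorem that a positive distribution is a positive Radon measure, and then concludes that this measure must vanish since it kills all nonnegative $h\in C_c^\infty(\bR\backslash A)$. Your argument is more elementary and fully self-contained: you bypass the measure-theoretic detour by writing an arbitrary $f$ vanishing near $A$ as $g_1-g_2$ with each $g_i$ nonnegative, smooth, bounded, and vanishing near $A$, via the buffer $\|f\|_\infty(1-\varphi)$. The paper's approach is shorter to state because it outsources the work to a standard reference; yours makes the linear-algebraic content explicit (the nonnegative cone spans) and requires no external input beyond mollification. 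Both reductions---to compactly supported $f$, and the construction of the cutoff $\varphi$---are handled cleanly in your write-up.
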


\begin{proof}
The assumption implies \emph{a fortiori} that
$\nu(h)\ge 0$ for all nonnegative functions $h\in C^\infty_0(\bR\backslash 
A)$. The restricted distribution $\nu|_{\bR\backslash A}$ (cf.\ \cite[\S 
2.2]{hormander2003}) is therefore a 
positive measure by \cite[Theorem 2.1.7]{hormander2003}.
Thus $\nu(h)=0$ for all nonnegative functions $h\in 
C^\infty_0(\bR\backslash A)$ then implies that $\nu|_{\bR\backslash A}=0$, 
which yields the conclusion.
\end{proof}

We can now prove Lemma \ref{lem:GOEGSEnu1}.

\begin{proof}[Proof of Lemma \ref{lem:GOEGSEnu1}]
Let $h$ be a bounded nonnegative function $h\in C^{\infty}(\bR)$, $h\ge 
0$ that vanishes in a neighborhood of $[-\|\xf\|,\|\xf\|]$. Then
Theorem \ref{thm:smasympexGOEGSE} yields
\begin{alignat*}{2}
	& 0 \le \E[\tr h(\xn)] && =
	\frac{\nu_1(h)}{N} + O\bigg(\frac{1}{N^2}\bigg), \\
	& 0 \le \E[\tr h(\yn)] && =
	-\frac{\nu_1(h)}{2N} + O\bigg(\frac{1}{N^2}\bigg),
\end{alignat*}
where we used that $\nu_0(h)=({\tr\otimes\tau})(h(\xf))=0$.
Taking $N$ sufficiently 
large then yields both $\nu_1(h)\ge 0$ and $-\nu_1(h)\ge 0$, 
which implies that we must have $\nu_1(h)=0$. The conclusion now follows 
from Lemma \ref{lem:distv}.
\end{proof}

\subsection{Proof of Theorem \ref{thm:maingauss}: GOE/GSE case}
\label{sec:pfmaingoegse}

The remainder of the proof of Theorem \ref{thm:maingauss} is now 
essentially identical to the proof of the GUE case.

\begin{proof}[Proof of Theorem \ref{thm:maingauss}: GOE/GSE case]
The results and proofs of section \ref{sec:bootstrapping} 
extend \emph{verbatim} to the case of GOE and GSE, provided that 
Theorem~\ref{thm:smasympexGUE} and Lemma~\ref{lem:guenu01} are replaced by 
Theorem~\ref{thm:smasympexGOEGSE} and Lemma~\ref{lem:GOEGSEnu1}, 
respectively.
\end{proof}

\section{Strong convergence for $\mathrm{U}(N)$}
\label{sec:Unitarymatrices}

We now turn our attention to Haar-distributed random matrices. While the 
structure of the proofs is similar to those for the Gaussian 
ensembles, there are distinct complications that arise in the two settings 
that must be surmounted to reach matrix coefficients of (nearly) 
exponential dimension. Unlike in the Gaussian case, no truncation is 
needed in the Haar setting as the random matrix model is already bounded. 
However, in the Haar setting polynomial spectral statistics are no longer 
polynomials, but rather rational functions, of $\frac{1}{N}$. These 
require a careful analysis in order not to incur any quantitative loss in 
the final result.

The aim of this section is to give a complete proof of Theorem 
\ref{thm:mainhaar} for Haar-distributed matrices in $\mathrm{U}(N)$.
The requisite modifications for the $\mathrm{O}(N)/\mathrm{Sp}(N)$ 
models will subsequently be developed in section \ref{sec:O(N)andSp(N)}.

The following will be fixed throughout this section. Let
$\boldsymbol{U}^N=(\haar_1, \dots, \haar_r)$ be independent
Haar-distributed random matrices in $\mathrm{U}(N)$, and let
$\boldsymbol{u}=(u_1,\ldots,u_r)$ be free Haar unitaries.
We further fix a self-adjoint noncommutative polynomial
$P\in\mathrm{M}_D(\bC)\otimes \bC\langle 
x_1,\ldots,x_r,x_1^*,\ldots,x_r^*\rangle$ of degree $q_0$ with matrix 
coefficients of dimension $D$. We denote the random matrix of interest and 
its limiting model as
$$
	\xn := P(\boldsymbol{U}^N,\boldsymbol{U}^{N*}),\qquad\quad
	\xf := P(\boldsymbol{u},\boldsymbol{u}^*).
$$
Finally, we will denote by
$$
	K := \|P\|_{\mathrm{M}_D(\bC)\otimes C^*(\bF_r)} =
	\sup_{n\in\bN}
	\sup_{W_1,\ldots,W_r\in\mathrm{U}(n)}
	\|P(W_1,\ldots,W_r,W_1^*,\ldots,W_r^*)\|
$$
the norm of $P$ in the full $C^*$-algebra of the free group
$\bF_r$ with $r$ free generators. The significance of this definition is
that $\|\xn\|\le K$ for every $N$.

\subsection{Polynomial encoding}

For GUE matrices, Lemma \ref{lem:polyencoding} showed that polynomial 
spectral statistics can be expressed as polynomials of $\frac{1}{N}$. The 
aim of this section is to prove an analogue of this property for 
$\mathrm{U}(N)$ matrices. In this case, however, we obtain rational 
functions rather than polynomials.

In the sequel, we always fix the following special polynomial
\begin{equation}
\label{eq:defofgL}
	g_q(x):= 
	\prod_{j=1}^q (1-(jx)^2)^{\lfloor \frac{q}{j} \rfloor}
\end{equation}
that will arise as the denominator in the rational 
expressions that appear for Haar-distributed matrices. 
We aim to prove the following.

\begin{lem}[Polynomial encoding]
\label{lem:rationalencoding}
For every $h \in \calP_q$, there is a rational function of the form 
$\Psi_h:=\frac{f_h}{g_{qq_0}}$ with 
$f_h,g_{qq_0}\in\mathcal{P}_{\lfloor 3qq_0(1+\log qq_0)\rfloor}$
so that
$$
	\E[\tr h(\xn)] = \Psi_h(\tfrac{1}{N}) =
	\Psi_h(-\tfrac{1}{N})
$$
for all $N\in\bN$ such that $N>qq_0$.
\end{lem}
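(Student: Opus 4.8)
The plan is to expand $h(\xn)$ into noncommutative monomials in $\boldsymbol U^N,\boldsymbol U^{N*}$ and to evaluate the resulting expected traces by the Weingarten calculus for $\mathrm{U}(N)$; the claimed rational structure, the specific denominator $g_{qq_0}$, and the $N\mapsto -N$ symmetry will then be read off from standard analytic properties of the Weingarten function $\wg$. Concretely, I would first write $h(\xn)=\sum_{\boldsymbol w}A_{\boldsymbol w}\otimes\boldsymbol w$, where $\boldsymbol w$ runs over the finitely many words of length $\ell(\boldsymbol w)\le qq_0$ in the letters $U_1^N,\dots,U_r^N,U_1^{N*},\dots,U_r^{N*}$ and $A_{\boldsymbol w}\in\mathrm{M}_D(\bC)$ are fixed coefficients depending only on $P$ and $h$, so that $\E[\tr h(\xn)]=\sum_{\boldsymbol w}\tr(A_{\boldsymbol w})\,\E[\tr\boldsymbol w(\boldsymbol U^N,\boldsymbol U^{N*})]$. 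It therefore suffices to analyze a single $\E[\tr\boldsymbol w]$ with $\ell:=\ell(\boldsymbol w)\le qq_0$. Since $U_1^N,\dots,U_r^N$ are independent and Haar distributed, this expectation factors over the distinct generators occurring in $\boldsymbol w$, vanishes unless each generator occurs as often with as without an adjoint, and is otherwise computed by the Weingarten formula: it equals a finite sum of terms $c\,N^{e}\prod_i\wg(N,\rho_i)$ with $c\in\bR$, $e\in\bZ$ and $|e|=O(\ell)$, where $\rho_i$ lies in the symmetric group $\mathrm{S}_{k_i}$ and $\sum_i k_i\le \ell/2$ (and for $N>qq_0$ this equals the underlying rational function of $N$, not merely its value).

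The argument then rests on two classical facts about $\wg(N,\cdot)$ on $\mathrm{S}_k$. First, $N\mapsto\wg(N,\rho)$ is a rational function whose poles are confined to $N\in\{0,\pm1,\dots,\pm(k-1)\}$, the pole at $\pm j$ (for $1\le j\le k-1$) having order at most $\lfloor k/(j+1)\rfloor$ and the pole at $0$ having order $O(k)$. Second, the self-duality of $\mathrm{U}(N)$ (the $\mathrm{U}(N)$ instance of the phenomenon of \S\ref{sec:supersymm}; see Appendix \ref{app:magee}): $\wg(-N,\rho)=\epsilon(\rho)\,\wg(N,\rho)$ for a sign $\epsilon(\rho)\in\{\pm1\}$ (explicitly $\epsilon(\rho)=(-1)^{\#\mathrm{cyc}(\rho)}$). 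From the first fact, together with $\sum_i k_i\le\ell/2$ and $|e|=O(\ell)$, one gets that $N\mapsto\E[\tr\boldsymbol w]$ is rational with poles confined to $\{0,\pm1,\dots,\pm(\ell-1)\}$, the pole at $\pm j$ having order at most $\sum_i\lfloor k_i/(j+1)\rfloor\le\lfloor\ell/j\rfloor\le\lfloor qq_0/j\rfloor$. Since (recall \eqref{eq:defofgL}) $g_{qq_0}(\tfrac1N)=\prod_{j=1}^{qq_0}\big(1-(j/N)^2\big)^{\lfloor qq_0/j\rfloor}$ vanishes to order $\lfloor qq_0/j\rfloor$ at $N=\pm j$, the product $g_{qq_0}(\tfrac1N)\,\E[\tr\boldsymbol w]$ has no finite poles away from $0$ and hence is a polynomial in $\tfrac1N$; call it $f_{\boldsymbol w}(\tfrac1N)$. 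The second fact, combined with a parity count of the exponent $e$ for a balanced word (the net power of $N$ produced in the Weingarten expansion carries exactly the sign that cancels $\prod_i\epsilon(\rho_i)$), shows that $\E[\tr\boldsymbol w]$ is invariant under $N\mapsto-N$, and therefore so is $f_{\boldsymbol w}(\tfrac1N)/g_{qq_0}(\tfrac1N)$ (note $g_{qq_0}$ is even).

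It remains to control degrees. On the one hand $\deg g_{qq_0}=2\sum_{j=1}^{qq_0}\lfloor qq_0/j\rfloor\le 2qq_0\sum_{j=1}^{qq_0}\tfrac1j\le 2qq_0(1+\log qq_0)\le\lfloor 3qq_0(1+\log qq_0)\rfloor$. On the other hand $\deg f_{\boldsymbol w}$ equals the order of growth of $g_{qq_0}(\tfrac1N)\,\E[\tr\boldsymbol w]$ as $N\to0$, which is at most $\deg g_{qq_0}$ plus the pole order of $\E[\tr\boldsymbol w]$ at $0$; the latter is $O(\ell)=O(qq_0)$ by the first fact together with the factor $N^{-1}$ from the normalized trace. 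Hence $\deg f_{\boldsymbol w}\le\deg g_{qq_0}+O(qq_0)\le\lfloor 3qq_0(1+\log qq_0)\rfloor$, the slack between the factor $2$ and the factor $3$ absorbing the $O(qq_0)$ term. Summing $\tr(A_{\boldsymbol w})f_{\boldsymbol w}(\tfrac1N)$ over $\boldsymbol w$ produces a single real polynomial $f_h$ with $\deg f_h\le\lfloor 3qq_0(1+\log qq_0)\rfloor$ such that $\E[\tr h(\xn)]=f_h(\tfrac1N)/g_{qq_0}(\tfrac1N)=:\Psi_h(\tfrac1N)$ for all $N>qq_0$, and $\Psi_h(\tfrac1N)=\Psi_h(-\tfrac1N)$, as required.

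The step I expect to be the main obstacle is the quantitative bookkeeping in the second paragraph: verifying that the pole orders of $\E[\tr\boldsymbol w]$ at the points $\pm j$ are dominated by precisely the exponents $\lfloor qq_0/j\rfloor$ hard-wired into $g_{qq_0}$, and that the powers of $N$ produced by the Weingarten expansion of a balanced word combine with the self-duality sign to give an exactly even function, rather than merely some rational expression with a cruder or asymmetric denominator. The rationality of $\E[\tr\boldsymbol w]$ and the mere location of its poles are classical; it is the sharp matching with the prescribed $g_{qq_0}$ and the exact $N\mapsto-N$ invariance that require care.
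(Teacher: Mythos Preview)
Your proposal is correct and follows the same route as the paper: reduce to words, apply the Weingarten formula, read off the pole structure of $\wg$ (your ``first fact'' is precisely the paper's Lemma~\ref{lem:denominatorsofwg}), and use the $N\mapsto -N$ sign relation for the symmetry (the paper simply cites \cite[Remark~1.9]{magee2019matrix} for this). One point to tighten: to conclude that $g_{qq_0}(\tfrac1N)\,\E[\tr\boldsymbol w]$ is a polynomial in $\tfrac1N$ rather than merely a Laurent polynomial, you also need it bounded as $N\to\infty$, which follows from $|\E[\tr\boldsymbol w]|\le 1$; the paper uses this same boundedness directly to conclude that the numerator degree in $N$ is at most the denominator degree $\Sigma$, which gives $\deg f_{\boldsymbol w}\le\Sigma\le 3qq_0(1+\log qq_0)$ in one stroke and is slightly cleaner than your pole-order-at-$N=0$ argument.
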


\begin{rem}
\label{rem:loselog}
Lemma \ref{lem:rationalencoding} and its counterpart for 
$\mathrm{O}(N)/\mathrm{Sp}(N)$ models are solely responsible for the loss 
of the logarithmic factor in Theorem \ref{thm:mainhaar} as compared to Theorem 
\ref{thm:maingauss}, which arises from the logarithmic factor in 
the degree of the numerator and denominator $f_h,g_{qq_0}$ of 
the rational function $\Psi_h$. It is unclear whether this logarithmic
factor is necessary, as there could be cancellations between the numerator 
and denominator that are not captured by the crude analysis below.
\end{rem}

Lemma \ref{lem:rationalencoding} for $\mathrm{U}(N)$ is a special case of
\cite[Theorem 3.1]{magee2024quasiexp}. However, as the argument will be 
needed below also for $\mathrm{O}(N)/\mathrm{Sp}(N)$, we spell out the 
proof here. We begin with some definitions. For every $L\in\bN$, we denote 
by $\mathrm{S}_L$ the symmetric group on $L$ letters. In view of
\cite[Theorem 4.3]{collins2022weingarten}, we introduce the following.

\begin{defn}[Unitary Weingarten functions]
\label{defn:unitaryweingarten}
For any $L\in \bN$ and $\alpha\in\mathrm{S}_L$
\begin{equation}
   \label{eq:weingartenfunction}
   \wg_L(\alpha, N) := 
	\frac{1}{L!} \sum_{\lambda \vdash L} 
	\frac{d_\lambda \chi^{\lambda}(\alpha)}{\prod_{\square \in \lambda }
	(N + c(\square))}  
\end{equation}
where the product runs over all boxes $\square=(i,j)$ in the Young diagram 
associated to $\lambda$ and $c(\square):=j-i$.
Here $d_\lambda$ denotes the dimension and $\chi^{\lambda}$ denotes the 
character of 
the irreducible representation of $\mathrm{S}_L$ associated to $\lambda$.
\end{defn}

For our purposes, the only relevant feature of the function $\wg_L(\alpha, N)$
is that it is a rational function of $N$ and that we know its 
poles.

\begin{lem}
\label{lem:denominatorsofwg}
For every $\alpha\in \mathrm{S}_L$, there exists
$a_\alpha\in\mathcal{P}$ so that
$$
	\wg_L(\alpha, N)=\frac{a_\alpha(N)}{N^L
	\prod_{k=1}^L (N^2-k^2)^{\lfloor\frac{L}{k}\rfloor}}
	\quad\text{for all }N>L.
$$
\end{lem}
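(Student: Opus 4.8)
The plan is to read off the poles of $\wg_L(\alpha,N)$ in $N$ directly from the defining formula \eqref{eq:weingartenfunction}: each summand has denominator $\prod_{\square\in\lambda}(N+c(\square))$, a product of linear factors $N+c$ with $c$ an integer, so it suffices to control, for each integer $c$, how many boxes of a diagram $\lambda\vdash L$ can have content $c$. Since the first row of $\lambda$ has length at most $L$ and contributes contents $0,-1,\dots,-(\lambda_1-1)$, and the first column contributes $0,1,\dots,\lambda_1'-1$, every box has content $c$ with $|c|\le L-1$; in particular only the factors $N$ and $N\pm k$ with $1\le k\le L$ can occur, so the product $N^{L}\prod_{k=1}^{L}(N^2-k^2)^{\lfloor L/k\rfloor}$ has the right linear factors, and only the multiplicities need checking.

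The heart of the argument is the estimate: for every $\lambda\vdash L$ and every $k\in\{1,\dots,L\}$, at most $\lfloor L/k\rfloor$ boxes of $\lambda$ have content $k$, and at most $\lfloor L/k\rfloor$ have content $-k$. I would prove this by observing that the boxes of content $k>0$ occupy positions $(1+k,1),(2+k,2),\dots,(m+k,m)$ for some $m\ge 0$: a Young diagram is closed under decreasing both coordinates of a box, so the set of columns $j$ with $(j+k,j)\in\lambda$ is an \emph{initial segment} $\{1,\dots,m\}$. The presence of the box $(m+k,m)$ forces $\lambda_{m+k}\ge m$ and hence $\lambda_i\ge m$ for all $i\le m+k$, whence $L\ge m(m+k)\ge mk$ and so $m\le\lfloor L/k\rfloor$. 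The bound for content $-k$ follows by transposing $\lambda$, and the number of boxes of content $0$ is at most $L$ trivially.

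Combining these multiplicity bounds, $\prod_{\square\in\lambda}(N+c(\square))$ divides $N^{L}\prod_{k=1}^{L}(N+k)^{\lfloor L/k\rfloor}(N-k)^{\lfloor L/k\rfloor}=N^{L}\prod_{k=1}^{L}(N^2-k^2)^{\lfloor L/k\rfloor}$ as polynomials in $N$, uniformly over $\lambda\vdash L$. Hence each summand of \eqref{eq:weingartenfunction} is a polynomial in $N$ divided by this common denominator; summing over $\lambda$ and absorbing $\frac{1}{L!}$ together with the integers $d_\lambda$ and $\chi^{\lambda}(\alpha)$ into the numerator yields $\wg_L(\alpha,N)=a_\alpha(N)\big/\big(N^{L}\prod_{k=1}^{L}(N^2-k^2)^{\lfloor L/k\rfloor}\big)$ with $a_\alpha\in\calP$. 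The hypothesis $N>L$ guarantees that the displayed denominator, as well as each original factor $N+c(\square)$, is nonzero, so the identity is valid as an equality of real numbers for every integer $N>L$.

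I expect the diagonal-counting estimate in the second paragraph to be the main (indeed essentially only) obstacle; once it is in hand, the remainder is a routine common-denominator computation.
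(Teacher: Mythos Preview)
Your proof is correct and follows essentially the same approach as the paper's: both arguments bound, for each integer $k$, the number of boxes of $\lambda\vdash L$ with content $\pm k$ by observing that such a box $(j+k,j)$ forces the rectangle of area $j(j+k)\le L$ into the diagram, and then pass to a common denominator. The paper records the marginally sharper bound $\omega_k(\lambda)\le\lfloor L/(|k|+1)\rfloor$, but your $\lfloor L/k\rfloor$ is exactly what the statement requires.
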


\begin{proof}
Fix a partition $\lambda\vdash L$.
Since $\lambda$ has at most $L$ columns and at most $L$ rows, we clearly 
have $-L \leq c(\square)\leq L$ for all $\square \in \lambda$. Thus
$$
	\prod_{\square \in \lambda} (N+ c(\square)) =
	\prod_{k=-L}^{L}
	(N+k)^{\omega_k(\lambda)},
$$
where $\omega_k(\lambda)$ denotes the number of
boxes $\square=(i,j)$ in the Young diagram associated to $\lambda$ with 
$j-i=k$. As for any such box, the Young diagram must contain the rectangle 
with side lengths $i,j$, respectively, we have $ij\le L$ and thus 
\begin{align*}
	\omega_k(\lambda) 
	&\le \#\{ (i,j)\in [L]^2 : ij \le L,~j-i=k\}
\\
	&\le \#\{ j\in [L] : (|k|+j)j \le L\}
	\le\lfloor\tfrac{L}{|k|+1}\rfloor.
\end{align*}
Thus the numerator of
$$
	\wg_L(\alpha, N)
	=
	\frac{
	\sum_{\lambda \vdash L} 
	\frac{d_\lambda \chi^{\lambda}(\alpha)}{L!}
	N^{L-\omega_0(\lambda)} 
	\prod_{k=1}^L 
	(N-k)^{\lfloor\frac{L}{k}\rfloor-\omega_{-k}(\lambda)}
	(N+k)^{\lfloor\frac{L}{k}\rfloor-\omega_k(\lambda)}
	}
	{N^L \prod_{k=1}^L (N^2-k^2)^{\lfloor\frac{L}{k}\rfloor}}
$$
is polynomial in $N$. It remains to 
note that the numerator is in fact a real polynomial, as all characters 
$\chi^\lambda$ of the symmetric group are real-valued.
\end{proof}

We can now prove Lemma \ref{lem:rationalencoding}. 

\begin{proof}[Proof of Lemma \ref{lem:rationalencoding}]
Let $w(\haar_1,\ldots,\haar_r)$ be a reduced word of length at most $L$ in 
the Haar unitary matrices $\haar_i$ and their adjoints $\haaradj_i$.
Suppose that $w\ne\id$ and that it is balanced, 
that is, $\haar_i$ and $\haaradj_i$ appear an equal number of times.
Let $L_i$ be the number of appearances of $\haar_i$ in $w$. Then
by \cite[Theorem 2.8]{magee2019matrix} 
\begin{multline*}
	\E[\tr w(\haar_1, \dots, \haar_r)] =
\\
	\sum_{(\alpha_1, \beta_1) \in \mathrm{S}_{L_1}^2, \dots, 
	(\alpha_r, \beta_r) \in \mathrm{S}_{L_r}^2} 
	\left(\prod_{i=1}^r 
	\wg_{L_i}(\alpha_i^{-1}\beta_i ,N)\right) \ell(N ; \alpha_1, 
	\beta_1, \dots, \alpha_r, \beta_r)
\end{multline*}
for all $N>\max_i L_i$, where each 
$\ell(N ; \alpha_1, \beta_1, \dots, \alpha_r, \beta_r)$
is either identically zero or a non-negative integer power of
$N$. Thus Lemma \ref{lem:denominatorsofwg} yields
$$
	\E[\tr w(\haar_1, \dots, \haar_r)] =
	\frac{b_w(N)}{
	N^L\prod_{k=1}^L (N^2-k^2)^{\lfloor\frac{L}{k}\rfloor}}
$$
for some $b_w\in\mathcal{P}$,
where we used that $\sum_{i=1}^r L_i \le L$ and therefore
$\sum_{i=1}^r \lfloor \frac{L_i}{j}\rfloor \le \lfloor\frac{L}{j}\rfloor$.
Now note that as $|\E[\tr w(\haar_1, \dots, \haar_r)]|\le 1$ for all $N$,
the degree of the numerator is at most the degree
$\Sigma= L+2\sum_{k=1}^L \lfloor \frac{L}{k}\rfloor \le 3L(1+\log L)$
of the denominator.
Dividing both numerator and denominator by $N^\Sigma$ therefore yields
\begin{equation}
\label{eq:unwordrational}
	\E[\tr w(\haar_1, \dots, \haar_r)] = 
	\frac{f_w(\frac{1}{N})}{g_L(\frac{1}{N})}
\end{equation}
for all $N>L$, where $g_L$ was defined in \eqref{eq:defofgL} and 
$f_w,g_L\in\mathcal{P}_\Sigma$.

If $w$ is not balanced, 
it is readily seen that $\E[\tr w(\haar_1,\ldots,\haar_r)]=0$ as $\haar_i$
has the same distribution as $e^{i\theta}\haar_i$ for every $\theta$,
while clearly $\E[\tr w(\haar_1,\ldots,\haar_r)]=1$ for $w=\id$.
Thus \eqref{eq:unwordrational} remains valid for all reduced words $w$.

Now note that $\mathbf{E}[\tr h(\xn)]$ is a linear combination of terms 
$\E[\tr w(\haar_1, \dots, \haar_r)]$ for words $w$ of length at most $L= 
qq_0$. As we have shown that each term is real and as $\mathbf{E}[\tr 
h(\xn)]$ is also real, the representation $\E[\tr 
h(\xn)]=\Psi_h(\frac{1}{N})$ follows from \eqref{eq:unwordrational}. 
That $\Psi_h(\frac{1}{N})=\Psi_h(-\frac{1}{N})$ follows from 
\cite[Remark 1.9]{magee2019matrix}, which implies that the power series 
expansion of $\Psi_h$ contains only even powers of $\frac{1}{N}$. 
\end{proof}

\subsection{A rational Bernstein inequality}
\label{sec:rationalbernstein}

As $\Psi_h$ in Lemma \ref{lem:rationalencoding} is a rational function, we 
can no longer apply Bernstein's inequality directly to control the 
remainder term in its Taylor expansion as we did in section 
\ref{sec:thepolymethodforGUE}, where the analogous expression in Lemma 
\ref{lem:polyencoding} was polynomial. This issue could be surmounted by 
applying Bernstein's inequality to the numerator and denominator 
separately using the chain rule; see, e.g., \cite[Lemma 4.3]{chen2024new}. 
However, this naive approach turns out to be lossy: it results in a 
multiplicative factor $m!$ in the bound on the $m$th derivative, which 
prevents us from reaching coefficients with exponential dimension.

Instead, we develop here a more specialized argument that exploits the 
fact that we have strong control over the denominator of the rational 
expressions.

\begin{lem}[Rational Bernstein inequality]
\label{lem:rationalbernstein}
Let $p,q\in\bN$ with $p\ge q$, let
$f\in \mathcal{P}_p$, and define the rational function 
$r:=\frac{f}{g_q}$ where $g_q$ is as defined in \eqref{eq:defofgL}. Then
$$
        \frac{1}{m!}\|r^{(m)}\|_{[-\frac{1}{cp},\frac{1}{cp}]}
        \le 
        \bigg( e^{-p} (Cp)^m  +
        \frac{(Cp)^{2m}}{m!}\bigg) \|r\|_{I_q}
$$
for all $m\ge 1$, where $c,C$ are universal constants and
$I_q := \{\frac{1}{N}:N\in\bZ,|N|>q\}$.
\end{lem}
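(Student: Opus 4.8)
The plan is to reduce the claimed bound to ordinary polynomial inequalities by approximating the rational function $r=f/g_q$ on an interval of length $\asymp 1/p$ around the origin by a polynomial $\tilde r$ of degree $O(p)$, then applying Bernstein's inequality (Lemma \ref{lem:polybernstein}) to $\tilde r$ and Cauchy's estimates to the holomorphic remainder $r-\tilde r$, and adding the two contributions. The construction of $\tilde r$ has two ingredients. First, since every factor of \eqref{eq:defofgL} lies in $(0,1)$ one has $|g_q(1/N)|\le 1$ for all integers $|N|>q$, hence $|f(1/N)|=|r(1/N)|\,|g_q(1/N)|\le\|r\|_{I_q}$ on $I_q$; applying Proposition \ref{prop:uniform_bound_1/N} to $f$ and to $x\mapsto f(-x)$ gives $\|f\|_{[-\delta_0,\delta_0]}\lesssim\|r\|_{I_q}$ with $\delta_0\asymp 1/p$. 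Second, $1/g_q$ is holomorphic on $\{|z|<1/q\}$, and on a circle $|z|=\rho$ with $\rho\asymp 1/p$ (so $\rho<1/q$, using $p\ge q$) one has $|g_q(z)|\ge\prod_{j\le q}(1-j^2\rho^2)^{\lfloor q/j\rfloor}\ge e^{-O(q^3\rho^2)}\ge e^{-O(q)}$, so Lemma \ref{lem:taylorcoeff} bounds the Taylor coefficients $c_k$ of $1/g_q$ at $0$ by $|c_k|\le\rho^{-k}e^{O(q)}$. Setting $Q_n:=\sum_{k\le n}c_kz^k$ and $\tilde r:=f\,Q_n$ (a polynomial of degree $O(p)$ once $n\asymp p$), the tail bound $|1/g_q(z)-Q_n(z)|\le e^{O(q)}2^{-n}$ for $|z|\le\rho/2$ combines with the crude estimate $|f(z)|\lesssim 3^p\|r\|_{I_q}$ for $|z|\le\delta_0$ (read off from the Chebyshev expansion of $f$ on $[-\delta_0,\delta_0]$) to yield, for a suitable $n\asymp p$, the key approximation
\[
  \max_{|z|=\rho_1}|r(z)-\tilde r(z)|\le e^{-p}\|r\|_{I_q},\qquad \rho_1:=\rho/2\asymp 1/p ,
\]
where $q\le p$ is used to absorb the $e^{O(q)}$ factor into $2^{-n}$.

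The crucial point is then to control $\|\tilde r\|$ on a real interval by $\lesssim\|r\|_{I_q}$ \emph{without} picking up the factor $e^{O(q)}$ that the naive estimate $\|\tilde r\|\le\|f\|\,\|Q_n\|$ would incur: indeed $\|1/g_q\|$ on a real interval of radius $\asymp 1/p$ is only $e^{O(q)}$, which is genuinely unbounded when $p\asymp q$. The resolution is to exploit that $\tilde r$ is a \emph{polynomial} of degree $O(p)$. By the approximation just established (and the maximum principle), $|\tilde r(1/N)|\le|r(1/N)|+e^{-p}\|r\|_{I_q}\le 2\|r\|_{I_q}$ for every integer $N$ with $|N|$ above the ($\asymp p$) threshold for which $1/N\in\{|z|\le\rho_1\}$; applying Proposition \ref{prop:uniform_bound_1/N} to $\tilde r$ (and to $x\mapsto\tilde r(-x)$) then upgrades this to $\|\tilde r\|_{[-\delta',\delta']}\lesssim\|r\|_{I_q}$ for some $\delta'\asymp 1/p$.

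With these two facts the proof concludes quickly on the interval $[-1/(cp),1/(cp)]$, with $c$ a large universal constant chosen so that $1/(cp)\le\delta'/2$ and $1/(cp)\le\rho_1/2$. Bernstein's inequality (Lemma \ref{lem:polybernstein}) applied to $\tilde r$ on $[-\delta',\delta']$ gives $\frac1{m!}\|\tilde r^{(m)}\|_{[-1/(cp),1/(cp)]}\le\frac{(Cp)^{2m}}{m!}\|r\|_{I_q}$ (degree $O(p)$ divided by $\delta'\asymp 1/p$ produces $(Cp^2)^m$), while the Cauchy integral formula (cf.\ Lemma \ref{lem:taylorcoeff}) applied to $r-\tilde r$, holomorphic on $\{|z|\le\rho_1\}$, gives $\frac1{m!}\|(r-\tilde r)^{(m)}\|_{[-1/(cp),1/(cp)]}\le(Cp)^m\max_{|z|=\rho_1}|r-\tilde r|\le(Cp)^m e^{-p}\|r\|_{I_q}$; summing and adjusting constants yields the stated inequality (with a short separate check for $p$ below an absolute constant). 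The main obstacle is the middle paragraph: one must arrange the truncation degree $n$ and the nested radii $\delta_0,\rho,\rho_1,\delta'$ so that the unavoidable $e^{O(q)}$ factors are either killed by $2^{-n}$ (using $p\ge q$) in the analytic estimate or circumvented entirely by re-running Proposition \ref{prop:uniform_bound_1/N} on $\tilde r$; the bookkeeping of which points $1/N$ land in each disc, and of the crude $3^p$-type bound on $|f|$ off the real axis, is where the argument most easily goes wrong.
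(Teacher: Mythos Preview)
Your proof is correct and follows the same overall structure as the paper's: approximate $1/g_q$ by a truncated Taylor polynomial $s$ (your $Q_n$), set $\tilde r=fs$, bound $\tilde r^{(m)}$ via Proposition~\ref{prop:uniform_bound_1/N} and Bernstein, and control the remainder separately. The execution differs in two places. For the remainder $(r-\tilde r)^{(m)}$ you use Cauchy's formula on a complex circle, while the paper stays on the real line, using the product rule together with the derivative bounds $\frac{1}{k!}\|(\tfrac{1}{g_q}-s)^{(k)}\|_{[-\frac{1}{8q},\frac{1}{8q}]}\le 2^{-bq}(8q)^k$ from Lemma~\ref{lem:polyratber}; both routes produce the $e^{-p}(Cp)^m$ term. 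More interestingly, your ``crucial point''---avoiding an $e^{O(q)}$ factor when bounding $\|\tilde r\|$---is handled more directly in the paper: Lemma~\ref{lem:polyratber} also shows $\tfrac{1}{2}\tfrac{1}{g_q(x)}\le s(x)\le\tfrac{3}{2}\tfrac{1}{g_q(x)}$ pointwise on a real interval, which immediately yields $|fs|\le\tfrac{3}{2}|r|$ at each point of $I_q$. This makes your complex-disk $3^p$ bound on $f$ and the re-running of Proposition~\ref{prop:uniform_bound_1/N} on $\tilde r$ unnecessary; the paper applies Proposition~\ref{prop:uniform_bound_1/N} and Bernstein to $fs$ with $\|fs\|_{I_q}\le\tfrac{3}{2}\|r\|_{I_q}$ in hand from the outset.
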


The key idea behind the proof is that the function $\frac{1}{g_q}$ and its 
derivatives can be approximated very precisely by a polynomial.

\begin{lem}
\label{lem:polyratber}
For every $b\in\mathbb{N}$, 
there is a polynomial $s\in\mathcal{P}_{2bq}$ so that
$$
	\frac{1}{k!} 
	\|(\tfrac{1}{g_q}-s)^{(k)}\|_{[-\frac{1}{8q},\frac{1}{8q}]}
        \le 2^{-bq} (8q)^k
$$
for all $k\ge 0$. In particular, 
$\frac{1}{2}\frac{1}{g_q(x)} \le s(x) \le \frac{3}{2}\frac{1}{g_q(x)}$
for all $|x|\le \frac{1}{8q}$.
\end{lem}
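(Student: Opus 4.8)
The plan is to take $s$ to be a partial sum of the Taylor expansion of $\frac{1}{g_q}$ about the origin. First I would note that the zeros of $g_q$ are exactly the points $\pm\frac1j$, $j=1,\dots,q$, so $\frac1{g_q}$ is holomorphic on the disc $\{|z|<\frac1q\}$, which strictly contains the interval $[-\frac1{8q},\frac1{8q}]$ on which we must estimate. Writing $\frac1{g_q(x)}=\sum_{n\ge0}a_nx^n$ for $|x|<\frac1q$ and setting $s(x):=\sum_{n=0}^{2bq}a_nx^n\in\mathcal P_{2bq}$, we have $\frac1{g_q}-s=\sum_{n>2bq}a_nx^n$ on this disc, and differentiating term by term gives, for every $k\ge0$ and $|x|\le\frac1{8q}$,
\[
	\tfrac1{k!}\big|(\tfrac1{g_q}-s)^{(k)}(x)\big|\le\sum_{n>2bq}|a_n|\tbinom nk|x|^{n-k}
\]
(the summand vanishing when $k>n$). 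So everything comes down to a Cauchy estimate for the $a_n$.

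The key step is to bound $\sup_{|z|=1/(2q)}|1/g_q(z)|$; the circle $|z|=\frac1{2q}$ is chosen to lie strictly between $\frac1{8q}$ and the nearest pole at $\frac1q$. On that circle $|jz|^2\le\frac14$ for $j\le q$, so $|1-(jz)^2|\ge1-\frac{j^2}{4q^2}$, and using $-\log(1-t)\le\frac t{1-t}\le\frac43t$ for $t\le\frac14$ together with $\sum_{j=1}^q\lfloor q/j\rfloor\,j^2/q^2\le\sum_{j=1}^q j/q=\frac{q+1}2\le q$ yields
\[
	\Big|\frac1{g_q(z)}\Big|=\prod_{j=1}^q|1-(jz)^2|^{-\lfloor q/j\rfloor}\le\exp\Big(\tfrac13\sum_{j=1}^q\lfloor q/j\rfloor\,j^2/q^2\Big)\le e^{q/3}\qquad(|z|=\tfrac1{2q}),
\]
whence $|a_n|\le(2q)^ne^{q/3}$ by Lemma \ref{lem:taylorcoeff}. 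The structural point that makes this work is that the large exponents $\lfloor q/j\rfloor$ occur only for small $j$, where the factor $1-(jz)^2$ is close to $1$; this keeps the bound at $e^{O(q)}$ rather than $e^{O(q\log q)}$ (the degree of $g_q$), and it is precisely this $e^{cq}$ with $c<\log2$ that will let the truncation gain $2^{-2bq}$ dominate.

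To finish, I would substitute $|a_n|\le(2q)^ne^{q/3}$ and $|x|^{n-k}\le(8q)^{k-n}$ (valid for $|x|\le\frac1{8q}$ when $n\ge k$) into the derivative bound: $\sum_{n>2bq}|a_n|\binom nk|x|^{n-k}\le e^{q/3}(8q)^k\sum_{n>2bq}\binom nk4^{-n}\le e^{q/3}(8q)^k\sum_{n>2bq}2^{-n}=e^{q/3}2^{-2bq}(8q)^k$, using $\binom nk\le2^n$. Since $b\ge1$ forces $e^{q/3}\le2^{bq}$, this is at most $2^{-bq}(8q)^k$, which is the displayed estimate for all $k\ge0$. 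For the final assertion, note that on the real interval $|x|\le\frac1{8q}$ every factor satisfies $0<1-(jx)^2\le1$, so $g_q(x)\in(0,1]$ and $\frac1{g_q(x)}\ge1$; the $k=0$ case just proved gives $|\frac1{g_q(x)}-s(x)|\le2^{-bq}\le\frac12\le\frac12\cdot\frac1{g_q(x)}$, which rearranges to $\frac12\cdot\frac1{g_q(x)}\le s(x)\le\frac32\cdot\frac1{g_q(x)}$.

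I do not expect a genuine obstacle here beyond bookkeeping. The one place that needs care is the constant in the bound for $\sup_{|z|=1/(2q)}|1/g_q(z)|$: it must be of the form $e^{cq}$ with $c$ comfortably below $\log2$, so that the geometric gain $2^{-2bq}$ from truncating at degree $2bq$ wins even in the worst case $b=1$, $q=1$. The choice of evaluation radius $\frac1{2q}$ — large enough that $2q|x|\le\frac14$ throughout $[-\frac1{8q},\frac1{8q}]$, yet small enough to stay away from the poles at $\pm\frac1q$ — is exactly what balances these two effects.
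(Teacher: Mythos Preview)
Your proposal is correct and follows essentially the same approach as the paper: define $s$ as the degree-$2bq$ Taylor truncation of $\tfrac{1}{g_q}$, bound $|1/g_q(z)|$ on the circle $|z|=\tfrac{1}{2q}$ to obtain a Cauchy estimate $|a_n|\le e^{cq}(2q)^n$, differentiate the tail termwise, and use $\binom{n}{k}\le 2^n$ to sum the geometric series. Your intermediate constant $e^{q/3}$ is slightly sharper than the paper's $e^{q/2}$, but the structure and the final estimate are identical.
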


\begin{proof}
Let $\frac{1}{g_q(z)} = \sum_{i=0}^\infty a_i z^i$ be the power series 
expansion of $\frac{1}{g_q}$ around zero. As
$|1-z^2| \ge 1-|z|^2 \ge e^{-2|z|^2}$ for $|z|\le\frac{1}{2}$,
we can estimate using \eqref{eq:defofgL}
$$
        \left|\tfrac{1}{g_q(z)}\right| \le
        e^{\sum_{j=1}^q 2j^2|z|^2\lfloor\frac{q}{j}\rfloor} \le
        e^{q^2(q+1)|z|^2} \le e^{q/2}
$$
for all $z\in\mathbb{C}$ with $|z|\le\frac{1}{2q}$. Thus
Lemma \ref{lem:taylorcoeff} yields
$|a_i| \le e^{q/2} (2q)^i$ for all $i$.

Now let $s(x):=\sum_{i=0}^{2bq} a_i x^i$. Then we can estimate
$$
        \frac{|(\frac{1}{g_q}-s)^{(k)}(x)|}{k!} \le
        e^{q/2} (8q)^k
        \sum_{i=\max\{2bq+1,k\}}^\infty 
        {i\choose k}
        \frac{1}{4^i} 
$$
for $|x|\le \frac{1}{8q}$, where we used 
$(\frac{1}{g_q}-s)^{(k)}(x)=
\sum_{i=\max\{2bq+1,k\}}^\infty a_i \frac{i!}{(i-k)!} x^{i-k}$.
The first part of the lemma follows as
${i\choose k}\le 2^i$ and $2^{-2}e^{1/2} \le \frac{1}{2}$. The second 
part follows from the first
using that $|\frac{1}{g_q(x)}-s(x)| \le \frac{1}{2} \le
\frac{1}{2} \frac{1}{g_q(x)}$ 
for $|x|\le \frac{1}{8q}$ as $g_q(x)\le 1$ 
\end{proof}

We can now complete the proof of Lemma \ref{lem:rationalbernstein}.

\begin{proof}[Proof of Lemma \ref{lem:rationalbernstein}]
Let $s$ be the polynomial of Lemma \ref{lem:polyratber} 
(we will choose $b\in\mathbb{N}$ at the end of the proof).
The product formula yields
$$
        \frac{r^{(m)}}{m!} =
        \frac{(fs)^{(m)}}{m!} +
        \sum_{k=0}^m \frac{f^{(k)}}{k!}
        \frac{(\frac{1}{g_q}-s)^{(m-k)}}{(m-k)!}.
$$
As $fs$ has degree $p' := p+2bq$, applying
Lemma \ref{lem:polybernstein} and
Proposition \ref{prop:uniform_bound_1/N} yields
\begin{align*}
        &\|f^{(k)}\|_{[-\frac{1}{cp},\frac{1}{cp}]} 
        \le C(Cp)^{2k} \|f\|_{I_q}
        \le C(Cp)^{2k} \|r\|_{I_q}, \\
        &\|(fs)^{(m)}\|_{[-\frac{1}{cp'},\frac{1}{cp'}]} 
        \le (Cp')^{2m}\|fs\|_{I_q}
        \le (Cp')^{2m}\|r\|_{I_q}.
\end{align*}
Here we used $|f|\le |r|$ in the last inequality on the first 
line, and that $|fs|\le \frac{3}{2}|r|$ by the second part of Lemma 
\ref{lem:polyratber} in the last inequality on the second line.

To conclude the proof, note that
\begin{multline*}
        \sum_{k=0}^m \frac{\|f^{(k)}\|_{[-\frac{1}{cp'},\frac{1}{cp'}]}
        }{k!}
        \frac{\|(\frac{1}{g_q}-s)^{(m-k)}\|_{[-\frac{1}{cp'},\frac{1}{cp'}]
        }}{(m-k)!}
\\ 
        \le 
        2^{-bq} (Cp)^m \|r\|_{I_q}
        \sum_{k=0}^m \frac{p^{k}}{k!} 
        \le
        e^{p} 2^{-bq} (Cp)^m \|r\|_{I_q},
\end{multline*}
where we used the first part of Lemma \ref{lem:polyratber}. 
If we now choose $b=\lceil \frac{2}{\log 2}\frac{p}{q}\rceil$,
then $e^{p} 2^{-bq}\le e^{-p}$ and
$p'\le Cp$, and the proof is readily completed.
\end{proof}

\subsection{The master inequality} 
\label{sec:taylorstheoremHaarU}

We now have the necessary ingredients to prove the analogue of Lemma 
\ref{lem:ratesofconv} for the $\mathrm{U}(N)$ model. Note that as no 
truncation is needed, the result holds for all $q,N$ and not merely for 
$q\lesssim N$ as in the Gaussian setting.

\begin{lem}[Master inequality]
\label{lem:ratesHaarU}
There exists a linear functional $\mu_m$ on $\calP$ for every $m\in\bZ_+$ 
such that for every $q\in\bN$ and $h\in \calP_q$
\begin{equation}
\label{eq:infiboundHaarU}
	|\mu_m(h)| 
	\le 
	\bigg(
	(C\tilde q\tilde q_0)^m
	+\frac{(C\tilde q\tilde q_0)^{2m}}{m!} 
	\bigg)
	\sobnorm{h}{0}{[-K, K]},
\end{equation}
and such that for all $N\in\bN$
\begin{equation}
\label{eq:ratesHaarU}
	\left| \E[\tr h(\xn)]- \sum_{k=0}^{m-1} \frac{\mu_k(h)}{N^k} 
	\right| \leq 
	\frac{1}{N^m}
	\bigg(
	(C\tilde q\tilde q_0)^m
	+\frac{(C\tilde q\tilde q_0)^{2m}}{m!} 
	\bigg)
	\sobnorm{h}{0}{[-K, K]}.
\end{equation}
Here $C$ is a universal constant,
$\tilde q:=q(1+\log q)$, and $\tilde q_0:=q_0(1+\log q_0)$.
\end{lem}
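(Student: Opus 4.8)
The plan is to prove this as the exact rational‑function analogue of the GUE master inequality (Lemma~\ref{lem:ratesofconv}): use the rational encoding of Lemma~\ref{lem:rationalencoding} in place of the polynomial encoding, and the rational Bernstein inequality of Lemma~\ref{lem:rationalbernstein} in place of the ordinary one. Fix $q\in\bN$, $h\in\calP_q$, and set $p:=\lfloor 3qq_0(1+\log qq_0)\rfloor$; since $1+\log(qq_0)\le(1+\log q)(1+\log q_0)$, one has $qq_0\le p\le 3\tilde q\tilde q_0$. Lemma~\ref{lem:rationalencoding} supplies a rational function $\Psi_h=f_h/g_{qq_0}$ with $f_h\in\calP_p$ and $\E[\tr h(\xn)]=\Psi_h(\tfrac1N)=\Psi_h(-\tfrac1N)$ for all $N>qq_0$; as these values are linear in $h$ and determine $\Psi_h$ uniquely, the $m$th Taylor coefficient $\mu_m(h):=\tfrac1{m!}\Psi_h^{(m)}(0)$ is a well-defined linear functional on $\calP$. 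The only model input needed is the trivial bound $\|\xn\|\le K$ a.s., which is built into the definition of $K$; it gives $|\Psi_h(\tfrac1N)|=|\E[\tr h(\xn)]|\le\|h\|_{[-K,K]}$ for $N>qq_0$, and hence, using $\Psi_h(\tfrac1N)=\Psi_h(-\tfrac1N)$, $\|\Psi_h\|_{I_{qq_0}}\le\|h\|_{[-K,K]}$ in the notation of Lemma~\ref{lem:rationalbernstein}.

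For \eqref{eq:infiboundHaarU} with $m\ge1$, apply Lemma~\ref{lem:rationalbernstein} to $r=\Psi_h$ (permissible since $p\ge qq_0$) and evaluate at the origin:
$$|\mu_m(h)|\le\tfrac1{m!}\|\Psi_h^{(m)}\|_{[-\frac1{cp},\frac1{cp}]}\le\big(e^{-p}(Cp)^m+\tfrac{(Cp)^{2m}}{m!}\big)\|\Psi_h\|_{I_{qq_0}}\le\big((Cp)^m+\tfrac{(Cp)^{2m}}{m!}\big)\|h\|_{[-K,K]},$$
and the claimed bound follows from $p\le 3\tilde q\tilde q_0$ after enlarging the universal constant. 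For $m=0$ the bound is immediate from weak asymptotic freeness (Lemma~\ref{lem:waf}): $\mu_0(h)=\Psi_h(0)=\lim_{N\to\infty}\E[\tr h(\xn)]=({\tr\otimes\tau})(h(\xf))$, so $|\mu_0(h)|\le\|h\|_{[-\|\xf\|,\|\xf\|]}\le\|h\|_{[-K,K]}$.

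For \eqref{eq:ratesHaarU} I split on the size of $N$. If $N\ge cp$ (which in particular forces $N>qq_0$), then $\Psi_h$ is analytic on $[0,\tfrac1N]\subseteq[-\tfrac1{cp},\tfrac1{cp}]$ — its only singularities sit at $\pm\tfrac1j$ with $j\le qq_0$, hence at distance $\ge\tfrac1{qq_0}>\tfrac1N$ from $0$ — and Taylor's theorem gives
$$\Big|\E[\tr h(\xn)]-\sum_{k=0}^{m-1}\frac{\mu_k(h)}{N^k}\Big|=\Big|\Psi_h(\tfrac1N)-\sum_{k=0}^{m-1}\frac{\Psi_h^{(k)}(0)}{k!\,N^k}\Big|\le\frac{\|\Psi_h^{(m)}\|_{[0,\frac1N]}}{m!\,N^m},$$
whose right side is estimated exactly as above. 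If $N<cp$, bound crudely instead: using $|\E[\tr h(\xn)]|\le\|h\|_{[-K,K]}$ together with the already-proved \eqref{eq:infiboundHaarU}, the left side of \eqref{eq:ratesHaarU} is at most $\|h\|_{[-K,K]}\big(1+\sum_{k=0}^{m-1}\tfrac1{N^k}\big((C\tilde q\tilde q_0)^k+\tfrac{(C\tilde q\tilde q_0)^{2k}}{k!}\big)\big)$; multiplying by $N^m$ and using $N^{m-k}<(cp)^{m-k}\le(3c\tilde q\tilde q_0)^{m-k}$ reduces the claim to an elementary numerical inequality for each $m$. Since the crude estimate needs no lower bound on $N$, the two cases together cover all $N\in\bN$, and (unlike the Gaussian setting) no restriction $q\lesssim N$ is required since no truncation enters.

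The real content sits entirely in the already-established Lemmas~\ref{lem:rationalencoding} and \ref{lem:rationalbernstein}; the rest is bookkeeping. The step I expect to take the most care is this last uniformity over all $N$ and $m$: in the small-$N$ regime the elementary estimate must be split according to whether $m$ lies below or above a constant multiple of $(\tilde q\tilde q_0)^2$, so that the two terms $(C\tilde q\tilde q_0)^m/N^m$ and $(C\tilde q\tilde q_0)^{2m}/(m!\,N^m)$ on the right of \eqref{eq:ratesHaarU} take turns being dominant, all with universal constants. The one feature of the statement worth flagging is the factor $1+\log(qq_0)$ in the degree $p$ of the numerator $f_h$: it is inherited from Lemma~\ref{lem:rationalencoding}, propagates everywhere through $p\le 3\tilde q\tilde q_0$, and is exactly the source of the $\log^2$ loss in Theorem~\ref{thm:mainhaar} relative to the GUE bound; were $\Psi_h$ polynomial of degree $O(qq_0)$, the same argument would give the cleaner bound with $qq_0$ in place of $\tilde q\tilde q_0$.
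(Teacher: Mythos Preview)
Your proposal is correct and follows essentially the same approach as the paper: rational encoding (Lemma~\ref{lem:rationalencoding}), the trivial bound $|\Psi_h(\tfrac1N)|\le\|h\|_{[-K,K]}$, the rational Bernstein inequality (Lemma~\ref{lem:rationalbernstein}), the definition $\mu_m(h):=\tfrac1{m!}\Psi_h^{(m)}(0)$, Taylor's theorem for $N\ge c\tilde q\tilde q_0$, and the triangle inequality plus \eqref{eq:infiboundHaarU} for small $N$. One small correction: in the small-$N$ case, after factoring out $\tfrac{(C\tilde q\tilde q_0)^m}{N^m}$ you are left with the sum $\sum_{k=0}^{m-1}\big(1+\tfrac{(C\tilde q\tilde q_0)^k}{k!}\big)$, and the natural split is at $m$ versus $C\tilde q\tilde q_0$ (not $(\tilde q\tilde q_0)^2$), since $\tfrac{a^k}{k!}$ is increasing in $k$ for $k\le a$ and the full sum is at most $e^a$ once $m>a$.
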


\begin{proof}
Let $\Psi_h$ be as in Lemma \ref{lem:rationalencoding}. Then
the degree of the numerator of $\Psi_h$ is 
bounded by $p=\lfloor 3\tilde q\tilde q_0\rfloor$, and as
$\|\xn\|\le K$ a.s.\ we have
$$
	|\Psi_h(\tfrac{1}{N})|=|\E[\tr h(\xn)]|\le\|h\|_{[-K,K]}.
$$
Applying Lemma \ref{lem:rationalbernstein} therefore yields
$$
        \frac{1}{m!}\|\Psi_h^{(m)}\|_{[-\frac{1}{c\tilde q\tilde q_0},\frac{1}{c\tilde q\tilde q_0}]}
        \le 
        \bigg( (C\tilde q\tilde q_0)^m  +
        \frac{(C\tilde q\tilde q_0)^{2m}}{m!}\bigg) \|h\|_{[-K,K]},
$$
for every $m,N\in\bN$, where $C,c>0$ are universal 
constants. Now define
$$
	\mu_m(h) := \frac{\Psi_h^{(m)}(0)}{m!}.
$$
Then \eqref{eq:infiboundHaarU} follows immediately from the previous 
equation display for $m\ge 1$, while for $m=0$ we have 
$|\mu_0(h)|=\lim_{N\to\infty}|\E[\tr 
h(\xn)]|\le\|h\|_{[-K,K]}$. Moreover, 
\eqref{eq:ratesHaarU} follows for $\frac{1}{N}\le \frac{1}{c\tilde q\tilde 
q_0}$ by Taylor expanding $\Psi_h$ as in the proof of Lemma 
\ref{lem:ratesofconv}.

On the other hand, in the case $\frac{1}{c\tilde q\tilde q_0}<\frac{1}{N}$,
we first estimate
\begin{align*}
	\left| \E[\tr h(\xn)]- \sum_{k=0}^{m-1} \frac{\mu_k(h)}{N^k} 
	\right| 
	&\leq 
	\sum_{k=0}^{m-1} 
	\frac{1}{N^k}
	\bigg(
	(C\tilde q\tilde q_0)^k
	+\frac{(C\tilde q\tilde q_0)^{2k}}{k!} 
	\bigg)
	\sobnorm{h}{0}{[-K, K]}
\\
	&\leq 
	\frac{(C\tilde q\tilde q_0)^m}{N^m}
	\sum_{k=0}^{m-1} 
	\bigg(
	1
	+\frac{(C\tilde q\tilde q_0)^{k}}{k!} 
	\bigg)
	\sobnorm{h}{0}{[-K, K]}
\end{align*}
using the triangle inequality and \eqref{eq:infiboundHaarU} on the first 
line, and $1<\frac{c\tilde q\tilde q_0}{N}$ on the second line.
We now consider two cases. If $C\tilde q\tilde q_0<m$, the sum
on the second line
can be 
estimated by $m+e^{C\tilde q\tilde q_0}\le C^m$.
If $C\tilde q\tilde q_0\ge m$, we use that
$\frac{a^k}{k!}\le \frac{a^m}{m!}$ for $m\le a$ to estimate the sum by
$m(1+\frac{(C\tilde q\tilde q_0)^{m}}{m!})$. In each case, the proof is 
readily concluded.
\end{proof}

\subsection{Extension to smooth functions}
\label{sec:extUN}

We now proceed to prove an analogue of Theorem~\ref{thm:smasympexGUE} for 
the $\mathrm{U}(N)$ model. The proof in the present setting is somewhat 
simpler as Lemma \ref{lem:ratesHaarU} holds without constraint on $q$.

\begin{thm}[Smooth asymptotic expansion for $\mathrm{U}(N)$]
\label{thm:smasympexUN} 
There is a universal constant $C>0$, and a compactly supported 
distribution $\mu_k$ for every $k\in\bZ_+$, such that the following 
hold. Fix any $h\in C^\infty(\bR)$, and let
$f(\theta):= h(K\cos\theta)$. Then
$$
	\left| \E[\tr h(\xn)]- \sum_{k=0}^{m-1} \frac{\mu_k(h)}{N^k} 
	\right| \leq 
	\frac{(C\tilde q_0)^m}{u^mN^m}
	\|f^{(m'+1)}\|_{[0,2\pi]} 
	+\frac{(C\tilde q_0)^{2m}}{m!\, u^{2m} N^m} 
	\|f^{(2m'+1)}\|_{[0,2\pi]} 
$$
for every $m, N\in \bN$ and $u\in (0,1)$, where
$m':=\lceil (1+u)m\rceil$ and $\tilde q_0 := q_0(1+\log q_0)$.
\end{thm}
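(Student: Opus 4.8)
The plan is to mimic the proof of Theorem~\ref{thm:smasympexGUE}, but using the master inequality for $\mathrm{U}(N)$ (Lemma~\ref{lem:ratesHaarU}) in place of Lemma~\ref{lem:ratesofconv}. The crucial simplification here is that Lemma~\ref{lem:ratesHaarU} holds for \emph{all} $q,N\in\bN$ with no constraint $q\lesssim N$, so the delicate splitting of the Chebyshev expansion into low- and high-degree parts (needed for GUE because of unboundedness) is unnecessary: we can simply substitute the full Chebyshev expansion $h(x)=\sum_{q=0}^\infty a_q T_q(K^{-1}x)$ on $[-K,K]$ directly. First I would observe that $\mu_m$ extends to a compactly supported distribution with $|\mu_m(h)|\lesssim\|h\|_{C^{2m+1}[-K,K]}$ by Lemma~\ref{lem:extensiontoadistribution} applied to \eqref{eq:infiboundHaarU}; moreover, arguing as in the GUE case, $\E[\tr h(\xn)]$ is constant in $N$ for constant $h$, so $\mu_0(c)=c$ and $\mu_k(c)=0$ for $k\ge1$, which lets us assume $a_0=0$.

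Then I would write
$$
	\left|\E[\tr h(\xn)]-\sum_{k=0}^{m-1}\frac{\mu_k(h)}{N^k}\right|
	\le
	\sum_{q=1}^\infty |a_q|\,
	\left|\E[\tr T_q(K^{-1}\xn)]-\sum_{k=0}^{m-1}
	\frac{\mu_k(T_q(K^{-1}x))}{N^k}\right|
$$
and apply \eqref{eq:ratesHaarU} to each term with $\tilde q = q(1+\log q)$. This gives a bound of the form
$$
	\frac{1}{N^m}\sum_{q=1}^\infty |a_q|\,
	\bigg((C\tilde q\tilde q_0)^m + \frac{(C\tilde q\tilde q_0)^{2m}}{m!}\bigg).
$$
The main technical point is to convert the weights $\tilde q^m = (q(1+\log q))^m$ and $\tilde q^{2m}$ into the clean powers $q^{m'}$ and $q^{2m'}$ with $m'=\lceil(1+u)m\rceil$ that appear in the statement, at the cost of a factor $u^{-m}$ (resp.\ $u^{-2m}$). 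The mechanism is the elementary inequality $(1+\log q)^m \le C^m u^{-m} q^{um}$ for all $q\ge1$ and $u\in(0,1)$ — indeed $\log q \le \frac{1}{u}q^u$ up to a constant, so $(1+\log q)^m \le (1+\tfrac{1}{u})^m q^{um}\lesssim u^{-m}q^{um}$, hence $\tilde q^m \le C^m u^{-m} q^{(1+u)m}\le C^m u^{-m} q^{m'}$; similarly $\tilde q^{2m}\le C^{2m}u^{-2m}q^{2m'}$. Substituting this in and using $\sum_q q^{m'}|a_q|\lesssim\|f^{(m'+1)}\|_{[0,2\pi]}$ and $\sum_q q^{2m'}|a_q|\lesssim\|f^{(2m'+1)}\|_{[0,2\pi]}$ from Lemma~\ref{lem:zygmund} yields exactly the claimed bound (absorbing universal constants into $C$, and noting $\tilde q_0^m\le(C\tilde q_0)^m$ etc.).

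The step I expect to require the most care is this conversion of the logarithmic weight into a polynomial weight with the correct tracking of the $u$-dependence: one must check that the constant in $\log q\lesssim u^{-1}q^u$ is genuinely universal (independent of $u$), which follows from $\sup_{t>0}t e^{-ut}=1/(eu)$ applied with $t=\log q$. After that, everything is routine: the two terms in \eqref{eq:ratesHaarU} produce the two terms in the statement, and the factor $1/m!$ is carried along unchanged. By the remark following Theorem~\ref{thm:smasympexGUE}-style continuity arguments (and as in Lemma~\ref{lem:zygmund}'s extension to $C^{m,1}$ functions), the expansion then applies to the test functions of Lemma~\ref{lem:testfunctions} that will be used in the proof of Theorem~\ref{thm:mainhaar}.
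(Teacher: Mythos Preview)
Your proposal is correct and follows essentially the same route as the paper. The paper first establishes the bound for polynomial $h$ and then extends by density, and packages the inequality $(1+\log j)\le \frac{e^u}{u}j^u$ into a separate lemma (Lemma~\ref{lem:zygmundwithlogs}) before invoking Lemma~\ref{lem:zygmund}; you work directly with the infinite Chebyshev expansion and derive the same log-to-power conversion inline, but the substance is identical.
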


We will need a variant of Lemma \ref{lem:zygmund} with logarithmic terms.

\begin{lem} 
\label{lem:zygmundwithlogs}
Fix $k,l\in\bZ_+$ and $u>0$, and let $s := \lceil k+ul\rceil$.
Let $h\in\mathcal{P}_q$ with Chebyshev expansion
$h(x) = \sum_{j=0}^q a_j T_j(K^{-1}x)$, and let
$f(\theta):= h(K \cos(\theta))$.  Then
$$
	\sum_{j=1}^q j^k (1+\log j)^l  
	|a_j| \lesssim
	\bigg(\frac{e^u}{u}\bigg)^l
	\sobnorm{f^{(s+1)}}{0}{[0, 2\pi]}.
$$
\end{lem}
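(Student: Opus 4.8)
The plan is to reduce the claim to Lemma~\ref{lem:zygmund} by absorbing the logarithmic weight $(1+\log j)^l$ into a small power of $j$, at the cost of the stated factor $(e^u/u)^l$. The only input beyond Lemma~\ref{lem:zygmund} is an elementary inequality: for every $x\ge 1$ and $u>0$, writing $w:=u(1+\log x)>0$ and using $w\le e^{w}$ gives $u(1+\log x)=w\le e^{w}=e^{u}e^{u\log x}=e^{u}x^{u}$, so that
$$
	(1+\log x)^l \le \Big(\frac{e^u}{u}\Big)^l x^{ul}
	\qquad\text{for all }x\ge 1.
$$
Here raising both (positive) sides of $1+\log x\le \tfrac{e^u}{u}x^u$ to the power $l$ is legitimate since $l\ge 1$.

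With this in hand, first I would apply the inequality with $x=j$ for each $j\in\{1,\dots,q\}$ and use that $j\ge 1$ together with $k+ul\le\lceil k+ul\rceil=s$ to obtain
$$
	j^k(1+\log j)^l \le \Big(\frac{e^u}{u}\Big)^l j^{k+ul}
	\le \Big(\frac{e^u}{u}\Big)^l j^{s}.
$$
Summing against $|a_j|$ and recalling that the $a_j$ are the Chebyshev coefficients of $h$, hence the Fourier coefficients of $f(\theta)=h(K\cos\theta)$, I would then invoke Lemma~\ref{lem:zygmund} with $m=s\in\bZ_+$ (note $s\ge 1$) to conclude
$$
	\sum_{j=1}^q j^k(1+\log j)^l\,|a_j|
	\le \Big(\frac{e^u}{u}\Big)^l \sum_{j=1}^q j^{s}|a_j|
	\lesssim \Big(\frac{e^u}{u}\Big)^l \|f^{(s+1)}\|_{[0,2\pi]},
$$
which is the assertion.

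There is no genuine obstacle in this argument; the one point that requires a little care is to choose the elementary bound on $(1+\log x)^l$ sharply enough that the prefactor comes out exactly as $(e^u/u)^l$ rather than a cruder expression such as $(1+1/u)^l$, which would be \emph{larger} than $(e^u/u)^l$ for small $u$ and hence insufficient for the intended application in Theorem~\ref{thm:smasympexUN}. The inequality $w\le e^{w}$ is precisely what delivers the optimal shape of the constant, and since the conclusion is stated with $\lesssim$, no further bookkeeping of absolute constants is needed.
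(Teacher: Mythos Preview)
Your proof is correct and follows exactly the same route as the paper: the paper's one-line proof also invokes Lemma~\ref{lem:zygmund} after using the elementary bound $1+\log j=\frac{1}{u}\log((ej)^u)\le \frac{e^u}{u}j^u$, which is equivalent to your $w\le e^w$ derivation. (One minor cosmetic point: your parenthetical remarks that $l\ge 1$ and $s\ge 1$ need not hold since $k,l\in\bZ_+$ allows $k=l=0$, but the argument goes through unchanged in that degenerate case.)
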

 
\begin{proof}
This follows from Lemma \ref{lem:zygmund} using
$1+\log j = \frac{1}{u}\log((ej)^u) \le \frac{e^u}{u} j^u$.
\end{proof}

We can now prove Theorem \ref{thm:smasympexUN}.

\begin{proof}[Proof of Theorem \ref{thm:smasympexUN}]
We first note that the linear functional $\mu_m$ of 
Lemma~\ref{lem:ratesHaarU} extends to a compactly supported distribution 
for every $m\in\bZ_+$ by Lemma \ref{lem:extensiontoadistribution} and the 
inequality \eqref{eq:infiboundHaarU}. Now fix $h\in\mathcal{P}_q$ with
Chebyshev expansion $h(x) = \sum_{j=0}^q a_j T_j(K^{-1}x)$. Then
\eqref{eq:ratesHaarU} and the triangle inequality yield
\begin{multline*}
	\left| \E[\tr h(\xn)]- \sum_{k=0}^{m-1} \frac{\mu_k(h)}{N^k} 
	\right|
\\	 \leq 
	\frac{(C\tilde q_0)^m}{N^m}
	\sum_{j=0}^q
	j^m(1+\log j)^m |a_j|
	+
	\frac{(C\tilde q_0)^{2m}}{m!N^m} 
	\sum_{j=0}^q j^{2m}(1+\log j)^{2m}|a_j|.
\end{multline*}
Applying Lemma \ref{lem:zygmundwithlogs} and noting that
$e^u\le C$ for $u\le 1$ yields the conclusion for any $h\in\mathcal{P}$.
The general result follows as polynomials are dense in $h\in 
C^\infty(\mathbb{R})$.
\end{proof}

\subsection{Bootstrapping} 
\label{sec:haarbootstrap}

We now aim to prove the following.

\begin{prop}
\label{prop:supportofinfsHaarU}
In the setting of Theorem \ref{thm:smasympexUN}, we have
$$
	\supp\mu_m \subseteq[-\|\xf\|, \|\xf\|]
	\quad\text{for all }
	m\in\bZ_+.
$$
\end{prop}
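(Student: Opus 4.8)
The plan is to transcribe the GUE bootstrapping argument of Section \ref{sec:bootsgue} essentially verbatim: first establish a qualitative form of strong convergence, then combine it with concentration of measure on $\mathrm{U}(N)$ to run an induction that promotes this soft input into the support bound for every $\mu_m$. The starting point is the analogue of Lemma \ref{lem:guenu01}. Weak asymptotic freeness (Lemma \ref{lem:waf}) gives $\mu_0(h)=\lim_{N\to\infty}\E[\tr h(\xn)]=(\tr\otimes\tau)(h(\xf))$, and since Lemma \ref{lem:rationalencoding} asserts $\Psi_h(\tfrac1N)=\Psi_h(-\tfrac1N)$, the Taylor expansion of $\Psi_h$ at $0$ contains only even-order terms, so $\mu_1(h)=\Psi_h'(0)=0$. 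Feeding this into Theorem \ref{thm:smasympexUN} with expansion order $2$, applied to the test function of Lemma \ref{lem:testfunctions} with $\rho=\|\xf\|$ and a large enough smoothness parameter, reproduces Corollary \ref{cor:strongconv}: one gets $\mathbf{P}[\|\xn\|>\|\xf\|+\varepsilon]\le DN\,\E[\tr h(\xn)]=O(D/N)\to 0$ for fixed $D$, hence $\|\xn\|\to\|\xf\|$ in probability and in particular $\mathrm{med}(\|\xn\|)\le\|\xf\|+\tfrac\varepsilon2$ for all large $N$.

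The second step is concentration of measure. Because $P$ is a fixed noncommutative polynomial and $\mathrm{U}(N)^r$ is compact, the map $(W_1,\dots,W_r)\mapsto\|P(W_1,\dots,W_r,W_1^*,\dots,W_r^*)\|$ is globally Lipschitz with respect to the metric induced by the Hilbert--Schmidt norm, with Lipschitz constant depending only on $P$; unlike in the Gaussian case, no restriction to a good event is needed here. The concentration property of Haar measure on $\mathrm{U}(N)$ (see, e.g., \cite{Led01}) then yields
$$
	\mathbf{P}\big[\,|\|\xn\|-\mathrm{med}(\|\xn\|)|>\varepsilon\,\big]\le Ce^{-c_P N\varepsilon^2}
$$
for a constant $c_P>0$ depending only on $P$. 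Combined with the basic strong convergence just obtained, this gives, exactly as in Corollary \ref{cor:smallexpectations}, that $|\E[\tr h(\xn)]|=O(e^{-cN})$ for every bounded $h\in C^\infty(\bR)$ vanishing on $[-\|\xf\|-\varepsilon,\|\xf\|+\varepsilon]$, where $c>0$ may depend on $P$ and $\varepsilon$.

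The final step is the bootstrapping induction, carried out exactly as in the proof of Proposition \ref{prop:supportofinfsGUE}. Fix $\varepsilon>0$ and a bounded $h\in C^\infty(\bR)$ vanishing on $[-\|\xf\|-\varepsilon,\|\xf\|+\varepsilon]$; since $\varepsilon$ may be taken arbitrarily small, it suffices to prove $\mu_m(h)=0$ for all $m$. For $m=0$ this is $\mu_0(h)=(\tr\otimes\tau)(h(\xf))=0$ as $\spec(\xf)\subseteq[-\|\xf\|,\|\xf\|]$. Assuming $\mu_0(h)=\cdots=\mu_{m-1}(h)=0$, Theorem \ref{thm:smasympexUN} applied with $m+1$ in place of $m$ (and, say, $u=1$) gives $|\E[\tr h(\xn)]-\mu_m(h)/N^m|=O(N^{-(m+1)})$, whence $|\mu_m(h)|\le N^m|\E[\tr h(\xn)]|+O(1/N)=O(1/N)$ by the second step; as the left-hand side does not depend on $N$, we conclude $\mu_m(h)=0$.

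The only genuinely new ingredient relative to the Gaussian argument, and the point demanding the most care, is the concentration estimate of the second step: one must confirm that Haar measure on $\mathrm{U}(N)$ (and, in Section \ref{sec:O(N)andSp(N)}, on $\mathrm{O}(N)$ and $\mathrm{Sp}(N)$) obeys a concentration inequality with the correct $N$-scaling, and that $P$ induces an operator-norm map that is Lipschitz with an $N$-independent constant, with particular attention to the metric normalization so that the rate $e^{-c_P N\varepsilon^2}$ carries a constant $c_P$ depending on $P$ alone. Everything else is a transcription of Section \ref{sec:bootsgue}.
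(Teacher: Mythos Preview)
Your proposal is correct and follows essentially the same route as the paper: identify $\mu_0$ and $\mu_1$ via weak asymptotic freeness and the evenness $\Psi_h(\tfrac1N)=\Psi_h(-\tfrac1N)$, deduce qualitative strong convergence, invoke Lipschitz concentration on $\mathrm{U}(N)^r$ (Lemma \ref{lem:HaarConcentration}), and then run the induction of Proposition \ref{prop:supportofinfsGUE} verbatim. One cosmetic point: Theorem \ref{thm:smasympexUN} requires $u\in(0,1)$, so take e.g.\ $u=\tfrac12$ rather than $u=1$.
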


The proof of the analogous Gaussian result in Proposition 
\ref{prop:supportofinfsGUE} transfers to the present setting with minimal 
modifications. The main input we need is an appropriate concentration 
inequality for Haar unitary matrices.

\begin{lem}
\label{lem:HaarConcentration}
For any $\varepsilon>0$ and $N\in\bN$, we have
$$
	\P[|\|\xn\|-\mathrm{med}(\|\xn\|)|>\varepsilon] \leq 
	Ce^{-c_P N \varepsilon^2}
$$
for a constant $c_P>0$ that depends only on $P$ and a universal 
constant $C>0$. 
\end{lem}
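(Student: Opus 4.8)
The plan is to derive this from two ingredients: the function $(U_1,\dots,U_r)\mapsto\|P(U_1,\dots,U_r,U_1^*,\dots,U_r^*)\|$ on $\mathrm{U}(N)^r$ is Lipschitz with a constant $L_P$ depending only on $P$; and Haar measure on $\mathrm{U}(N)$ exhibits concentration of measure with rate $e^{-cN(\cdot)^2}$. In contrast to the Gaussian case of Lemma~\ref{lem:concentrationaroundthemean}, the random matrices here are automatically bounded, so no truncation and no exceptional event of probability $\sim e^{-cN}$ are needed, and the proof is correspondingly shorter.

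For the Lipschitz bound I would write $P=\sum_w A_w\otimes w$ as a finite sum over words $w$ of length $\le q_0$ in $x_1,\dots,x_r,x_1^*,\dots,x_r^*$ with $A_w\in\mathrm{M}_D(\bC)$, and swap one coordinate of the argument at a time. Using that each argument is unitary, so that every factor other than the one being swapped has operator norm $\le 1$, one gets
$$
\big\|P(\boldsymbol U,\boldsymbol U^*)-P(\boldsymbol V,\boldsymbol V^*)\big\|
\;\le\; q_0\Big(\textstyle\sum_w\|A_w\|\Big)\sum_{i=1}^r\|U_i-V_i\|,
$$
where $\|\cdot\|$ denotes the operator norm. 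Since $\|A\|\le\|A\|_{\mathrm{HS}}$ and using the triangle inequality for the operator norm, it follows that $(U_1,\dots,U_r)\mapsto\|P(\boldsymbol U,\boldsymbol U^*)\|$ is $L_P$-Lipschitz on $\mathrm{U}(N)^r$ for the product Hilbert--Schmidt metric, with $L_P:=\sqrt r\,q_0\sum_w\|A_w\|$ depending only on $P$. I would then invoke the concentration inequality for Haar measure on $\mathrm{U}(N)^r$ \cite{Led01} --- any $L$-Lipschitz $F$ satisfies $\P[\,|F-\mathrm{med}(F)|>t\,]\le Ce^{-cNt^2/L^2}$, which is a consequence of the strictly positive Ricci curvature of $\mathrm{SU}(N)$, equivalently of the logarithmic Sobolev inequality for $\mathrm{U}(N)^r$ with constant $O(1/N)$ --- applied to $F=\|P(\boldsymbol U,\boldsymbol U^*)\|$ with $L=L_P$. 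This yields the claim with $c_P=c/L_P^2$.

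There is no substantial obstacle here: the only point requiring a little care is that the Lipschitz estimate must be phrased for the same (Hilbert--Schmidt, equivalently geodesic) metric that appears in the concentration inequality, which is precisely what the bound above provides. The argument does not use the dimension $D$ of the matrix coefficients, and it transfers essentially verbatim to the $\mathrm{O}(N)/\mathrm{Sp}(N)$ models in section~\ref{sec:O(N)andSp(N)} upon substituting the corresponding concentration inequalities.
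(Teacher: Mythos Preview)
Your proof is correct and follows essentially the same approach as the paper: establish that $\|X^N\|$ is $L_P$-Lipschitz in the Haar unitaries with $L_P$ depending only on $P$, then invoke concentration of measure on $\mathrm{U}(N)^r$ (the paper cites \cite[Theorem 5.17]{Mec19} rather than \cite{Led01}, but these are equivalent inputs). One small correction to your closing remark: the argument does \emph{not} transfer verbatim to $\mathrm{O}(N)$, since $\mathrm{O}(N)$ has two connected components and therefore does not satisfy a Lipschitz concentration principle; the paper notes this explicitly and sidesteps the issue by running the bootstrapping argument through $\mathrm{Sp}(N)$ instead.
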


\begin{proof}
As $\|\xn\|=\|P(\boldsymbol{U}^N,\boldsymbol{U}^{N*})\|$ is 
$L_P$-Lipschitz as a function of $\haar_1,\ldots,\haar_r$ where $L_P$ 
depends only on $P$, concentration around 
the mean follows directly from \cite[Theorem 5.17]{Mec19}.
It is a standard fact that the concentration around the mean is equivalent 
to concentration around the median \cite[Proposition 1.8]{Led01}.
\end{proof}

The proof of Proposition \ref{prop:supportofinfsHaarU} now
follows exactly as in the Gaussian case.

\begin{proof}[Proof of Proposition \ref{prop:supportofinfsHaarU}]
All the proofs in sections \ref{sec:guebasiccvg} and 
\ref{sec:proofofsupportofinfsGUE} extend directly to the present setting, 
provided that we replace Theorem \ref{thm:smasympexGUE} and
Lemmas \ref{lem:polyencoding}~and~\ref{lem:concentrationaroundthemean} by
Theorem \ref{thm:smasympexUN} and Lemmas 
\ref{lem:rationalencoding}~and~\ref{lem:HaarConcentration}, respectively.
\end{proof}

\subsection{Proof of Theorem \ref{thm:mainhaar}: $\mathrm{U}(N)$ case}
\label{sec:pfmainHaarU}

We can now prove Theorem \ref{thm:mainhaar} in the case 
that $\boldsymbol{U}^N$ are Haar-distributed in $\mathrm{U}(N)$.
We first note the following.

\begin{lem}
\label{lem:cstarred}
We can estimate
$K := \|P\|_{\mathrm{M}_D(\bC)\otimes C^*(\bF_r)} \le (Cr)^{q_0}\|\xf\|$.
\end{lem}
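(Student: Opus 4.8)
The plan is to mimic the orthonormality argument of Lemma~\ref{lem:chebsecondkind}, with reduced words in the free group $\bF_r$ playing the role of the Chebyshev system. First I would note that since $C^*(\bF_r)$ is the universal $C^*$-algebra generated by $r$ unitaries, reducing $P$ using only the relations $x_ix_i^*=x_i^*x_i=\id$ produces a canonical expression $P\equiv\sum_w A_w\,w$, where $w$ ranges over reduced words of length at most $q_0$ in the letters $x_1,\ldots,x_r,x_1^*,\ldots,x_r^*$ and $A_w\in\mathrm{M}_D(\bC)$. The coefficients $A_w$ depend only on $P$, so the same identity holds after evaluating the variables at \emph{any} tuple of unitaries: $P(W_1,\ldots,W_r,W_1^*,\ldots,W_r^*)=\sum_w A_w\otimes w(\boldsymbol W)$ for all $n$ and $W_i\in\mathrm{U}(n)$, and likewise $\xf=\sum_w A_w\,w(\boldsymbol u)$.

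Next I would bound the coefficients by $\|\xf\|$. The elements $\{w(\boldsymbol u)\}$ indexed by distinct reduced words are orthonormal in $L^2(\tau)$, since $\tau(w_1(\boldsymbol u)^*w_2(\boldsymbol u))=\delta_{w_1,w_2}$ (the reduced word $w_1^{-1}w_2$ being a nontrivial element of $\bF_r$ whenever $w_1\ne w_2$). Hence $(\mathrm{id}\otimes\tau)(\xf^*\xf)=\sum_w A_w^*A_w$. Since $\mathrm{id}\otimes\tau$ is unital completely positive, hence contractive, and $\sum_w A_w^*A_w\ge A_{w_0}^*A_{w_0}$ in the Loewner order for each $w_0$, we get $\|A_{w_0}\|^2\le\|(\mathrm{id}\otimes\tau)(\xf^*\xf)\|\le\|\xf\|^2$, i.e.\ $\|A_w\|\le\|\xf\|$ for every reduced word $w$.

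Finally I would assemble the bound. For any $n$ and $W_i\in\mathrm{U}(n)$ each $w(\boldsymbol W)$ is unitary, so the triangle inequality gives $\|P(\boldsymbol W,\boldsymbol W^*)\|\le\sum_w\|A_w\|$, and since there are at most $(Cr)^{q_0}$ reduced words of length at most $q_0$ in $\bF_r$ for a universal constant $C$, this is at most $(Cr)^{q_0}\|\xf\|$. Taking the supremum over $n$ and $W_i$ and recalling that $K$ is precisely this supremum completes the proof. The only mildly delicate point is the first step — that $P$ reduces to a linear combination of reduced words that is independent of the unitaries chosen — but this is exactly the statement that distinct reduced words are linearly independent in the group algebra of $\bF_r$, equivalently the universal property of $C^*(\bF_r)$; everything after that is routine.
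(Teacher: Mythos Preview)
Your proof is correct and follows essentially the same approach as the paper: represent $P$ in terms of reduced words, use orthonormality of $\{w(\boldsymbol{u})\}$ in $L^2(\tau)$ (exactly as in the second part of Lemma~\ref{lem:chebsecondkind}) to bound $\|A_w\|\le\|\xf\|$, and finish with the triangle inequality and a count of reduced words. The paper's version is terser because it simply refers back to Lemma~\ref{lem:chebsecondkind} for the orthonormality step, but you have spelled out precisely that argument.
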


\begin{proof}
We can represent the noncommutative polynomial $P$ as
$$
	P(\boldsymbol{u},\boldsymbol{u}^*)  =
	\sum_w A_w \otimes w(u_1,\ldots,u_r),
$$
where $A_w\in \mathrm{M}_D(\bC)$ are matrix coefficients and
where the sum is over all reduced words in 
$\boldsymbol{u},\boldsymbol{u}^*$ of length at most $q_0$.
As there are at most $(Cr)^{q_0}$ such words, we immediately obtain
$K\le (Cr)^{q_0}\max_w \|A_w\|$ by the triangle inequality.
The conclusion follows as in the second part of the proof of Lemma 
\ref{lem:chebsecondkind} using that the reduced words 
$\{w(u_1,\ldots,u_r)\}$ are orthonormal in $L^2(\tau)$.
\end{proof}

We now complete the proof. The argument is very similar to the one used in 
section \ref{sec:pfmaingue}, except that we must take care of the 
logarithmic terms.

\begin{proof}[Proof of Theorem \ref{thm:mainhaar}: $\mathrm{U}(N)$ case]
Let $\varepsilon\in[\frac{2}{\sqrt{N}},1]$, and
fix $m\in\bN$ and $u\in[\frac{1}{m},1]$ that will be chosen at the end of 
the proof. Define $m':=\lceil (1+u)m\rceil$.

Let $h$ be the test function provided by Lemma \ref{lem:testfunctions} 
with $m\leftarrow 2m'$, $\rho\leftarrow \|\xf\|$, and 
$\varepsilon\leftarrow\varepsilon\|\xf\|$.
Then $\mu_k(h)=0$ for all $k\in\bZ_+$ by Proposition 
\ref{prop:supportofinfsHaarU}. Thus
\begin{multline*}
	\mathbf{P}[\|\xn\| \ge (1+\varepsilon)\|\xf\|]
	\le
	\mathbf{E}[\trace h(\xn)] =
	DN\,\mathbf{E}[\tr h(\xn)]
\\
	\le
	\frac{(Cr)^{q_0}DN}{\varepsilon}
	\bigg[
	\bigg(\frac{(Cr)^{3q_0} \tilde q_0  m^{1+2u}
	}{\varepsilon^{1+2u}uN}
	\bigg)^m
	+
	\bigg(
	\frac{
	(Cr)^{6q_0}
	\tilde q_0^2 m^{1+4u} 
	}{\varepsilon^{2(1+2u)}u^2 N
	}
	\bigg)^m
	\bigg]
\end{multline*}
for a universal constant $C$
by Theorem \ref{thm:smasympexUN}, Lemma \ref{lem:testfunctions}, and 
Lemma \ref{lem:cstarred},
where $\tilde q_0 := q_0(1+\log q_0)$ and we used that
$m'\le (1+2u)m \le 3m$ and $\frac{1}{m!}\le(\frac{e}{m})^m$.

Now assume that $D\le e^m$, and choose $u=\frac{1}{1+\log m}$
and $m=\lfloor \frac{N\varepsilon^2}{L\log^2(N\varepsilon^2)}\rfloor$.
Then $m(1+\log m)^2 \le \frac{N\varepsilon^2}{L}$ and
$\varepsilon^u \ge \frac{1}{C}$ (as $\varepsilon \ge \frac{2}{\sqrt{N}}$),
so the inequality simplifies to
$$
	\mathbf{P}[\|\xn\| \ge (1+\varepsilon)\|\xf\|]
	\le
	\frac{(Cr)^{q_0}N}{\varepsilon}
	\bigg(
	\frac{
	(Cr)^{6q_0}
	\tilde q_0^2 
	}{L}
	\bigg)^m.
$$
The conclusion follows by choosing $L=e(Cr)^{6q_0}\tilde q_0^2$ provided 
that $m\ge 2$, which is ensured by assuming that $\varepsilon\ge 
\frac{1}{c\sqrt{N}}$ for a constant $c$ that depends only on $L$.
\end{proof}

\section{Strong convergence for $\mathrm{O}(N)$ and $\mathrm{Sp}(N)$}
\label{sec:O(N)andSp(N)}

The aim of this section is to complete the proof of Theorem 
\ref{thm:mainhaar} for Haar-distributed random matrices in $\mathrm{O}(N)$ 
and $\mathrm{Sp}(N)$. As in the Gaussian case
(section~\ref{sec:supersymmetryGOEGSE}), most of the proof in the 
$\mathrm{U}(N)$ case extends \emph{verbatim} to the present setting, so 
that we will focus attention here only on the necessary modifications.

The following setting and notations will be fixed throughout this section. 
Let
$\boldsymbol{U}^N=(\haaro_1, \dots, \haaro_r)$
and
$\boldsymbol{V}^N=(\haars_1, \dots, \haars_r)$
be independent
Haar-distributed random matrices in $\mathrm{O}(N)$ and
$\mathrm{Sp}(N)$, respectively, and let
$\boldsymbol{u}=(u_1,\ldots,u_r)$ be free Haar unitaries.
We further fix a self-adjoint noncommutative polynomial
$P\in\mathrm{M}_D(\bC)\otimes \bC\langle 
x_1,\ldots,x_r,x_1^*,\ldots,x_r^*\rangle$ of degree $q_0$ with matrix 
coefficients of dimension $D$, and denote the random matrices of interest 
and their limiting model as
$$
	\xn := P(\boldsymbol{U}^N,\boldsymbol{U}^{N*}),\qquad
	\yn := P(\boldsymbol{V}^N,\boldsymbol{V}^{N*}),\qquad
	\xf := P(\boldsymbol{u},\boldsymbol{u}^*).
$$
Finally, we define $K$ as in section \ref{sec:Unitarymatrices}.

\subsection{Polynomial encoding and supersymmetric duality}

We begin by proving an analogue of Lemmas 
\ref{lem:polyencodingGOEGSE} and \ref{lem:rationalencoding} in the present 
setting. We will always denote by $g_q$ the polynomial 
defined in \eqref{eq:defofgL} without further comment.

\begin{lem}[Polynomial encoding]
\label{lem:rationalencodingOSp}
For every $h \in \calP_q$, there is a rational function of the form 
$\Psi_h:=\frac{f_h}{g_{qq_0}}$ with 
$f_h,g_{qq_0}\in\mathcal{P}_{\lfloor 6qq_0(1+\log qq_0)\rfloor}$
so that
\begin{alignat*}{2}
	&\E[\tr h(\xn)] &&= \Psi_h(\tfrac{1}{N}),\\
	&\E[\tr h(\yn)] &&= \Psi_h(-\tfrac{1}{2N})
\end{alignat*}
for all $N\in\bN$ such that $N>qq_0$.
\end{lem}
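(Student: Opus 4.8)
The plan is to mirror the proof of Lemma \ref{lem:rationalencoding} (the $\mathrm{U}(N)$ case), replacing the unitary Weingarten calculus by its orthogonal and symplectic analogues and exploiting the duality between the $\mathrm{O}(N)$ and $\mathrm{Sp}(N)$ ensembles in exactly the way that Lemma \ref{lem:polyencodingGOEGSE} exploits the duality between GOE and GSE. By linearity, $\E[\tr h(\xn)]$ and $\E[\tr h(\yn)]$ are linear combinations \emph{with the same coefficients} of $\E[\tr w(\boldsymbol U^N,\boldsymbol U^{N*})]$ and $\E[\tr w(\boldsymbol V^N,\boldsymbol V^{N*})]$, respectively, over reduced words $w$ of length at most $L:=qq_0$. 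The identity word contributes $1$ to both; and if some generator appears an odd total number of times in $w$, both expectations vanish, since $-\id$ lies in $\mathrm{O}(N)$ and in $\mathrm{Sp}(N)$ so that each $\haaro_i$ (resp.\ $\haars_i$) has the same law as $-\haaro_i$ (resp.\ $-\haars_i$). For the remaining words I would apply the Weingarten formula for Haar measure on $\mathrm{O}(N)$ (resp.\ $\mathrm{Sp}(N)$); see \cite{collins2022weingarten,magee2024matrix}. This expresses $\E[\tr w(\boldsymbol U^N,\boldsymbol U^{N*})]$ as a finite sum over pair partitions of the at most $L$ ``legs'' of $w$, each term a product of orthogonal Weingarten values $\wg^{\mathrm O}(\cdot,N)$ times an integer power of $N$ counting the loops of the associated diagram, and analogously for the symplectic case with symplectic Weingarten values, an integer power of $2N$, and an additional sign coming from the symplectic form.

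Next I would record the pole structure of the orthogonal Weingarten function in complete analogy with Lemma \ref{lem:denominatorsofwg}. Using the explicit expression for $\wg^{\mathrm O}_L(\cdot,N)$ as a sum over partitions of a quantity whose denominator is a product of shifted contents $N+c'(\square)$ with $|c'(\square)|$ bounded in terms of $L$, together with the same diagonal-counting estimate as in the proof of Lemma \ref{lem:denominatorsofwg}, one finds that $\E[\tr w(\boldsymbol U^N,\boldsymbol U^{N*})]$ is a rational function of $N$ whose poles lie at integers of absolute value at most $L$, with multiplicities small enough that after clearing denominators against $g_L$ (times a power of $N$) and using $|\E[\tr w(\boldsymbol U^N,\boldsymbol U^{N*})]|\le 1$ for all $N$ to bound the numerator degree by the denominator degree, one obtains
$$
	\E[\tr w(\boldsymbol U^N,\boldsymbol U^{N*})]=\frac{f_w(\tfrac1N)}{g_{qq_0}(\tfrac1N)}
	\qquad\text{for all }N>L,
$$
with $f_w,g_{qq_0}\in\mathcal{P}_{\lfloor 6qq_0(1+\log qq_0)\rfloor}$; the factor $6$ (rather than $3$ as in the unitary case) absorbs the larger range of the relevant contents in the orthogonal setting. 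Summing over the finitely many words $w$ of length at most $L$ over the common denominator $g_{qq_0}(\tfrac1N)$ then yields $\E[\tr h(\xn)]=\Psi_h(\tfrac1N)$ for a rational function $\Psi_h=f_h/g_{qq_0}$ of the claimed form, and realness of $\Psi_h$ follows as in Lemma \ref{lem:rationalencoding} since $\wg^{\mathrm O}$ is real-valued and $\E[\tr h(\xn)]$ is real.

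The crux of the argument is to show that the symplectic expectations arise from the orthogonal ones by the substitution $N\mapsto-2N$, i.e.\ that
$$
	\E[\tr w(\boldsymbol V^N,\boldsymbol V^{N*})]=\frac{f_w(-\tfrac1{2N})}{g_{qq_0}(-\tfrac1{2N})}
	\qquad\text{for all }N>L,
$$
which gives $\E[\tr h(\yn)]=\Psi_h(-\tfrac1{2N})$ and completes the proof. Here I would invoke the orthogonal--symplectic Weingarten duality, in the form used in \cite{magee2024matrix} and in the spirit of the GOE/GSE duality of \cite{bryc2009duality}: the symplectic Weingarten function of $\mathrm{Sp}(N)$ equals, up to an explicit sign depending only on the pairing, the orthogonal Weingarten function evaluated at dimension $-2N$. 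The delicate point --- and the main obstacle --- is precisely the bookkeeping of these signs: one must check that the sign carried by each symplectic Weingarten value, the sign produced by the symplectic form in each loop of the diagram, and the sign incurred by replacing $N$ with $-2N$ in the loop-counting powers all combine so that \emph{each individual term} of the symplectic expansion is exactly the corresponding orthogonal term with $N$ replaced by $-2N$. Once this is verified termwise, summing over pairings and over words shows that $\E[\tr w(\boldsymbol V^N,\boldsymbol V^{N*})]$ is the same rational function of the dimension as $\E[\tr w(\boldsymbol U^N,\boldsymbol U^{N*})]$ evaluated at $-2N$, which is the asserted identity. This sign analysis is entirely parallel to the one underlying \cite[Theorem 4.1]{bryc2009duality} used in Lemma \ref{lem:polyencodingGOEGSE}, so no genuinely new difficulty should arise beyond careful accounting.
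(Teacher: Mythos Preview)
Your proposal is correct and follows essentially the same approach as the paper: the $\mathrm{O}(N)$ case is proved by repeating the argument of Lemma~\ref{lem:rationalencoding} with the orthogonal Weingarten function (whose pole structure is analyzed in Lemma~\ref{lem:denominatorsofwgO}, the analogue of your proposed Lemma~\ref{lem:denominatorsofwg} variant), and the $\mathrm{Sp}(N)$ case is then deduced from the $\mathrm{O}(N)$ case via the orthogonal--symplectic duality of \cite{magee2024matrix}. The only difference is one of economy: the paper simply invokes \cite[Theorem~1.2]{magee2024matrix} as a black box for the duality $\E[\tr h(\yn)]=\Psi_h(-\tfrac1{2N})$, whereas you propose to verify the termwise sign cancellation yourself---this is unnecessary, as that theorem already packages exactly the identity you need.
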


In the proof, we require the orthogonal counterparts of the 
Weingarten functions of Definition \ref{defn:unitaryweingarten}.
It follows from \cite[Theorem 3.1]{collins2009some} that these are given
by
\begin{equation}
\label{eq:weingartenfunctionO}
   \wgo_L(m_1,m_2, N) = 
   \sum_{\lambda \vdash L} 
   \frac{C_{\lambda,m_1,m_2}}
   {\prod_{(i,j) \in \lambda } (N + 2j-i-1)}
\end{equation}
for all $L,N\in\bN$ with $L\le N$ and $m_1,m_2\in\calM_{2L}$. Here
$\calM_{2L}$ denotes the set of perfect matchings (i.e., pair partitions) 
of $[2L]$, and $C_{\lambda,m_1,m_2}$ is a real constant that depends only 
on $\lambda,m_1,m_2$ whose precise form is irrelevant for our analysis.
The following lemma is the counterpart of Lemma \ref{lem:denominatorsofwg}
in the present setting.

\begin{lem}
\label{lem:denominatorsofwgO}
For every $m_1,m_2\in\calM_{2L}$, there exists
$a_{m_1,m_2}\in\mathcal{P}$ so that
$$
	\wgo_L(m_1,m_2, N)
	=\frac{a_{m_1,m_2}(N)}{N^L
	\prod_{k=1}^{2L} (N^2-k^2)^{\lfloor\frac{2L}{k}\rfloor}}
	\quad\text{for all }N>2L.
$$
\end{lem}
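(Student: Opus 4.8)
The plan is to follow the proof of Lemma~\ref{lem:denominatorsofwg} line for line, with the content $c(\square)=i-j$ replaced throughout by the quantity $2j-i-1$ that governs the factors in the denominator of the orthogonal Weingarten formula~\eqref{eq:weingartenfunctionO}. First I would fix a partition $\lambda\vdash L$ and, for $m\in\bZ$, write $\eta_m(\lambda)$ for the number of boxes $(i,j)\in\lambda$ with $2j-i-1=m$, so that $\prod_{(i,j)\in\lambda}(N+2j-i-1)=\prod_m(N+m)^{\eta_m(\lambda)}$. Since $1\le i,j\le L$ for every box of $\lambda$, we have $1-L\le 2j-i-1\le 2L-2$, so only $|m|\le 2L$ occurs; the whole proof then reduces to showing that $\prod_{(i,j)\in\lambda}(N+2j-i-1)$ divides $N^L\prod_{k=1}^{2L}(N^2-k^2)^{\lfloor 2L/k\rfloor}$, i.e., that $\eta_0(\lambda)\le L$ and $\eta_{\pm k}(\lambda)\le\lfloor 2L/k\rfloor$ for $1\le k\le 2L$.

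To prove these bounds I would use, exactly as in Lemma~\ref{lem:denominatorsofwg}, that a box $(i,j)\in\lambda$ forces the $i\times j$ rectangle into $\lambda$, hence $ij\le L$. For $m=k>0$ a box with $2j-i-1=k$ has $2j=i+1+k$, so $i(k+2)\le i(i+1+k)=2ij\le 2L$ and thus $i\le\lfloor\tfrac{2L}{k+2}\rfloor\le\lfloor\tfrac{2L}{k}\rfloor$; since distinct such boxes lie in distinct rows, $\eta_k(\lambda)\le\lfloor\tfrac{2L}{k}\rfloor$. For $m=-k<0$ a box with $2j-i-1=-k$ has $i=2j-1+k$, so $(k+1)j\le(2j-1+k)j=ij\le L$ and thus $\eta_{-k}(\lambda)\le\lfloor\tfrac{L}{k+1}\rfloor\le\lfloor\tfrac{2L}{k}\rfloor$ (distinct such boxes lie in distinct columns); and trivially $\eta_0(\lambda)\le|\lambda|=L$. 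With the divisibility in hand, rewriting~\eqref{eq:weingartenfunctionO} over the common denominator $N^L\prod_{k=1}^{2L}(N^2-k^2)^{\lfloor 2L/k\rfloor}$ yields
$$
	\wgo_L(m_1,m_2,N)=\frac{a_{m_1,m_2}(N)}{N^L\prod_{k=1}^{2L}(N^2-k^2)^{\lfloor 2L/k\rfloor}},
$$
where $a_{m_1,m_2}$ is obtained by clearing denominators in the finite sum over $\lambda$ and is a polynomial with real coefficients since each $C_{\lambda,m_1,m_2}$ is real. As both sides are rational functions of $N$, the identity holds wherever the right-hand denominator is nonzero, in particular for all integers $N>2L$, which is the assertion; as a byproduct one reads off $\deg a_{m_1,m_2}\le 2\sum_{k=1}^{2L}\lfloor\tfrac{2L}{k}\rfloor$, the source of the logarithmic factor in the degree bound of Lemma~\ref{lem:rationalencodingOSp}.

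There is no genuine obstacle here: the argument is essentially a transcription of Lemma~\ref{lem:denominatorsofwg} with $i-j$ replaced by $2j-i-1$, and the only point that needs any care is the slightly asymmetric combinatorics of the map $(i,j)\mapsto 2j-i-1$, which forces the separate estimates for $\eta_k$ and $\eta_{-k}$ above. Both are comfortably dominated by $\lfloor\tfrac{2L}{k}\rfloor$, so the common denominator $N^L\prod_{k=1}^{2L}(N^2-k^2)^{\lfloor 2L/k\rfloor}$ suffices, exactly as stated.
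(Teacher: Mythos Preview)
Your proof is correct and follows essentially the same approach as the paper: fix $\lambda\vdash L$, use $ij\le L$ to bound the multiplicities $\eta_{\pm k}(\lambda)$ of each factor $(N\pm k)$ separately via the asymmetric relation $2j-i-1=\pm k$, and then clear denominators in~\eqref{eq:weingartenfunctionO}. The paper obtains the identical intermediate bounds $\lfloor 2L/(k+2)\rfloor$ for $k>0$ and $\lfloor L/(k+1)\rfloor$ for $-k<0$, and concludes in the same way.
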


\begin{proof}
Fix a partition $\lambda\vdash L$.
Since $\lambda$ has at most $L$ columns and at most $L$ rows, we clearly 
have $-L \leq 2j-i-1\leq 2L$ for all $(i,j) \in \lambda$. Thus
$$
	\prod_{(i,j) \in \lambda} (N+ 2j-i-1) =
	\prod_{k=-2L}^{2L}
	(N+k)^{\omega_k(\lambda)},
$$
where $\omega_k(\lambda)$ denotes the number of $(i,j)\in\lambda$ with
$2j-i-1=k$. As any $(i,j)\in\lambda$ must satisfy $ij\le L$
(cf.\ the proof of Lemma \ref{lem:denominatorsofwg}), we can 
estimate
$$
	\omega_k(\lambda) \le \#\{ (i,j)\in [L]^2 : ij \le L,~
	2j-i-1=k\}.
$$
If $k>0$, we can further estimate
$$
	\omega_k(\lambda) \le \#\{ i\in [L] : i(k+i+1) \le 2L\}
	\le \lfloor \tfrac{2L}{k+2}\rfloor.
$$
Similarly, if $k\le 0$ we can estimate
$$
	\omega_k(\lambda) \le \#\{ j\in [L] : (2j-1+|k|)j \le L\}
	\le \lfloor \tfrac{L}{|k|+1}\rfloor.
$$
Therefore $\omega_0(\lambda) \le L$ and 
$\omega_k(\lambda) \le \lfloor \frac{2L}{|k|} \rfloor$ for all $k\ne 0$.

It follows from the above observations and \eqref{eq:weingartenfunctionO}
that the numerator of
\begin{multline*}
   \wgo_L(m_1,m_2, N) =  \\
   \frac{
   \sum_{\lambda \vdash L} C_{\lambda,m_1,m_2}
	N^{L-\omega_0(\lambda)} \prod_{k=1}^{2L} 
	(N-k)^{\lfloor\frac{2L}{k}\rfloor - \omega_{-k}(\lambda)}
	(N+k)^{\lfloor\frac{2L}{k}\rfloor - \omega_k(\lambda)}
   }{N^L \prod_{k=1}^{2L} (N^2-k^2)^{\lfloor\frac{2L}{k}\rfloor}}
\end{multline*}
is a real polynomial in $N$, concluding the proof.
\end{proof}

We can now complete the proof of Lemma \ref{lem:rationalencodingOSp}.

\begin{proof}[Proof of Lemma \ref{lem:rationalencodingOSp}]
The proof for the $\mathrm{O}(N)$ case is nearly identical to the
proof of Lemma \ref{lem:rationalencoding}. Specifically,
let $w(\haaro_1,\ldots,\haaro_r)$ be a reduced word of length 
$L\le qq_0$ 
in the Haar orthogonal matrices $\haar_i$ and their adjoints $\haaradj_i$.
If $w\ne\id$ and $w$ is even---that is, each $\haaro_i$ appears an 
even number of times (with or without adjoint)---then a rational 
representation of $\E[\tr w(\haaro_1,\ldots,\haaro_r)]$ as in the 
statement of the lemma follows by the identical argument as in the proof 
of Lemma \ref{lem:rationalencoding} by using \cite[Theorem 
3.4]{magee2024matrix} and Lemma \ref{lem:denominatorsofwgO} instead of
\cite[Theorem 2.8]{magee2019matrix} and
Lemma \ref{lem:denominatorsofwg}, respectively.
If $w$ is not even, then $\E[\tr w(\haaro_1,\ldots,\haaro_r)]=0$ as
$\haaro_i$ has the same distribution as $-\haaro_i$. The proof is now 
readily completed.

The proof for the $\mathrm{Sp}(N)$ case follows directly from the
$\mathrm{O}(N)$ case and the supersymmetric duality property given in
\cite[Theorem 1.2]{magee2024matrix}.
\end{proof}

\subsection{Asymptotic expansion}

With Lemma \ref{lem:rationalencodingOSp} in hand, we can now repeat the 
proof of Theorem \ref{thm:smasympexUN} with only trivial modifications.

\begin{thm}[Smooth asymptotic expansion for $\mathrm{O}(N)/\mathrm{Sp}(N)$]
\label{thm:smasympexOSp} 
There is a universal constant $C>0$, and a compactly supported 
distribution $\mu_k$ for every $k\in\bZ_+$, so that the following 
hold. Fix any $h\in C^\infty(\bR)$, and let
$f(\theta):= h(K\cos\theta)$. Then
\begin{multline*}
	\left| \E[\tr h(\xn)]- \sum_{k=0}^{m-1} \frac{\mu_k(h)}{N^k} 
	\right| 
	~\vee~
	\left| \E[\tr h(\yn)]- \sum_{k=0}^{m-1} \frac{\mu_k(h)}{(-2N)^k} 
	\right| 
\\
	\leq 
	\frac{(C\tilde q_0)^m}{u^mN^m}
	\|f^{(m'+1)}\|_{[0,2\pi]} 
	+\frac{(C\tilde q_0)^{2m}}{m!\, u^{2m} N^m} 
	\|f^{(2m'+1)}\|_{[0,2\pi]} 
\end{multline*}
for all $m, N\in \bN$ and $u\in (0,1)$, where
$m':=\lceil (1+u)m\rceil$ and $\tilde q_0 := q_0(1+\log q_0)$.
\end{thm}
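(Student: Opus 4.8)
The plan is to transcribe the proof of Theorem~\ref{thm:smasympexUN} essentially verbatim, with Lemma~\ref{lem:rationalencoding} replaced by Lemma~\ref{lem:rationalencodingOSp} and with the two models $X^N,Y^N$ carried in parallel exactly as Theorem~\ref{thm:smasympexGOEGSE} did for GOE/GSE. First I would establish a master inequality: writing $\Psi_h=f_h/g_{qq_0}$ as in Lemma~\ref{lem:rationalencodingOSp} and setting $\mu_m(h):=\Psi_h^{(m)}(0)/m!$, I would show that $\tfrac1{m!}\|\Psi_h^{(m)}\|$ on a fixed symmetric neighborhood $[-\tfrac{1}{c\tilde q\tilde q_0},\tfrac{1}{c\tilde q\tilde q_0}]$ of $0$ is bounded by $\big((C\tilde q\tilde q_0)^m+\tfrac{(C\tilde q\tilde q_0)^{2m}}{m!}\big)\|h\|_{[-K,K]}$ (with $p\le 6\tilde q\tilde q_0$ the numerator degree, $\tilde q:=q(1+\log q)$, $\tilde q_0:=q_0(1+\log q_0)$), after which the estimates for $\E[\tr h(X^N)]$ and $\E[\tr h(Y^N)]$ follow by Taylor expanding $\Psi_h$ at $x=\tfrac1N$ and at $x=-\tfrac1{2N}$ respectively, splitting into the cases $N\ge c\tilde q\tilde q_0$ (direct Taylor remainder bound) and $N<c\tilde q\tilde q_0$ (crude triangle-inequality bound) exactly as in Lemma~\ref{lem:ratesHaarU}. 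Since $\|X^N\|\le K$ and $\|Y^N\|\le K$ almost surely, Lemma~\ref{lem:rationalencodingOSp} gives $|\Psi_h(\tfrac1N)|\le\|h\|_{[-K,K]}$ and $|\Psi_h(-\tfrac1{2N})|\le\|h\|_{[-K,K]}$ for $N>qq_0$, and since each factor $1-(jx)^2$ lies in $(0,1]$ at $x=\tfrac1N$ and $x=-\tfrac1{2N}$ when $N>qq_0$, we have $|g_{qq_0}|\le 1$ at these points, so the numerator $f_h$ obeys the same bounds at $\{\tfrac1N:N>qq_0\}$ and at $\{-\tfrac1{2N}:N>qq_0\}$.

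The one point that is not literally identical to the $\mathrm{U}(N)$ case is the input to the rational Bernstein inequality: for $\mathrm{U}(N)$ one uses the symmetry $\Psi_h(\tfrac1N)=\Psi_h(-\tfrac1N)$ from Lemma~\ref{lem:rationalencoding} to bound $\Psi_h$ on the full two-sided sample set, whereas here the orthogonal model supplies only the one-sided samples $\{\tfrac1N\}$ and the symplectic model only $\{-\tfrac1{2N}\}$. This is handled by inspecting the proof of Lemma~\ref{lem:rationalbernstein}: that proof controls the numerator polynomial $f_h$ (and $f_h s$, with $s$ the polynomial approximation of $1/g_{qq_0}$ from Lemma~\ref{lem:polyratber}) near $0$ by applying Proposition~\ref{prop:uniform_bound_1/N} to $f_h$ and to its reflection, and Proposition~\ref{prop:uniform_bound_1/N} uses only \emph{positive} reciprocal samples $\{\tfrac1N:N\ge M\}$ and is stable under affine rescaling of the variable. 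Thus one bounds $\|f_h\|_{[0,\delta]}$ from the orthogonal samples and $\|f_h\|_{[-\delta/2,0]}$ by applying Proposition~\ref{prop:uniform_bound_1/N} to $x\mapsto f_h(-\tfrac x2)$ using the symplectic samples; combining these on a symmetric subinterval and applying Lemma~\ref{lem:polybernstein} together with Lemma~\ref{lem:polyratber} reproduces the conclusion of Lemma~\ref{lem:rationalbernstein} (with denominator parameter $qq_0$ and numerator degree $p\le 6\tilde q\tilde q_0$) from these one-sided samples. I expect this bookkeeping — confirming that the $\tfrac1N$- and $-\tfrac1{2N}$-samples jointly drive the rational Bernstein estimate — to be the only genuine obstacle; it is a minor modification precisely because Proposition~\ref{prop:uniform_bound_1/N} is one-sided and rescaling-invariant.

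Given the master inequality, the extension to smooth $h$ is verbatim as in the proof of Theorem~\ref{thm:smasympexUN}: by Lemma~\ref{lem:extensiontoadistribution} and the infinitesimal bound, each $\mu_m$ extends to a compactly supported distribution; expanding $h=\sum_j a_j T_j(K^{-1}\cdot)$ in Chebyshev polynomials, applying the master inequality term by term to $T_j(K^{-1}\cdot)\in\calP_j$, and using Lemma~\ref{lem:zygmundwithlogs} to absorb the weights $j^m(1+\log j)^m$ and $j^{2m}(1+\log j)^{2m}$ against $\|f^{(m'+1)}\|_{[0,2\pi]}$ and $\|f^{(2m'+1)}\|_{[0,2\pi]}$ (with $m'=\lceil(1+u)m\rceil$ and $e^u\le C$ for $u\le1$) yields both claimed bounds simultaneously — the first from the $X^N$ master inequality (dividing by $N^k$) and the second from the $Y^N$ master inequality (dividing by $(-2N)^k$). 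The only differences from the $\mathrm{U}(N)$ argument are the replacement of $3\tilde q_0$ by $6\tilde q_0$ in the degree bounds (from Lemma~\ref{lem:rationalencodingOSp}) and of $-\tfrac1N$ by $-\tfrac1{2N}$ in the symplectic Taylor expansion, neither of which affects the final form of the estimate.
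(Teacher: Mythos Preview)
Your proposal is correct and follows essentially the same route as the paper. The only cosmetic difference is that the paper packages the two-sided input to the rational Bernstein step by working with $\Psi_h(\tfrac{x}{2})$ (so that the $\mathrm{O}(N)$ samples sit at $x=\tfrac{2}{N}$ and the $\mathrm{Sp}(N)$ samples at $x=-\tfrac{1}{N}$), whereas you assemble the two one-sided bounds on $f_h$ directly via the rescaling invariance of Proposition~\ref{prop:uniform_bound_1/N}; these are equivalent reformulations of the same bookkeeping.
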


\begin{proof}
The only modification that must be made to the arguments of sections
\ref{sec:taylorstheoremHaarU} and \ref{sec:extUN} is that we apply
Lemma~\ref{lem:rationalencodingOSp} instead of
Lemma~\ref{lem:rationalencoding} in the proof of Lemma 
\ref{lem:ratesHaarU}; the master inequalities for $\mathrm{O}(N)$ and
$\mathrm{Sp}(N)$ are then obtained by Taylor expanding 
$\Psi_h(\frac{x}{2})$ at the points $x=\frac{2}{N}$ and $x=-\frac{1}{N}$,
respectively.
\end{proof}

\subsection{Bootstrapping}

We now aim to prove the following.

\begin{prop}
\label{prop:supportofinfsOSp}
In the setting of Theorem \ref{thm:smasympexOSp}, we have
$$
	\supp\mu_m \subseteq[-\|\xf\|, \|\xf\|]
	\quad\text{for all }
	m\in\bZ_+.
$$
\end{prop}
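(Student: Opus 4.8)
The plan is to mirror the bootstrapping argument of Section~\ref{sec:bootstrapping} exactly as it was adapted to a dual pair in Section~\ref{sec:supersymmetryGOEGSE}; in other words, to combine the proof of Proposition~\ref{prop:supportofinfsHaarU} with the supersymmetric treatment of $\mu_1$ from Lemma~\ref{lem:GOEGSEnu1}. There are three things to establish, in order: (i) $\mu_0(h)=(\tr\otimes\tau)(h(\xf))$; (ii) $\supp\mu_1\subseteq[-\|\xf\|,\|\xf\|]$; and (iii) the analogue of Corollary~\ref{cor:smallexpectations}, after which an induction closes the argument.

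For (i), Lemma~\ref{lem:waf} gives $\mu_0(h)=\lim_N\E[\tr h(\xn)]=(\tr\otimes\tau)(h(\xf))$ for $h\in\mathcal{P}$, and this passes to $h\in C^\infty(\bR)$ since $\mu_0$ is a compactly supported distribution. For (ii) I would repeat the proof of Lemma~\ref{lem:GOEGSEnu1} verbatim: for bounded nonnegative $h\in C^\infty(\bR)$ vanishing in a neighborhood of $[-\|\xf\|,\|\xf\|]$ one has $\mu_0(h)=0$, so Theorem~\ref{thm:smasympexOSp} with $m=2$ gives $0\le\E[\tr h(\xn)]=\mu_1(h)/N+O(N^{-2})$ and $0\le\E[\tr h(\yn)]=-\mu_1(h)/(2N)+O(N^{-2})$; letting $N\to\infty$ forces $\mu_1(h)\ge 0$ and $-\mu_1(h)\ge 0$, hence $\mu_1(h)=0$, and Lemma~\ref{lem:distv} then yields the support bound.

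Given (i) and (ii), strong convergence $\|\xn\|\to\|\xf\|$ in probability follows precisely as in Corollary~\ref{cor:strongconv}: apply Theorem~\ref{thm:smasympexOSp} with $m=2$ to the test function of Lemma~\ref{lem:testfunctions}, noting that $\mu_0(h)=0$ (the function vanishes on $\mathrm{spec}(\xf)$) and $\mu_1(h)=0$ (by the support bound), to get $\mathbf{P}[\|\xn\|>\|\xf\|+\varepsilon]\le DN\,\E[\tr h(\xn)]=O(D/N)=o(1)$ since $D$ is fixed. Concentration is available in the required form: Lemma~\ref{lem:HaarConcentration} holds verbatim for $\mathrm{O}(N)$, its proof invoking \cite[Theorem~5.17]{Mec19}, which covers all classical compact groups. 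Combining the two as in Corollary~\ref{cor:smallexpectations} gives $\E[\tr h(\xn)]=O(e^{-cN})$ for any bounded $h\in C^\infty(\bR)$ vanishing on $[-\|\xf\|-\varepsilon,\|\xf\|+\varepsilon]$.

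Finally, the induction of Section~\ref{sec:proofofsupportofinfsGUE} carries over: for fixed $\varepsilon>0$ and $h$ vanishing on $[-\|\xf\|-\varepsilon,\|\xf\|+\varepsilon]$ one has $\mu_0(h)=0$, and assuming $\mu_0(h)=\cdots=\mu_{m-1}(h)=0$, Theorem~\ref{thm:smasympexOSp} gives $|\E[\tr h(\xn)]-\mu_m(h)/N^m|=O(N^{-m-1})$, so $|\mu_m(h)|\le N^m|\E[\tr h(\xn)]|+O(N^{-1})=O(N^{-1})$ by the exponential bound; since the left side does not depend on $N$, $\mu_m(h)=0$. As $\varepsilon>0$ is arbitrary and any $h$ vanishing in a neighborhood of $[-\|\xf\|,\|\xf\|]$ vanishes on some such closed neighborhood, $\supp\mu_m\subseteq[-\|\xf\|,\|\xf\|]$ follows. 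I do not expect a genuine obstacle: every input already has a proved counterpart, and the only mild points of care are that the two evaluation points $1/N$ and $-1/(2N)$ enter Theorem~\ref{thm:smasympexOSp} asymmetrically (already absorbed into that theorem, so harmless for the duality step) and that the concentration estimate is used for the real $\mathrm{O}(N)$ model $\xn$, which is exactly the setting of \cite[Theorem~5.17]{Mec19}.
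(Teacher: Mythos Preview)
Your approach mirrors the paper's almost exactly, with one genuine snag in the concentration step. You assert that Lemma~\ref{lem:HaarConcentration} holds verbatim for $\mathrm{O}(N)$ via \cite[Theorem~5.17]{Mec19}, but $\mathrm{O}(N)$ has two connected components (determinant $\pm 1$), so a Lipschitz concentration principle around a single median cannot hold in general: a function that is constant on each component but takes different values on the two components is $0$-Lipschitz yet does not concentrate. Meckes' concentration results apply to each coset separately, not to Haar measure on the full disconnected group, so the passage from strong convergence plus concentration to the exponential bound in your step~(iii) is not justified as written.

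The paper handles this by observing that the distributions $\mu_m$ are the \emph{same} for the $\mathrm{O}(N)$ and $\mathrm{Sp}(N)$ expansions in Theorem~\ref{thm:smasympexOSp}, so for the bootstrapping induction ($m\ge 2$) one may work entirely with the $\mathrm{Sp}(N)$ model $\yn$, for which the proof of Lemma~\ref{lem:HaarConcentration} does carry over verbatim since $\mathrm{Sp}(N)$ is connected. Your treatment of $\mu_0$ and $\mu_1$ via duality is fine and matches the paper; only the choice of which model to run the concentration-plus-induction argument on needs to be switched from $\xn$ to $\yn$. (Alternatively one could condition on the component of each $U_i^N$ and argue separately, which the paper notes is easy but unnecessary.)
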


\begin{proof}
For $m=0,1$, the proofs of section
\ref{sec:nu1supersymmetry} extend \emph{verbatim} to the present setting, 
provided that we use Theorem \ref{thm:smasympexOSp} instead of Theorem 
\ref{thm:smasympexGOEGSE}.

For $m\ge 2$, we may consider only the $\mathrm{Sp}(N)$ case without loss 
of generality, as the expansion in the $\mathrm{O}(N)$ case is defined by 
the same distributions $\mu_m$. The proof is then identical to that of 
Proposition \ref{prop:supportofinfsHaarU},
provided that we replace Theorem \ref{thm:smasympexUN} and Lemma
\ref{lem:rationalencoding}
by Theorem \ref{thm:smasympexOSp} and Lemma
\ref{lem:rationalencodingOSp}, respectively, and we note that the proof of
Lemma~\ref{lem:HaarConcentration} extends \emph{verbatim} to the 
$\mathrm{Sp}(N)$ model.
\end{proof}

\begin{rem}
Curiously, the proof of Lemma~\ref{lem:HaarConcentration} does not extend 
directly to the $\mathrm{O}(N)$ model, as $\mathrm{O}(N)$ has two disjoint 
connected components and thus cannot exhibit a Lipschitz concentration 
principle. This minor issue is easily surmounted, but we need not do so as 
we can work with $\mathrm{Sp}(N)$ without loss of generality.
\end{rem}

\subsection{Proof of Theorem \ref{thm:mainhaar}: 
$\mathrm{O}(N)/\mathrm{Sp}(N)$ case}
\label{sec:pfmainOSp}

The remainder of the proof of Theorem \ref{thm:mainhaar} is now 
essentially identical to the proof of the $\mathrm{U}(N)$ case.

\begin{proof}[Proof of Theorem \ref{thm:mainhaar}: 
$\mathrm{O}(N)/\mathrm{Sp}(N)$ case]
The proof in section \ref{sec:pfmainHaarU} extends \emph{verbatim} to the 
present setting, provided that Theorem \ref{thm:smasympexUN} and
Proposition \ref{prop:supportofinfsHaarU} are replaced by
Theorem \ref{thm:smasympexOSp} and
Proposition \ref{prop:supportofinfsOSp}, respectively.
\end{proof}

\section{Applications}
\label{sec:applications}

\subsection{Subexponential operator spaces}
\label{sec:subexpon}

The aim of this section is to prove Corollary~\ref{cor:gausssubexp}. Let 
us begin by recalling some basic definitions \cite[\S 4]{pisier2014random}.

\begin{defn}[Operator spaces]
\label{defn:subexp}
An \emph{operator space} is a closed subspace of a 
$C^*$-algebra. A finite-dimensional operator space $\mathbf{W}$ is called
\begin{enumerate}[$\bullet$]
\itemsep\smallskipamount
\item 
\emph{exact} if for every $C>1$, there exists $S\in\bN$ and a
linear embedding $u:\mathbf{W}\to\mathrm{M}_S(\mathbb{C})$ such that
for every $N\in\bN$ and $x\in \mathbf{W}\otimes\mathrm{M}_N(\bC)$, we have
$$
	\|(u\otimes\mathrm{id})(x)\| \le
	\|x\| \le C\|(u\otimes\mathrm{id})(x)\|;
$$
\item \emph{$C$-subexponential} if there exists
$D_N\in\bN$ with $D_N=e^{o(N)}$ and a linear embedding 
$f_N:\mathbf{W}\to\mathrm{M}_{D_N}(\mathbb{C})$ such that
for every $N\in\bN$ and $x\in \mathbf{W}\otimes\mathrm{M}_N(\bC)$, we have
$$
	\|(f_N\otimes\mathrm{id})(x)\| \le
	\|x\| \le C\|(f_N\otimes\mathrm{id})(x)\|.
$$
\end{enumerate}
An operator space $\mathbf{W}$ is called exact or $C$-subexponential if 
every finite-dimensional subspace of $\mathbf{W}$ is exact or 
$C$-subexponential, respectively. The subexponential constant of an
operator space
$\mathbf{W}$ is $C(\mathbf{W}) := \inf\{C:\mathbf{W}\text{ is }C\text{-subexponential}\}$.
\end{defn}

An important fact that will be used in the sequel is that the 
$C^*$-algebra $\mathcal{A}$ generated by a free semicircular family 
$\boldsymbol{s}=(s_1,\ldots,s_r)$ is 
exact (this follows, for example, from \cite[Corollary 17.10]{Pis03} and 
the proof of \cite[Theorem 2.4]{Haa14}). Examples of non-exact 
subexponential operator spaces are given in \cite{pisier2014random}.

\begin{rem}
Given a noncommutative polynomial
$Q\in\mathbf{W}\otimes\mathbb{C}\langle\boldsymbol{s}\rangle$
with coefficients in an operator space $\mathbf{W}$, we will always
view $Q(\boldsymbol{s})$ as an element of the minimal tensor product
$\mathbf{W}\mathop{\otimes_{\rm min}}\mathcal{A}$ whose norm is
denoted as $\|\cdot\|_{\rm min}$; cf.\ \cite[\S 2.1]{Pis03}.
\end{rem}

We now turn to the proof of Corollary~\ref{cor:gausssubexp}. The main 
difficulty in the proof is in fact to prove the lower bound on 
$\|P_N(\boldsymbol{G}^N)\|$, as the upper bound is immediate from Theorem 
\ref{thm:maingauss}. 
For completeness, we begin by recalling the standard argument for
the case that $P_N=P$ is independent of $N$.

\begin{lem}
\label{lem:gausssubexplowerfixed}
Let $\boldsymbol{G}^N$ and $\boldsymbol{s}$ be as in Theorem
\ref{thm:maingauss}. 
For any $P\in\mathrm{M}_{D}(\mathbb{C})\otimes 
\mathbb{C}\langle\boldsymbol{s}\rangle$, we have
$\|P(\boldsymbol{G}^N)\|\ge (1-o(1))\|P(\boldsymbol{s})\|$
a.s.\ as $N\to\infty$.
\end{lem}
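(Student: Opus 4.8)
The plan is to pass to the positive operators $Y_N:=(P^*P)(\boldsymbol{G}^N)=P(\boldsymbol{G}^N)^*P(\boldsymbol{G}^N)\ge 0$ and $y:=(P^*P)(\boldsymbol{s})\ge 0$, so that $\|Y_N\|=\|P(\boldsymbol{G}^N)\|^2$ and $\|y\|=\|P(\boldsymbol{s})\|^2$, and to recover a lower bound on $\|Y_N\|$ purely from the convergence of normalized moments. For any $k\in\bN$ the matrix $Y_N^k$ is positive semidefinite, hence $\tr(Y_N^k)\le\|Y_N^k\|=\|Y_N\|^k$ and so $\|P(\boldsymbol{G}^N)\|^{2k}\ge\tr(Y_N^k)$. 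On the limiting side, since $\tr\otimes\tau$ is a faithful trace on $\mathrm{M}_D(\bC)\otimes\mathcal{A}$, the standard fact that $(\tr\otimes\tau)(y^k)^{1/k}\to\|y\|$ as $k\to\infty$ applies. Thus the heart of the matter is to prove, for each fixed $k\in\bN$, that
$$
	\tr(Y_N^k)\longrightarrow(\tr\otimes\tau)(y^k)\qquad\text{a.s. as }N\to\infty.
$$
Granting this, one obtains $\liminf_{N\to\infty}\|P(\boldsymbol{G}^N)\|^2\ge(\tr\otimes\tau)(y^k)^{1/k}$ a.s.\ for every $k$, and intersecting these full-probability events along a sequence $k\to\infty$ yields $\liminf_{N\to\infty}\|P(\boldsymbol{G}^N)\|\ge\|P(\boldsymbol{s})\|$ a.s., which is the desired bound (indeed slightly stronger than the stated $(1-o(1))$ form).

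To establish the displayed a.s.\ convergence, I would first note that $\mathbf{E}[\tr(Y_N^k)]\to(\tr\otimes\tau)(y^k)$ is immediate from weak asymptotic freeness (Lemma \ref{lem:waf}) applied to the noncommutative polynomial $(P^*P)^k$. The remaining step is to upgrade convergence in expectation to almost sure convergence, and here I would reuse the concentration tools of section \ref{sec:concentrationaroundthemedian}: on the event $\Omega_N:=\{\|G_i^N\|\le\kappa+1\text{ for all }i\in[r]\}$ the function mapping the real and imaginary parts of the entries of $\boldsymbol{G}^N$ to $\tr((P^*P)^k(\boldsymbol{G}^N))$ is $L$-Lipschitz with $L$ depending only on $P$ and $k$, while $\mathbf{P}[\Omega_N]\ge\tfrac34$ for all large $N$ by Lemma \ref{lem:subgaussiannorm}. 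Lemma \ref{lem:localconcentration} together with Lemma \ref{lem:subgaussiannorm} then gives, for every $t>0$,
$$
	\mathbf{P}\big[\,|\tr(Y_N^k)-\mathrm{med}(\tr(Y_N^k))|>t\,\big]\le Cre^{-cN}+Ce^{-c_kNt^2},
$$
and Borel--Cantelli (for each fixed $t$, then letting $t\downarrow0$) yields $\tr(Y_N^k)-\mathrm{med}(\tr(Y_N^k))\to0$ a.s. Since $\mathbf{E}[\tr(Y_N^k)]\to(\tr\otimes\tau)(y^k)$ and the gap between mean and median tends to $0$, the medians converge to the same limit, completing the argument.

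The only point I expect to require a little care is the bookkeeping of the Lipschitz constant $L$ of the trace functional on $\Omega_N$: one must verify that $L$ is bounded uniformly in $N$ — which is exactly what the normalization $\tr=\tfrac{1}{DN}\trace$ provides, since each partial derivative of $\tr((P^*P)^k(\boldsymbol{G}^N))$ is a normalized trace of a product of $O(1)$ matrices of norm $O(1)$ on $\Omega_N$ — so that the concentration scale is genuinely $O(N^{-1/2})$ and independent of $N$. This is routine; everything else is an assembly of Lemma \ref{lem:waf}, the faithfulness of $\tr\otimes\tau$, and the concentration estimates recorded above.
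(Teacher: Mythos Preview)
Your proposal is correct and follows essentially the same route as the paper: reduce to a positive operator (the paper assumes $P$ self-adjoint via Remark~\ref{rem:selfadjoint}, you pass to $P^*P$), use Lemma~\ref{lem:waf} for convergence of $2p$th moments in expectation, upgrade to almost sure convergence via the concentration estimates of section~\ref{sec:concentrationaroundthemedian} and Borel--Cantelli, and finally let $p\to\infty$ using faithfulness of $\tr\otimes\tau$. The only cosmetic difference is that the paper applies concentration directly to $\|X^N\|$ (via Lemma~\ref{lem:concentrationaroundthemean}, already stated) rather than to $\tr(Y_N^k)$, which makes the median--mean bookkeeping slightly shorter; your version is equally valid.
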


\begin{proof}
We may assume that $P$ is self-adjoint 
(Remark \ref{rem:selfadjoint}). Lemma \ref{lem:waf} yields
$$
	\E[\|P(\boldsymbol{G}^N)\|^{2p}] \ge
	\E[\tr P(\boldsymbol{G}^N)^{2p}] =
	(1+o(1))\,
	({\tr\otimes\tau})(P(\boldsymbol{s})^{2p})
$$
as $N\to\infty$ for every $p\in\bN$. Thus
Lemma 
\ref{lem:concentrationaroundthemean} and the Borel-Cantelli lemma yield
$$
	\|P(\boldsymbol{G}^N)\|^{2p} \ge (1+o(1))\,
	({\tr\otimes\tau})(P(\boldsymbol{s})^{2p})\quad\text{a.s.}
$$
as $N\to\infty$.
It 
remains to note that 
$[({\tr\otimes\tau})(P(\boldsymbol{s})^{2p})]^{\frac{1}{2p}}
\to \|P(\boldsymbol{s})\|$ as $p\to\infty$.
\end{proof}

We can now complete the proof of Corollary~\ref{cor:gausssubexp}.

\begin{proof}[Proof of Corollary~\ref{cor:gausssubexp}]
Let $P_N$ and $D_N=e^{o(N)}$ be as in the statement.
Applying Theorem \ref{thm:maingauss} with $P\leftarrow P_N$
and $\varepsilon\leftarrow\varepsilon_N:=
\max\{(\frac{\log D_N}{cN})^{1/2},N^{-1/4}\}=o(1)$ yields
$$
	\mathbf{P}\big[
	\|P_N(\boldsymbol{G}^N)\|\ge (1+\varepsilon_N)\|P_N(\boldsymbol{s})\|
	\big] \le
	\frac{N^{5/4}}{c}
	\,e^{-cN^{1/2}}.
$$
Thus
$\|P_N(\boldsymbol{G}^N)\|\le (1+o(1))\|P_N(\boldsymbol{s})\|$
a.s.\ by the Borel-Cantelli lemma.

To prove the corresponding lower bound, note first that by a compactness 
argument, the conclusion of Lemma \ref{lem:gausssubexplowerfixed} extends 
readily to the case that $P_N$ may depend on $N$ but with both degree 
$O(1)$ and matrix coefficients of dimension $D_N=O(1)$. The lower bound 
for general $D_N$ reduces to the case $D_N=O(1)$ by 
\cite[Lemma~5.12]{magee2024quasiexp} and the fact that the $C^*$-algebra 
generated by 
$\boldsymbol{s}$ is exact. We have therefore proved the lower bound
$\|P_N(\boldsymbol{G}^N)\|\ge (1-o(1))\|P_N(\boldsymbol{s})\|$
a.s.

For the second part, let $\mathbf{W}$ be a subexponential operator space, 
and fix any $Q\in\mathbf{W}\otimes\mathbb{C}\langle\boldsymbol{s}\rangle$ and
$C>C(\mathbf{W})$. Denote by $\mathbf{V}\subseteq\mathbf{W}$ the 
finite-dimensional operator space spanned by the coefficients of $Q$.
For every $N\in\bN$, let 
$f_N:\mathbf{V}\to\mathrm{M}_{D_N}(\bC)$
with $D_N=e^{o(N)}$ be the embedding provided by 
Definition \ref{defn:subexp}. Then
$$
	(1-o(1))\|P_N(\boldsymbol{s})\|=
	\|P_N(\boldsymbol{G}^N)\| \le
	\|Q(\boldsymbol{G}^N)\| \le
	C\|P_N(\boldsymbol{G}^N)\|
	= C(1+o(1))\|P_N(\boldsymbol{s})\|
$$
a.s., where $P_N = (f_N\otimes\mathrm{id})(Q)
\in \mathrm{M}_{D_N}(\bC)\otimes 
\mathbb{C}\langle\boldsymbol{s}\rangle$. 

Now let $\mathbf{A}$ be the operator space spanned by the monomials of
$Q$, and let $a>1$. As the $C^*$-algebra generated by
$\boldsymbol{s}$ is exact, so is $\mathbf{A}$.
Let $S\in\bN$ and the embedding $u:\mathbf{A}\to\mathrm{M}_S(\bC)$
be as in Definition \ref{defn:subexp} with $C\leftarrow a$.
Then
$$
	\frac{1}{Ca}
	\|Q(\boldsymbol{s})\|_{\rm min}
	\le
	\frac{1}{C}
	\|(\mathrm{id}\otimes u)(Q(\boldsymbol{s}))\|
	\le
	\|P_N(\boldsymbol{s})\| \le
	a\|(\mathrm{id}\otimes u)(Q(\boldsymbol{s}))\|
	\le
	a\|Q(\boldsymbol{s})\|_{\rm min}
$$
for all $N\ge S$, where we used \cite[Proposition 2.1.1]{Pis03}
for the first and last inequality. We conclude the proof by
taking $a\downarrow 1$ and $C\downarrow C(\mathbf{W})$.
\end{proof}

\subsection{Improved rates for random permutations}
\label{sec:permu}

Let $\tilde\Pi_1^N,\ldots,\tilde\Pi_r^N$ be i.i.d.\ uniformly distributed 
random permutation matrices of dimension $N$, and denote by $\Pi_i^N := 
\tilde\Pi_i^N|_{1^\perp}$ their restriction to the orthogonal complement 
of invariant vector $1$. A breakthrough result of Bordenave and 
Collins \cite{BC19} shows that
$\boldsymbol{\Pi}^N=(\Pi_1^N,\ldots,\Pi_r^N)$ converges strongly to free 
Haar unitaries $\boldsymbol{u}=(u_1,\ldots,u_r)$.

As for the Gaussian and classical compact group ensembles, it is expected 
that the rate of convergence of 
$\|P(\boldsymbol{\Pi}^N,\boldsymbol{\Pi}^{N*})\|$ to 
$\|P(\boldsymbol{u},\boldsymbol{u}^*)\|$ is of order $N^{-2/3}$, in 
agreement with Tracy-Widom asymptotics. This was proved up to a 
logarithmic factor in the recent work \cite{HMY24} in the special case 
$P(\boldsymbol{x},\boldsymbol{x}^*)=x_1+x_1^*+\cdots+x_r+x_r^*$, but 
remains well outside the reach of current methods for general $P$. In this 
setting, the best known rate of $(\frac{\log N}{N})^{1/8}$ was proved in 
\cite{chen2024new}, considerably improving the $\frac{\log\log N}{\log N}$ 
rate established by Bordenave and Collins in \cite{BC24}.

The main result of this section is a further quantitative improvement.

\begin{thm}
\label{thm:permrate}  
For any noncommutative polynomial
$P\in\mathrm{M}_{D}(\mathbb{C})\otimes
\mathbb{C}\langle\boldsymbol{u},\boldsymbol{u}^*\rangle$, 
$$
	\|P(\boldsymbol{\Pi}^N,\boldsymbol{\Pi}^{N*})\| \le
	\|P(\boldsymbol{u},\boldsymbol{u}^*)\|
        + O_{\mathrm{P}}\Big(\big(\tfrac{\log N}{N}\big)^{1/6}\Big)
        \quad\text{as}\quad N\to\infty.
$$
\end{thm}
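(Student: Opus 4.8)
The plan is to run the polynomial method of \cite{chen2024new} for random permutation matrices essentially unchanged, feeding it the optimal interpolation bound of Section~\ref{sec:optimaluniformbounds} in place of the classical interpolation bound used there (which requires $O(\delta/q^{2})$ spacing). Write $X^{N}:=P(\boldsymbol{\Pi}^{N},\boldsymbol{\Pi}^{N*})$ and $X_{\rm F}:=P(\boldsymbol{u},\boldsymbol{u}^{*})$. First I would recall from \cite{chen2024new} that $\|X^{N}\|\le K$ a.s.\ for a constant $K$ depending only on $P$, and that $\mathbf{E}[\tr h(X^{N})]=\Psi_{h}(\tfrac{1}{N})$ for a rational function $\Psi_{h}=f_{h}/g$ whose numerator and denominator have degree $O(qq_{0}\log(qq_{0}))$ for $h\in\mathcal{P}_{q}$, via the permutation analogue of the Weingarten calculus (Nica's fixed-point formula), exactly as for $\mathrm{U}(N)$ in Lemma~\ref{lem:rationalencoding}. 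The goal is then a master inequality
\[
  \Bigl|\mathbf{E}[\tr h(X^{N})]-\sum_{k=0}^{m-1}\frac{\nu_{k}(h)}{N^{k}}\Bigr|
  \le \frac{(Cqq_{0}\log(qq_{0}))^{3m}}{m!\,N^{m}}\,\|h\|_{[-K,K]},
\]
followed by its extension to smooth $h$ through the Chebyshev expansion exactly as in Theorem~\ref{thm:smasympexUN}.

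The proof of the master inequality is where the improvement enters. As in Lemma~\ref{lem:ratesHaarU}, from $|\Psi_{h}(\tfrac{1}{N})|\le\|h\|_{[-K,K]}$ and Proposition~\ref{prop:uniform_bound_1/N}, together with a polynomial approximation of $1/g$ near $0$ in the spirit of Lemma~\ref{lem:polyratber} (adapted to the permutation denominator), one obtains $\|\Psi_{h}\|_{[0,\delta]}\lesssim\|h\|_{[-K,K]}$ with $\delta$ of order $1/(qq_{0}\log(qq_{0}))$. The essential point is that permutation models admit no dual random matrix model (unlike the Gaussian and Haar ensembles), so $\Psi_{h}$ is controlled only on the one-sided interval $[0,\delta]$ and the derivatives $\Psi_{h}^{(m)}$ near its endpoint $0$ must be estimated by the Markov inequality rather than Bernstein's. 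Iterating Markov on an interval of length $\delta\sim 1/p$, where $p$ is the effective degree, costs a factor $p^{3}$ per differentiation (a factor $p$ from $1/\delta$ and a factor $p^{2}$ from the Markov exponent), which produces the exponent $3$ above; in \cite{chen2024new} the weaker interpolation forced $\delta\sim 1/p^{2}$ and hence a factor $p^{4}$. This single change is what turns the rate $N^{-1/8}$ into $N^{-1/6}$.

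For the support of the infinitesimal distributions I would invoke verbatim the moment computation of \cite{chen2024new}, which gives $\supp\nu_{1}\subseteq[-\|X_{\rm F}\|,\|X_{\rm F}\|]$, together with $\nu_{0}(h)=(\tr\otimes\tau)(h(X_{\rm F}))$ from weak asymptotic freeness (Lemma~\ref{lem:waf}); this is the only problem-specific input. Because the permutation norm is not a Lipschitz function of the underlying permutations---a single transposition can move an eigenvalue by an $O(1)$ amount---there is no concentration of measure at the relevant scale, so the bootstrapping of Section~\ref{sec:bootsgue} is unavailable and one is forced to work with $m=2$. Taking the test function $\chi$ of Lemma~\ref{lem:testfunctions} with $\rho=\|X_{\rm F}\|$ and width $\varepsilon$, one has $\nu_{0}(\chi)=\nu_{1}(\chi)=0$, so the $m=2$ master inequality combined with Lemma~\ref{lem:testfunctions} yields
\[
  \mathbf{P}\bigl[\|X^{N}\|\ge\|X_{\rm F}\|+\varepsilon\bigr]
  \le DN\,\mathbf{E}[\tr\chi(X^{N})]
  \lesssim \frac{D}{N}\,\Bigl(\frac{C}{\varepsilon}\Bigr)^{6}
\]
up to logarithmic factors in $1/\varepsilon$; choosing $\varepsilon$ of order $(\log N/N)^{1/6}$ then gives the claimed $O_{\rm P}((\log N/N)^{1/6})$ bound, using Remark~\ref{rem:selfadjoint} to reduce to self-adjoint $P$.

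The bottleneck is bookkeeping rather than anything conceptual: one must check that the one-sided interpolation-plus-Markov argument really produces the clean $(Cp)^{3m}/m!$ dependence above with $p\sim qq_{0}\log(qq_{0})$---in particular that $1/g$ for the permutation denominator can be polynomially approximated with the precision of Lemma~\ref{lem:polyratber}---and then that the logarithmic factors coming from the degree of the rational encoding propagate cleanly through the smooth extension (via a variant of Lemma~\ref{lem:zygmundwithlogs}) and through the final optimization, so as to land the exponent $\tfrac16$ together with a single power of $\log N$. No ingredient beyond Section~\ref{sec:optimaluniformbounds} is needed, which is why the improvement comes ``with no additional effort.''
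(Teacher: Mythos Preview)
Your proposal is correct and follows essentially the same route as the paper: feed Proposition~\ref{prop:uniform_bound_1/N} into the rational-function argument of \cite{chen2024new}, use a one-sided rational Markov inequality (the paper's Lemma~\ref{lem:rationalmarkov}) built from a polynomial approximation of $1/\tilde g_q$, obtain exponent $3m$ in place of $4m$, and otherwise repeat \cite{chen2024new} verbatim at $m=2$. One small correction that simplifies the bookkeeping you flag as the bottleneck: for permutations the numerator and denominator have degree $qq_0(1+\log r)$, not $qq_0\log(qq_0)$---the logarithm is in the fixed number of variables $r$, not in $q$---so there are no $\log q$ factors to track through the Chebyshev extension and Lemma~\ref{lem:zygmundwithlogs} is not needed here.
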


\smallskip

The result of Theorem \ref{thm:permrate} is only a modest improvement on 
that of \cite[\S 3.3]{chen2024new}. We include it here to illustrate that 
the methods of this paper yield quantitative improvements, essentially for 
free, even for models such as random permutations for which good 
concentration and duality properties are not available.

In the remainder of this section, we fix 
$P\in\mathrm{M}_{D}(\mathbb{C})\otimes 
\mathbb{C}\langle\boldsymbol{u},\boldsymbol{u}^*\rangle$ of degree $q_0$, 
and write $X^N:=P(\boldsymbol{\Pi}^N,\boldsymbol{\Pi}^{N*})$ and $X_{\rm 
F}:= P(\boldsymbol{u},\boldsymbol{u}^*)$. In this setting, the analogue of 
Lemma \ref{lem:rationalencoding} is stated in \cite[Lemma 
5.1]{chen2024new}: for every $h\in\mathcal{P}_q$, there is a rational 
function of the form $\Psi_h := \frac{f_h}{\tilde g_{qq_0}}$ with 
$f_h,\tilde g_{qq_0}\in\mathcal{P}_{\lfloor qq_0(1+\log r)\rfloor}$ so 
that
$$
	\E[\tr h(X^N)] = \Psi_h(\tfrac{1}{N})\quad
	\text{for all }N\ge qq_0,
$$
where 
$$
	\tilde g_q(x) := 
	\prod_{j=1}^{q-1} (1-jx)^{\min\{r,\lfloor\frac{q}{j+1}\rfloor\}}.
$$
We emphasize that we have no control of $\Psi_h(-x)$ here (cf.\ section 
\ref{sec:beyond}). Thus we must use the Markov rather 
than Bernstein inequality in the proof.

The improved rate of Theorem \ref{thm:permrate} arises by replacing the 
classical polynomial interpolation argument used in \cite{chen2024new} by 
the optimal interpolation inequality of Proposition 
\ref{prop:uniform_bound_1/N}, which enables us to interpolate $\Psi_h$ 
between the $\frac{1}{N}$ samples for $N\gtrsim M=qq_0(1+\log r)$. The 
difficulty that then arises is that $\tilde g_{qq_0}$ is not uniformly 
bounded away from zero on the interval $[0,\frac{1}{M}]$, so that the 
elementary rational Markov inequality of \cite[Lemma 4.3]{chen2024new} 
cannot be applied. To surmount this issue, we prove a variant of Lemma 
\ref{lem:rationalbernstein} in the present setting.

\begin{lem}[Rational Markov inequality]
\label{lem:rationalmarkov}
Let $p,q\in\bN$ with $p\ge q$, let
$f\in \mathcal{P}_p$, and define the rational function 
$r:=\frac{f}{\tilde g_q}$. Then
$$
        \frac{1}{m!}\|r^{(m)}\|_{[0,\frac{1}{cp}]}
        \le 
        \bigg( e^{-p} (Cp)^m  +
        \frac{(Cp)^{3m}}{(m!)^2}\bigg) 
	\sup_{N\ge q}|r(\tfrac{1}{N})|
$$
for all $m\ge 1$, where $c,C$ are universal constants.
\end{lem}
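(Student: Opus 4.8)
The plan is to follow the proof of Lemma~\ref{lem:rationalbernstein} closely, the essential difference being that $\tilde g_q$ vanishes at the \emph{positive} points $x=\tfrac1j$ for $j=1,\dots,q-1$, so that $r=\tfrac{f}{\tilde g_q}$ cannot be controlled on any interval symmetric about the origin. Consequently we are forced to work one-sidedly on $[0,\tfrac1{cp}]$ and to use the higher-order Markov inequality \cite{borwein2012polynomials} in place of the Bernstein inequality. This is exactly what produces the worse exponent $3m$ instead of $2m$ (one factor of $m$ from scaling to an interval of length $\sim\tfrac1p$, two more from the $p^{2m}$ of Markov) and the $(m!)^2$ instead of $m!$ in the denominator (from the $(2m-1)!!$ in V.\,A.\ Markov's bound $\|h^{(m)}\|_{[-1,1]}\le\tfrac{p^{2m}}{(2m-1)!!}\|h\|_{[-1,1]}$ for $h\in\mathcal{P}_p$, together with $m!\,(2m-1)!!=\tfrac{(2m)!}{2^m}\ge\tfrac{(m!)^2}{2^m}$).

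The first step is a one-sided analogue of Lemma~\ref{lem:polyratber}. Since the zeros of $\tilde g_q$ nearest the origin are at $\tfrac1{q-1}$, the function $\tfrac1{\tilde g_q}$ is holomorphic on $\{|z|<\tfrac1{q-1}\}$, and on $|z|\le\tfrac1{cq}$ the estimate $|1-jz|\ge 1-j|z|\ge e^{-2j|z|}$ applied to the product $\tilde g_q(z)=\prod_{j=1}^{q-1}(1-jz)^{\min\{r,\lfloor q/(j+1)\rfloor\}}$ gives $\big|\tfrac1{\tilde g_q(z)}\big|\le e^{\,2|z|\sum_{j}j\min\{r,\lfloor q/(j+1)\rfloor\}}\le e^{2q^2|z|}\le e^{q/4}$ once $c$ is a large enough universal constant. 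By Lemma~\ref{lem:taylorcoeff} the Taylor coefficients $a_i$ of $\tfrac1{\tilde g_q}$ at $0$ then satisfy $|a_i|\le e^{q/4}(cq)^i$, so truncating the Taylor series at degree $2bq$ yields, by the same computation as in Lemma~\ref{lem:polyratber}, a polynomial $s\in\mathcal{P}_{2bq}$ with $\tfrac1{k!}\big\|(\tfrac1{\tilde g_q}-s)^{(k)}\big\|_{[0,\frac1{cq}]}\le 2^{-bq}(Cq)^k$ for all $k\ge0$; and since $0<\tilde g_q\le1$ on $[0,\tfrac1{cq})$, the first part also gives the sandwich $\tfrac12\tfrac1{\tilde g_q(x)}\le s(x)\le\tfrac32\tfrac1{\tilde g_q(x)}$ there.

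The remaining step copies the proof of Lemma~\ref{lem:rationalbernstein}. From the product rule, $\tfrac{r^{(m)}}{m!}=\tfrac{(fs)^{(m)}}{m!}+\sum_{k=0}^m\tfrac{f^{(k)}}{k!}\,\tfrac{(\frac1{\tilde g_q}-s)^{(m-k)}}{(m-k)!}$, with $fs\in\mathcal{P}_{p'}$, $p':=p+2bq$. On $[0,\tfrac1{cp'}]$ one combines the higher-order Markov inequality with the one-sided interpolation bound of Proposition~\ref{prop:uniform_bound_1/N} and uses that $|f|\le|r|$ and $|fs|\le\tfrac32|r|$ on that interval (and $\tilde g_q(\tfrac1N)\le1$, $s(\tfrac1N)\tilde g_q(\tfrac1N)\le\tfrac32$ at the relevant sample points $N\ge q$) to get $\tfrac1{k!}\|f^{(k)}\|\le C\tfrac{(Cp)^{3k}}{(k!)^2}\sup_{N\ge q}|r(\tfrac1N)|$ and $\tfrac1{m!}\|(fs)^{(m)}\|\le\tfrac{(Cp)^{3m}}{(m!)^2}\sup_{N\ge q}|r(\tfrac1N)|$; the cross-sum is then bounded, using the first part of the approximation lemma and $\sum_{k\ge0}\tfrac{(Cp)^{2k}}{(k!)^2}\le e^{2Cp}$, by $2^{-bq}e^{2Cp}(Cp)^m\sup_{N\ge q}|r(\tfrac1N)|$. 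Choosing $b:=\lceil\tfrac{3C}{q}p\rceil$ makes this last quantity $\le e^{-p}(Cp)^m\sup_{N\ge q}|r(\tfrac1N)|$ while keeping $p'\le C'p$, and one finally passes from $[0,\tfrac1{cp'}]$ to $[0,\tfrac1{cp}]$ by enlarging $c$. The main thing to watch is the bookkeeping that produces the exponents $3m$ and $(m!)^2$; beyond that the argument is routine given Lemma~\ref{lem:rationalbernstein}, the only conceptual point being that — reflecting the absence of any $\Psi_h(-x)$ symmetry for permutation models — Proposition~\ref{prop:uniform_bound_1/N} and the Markov inequality are both used here in their one-sided forms, which is legitimate precisely because we only claim a bound in terms of $\sup_{N\ge q}|r(\tfrac1N)|$.
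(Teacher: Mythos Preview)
Your proposal is correct and follows essentially the same approach as the paper's proof: extend Lemma~\ref{lem:polyratber} to $\tilde g_q$ via the same Taylor-truncation argument, decompose $\tfrac{r^{(m)}}{m!}$ by the product rule, replace Bernstein by the higher-order Markov inequality combined with the one-sided interpolation of Proposition~\ref{prop:uniform_bound_1/N}, bound the cross-sum via $\sum_{k\ge0}\tfrac{p^{2k}}{(k!)^2}\le e^{2p}$, and choose $b$ proportional to $p/q$. Your explanation of how the exponent $3m$ and the $(m!)^2$ arise from Markov (via $T_p^{(k)}(1)\le p^{2k}/(2k-1)!!$ and $(2k-1)!!\ge k!$) matches the paper's remark ``where we used that $(2k-1)!!\ge k!$''; the only cosmetic difference is that the paper simply asserts the analogue of Lemma~\ref{lem:polyratber} ``extends readily'' while you write out the bound $|1/\tilde g_q(z)|\le e^{q/4}$ explicitly.
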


\begin{proof}
We first note that the statement and proof of Lemma \ref{lem:polyratber} 
extend readily to the setting where $g_q$ is replaced by $\tilde g_q$.
We proceed as in the proof of Lemma \ref{lem:rationalbernstein}.
Applying the Markov inequality \cite[Lemma 4.1]{chen2024new}
instead of Lemma \ref{lem:polybernstein} yields
\begin{align*}
        &k!\,\|f^{(k)}\|_{[-\frac{1}{cp},\frac{1}{cp}]} 
        \le C(Cp)^{3k} \|f\|_{I_q}
        \le C(Cp)^{3k} \|r\|_{I_q}, \\
        &m!\,\|(fs)^{(m)}\|_{[-\frac{1}{cp'},\frac{1}{cp'}]} 
        \le (Cp')^{3m}\|fs\|_{I_q}
        \le (Cp')^{3m}\|r\|_{I_q},
\end{align*}
where we now let $I_q=\{\frac{1}{N}:N\in\bN,N\ge q\}$ and we used that
$(2k-1)!!\ge k!$. The argument is readily concluded as in the proof of
Lemma \ref{lem:rationalbernstein} by noting that
\begin{multline*}
        \sum_{k=0}^m \frac{\|f^{(k)}\|_{[-\frac{1}{cp'},\frac{1}{cp'}]}
        }{k!}
        \frac{\|(\frac{1}{\tilde g_q}-s)^{(m-k)}\|_{[-\frac{1}{cp'},\frac{1}{cp'}]
        }}{(m-k)!}
\\ 
        \le 
        2^{-bq} (Cp)^m \|r\|_{I_q}
        \sum_{k=0}^m \frac{p^{2k}}{(k!)^2} 
        \le
        e^{2p} 2^{-bq} (Cp)^m \|r\|_{I_q},
\end{multline*}
where we used that $\sum_{k\ge 0} \frac{p^{2k}}{(k!)^2} \le
(\sum_{k\ge 0} \frac{p^k}{k!})^2=e^{2p}$.
\end{proof}

We can now complete the proof of Theorem \ref{thm:permrate}.

\begin{proof}[Proof of Theorem \ref{thm:permrate}]
We can repeat the proof of \cite[Theorem 3.9]{chen2024new} 
\emph{verbatim}, with the only modification that we use
Lemma \ref{lem:rationalmarkov} to obtain the estimate
$$
        \frac{\|\Psi_h^{(m)}\|_{C^0[0,\frac{1}{M}]}}{m!}
        \le (Cqq_0(1+\log r))^{3m} 
        \|h\|_{[-K,K]}	
$$
in the proof of \cite[Theorem 6.1 and Corollary 6.2]{chen2024new},
instead of the corresponding estimate in \cite{chen2024new} that has the 
exponent $4m$ rather than $3m$.
\end{proof}

\iffalse
\begin{rem}
In contrast to all other models considered in this paper, it is not true 
in the present setting that $\supp\nu_k\subseteq[-\|\xf\|,\|\xf\|]$ for 
all $k$: there are structures called ``tangles'' that occur with 
probability $N^{-c}$, which give rise to outliers in the supports of 
higher order $\nu_k$. We refer to \cite{chen2024new} for further discussion.

Even though we can establish strong convergence without understanding the 
tangles, their presence limits improvements to the convergence rate and 
prevents us from working with matrix coefficients of dimension $D\gtrsim 
N$. The possibility of combining our methods with tangle removal, e.g., by 
using a selective trace construction (see, e.g., \cite[Chapter 6]{Fri08} 
or \cite[\S 4.2]{BC19}), remains to be explored.
\end{rem}
\fi

\subsection{Hayes' model}
\label{sec:hayes}

Fix $L,r\in\bN$, and let $G_k^{N,i}$
be independent GUE matrices of dimension $N$ for $i\in [L]$, $k\in[r]$.
The aim of this section is to investigate whether
the family of $N^L$-dimensional random matrices
$$
	\boldsymbol{\tilde G}^N := 
	\big\{ \id_N^{\otimes (i-1)}\otimes G_k^{N,i}\otimes
	\id_N^{\otimes (L-i)} :
	i\in [L],~k\in[r]\big\}
$$
converges strongly as $N\to\infty$ to
$$
	\boldsymbol{\tilde s} := 
	\big\{ \id^{\otimes (i-1)}\otimes s_k\otimes
	\id^{\otimes (L-i)} :
	i\in [L],~k\in[r] \big\}
$$
in $(\mathcal{A}^{\otimes_{\rm min}L},\tau^{\otimes L})$, where
$\mathcal{A}$ denotes the $C^*$-algebra generated by a free semicircular 
family $\boldsymbol{s}=(s_1,\ldots,s_r)$ with respect to the trace $\tau$.
This question was considered in an influential paper of
Hayes \cite{hayes2020random}, whose main result states that strong
convergence of this model in the case $L=2$ implies an affirmative answer 
to a conjecture of Peterson and Thom in the theory of von Neumann 
algebras.

That Hayes' question does indeed have an affirmative answer has now been 
established by a variety of methods 
\cite{BC22,BC24,magee2024quasiexp,Par24}, proving the Peterson-Thom 
conjecture. We provide yet another proof as a 
consequence of Corollary \ref{cor:gausssubexp}.

\begin{lem}
\label{lem:qualihayes}
For any $P\in\mathrm{M}_D(\bC)\otimes\bC\langle
\boldsymbol{\tilde s}\rangle$, we have
$$
	\|P(\boldsymbol{\tilde G}^N)\| =
	(1+o(1))\|P(\boldsymbol{\tilde s})\|_{\rm min}
	\text{ a.s.\ as }N\to\infty.
$$
\end{lem}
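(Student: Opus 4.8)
The plan is to prove the lemma by induction on $L$, peeling off one tensor slot at a time and invoking Corollary \ref{cor:gausssubexp} together with exactness of $\mathcal{A}$ (recalled in section \ref{sec:subexpon}). The base case $L=1$ is Corollary \ref{cor:gausssubexp} itself, applied with a constant coefficient dimension. For the inductive step I would fix $L\ge 2$, assume the statement for $L-1$, and (by Remark \ref{rem:selfadjoint}) take $P$ self-adjoint.

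First I would single out the last tensor factor. Writing $\boldsymbol{G}^{N,L}:=(G_1^{N,L},\dots,G_r^{N,L})$ and letting $\boldsymbol{\tilde Z}^N$ denote the Hayes $(L-1)$-model built from $\{G_k^{N,i}:i\le L-1,\ k\le r\}$, so that $\boldsymbol{\tilde Z}^N$ is an $r(L-1)$-tuple in $\mathrm{M}_{N^{L-1}}(\bC)$ independent of $\boldsymbol{G}^{N,L}$, the identification $\mathrm{M}_N(\bC)^{\otimes L}=\mathrm{M}_{N^{L-1}}(\bC)\otimes\mathrm{M}_N(\bC)$ and the fact that the first $L-1$ tensor slots commute with the last let me regroup $P$ as
$$
	P(\boldsymbol{\tilde G}^N)=\sum_{|w|\le q_0}c_w(\boldsymbol{\tilde Z}^N)\otimes w(\boldsymbol{G}^{N,L}),
$$
where $w$ ranges over words of length $\le q_0$ in $r$ letters and each $c_w\in\mathrm{M}_D(\bC)\otimes\bC\langle x_1,\dots,x_{r(L-1)}\rangle$ is a fixed polynomial of degree $\le q_0$; correspondingly $P(\boldsymbol{\tilde s})=\sum_w c_w(\boldsymbol{z})\otimes w(\boldsymbol{s})$ in $\mathcal{A}^{\otimes_{\rm min}(L-1)}\otimes_{\rm min}\mathcal{A}=\mathcal{A}^{\otimes_{\rm min}L}$, where $\boldsymbol{z}=\{\id^{\otimes(i-1)}\otimes s_k\otimes\id^{\otimes(L-1-i)}\}$ is the Hayes $(L-1)$-limiting model. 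Conditionally on all the GUE matrices in the first $L-1$ slots, $Q_N:=\sum_w c_w(\boldsymbol{\tilde Z}^N)\otimes w(\,\cdot\,)$ is a deterministic noncommutative polynomial of degree $\le q_0$ whose matrix coefficients have dimension $DN^{L-1}=e^{o(N)}$. Applying Corollary \ref{cor:gausssubexp} conditionally — and upgrading to an unconditional almost-sure statement by Fubini, since the conditioning is independent of $\boldsymbol{G}^{N,L}$ — I obtain
$$
	\|P(\boldsymbol{\tilde G}^N)\|=\|Q_N(\boldsymbol{G}^{N,L})\|=(1+o(1))\,\|Q_N(\boldsymbol{s})\|\qquad\text{a.s.}
$$

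It then remains to show that $\|Q_N(\boldsymbol{s})\|\to\|P(\boldsymbol{\tilde s})\|_{\rm min}$ almost surely, which is where exactness of $\mathcal{A}$ enters. Given $\varepsilon>0$, I would apply Definition \ref{defn:subexp} to the finite-dimensional operator space $E:=\mathrm{span}\{w(\boldsymbol{s}):|w|\le q_0\}\subseteq\mathcal{A}$ to obtain $S\in\bN$ and a linear embedding $v\colon E\to\mathrm{M}_S(\bC)$ that is completely contractive and satisfies $\|x\|\le(1+\varepsilon)\|(v\otimes\mathrm{id})(x)\|$ on all amplifications. Amplifying by $\id_{\mathrm{M}_{N^{L-1}}(\bC)}$ and reordering tensor factors, $\|Q_N(\boldsymbol{s})\|$ equals, up to a factor in $[1,1+\varepsilon]$, the norm of $R(\boldsymbol{\tilde Z}^N)$, where $R:=\sum_w v(w(\boldsymbol{s}))\otimes c_w(\,\cdot\,)$ is a \emph{fixed} polynomial in $r(L-1)$ variables with matrix coefficients of the fixed dimension $DS$. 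The induction hypothesis applied to $R$ gives $\|R(\boldsymbol{\tilde Z}^N)\|\to\|R(\boldsymbol{z})\|_{\rm min}$ almost surely, and running $v$ backwards identifies $\|R(\boldsymbol{z})\|_{\rm min}$ with $\|P(\boldsymbol{\tilde s})\|_{\rm min}$ up to a further factor in $[1,1+\varepsilon]$. Taking $\varepsilon\downarrow 0$ along a sequence and intersecting the corresponding almost-sure events yields $\|Q_N(\boldsymbol{s})\|\to\|P(\boldsymbol{\tilde s})\|_{\rm min}$ a.s., which together with the previous display closes the induction. (If $P(\boldsymbol{\tilde s})=0$, only a trivial modification is needed.)

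The heart of the argument — and the step I expect to be most delicate to write cleanly — is the division of labour between Corollary \ref{cor:gausssubexp}, which absorbs the $N^{L-1}$-dimensional coefficients generated by the peeled-off slots (this is precisely where the subexponential regime $N^{L-1}=e^{o(N)}$ is used), and exactness of $\mathcal{A}$, which collapses the \emph{remaining} coefficient algebra to a fixed matrix size $\mathrm{M}_S(\bC)$ so that the induction hypothesis becomes applicable. The surrounding bookkeeping — conditioning on the other slots and upgrading conditional to unconditional almost-sure convergence, together with the nested minimal-tensor-product identifications and the two-sided estimates produced by the exactness embedding — is routine but must be tracked with care. I anticipate no other obstacle; in particular, this route yields no quantitative information, which is why a quantitative form of strong convergence of the Hayes model will be established separately in what follows by applying the polynomial method directly.
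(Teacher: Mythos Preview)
Your proposal is correct and follows essentially the same route as the paper. Both arguments replace the GUE matrices by semicirculars one tensor slot at a time, using Corollary~\ref{cor:gausssubexp} to absorb the $N^{L-1}$-dimensional matrix coefficients coming from the remaining GUE slots, and exactness of $\mathcal{A}$ to collapse the semicircular coefficients to a fixed matrix size. The paper organizes this as a chain of hybrid models $P(\boldsymbol{\tilde G}^N_{\le j},\boldsymbol{\tilde s}_{>j})$ with $j=L,L-1,\ldots,0$, invoking at each step exactness of the tensor power $\mathcal{A}^{\otimes_{\rm min}(L-j)}$; you organize it as an induction on $L$, invoking exactness only of a single copy of $\mathcal{A}$ and letting the induction hypothesis handle the rest. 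This is a minor cosmetic difference (and arguably a slight simplification on your side, since you do not need stability of exactness under minimal tensor products).
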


\begin{proof}
Denote by $\boldsymbol{\tilde G}^N_{\le j}$ the subset of
$\boldsymbol{\tilde G}^N$ with index $1\le i\le j$,
and denote by $\boldsymbol{\tilde s}_{>j}$ the subset of 
$\boldsymbol{\tilde s}$ with index $j<i\le L$.
It clearly suffices to show that
$$
	\|P(\boldsymbol{\tilde G}^N_{\le j},\boldsymbol{\tilde s}_{>j})\|_{\rm min}
	= (1+o(1))
	\|P(\boldsymbol{\tilde G}^N_{\le j-1},\boldsymbol{\tilde 
	s}_{>j-1})\|_{\rm min}\text{ a.s.\ as }N\to\infty
$$
for $j=1,\ldots,L$. To this end, note that
conditionally on $\boldsymbol{G}^{N,j}=(G_1^{N,j},\ldots,G_r^{N,j})$,
we can interpret $P(\boldsymbol{\tilde G}^N_{\le j},\boldsymbol{\tilde 
s}_{>j})$ as a noncommutative polynomial $P_N(\boldsymbol{G}^{N,j})$
with coefficients in $\mathrm{M}_N(\mathbb{C})^{\otimes(j-1)}\otimes
\mathbf{A}$, where $\mathbf{A}\subseteq\mathcal{A}^{\otimes_{\rm min}(L-j)}$
denotes the operator space spanned by the monomials of
$\boldsymbol{\tilde s}_{>j}$ that appear in $P_N$.

As exactness is stable under the
minimal tensor product \cite[p.\ 297]{Pis03}, it follows that
$\mathbf{A}$ is exact. Fix any $C>1$, and let
$u:\mathbf{A}\to \mathrm{M}_S(\mathbb{C})$ be the embedding provided by 
Definition \ref{defn:subexp}. As the polynomial $(\text{id}\otimes 
u)(P_N)$ has matrix coefficients of dimension $D_N= N^{j-1}S=e^{o(N)}$, 
applying Corollary \ref{cor:gausssubexp} conditionally yields
\begin{align*}
	\|P_N(\boldsymbol{G}^{N,j})\| &\le
	C\|(\text{id}\otimes u)(P_N(\boldsymbol{G}^{N,j}))\| =
	(1+o(1))
	C\|(\text{id}\otimes u)(P_N(\boldsymbol{s}))\|
	\\
	&\le
	C(1+o(1))\|P_N(\boldsymbol{s})\|_{\rm min} =
	C(1+o(1))
	\|P(\boldsymbol{\tilde G}^N_{\le j-1},\boldsymbol{\tilde s}_{>j-1})\|_{\rm min}
\end{align*}
a.s.\ as $N\to\infty$, where we used 
\cite[Proposition 2.1.1]{Pis03} in the second inequality.
Taking $C\downarrow 1$ yields an upper bound of the 
desired form. The corresponding lower bound follows in a completely 
analogous fashion, concluding the proof.
\end{proof}

Even though Lemma \ref{lem:qualihayes} provides a short proof of strong 
convergence of Hayes' model, the exactness argument provides no 
quantitative information. The main result of this section is the following 
quantitative form of Lemma \ref{lem:qualihayes}.
%We spell out the constant $c$ as it will be needed in the next section.

\begin{thm}
\label{thm:hayes}
Let $\varepsilon\in (0,1]$, and fix
$P\in\mathrm{M}_D(\mathbb{C})\otimes 
\mathbb{C}\langle \boldsymbol{\tilde s}\rangle$ of degree $q_0$ 
with matrix coefficients of dimension $D\le e^{cN\varepsilon^2}$.
Then we have 
$$
	\mathbf{P}\big[
	\|P(\boldsymbol{\tilde G}^N)\|
	\ge (1+\varepsilon)
	\|P(\boldsymbol{\tilde s})\|_{\rm min}
	\big] \le
	\frac{N^L}{c\varepsilon}
	\,e^{-cN\varepsilon^2},
$$
where $\frac{1}{c}=(CLr)^{2q_0}q_0^2$ for a universal constant $C$.
\end{thm}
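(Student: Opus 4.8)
The plan is to transcribe the proof of Theorem~\ref{thm:maingauss} (GUE case) to the random matrix $\xn := P(\boldsymbol{\tilde G}^N)\in\mathrm{M}_D(\bC)\otimes\mathrm{M}_N(\bC)^{\otimes L}$, with limiting model $\xf := P(\boldsymbol{\tilde s})$ in $\mathrm{M}_D(\bC)\otimes\mathcal{A}^{\otimes_{\rm min}L}$, and to use the \emph{qualitative} strong convergence already established in Lemma~\ref{lem:qualihayes} as the input for the bootstrapping step. (The reason for importing Lemma~\ref{lem:qualihayes} rather than re-deriving basic strong convergence from the low-order expansion as in Corollary~\ref{cor:strongconv} is that the ambient dimension is now $N^L$, which is too large for the $m=2$ expansion to be informative once $L\ge 2$.) By Remark~\ref{rem:selfadjoint} we may assume $P$ is self-adjoint, so that $\xf$ is self-adjoint and $\|\xf\|=\|P(\boldsymbol{\tilde s})\|_{\rm min}$.

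The only genuinely model-specific ingredient is the analogue of the polynomial encoding of Lemma~\ref{lem:polyencoding}. Any word $\tilde w$ of length $k$ in $\boldsymbol{\tilde G}^N$ factors, after reordering its pairwise commuting tensor slots, as $\tilde w = \bigotimes_{i=1}^L w_i(\boldsymbol{G}^{N,i})$ with $\sum_{i=1}^L\deg w_i=k$; since $\boldsymbol{G}^{N,1},\ldots,\boldsymbol{G}^{N,L}$ are independent,
$$
	\mathbf{E}[\tr\tilde w] = \prod_{i=1}^L\mathbf{E}[\tr_N w_i(\boldsymbol{G}^{N,i})],
$$
and each factor on the right is, by the GUE genus expansion, a polynomial in $\tfrac{1}{N^2}$ of degree at most $\tfrac{\deg w_i}{4}$. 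Hence $\mathbf{E}[\tr h(\xn)]=\Phi_h(\tfrac1N)$ for some $\Phi_h\in\calP_{qq_0}$ with $\Phi_h(x)=\Phi_h(-x)$, exactly as in Lemma~\ref{lem:polyencoding}; it is this parity that permits the use of Bernstein's inequality on a symmetric interval in the master inequality.

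Granting this, the remaining ingredients transfer with $r$ replaced by $Lr$ and ambient dimension $N$ by $N^L$. Writing $P$ in the tensorized Chebyshev representation $P=\sum_{\boldsymbol I,\boldsymbol J}A_{\boldsymbol I,\boldsymbol J}\otimes\bigotimes_{i=1}^L U_{\boldsymbol{i}_i,\boldsymbol{j}_i}$ with $\sum_i|\boldsymbol{j}_i|\le q_0$, the operators $\{\bigotimes_i U_{\boldsymbol{i}_i,\boldsymbol{j}_i}(\boldsymbol s)\}$ are orthonormal in $L^2(\tau^{\otimes L})$ (a tensor product of the systems in Lemma~\ref{lem:chebsecondkind}), so $\max_{\boldsymbol I,\boldsymbol J}\|A_{\boldsymbol I,\boldsymbol J}\|\le\|\xf\|$ and $\sum_{\boldsymbol I,\boldsymbol J}\|A_{\boldsymbol I,\boldsymbol J}\|\le (CLr)^{q_0}\|\xf\|$. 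Since the event $\{\|G_k^{N,i}\|\le\kappa+1\text{ for all }i,k\}$ has complement of probability $\le CLr\,e^{-cN}$ by a union bound and Lemma~\ref{lem:subgaussiannorm}, the a priori bounds of Lemma~\ref{lem:apriori} hold with $K=(CLr)^{q_0}\|\xf\|$, and therefore so do the master inequality (Lemma~\ref{lem:ratesofconv}) and the smooth asymptotic expansion (Theorem~\ref{thm:smasympexGUE}). The same moment-convergence argument as in Lemma~\ref{lem:waf} gives $\nu_0(h)=(\tr\otimes\tau^{\otimes L})(h(\xf))$, while $\nu_1=0$ by the parity.

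It remains to run section~\ref{sec:bootstrapping}. Lemma~\ref{lem:concentrationaroundthemean} transfers: on the event above, $\|\xn\|=\|P(\boldsymbol{\tilde G}^N)\|$ is a Lipschitz function of the $O(LrN^2)$ real and imaginary parts of the on-and-above-diagonal entries of the $G_k^{N,i}$, with Lipschitz constant depending only on $P$ and $L$, and these are independent $N(0,\tfrac1N)$ variables, so Lemma~\ref{lem:localconcentration} applies. Combining this with Lemma~\ref{lem:qualihayes} in place of Corollary~\ref{cor:strongconv}, the argument of section~\ref{sec:bootsgue} yields $\supp\nu_m\subseteq[-\|\xf\|,\|\xf\|]$ for all $m\in\bZ_+$. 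Finally, the computation of section~\ref{sec:pfmaingue} goes through verbatim save that $\trace h(\xn)=DN^L\,\tr h(\xn)$; with the choice $m=\lfloor N\varepsilon^2/L'\rfloor$ for a suitable constant $L'$ of the form $(CLr)^{2q_0}q_0^2$, and assuming $D\le e^m$, this produces the stated bound with $\tfrac1c=(CLr)^{2q_0}q_0^2$. I expect the polynomial encoding of the second paragraph to be the only step with real content; the rest is bookkeeping, whose one subtlety is correctly importing the qualitative input from Lemma~\ref{lem:qualihayes}.
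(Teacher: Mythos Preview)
Your proposal is correct and follows essentially the same route as the paper: establish the polynomial encoding by factoring words across the tensor legs, then rerun the GUE proof with the qualitative strong convergence of Lemma~\ref{lem:qualihayes} substituted for the $m=2$ argument of section~\ref{sec:guebasiccvg}, and with the normalization $\trace h(\xn)=DN^L\tr h(\xn)$. You have in fact spelled out more detail than the paper does (the tensorized Chebyshev basis, the explicit constant tracking with $r\leftarrow Lr$, and the Lipschitz concentration), but the architecture is identical.
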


The proof of Theorem \ref{thm:hayes} is very similar to that of Theorem 
\ref{thm:maingauss}, with one twist. In our main results, we could deduce 
qualitative strong convergence merely from the fact that 
$\supp\nu_1\subseteq[-\|\xf\|,\|\xf\|]$; this was then used as input to 
the bootstrapping argument (cf.\ section \ref{sec:introboot}) to control 
the remaining $\nu_k$. However, as the random matrices in the present 
section are $N^L$-dimensional rather than $N$-dimensional, we would need 
to control $\nu_1,\ldots,\nu_L$ to prove strong convergence. In contrast 
to $\nu_1$, control of $\nu_2,\ldots,\nu_L$ cannot be achieved by 
supersymmetric duality, and would ordinarily require a problem-specific 
analysis as in \cite{chen2024new}.

Fortunately, as we already established strong convergence in a different 
manner in Lemma \ref{lem:qualihayes}, we can use the latter as input to 
the bootstrapping argument and avoid any additional computations.
The proof of Theorem \ref{thm:hayes} therefore illustrates the fact that 
the bootstrapping argument can be used to amplify a qualitative strong 
convergence result to a strong quantitative bound.

\begin{proof}[Proof of Theorem \ref{thm:hayes}]
Fix $P\in\mathrm{M}_D(\mathbb{C})\otimes \mathbb{C}\langle 
\boldsymbol{\tilde s}\rangle$ as in the statement, and define $\xn = 
P(\boldsymbol{\tilde G}^N)$ and $\xf = P(\boldsymbol{\tilde s})$.
As any word 
$w(\boldsymbol{\tilde G}^N)$ of length $q$ has the form
$$
	w(\boldsymbol{\tilde G}^N) = 
	\big(G^{N,1}_{k_1}\cdots G^{N,1}_{k_{\ell_1}}\big)\otimes
	\big(G^{N,2}_{k_{\ell_1+1}}\cdots G^{N,2}_{k_{\ell_2}}\big)
	\otimes\cdots\otimes
	\big(G^{N,L}_{k_{\ell_{L-1}+1}}\cdots G^{N,L}_{k_q}\big)
$$
for some $0\le\ell_1\le\cdots\le\ell_{L-1}\le q$ and $k_1,\ldots,k_q\in[r]$,
$$
	\E[\tr w(\boldsymbol{\tilde G}^N)] = 
	\E[\tr
	G^{N,1}_{k_1}\cdots G^{N,1}_{k_{\ell_1}}]\,
	\E[\tr
	G^{N,2}_{k_{\ell_1+1}}\cdots G^{N,2}_{k_{\ell_2}}]
	\,\cdots\,
	\E[\tr
	G^{N,L}_{k_{\ell_{L-1}+1}}\cdots G^{N,L}_{k_q}]
$$
is a polynomial of $\frac{1}{N^2}$ of degree at most $\frac{q}{4}$ as is 
noted in the proof of Lemma \ref{lem:polyencoding}. In particular, the 
statement of Lemma \ref{lem:polyencoding} extends to the present setting.

With this observation in place, the entire proof of Theorem 
\ref{thm:maingauss} for GUE matrices carries over directly to the present 
setting with two minor modifications: we replace the argument of section 
\ref{sec:guebasiccvg} by Lemma \ref{lem:qualihayes}, and we note the 
correct normalization $\mathbf{E}[\trace h(\xn)] 
= DN^L\,\mathbf{E}[\tr h(\xn)]$
in the present setting in section \ref{sec:pfmaingue}.
\end{proof}

We have considered the GUE form of the Hayes model in this section for 
concreteness. It is straightforward to repeat the above arguments to 
obtain the analogous result for the GOE/GSE or 
$\mathrm{U}(N)/\mathrm{O}(N)/\mathrm{Sp}(N)$ ensembles.

\subsection{Tensor GUE models}
\label{sec:graph}

In Hayes' model of the previous section, independent GUE matrices act 
on disjoint factors of a tensor product, that is, they are 
non-interacting. In this section, we investigate tensor GUE models that 
admit a general interaction pattern. Such models arise naturally in the
study of quantum many-body systems \cite{ML19,CY24} and random 
geometry \cite{MT23}.

To this end, fix $L,V\in\bN$ and 
nonempty subsets $K_1,\ldots,K_V\subseteq[L]$. For every $v\in[V]$, we 
define an independent $N^L$-dimensional random matrix 
$$
	\hat G_v^N = H_v^N \otimes \id_{N^{|[L]\backslash K_v|}}
        \quad\text{in}\quad
        \bigotimes_{\ell\in[L]}\mathrm{M}_N(\mathbb{C}) \simeq 
        \bigotimes_{\ell\in K_v}\mathrm{M}_N(\mathbb{C})\otimes 
	\bigotimes_{\ell\in [L]\backslash K_v}\mathrm{M}_N(\mathbb{C}),
$$
where $H_v^N$ is GUE of dimension $N^{|K_v|}$. Thus
$\boldsymbol{\hat G}^N=(\hat G_v^N)_{v\in[V]}$ are independent GUE 
matrices that act on overlapping tensor factors.

To describe the limiting model, let $\Gamma=([V],E)$ be a finite simple 
graph. A \emph{$\Gamma$-independent semicircular family} is a family 
$\boldsymbol{\hat s}=(\hat s_v)_{v\in[V]}$ in a $C^*$-probability space 
with the following properties: each $\hat s_v$ is a semicircular variable;
$\hat s_v,\hat s_w$ are classically independent if $\{v,w\}\in E$; and 
$\hat s_v,\hat s_w$ are freely independent if $v\ne w$, $\{v,w\}\not\in 
E$. We refer to \cite[\S 3]{SW16} for the precise definition.
 
It was shown by Charlesworth and Collins \cite[Theorem 4]{CC21} that 
$\boldsymbol{\hat G}^N$ converges weakly to $\boldsymbol{\hat s}$ as 
$N\to\infty$ (in the sense of Lemma \ref{lem:waf}), where the graph 
$\Gamma$ is defined by placing an edge $\{v,w\}\in E$ if and only if 
$K_v\cap K_w=\varnothing$. We will fix this graph $\Gamma$ in the 
remainder of this section. It is readily seen that any finite simple 
graph $\Gamma$ can be realized in this manner, as is noted in \cite{CC21} 
and \cite{MT23}.

Whether $\boldsymbol{\hat G}^N$ converges strongly to $\boldsymbol{\hat 
s}$ as $N\to\infty$ has remained an open problem, cf.\ \cite[Problem 
1.6]{MT23} and \cite{CY24}. We resolve this problem here.

\begin{thm}
\label{thm:graph}
For every $P\in\mathrm{M}_D(\mathbb{C})\otimes
\mathbb{C}\langle\boldsymbol{\hat s}\rangle$, we have
$$
        \|P(\boldsymbol{\hat G}^N)\| = (1+o(1))
        \|P(\boldsymbol{\hat s})\|\quad\text{a.s.}\quad\text{as}\quad 
	N\to\infty.
$$
\end{thm}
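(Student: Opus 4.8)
As always I would prove the upper bound $\|P(\boldsymbol{\hat G}^N)\|\le(1+o(1))\|P(\boldsymbol{\hat s})\|$ and the matching lower bound separately; both hold a.s.

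The lower bound is routine. The weak convergence of Charlesworth and Collins \cite{CC21} gives $\mathbf{E}[\tr P(\boldsymbol{\hat G}^N)^{2p}]\to(\tr\otimes\tau)[P(\boldsymbol{\hat s})^{2p}]$ for every $p\in\bN$, while $\|P(\boldsymbol{\hat G}^N)\|$ is a Lipschitz function of the Gaussian entries of the blocks $H_1^N,\dots,H_V^N$ on the event that all block norms $\|\hat G_v^N\|$ are $O(1)$; since those entries have variance at most $N^{-1}$, Lemma~\ref{lem:localconcentration} together with Lemma~\ref{lem:subgaussiannorm} yields concentration of $\|P(\boldsymbol{\hat G}^N)\|$ at scale $N^{-1/2}$. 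Borel--Cantelli and letting $p\to\infty$ then give the lower bound exactly as in Lemma~\ref{lem:gausssubexplowerfixed}.

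For the upper bound I would follow the template of Hayes' model (Section~\ref{sec:hayes}) in two stages. In the first stage one proves the \emph{qualitative} statement $\|P(\boldsymbol{\hat G}^N)\|=(1+o(1))\|P(\boldsymbol{\hat s})\|$ a.s.---the analogue of Lemma~\ref{lem:qualihayes}. This is where the approximation result of Appendix~\ref{app:tensor} enters: it relates both the $\Gamma$-independent family $\boldsymbol{\hat s}$ and the tensor GUE model $\boldsymbol{\hat G}^N$ to a Hayes-type model living on a large (but finite, depending on a precision parameter) tensor product, so that the qualitative strong convergence of $\boldsymbol{\hat G}^N$ follows from that of Hayes' model. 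Because the number of tensor factors in the approximation grows as the precision improves, this step uses the \emph{quantitative} Hayes bound of Theorem~\ref{thm:hayes} to ensure that the Hayes convergence beats the approximation error. In the second stage one observes, as in the proof of Theorem~\ref{thm:hayes}, that the polynomial-encoding identity of Lemma~\ref{lem:polyencoding} persists: by the genus expansion, $\mathbf{E}[\tr w(\boldsymbol{\hat G}^N)]$ is a polynomial in $\tfrac{1}{N^2}$ of degree $O(|w|)$ for every word $w$ in the $\hat G_v^N$, even when the blocks act on overlapping tensor factors, because each block contributes independent Wick pairings. Granting these two facts, the entire GUE analysis of Sections~\ref{sec:thepolymethodforGUE}--\ref{sec:bootstrapping}---the a priori bounds, the master inequality and smooth asymptotic expansion, and the bootstrapping of $\supp\nu_k$ via concentration of measure with the first-stage qualitative convergence as input---transfers verbatim, the only bookkeeping change being that the dimension $M_N=N^L$ replaces $M_N=N$. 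This delivers not merely $\|P(\boldsymbol{\hat G}^N)\|=(1+o(1))\|P(\boldsymbol{\hat s})\|$ but the quantitative tail $\frac{N^L}{c\varepsilon}e^{-cN\varepsilon^2}$, in parallel with Theorem~\ref{thm:hayes}.

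The hard part is concentrated entirely in the first stage, namely the construction and quantitative control of the approximation in Appendix~\ref{app:tensor}. The difficulty is that $\Gamma$-independence interpolates between classical independence (of blocks $\hat s_v,\hat s_w$ with $K_v\cap K_w=\varnothing$) and free independence (when $K_v\cap K_w\ne\varnothing$), and one must realize this mixed structure inside a model built from GUE matrices placed on \emph{disjoint} tensor factors, keeping all norms under uniform control; by contrast, verifying the polynomial encoding for overlapping blocks and transferring Sections~\ref{sec:thepolymethodforGUE}--\ref{sec:bootstrapping} are comparatively routine. The identical strategy handles the GOE/GSE and $\mathrm{U}(N)/\mathrm{O}(N)/\mathrm{Sp}(N)$ tensor models.
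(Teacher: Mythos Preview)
Your lower bound and the high-level idea of routing through a Hayes-type approximation are correct, and your Stage~2 (direct polynomial encoding for $\boldsymbol{\hat G}^N$ plus bootstrapping) is essentially what the paper says could be done as a follow-up to obtain a quantitative version. However, your Stage~1 has a genuine gap: you are missing the mechanism that transfers norm control from the Hayes-type model $\boldsymbol{\hat G}^{N,T}$ back to the tensor model $\boldsymbol{\hat G}^N$. Appendix~\ref{app:tensor} only proves $\|P(\boldsymbol{\hat s}^T)\|\to\|P(\boldsymbol{\hat s})\|$; it says nothing about comparing the \emph{random matrix} spectra of $\boldsymbol{\hat G}^N$ and $\boldsymbol{\hat G}^{N,T}$. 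These two families merely share their first two moments (Lemma~\ref{lem:graphcov}); the central limit theorem gives convergence in distribution as $T\to\infty$ for fixed $N$, but that is far from the norm comparison you need with $T=T_N$ growing. The paper closes this gap with two ingredients you do not mention: the Haagerup--Thorbj{\o}rnsen linearization to reduce to $q_0=1$, and then the universality principle of \cite{BvH24} (applied to $P(\boldsymbol{\hat G}^{N,T})=A_0\otimes\id+\frac{1}{\sqrt{T}}\sum A_{v,t}\otimes Z_{v,t}$), which gives a quantitative spectral inclusion $\mathrm{sp}(P(\boldsymbol{\hat G}^N))\subseteq\mathrm{sp}(P(\boldsymbol{\hat G}^{N,T_N}))+[-\varepsilon,\varepsilon]$ with explicit error in $N$ and $T$. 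Only then does the quantitative Hayes bound (applied with $r=VT_N\asymp\log^9 N$ variables, which is why the polynomial dependence on $r$ in Theorem~\ref{thm:hayes} matters) combine with Lemma~\ref{lem:graphlimit} to finish.

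A secondary point: your justification of the polynomial encoding in Stage~2 (``each block contributes independent Wick pairings'') is the argument for the Hayes model, where the trace \emph{factorizes} across disjoint tensor legs. For overlapping $K_v$'s the trace does not factorize; one still gets a polynomial in $\tfrac{1}{N}$ because each Wick term is a pure power of $N$ from index counting, but this requires a separate (if routine) argument, and the degree bound is $O(Lqq_0)$ rather than $O(qq_0)$.
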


\smallskip 

As the random matrices $\hat G_v^N$ have dimension $N^L$,
a direct application of the polynomial method to the present model would 
require us to control the supports of the infinitesimal distributions 
$\nu_1,\ldots,\nu_L$. Instead, we will take a different approach 
that circumvents the need for such an analysis.

The idea behind the proof is to use the central limit theorem to 
approximate the present model by the Hayes model of the previous section. 
To this end, we define in $\bigotimes_{\ell\in[L]}\mathrm{M}_N(\mathbb{C}) 
\simeq \bigotimes_{\ell\in K_v}\mathrm{M}_N(\mathbb{C})\otimes 
\bigotimes_{\ell\in [L]\backslash K_v}\mathrm{M}_N(\mathbb{C})$ the random 
matrix
$$
        \hat G_v^{N,T} =
        \frac{1}{\sqrt{T}} 
        \sum_{t=1}^T
        \big(G_{v,t}^{N,1} \otimes
        G_{v,t}^{N,2} \otimes\cdots \otimes
        G_{v,t}^{N,|K_v|}\big)\otimes \id_{N^{|[L]\backslash K_v|}},
$$
where $(G_{v,t}^{N,i})_{v,t,i}$ are independent GUE matrices  of 
dimension $N$.
Similarly, we define in\footnote{All tensor products of $C^*$-algebras in this
section are minimal tensor products.}
$\bigotimes_{\ell\in [L]}\mathcal{A} \simeq
\bigotimes_{\ell\in K_v}\mathcal{A}\otimes
\bigotimes_{\ell\in [L]\backslash K_v}\mathcal{A}$ the associated 
limiting model
$$
        \hat s_v^T =
        \frac{1}{\sqrt{T}} 
        \sum_{t=1}^T
        \big(s_{v,t} \otimes\cdots \otimes
        s_{v,t}\big)\otimes \id,
$$
where $(s_{v,t})_{v,t}$ is a free semicircular family in
$\mathcal{A}$.
In the following, we will write
$\boldsymbol{\hat G}^{N,T}=(\hat G_v^{N,T})_{v\in [V]}$ and
$\boldsymbol{\hat s}^T=(\hat s^T_v)_{v\in [V]}$.

\begin{lem}
\label{lem:graphcov}
The mean and covariance of the real and imaginary parts of the entries
of $\boldsymbol{\hat G}^{N,T}$ coincide with those of 
$\boldsymbol{\hat G}^N$ for every $T\in\bN$.
\end{lem}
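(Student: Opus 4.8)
The plan is to compute both sets of moments explicitly and verify that they agree entry-by-entry. Write $\hat G_v^N = H_v^N\otimes\id$ where $H_v^N$ is a GUE matrix of dimension $N^{|K_v|}$, and write $\hat G_v^{N,T}$ as $\frac{1}{\sqrt T}\sum_{t=1}^T B_{v,t}^N\otimes\id$ with $B_{v,t}^N := G_{v,t}^{N,1}\otimes\cdots\otimes G_{v,t}^{N,|K_v|}$. Since both families are built from independent blocks across $v$, and since tensoring with $\id$ and applying the complex representation (for the quaternionic case, should one ever need it) is a fixed linear operation, it suffices to check that for each fixed $v$ the $N^{|K_v|}\times N^{|K_v|}$ random matrices $H_v^N$ and $\frac{1}{\sqrt T}\sum_{t=1}^T B_{v,t}^N$ have the same mean and covariance structure on their real and imaginary entry parts. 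Independence across $v$ is immediate on both sides, so no cross-covariances arise.

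First I would record that both $H_v^N$ and each $B_{v,t}^N$ are self-adjoint (a tensor product of self-adjoint matrices is self-adjoint), have mean zero, and hence so does the normalized sum; this disposes of the mean. For the covariance, I would use the standard fact that a mean-zero self-adjoint Gaussian matrix is determined by $\mathbf{E}[M_{ab}\overline{M_{cd}}]$ and $\mathbf{E}[M_{ab}M_{cd}]$, or equivalently by $\mathbf{E}[\mathrm{tr}(AM)\,\mathrm{tr}(BM)]$ for all deterministic $A,B$. For GUE of dimension $D$ with the normalization in the paper (off-diagonal variance $\frac1N$, i.e.\ here variance $\frac{1}{N}$ with the block dimension playing no role in the variance — one must be careful: the paper's $H_v^N$ is GUE of dimension $N^{|K_v|}$ with off-diagonal entries of variance $\tfrac{1}{N}$, matching the construction), the second-moment form is $\mathbf{E}[H_{ab}\overline{H_{cd}}] = \tfrac1N \delta_{ac}\delta_{bd}$, and the pseudo-covariance $\mathbf{E}[H_{ab}H_{cd}] = \tfrac1N\delta_{ad}\delta_{bc}$. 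I would then compute the same two quantities for $\frac{1}{\sqrt T}\sum_t B_{v,t}^N$: by independence across $t$ this equals $\mathbf{E}[(B^N_{v,1})_{ab}\,\overline{(B^N_{v,1})_{cd}}]$, and writing the tensor indices $a=(a_1,\dots,a_{|K_v|})$ etc.\ and using independence of the factors $G_{v,1}^{N,i}$ gives $\prod_i \mathbf{E}[(G^{N,i}_{v,1})_{a_i b_i}\overline{(G^{N,i}_{v,1})_{c_i d_i}}] = \prod_i \tfrac1N \delta_{a_i c_i}\delta_{b_i d_i} = \tfrac{1}{N^{|K_v|}}\delta_{ac}\delta_{bd}$, and likewise for the pseudo-covariance. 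This exactly matches the covariance of a GUE matrix of dimension $N^{|K_v|}$ with off-diagonal variance $\tfrac{1}{N^{|K_v|}}$ — so here I must double-check the normalization convention used for $\hat G_v^N$ in the statement (whether $H_v^N$ is GUE of dimension $N^{|K_v|}$ with variance $\tfrac{1}{N^{|K_v|}}$, which is the natural reading of ``GUE of dimension $N^{|K_v|}$''); with that reading the two covariance structures coincide identically. Translating the complex second-moment and pseudo-moment identities into statements about $\mathrm{Re}$ and $\mathrm{Im}$ of the entries is a routine linear bijection, and the diagonal entries (real, with the appropriate variance on both sides) are handled the same way.

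The one point requiring genuine care — and the closest thing to an obstacle — is bookkeeping the normalization and the tensor-index combinatorics so that the power of $N$ comes out right, together with confirming that the $\delta$-structure of the covariance of a tensor product of independent GUEs is precisely that of a single GUE of the product dimension (this is exactly the algebraic identity $\bigotimes_i(\text{GUE}_N) \stackrel{\text{cov}}{=} \text{GUE}_{N^k}$, which holds because the covariance tensor factorizes). Everything else — mean zero, self-adjointness, independence across $v$ and across $t$, and the reduction from the ambient $N^L$-dimensional matrix to the active $N^{|K_v|}$-dimensional block — is immediate. I would therefore structure the proof as: (i) reduce to a single $v$ and to the active block; (ii) check mean and self-adjointness; (iii) compute $\mathbf{E}[M_{ab}\overline{M_{cd}}]$ and $\mathbf{E}[M_{ab}M_{cd}]$ on both sides via factorization over tensor slots and independence over $t$; (iv) note these determine the joint Gaussian law of $(\mathrm{Re}\,M_{ab},\mathrm{Im}\,M_{ab})$ and conclude equality of all first and second moments.
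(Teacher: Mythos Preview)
Your proposal is correct and follows essentially the same approach as the paper: both reduce to showing that a tensor product of $|K_v|$ independent $N$-dimensional GUE matrices has the same entrywise covariance structure as a single GUE matrix of dimension $N^{|K_v|}$, via the factorization $\mathbf{E}[G_{(i_1,\ldots,i_r),(j_1,\ldots,j_r)}\bar G_{(k_1,\ldots,k_r),(l_1,\ldots,l_r)}] = \prod_i \mathbf{E}[G^i_{i_i,j_i}\bar G^i_{k_i,l_i}]$. Your explicit treatment of the pseudo-covariance $\mathbf{E}[M_{ab}M_{cd}]$ is redundant given self-adjointness (since $M_{cd}=\overline{M_{dc}}$), and your normalization worry resolves correctly: the paper's convention is that GUE of dimension $D$ has variance $\tfrac{1}{D}$, so $H_v^N$ has variance $\tfrac{1}{N^{|K_v|}}$, matching your computation.
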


\begin{proof}
Let $G = G^{1}\otimes\cdots\otimes G^{r}$ where
$G^{1},\ldots,G^{r}$ are independent GUE matrices of dimension $N$, 
and let $G'$ be a GUE matrix of dimension $N^r$ acting on 
$(\mathbb{C}^N)^{\otimes r}$. Both $G$ and $G'$ have zero mean
and are self-adjoint, and
\begin{multline*}
        \mathbf{E}\big[
        G_{(i_1,\ldots,i_r),(j_1,\ldots,j_r)}
        \bar G_{(k_1,\ldots,k_r),(l_1,\ldots,l_r)}\big] =
        \mathbf{E}\big[
        G^{1}_{i_1,j_1}
        \bar G^{1}_{k_1,l_1}\big]\cdots
        \mathbf{E}\big[
        G^{r}_{i_r,j_r}
        \bar G^{r}_{k_r,l_r}\big] \\
        = \frac{1_{i_1=k_1}1_{j_1=l_1}\cdots
        1_{i_r=k_r}1_{j_r=l_r}}{N^r} =
        \mathbf{E}\big[
        G_{(i_1,\ldots,i_r),(j_1,\ldots,j_r)}'
        \bar G_{(k_1,\ldots,k_r),(l_1,\ldots,l_r)}'\big].
\end{multline*} 
Thus the real and imaginary parts of the entries of
$G$ and $G'$ have the same mean and covariance. The proof is
readily completed.
\end{proof}

Lemma \ref{lem:graphcov} ensures that $\boldsymbol{\hat G}^{N,T}$ 
converges in distribution to $\boldsymbol{\hat G}^N$ as $T\to\infty$ by 
the central limit theorem. The following lemma yields the corresponding 
property for the limiting models; its proof will be given in Appendix 
\ref{app:tensor}.

\begin{lem}
\label{lem:graphlimit}
For every 
$P\in\mathrm{M}_D(\mathbb{C})\otimes
\mathbb{C}\langle\boldsymbol{\hat s}\rangle$, we have
$$
	\|P(\boldsymbol{\hat s}^T)\|=(1+o(1))\|P(\boldsymbol{\hat s})\|
	\quad\text{as}\quad T\to\infty.
$$
\end{lem}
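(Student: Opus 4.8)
The plan is to prove the two inequalities $\liminf_{T\to\infty}\|P(\boldsymbol{\hat s}^T)\|\ge\|P(\boldsymbol{\hat s})\|$ and $\limsup_{T\to\infty}\|P(\boldsymbol{\hat s}^T)\|\le\|P(\boldsymbol{\hat s})\|$ separately, with a common first step: that $\boldsymbol{\hat s}^T$ converges to $\boldsymbol{\hat s}$ in $\mathrm{M}_D$-valued $*$-distribution as $T\to\infty$. This is a free central limit computation. Expanding $(\tr\otimes\tau^{\otimes L})$ of an arbitrary word in the $\hat s_v^T$ by multilinearity over the summation indices $t\in[T]$ (carrying the $T^{-1/2}$ normalizations), one checks that the contributions surviving as $T\to\infty$ are precisely those indexed by pairings of the word positions in which the two positions of a pair share the same index $t$; using that the $s$'s placed in a fixed tensor leg $\ell$ form a free semicircular family — so that the trace of such a word restricted to leg $\ell$ equals $1$ if the induced pairing is noncrossing and $0$ otherwise, while partitions with a repeated index or a block of size $\ne 2$ are of strictly smaller order in $T$ — the limit reproduces exactly the moment formula of a $\Gamma$-independent semicircular family. (This is close to the computations of \cite{CC21,SW16} and can be cited or carried out directly.)

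Given moment convergence, the lower bound is routine. Assuming $P$ self-adjoint (otherwise apply the result to $P^*P$), for each fixed $p\in\bN$ one has $\|P(\boldsymbol{\hat s}^T)\|^{2p}\ge(\tr\otimes\tau^{\otimes L})(P(\boldsymbol{\hat s}^T)^{2p})\to(\tr\otimes\tau^{\otimes L})(P(\boldsymbol{\hat s})^{2p})$ as $T\to\infty$, and letting $p\to\infty$ gives $\liminf_{T\to\infty}\|P(\boldsymbol{\hat s}^T)\|\ge\|P(\boldsymbol{\hat s})\|$.

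The upper bound is the crux, and I expect it to be the main obstacle, since moment (i.e.\ weak) convergence is far too weak on its own. The cleanest route I see is operator-algebraic: for each $T$ construct a unital completely positive map $\Phi_T$ from the $C^*$-algebra generated by $\boldsymbol{\hat s}$ into $\mathcal{A}^{\otimes_{\rm min}L}$ with $\|\Phi_T(w(\boldsymbol{\hat s}))-w(\boldsymbol{\hat s}^T)\|\to 0$ as $T\to\infty$ for every noncommutative word $w$; then $\mathrm{id}_D\otimes\Phi_T$ is completely positive and unital, hence contractive, so $\|P(\boldsymbol{\hat s}^T)\|\le\|P(\boldsymbol{\hat s})\|+o(1)$, which together with the lower bound finishes the proof. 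Such a $\Phi_T$ should be built as an approximate graph product of the obvious single-vertex maps: the independence relations encoded by $\Gamma$ hold \emph{exactly} for $\boldsymbol{\hat s}^T$ whenever $\{v,w\}\in E$ (then $K_v\cap K_w=\varnothing$ and $\hat s_v^T,\hat s_w^T$ act on disjoint legs, so they are genuinely classically independent), and the only defect from the graph-product structure sits on pairs $v,w$ with overlapping supports, where $\hat s_v^T,\hat s_w^T$ fail to be freely independent by an amount that vanishes as $T\to\infty$ by the first step; turning this into a genuine contractive map — together with a mild truncation so that the single-vertex pieces land where required, which uses the single-variable case $\|\hat s_v^T\|\to\|\hat s_v\|$ — is what Appendix \ref{app:tensor} must supply. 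A tempting alternative, closer to the random-matrix machinery of the paper, is to use that for fixed $T$ the matrices $\boldsymbol{\hat G}^{N,T}$ form a Hayes-type model — a fixed-degree ($\le q_0\max_v|K_v|$) polynomial in independent GUE matrices tensored with identities on disjoint legs — so that $\|P(\boldsymbol{\hat s}^T)\|=\lim_{N\to\infty}\|P(\boldsymbol{\hat G}^{N,T})\|$ a.s.\ by Lemma \ref{lem:qualihayes}, and then to combine the quantitative Hayes bound (Theorem \ref{thm:hayes}, whose constant decays only polynomially in the number of generators $\sim T$) with a quantitative central limit coupling of the entries of $\boldsymbol{\hat G}^{N,T}$ and $\boldsymbol{\hat G}^N$ (whose means and covariances already coincide by Lemma \ref{lem:graphcov}). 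The danger in this route is circularity: it ultimately asks for an upper bound on $\|P(\boldsymbol{\hat G}^N)\|$ of the desired form, which is exactly strong convergence of $\boldsymbol{\hat G}^N$, i.e.\ Theorem \ref{thm:graph}, the statement this lemma is used to prove — so I would pursue the operator-algebraic route instead.
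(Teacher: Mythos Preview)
Your lower bound and the identification of weak convergence as the first step are both correct and match the paper. (The paper proves weak convergence differently --- by showing $\mathbf{E}[\tr P(\boldsymbol{\hat G}^{N,T})]$ is a \emph{polynomial} in $(1/N^2,1/T)$, which lets one exchange the limits $N\to\infty$ and $T\to\infty$ for free --- but your direct CLT computation would also work.)

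The gap is the upper bound. Your proposed route is to build a unital completely positive map $\Phi_T$ with $\|\Phi_T(w(\boldsymbol{\hat s}))-w(\boldsymbol{\hat s}^T)\|\to 0$ for every word $w$, but you do not construct it; you only say it ``should'' exist and that the appendix ``must supply'' it. This is not a proof, and the construction is genuinely problematic: a UCP map is not multiplicative, so even if $\Phi_T(\hat s_v)=\hat s_v^T$ exactly you would not get $\Phi_T(\hat s_v\hat s_w)\approx\hat s_v^T\hat s_w^T$. What you would really need is an approximate $*$-homomorphism, i.e.\ a universal property for the graph-product $C^*$-algebra of semicircular generators applied to a family that only \emph{approximately} satisfies the freeness relations along non-edges. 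No such soft tool is available off the shelf. You are also right that your alternative random-matrix route is circular.

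The paper's actual mechanism is entirely different and much more elementary: a \emph{Haagerup inequality uniform in $T$}, namely $\|Q(\boldsymbol{\hat s}^T)\|\le Cq^a\|Q(\boldsymbol{\hat s}^T)\|_{L^2(\tau)}$ for all $T$ and all $Q$ of degree $q$. This is obtained by writing $\hat s_v^{T}$ as a limit (in $S$) of polynomials in $\lambda(g_{v,t,s})$ for generators of a free group, sitting inside $\bigotimes_{\ell\in[L]} C^*_{\rm red}(\mathrm{F}_R)$; the classical Haagerup inequality on $\mathrm{F}_R$ together with its stability under direct products gives the bound with constants independent of $R$, hence of $T$. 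Once this is in hand, the upper bound is immediate: for self-adjoint $P$ of degree $q_0$,
\[
\|P(\boldsymbol{\hat s}^T)\|\le D^{3/4p}(Cpq_0)^{a/2p}\|P(\boldsymbol{\hat s}^T)\|_{L^{4p}(\tr\otimes\tau)},
\]
and one takes $T\to\infty$ using weak convergence, then $p\to\infty$. The missing idea in your proposal is precisely this uniform control of the operator norm by a fixed $L^2$-type norm.
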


\smallskip

On the other hand, strong convergence of
$\boldsymbol{\hat G}^{N,T}$ to $\boldsymbol{\hat s}^T$ as $N\to\infty$
follows from Lemma \ref{lem:qualihayes}, as
for any $P\in\mathrm{M}_D(\mathbb{C})\otimes
\mathbb{C}\langle\boldsymbol{\hat s}\rangle$ of degree $q_0$,
$P(\boldsymbol{\hat G}^{N,T})$ may be viewed as a noncommutative 
polynomial of degree at most $Lq_0$ of the Hayes model of the previous 
section with $r=VT$ free variables.

The above relations between the different models are summarized as 
follows:
\begin{center}
\begin{tikzpicture}
\draw (0,0) node[right] {$\boldsymbol{\hat G}^{N,T}$};
\draw[->] (1.05,-0.05) to (3,-0.05); 
\draw (3,0) node[right] {$\boldsymbol{\hat G}^{N}$};
\draw[->] (0.25,-.35) to (0.25,-1.7);
\draw (0,-2) node[right] {$\boldsymbol{\hat s}^{T}$};
\draw[densely dashed,->] (3.25,-.35) to (3.25,-1.7);
\draw (3,-2) node[right] {$\boldsymbol{\hat s}$};
\draw[->] (0.65,-2.05) to (3,-2.05);

\draw (1.9,0.2) node {\small CLT};
%\draw (1.8,-2.25) node {\small Lemma \ref{lem:graphlimit}};
\draw (1.8,-1.8) node {\small Lem.\ \ref{lem:graphlimit}};
\draw (0,-1) node[rotate=90] {\small Lem.\ \ref{lem:qualihayes}};

\end{tikzpicture}
\end{center}
The key to the proof of Theorem \ref{thm:graph} is that we have strong 
quantitative forms of the convergence of $\boldsymbol{\hat G}^{N,T}$ both 
as $N\to\infty$ (by Theorem \ref{thm:hayes}) and as $T\to\infty$ (by the 
universality principle of \cite{BvH24}). This enables us to 
exchange the order of the limits $T\to\infty$ and $N\to\infty$ to deduce 
strong convergence of $\boldsymbol{\hat G}^{N}$ to $\boldsymbol{\hat s}$.

\begin{proof}[Proof of Theorem \ref{thm:graph}]
The lower bound $\|P(\boldsymbol{\hat G}^N)\|\ge 
(1+o(1))\|P(\boldsymbol{\hat s})\|$ a.s.\ follows readily from weak 
convergence \cite{CC21} and concentration of measure as in the proof of 
Lemma \ref{lem:gausssubexplowerfixed}.
It remains to prove the corresponding upper bound.

In order to apply the results of \cite{BvH24}, it is convenient to 
use a classical linearization trick of Haagerup and Thorbj{\o}rnsen: by 
\cite[Lemma 1]{HT05}, it suffices to show that
$$
        \mathrm{sp}(P(\boldsymbol{\hat G}^N))
        \subseteq \mathrm{sp}(P(\boldsymbol{\hat s}))
        +[-\varepsilon,\varepsilon]
        \quad\text{eventually as }N\to\infty\quad\text{ a.s.}
$$
for every $\varepsilon>0$, $D\in\mathbb{N}$, and self-adjoint 
$P\in\mathrm{M}_D(\mathbb{C})\otimes
\mathbb{C}\langle\boldsymbol{\hat s}\rangle$ of degree $q_0=1$. 
We will fix such a polynomial $P$ in the rest of the proof.

Now note that $P(\boldsymbol{\hat G}^{N,T})=
A_0\otimes\id_{N^L} + \frac{1}{\sqrt{T}}
\sum_{v,t} A_{v,t}\otimes Z_{v,t}$,
where each $Z_{v,t}$ is an independent tensor product of at most $L$
GUE matrices. In particular,
$$
	\mathbf{P}\Big[\max_{v,t}\|Z_{v,t}\| > C\Big] \le CT e^{-cN},
	\quad
	\mathbf{E}\Big[\max_{v,t}\|Z_{v,t}\|^2\Big]^{1/2}\le 
	C\bigg(1+\sqrt{\frac{\log T}{N}}\bigg)^L
$$
by Lemma \ref{lem:subgaussiannorm}, where the constant $C$ may depend 
on $L,V$ but not on $N,T$. We can therefore apply \cite[Theorem 
2.8]{BvH24} and Lemma \ref{lem:graphcov} to obtain
$$
        \mathbf{P}\big[
        \mathrm{sp}(P(\boldsymbol{\hat G}^N)) \subseteq
        \mathrm{sp}(P(\boldsymbol{\hat G}^{N,T}))+
        [-\varepsilon(t),\varepsilon(t)]
        \big]
        \ge
        1 - N^{L}e^{-t} - CT e^{-cN} 
$$
for all $t>0$ and $T\le e^{N}$,
where $\varepsilon(t) = 
C_P(N^{-1/2}t^{1/2}+T^{-1/12}t^{2/3}+T^{-1/4}t)$ and
$C_P$ is a constant that depends on $P$.
Choosing $t=(L+2)\log N$ yields
$$
        \mathrm{sp}(P(\boldsymbol{\hat G}^N)) \subseteq
        \mathrm{sp}(P(\boldsymbol{\hat G}^{N,T_N}))+
        [-\varepsilon,\varepsilon]
        \quad\text{eventually as }N\to\infty\quad\text{ a.s.}
$$
for every $\varepsilon>0$ by the Borel-Cantelli lemma, where
$T_N := \lceil \log^9 N\rceil$.

On the other hand, for any $h\in\mathcal{P}_q$, we may view
$h(P(\boldsymbol{\hat G}^{N,T_N}))$ as a noncommutative polynomial
of degree at most $Lq$ of the Hayes model of the previous section with
$r=VT_N=O(\log^9 N)$ variables. Thus Theorem \ref{thm:hayes}
and Borel-Cantelli yield
$$
        \|h(P(\boldsymbol{\hat G}^{N,T_N}))\| \le
        (1+o(1))\|h(P(\boldsymbol{\hat s}^{T_N}))\|
        \quad\text{a.s.}\quad\text{as}\quad N\to\infty
$$
for every $h\in\mathcal{P}$. By Lemma \ref{lem:graphlimit} and
\cite[Proposition 2.1]{CM14}, this implies
$$
        \mathrm{sp}(P(\boldsymbol{\hat G}^{N,T_N}))
        \subseteq
        \mathrm{sp}(P(\boldsymbol{\hat s})) 
        +[-\varepsilon,\varepsilon]
        \quad\text{eventually as }N\to\infty\quad\text{ a.s.}
$$
for every $\varepsilon>0$. Combining the above estimates concludes the 
proof.
\end{proof}

We emphasize that the above argument relies fundamentally on the 
quantitative form of strong convergence for the Hayes model provided by 
Theorem \ref{thm:hayes}. Indeed, $P(\boldsymbol{\hat G}^{N,T_N})$ defines 
a sequence of noncommutative polynomials in the Hayes model with an 
increasing number of variables $r=O(\log^9N)$. We achieve uniform control 
of the error as the constant in Theorem \ref{thm:hayes} depends 
polynomially on $r$.

Now that strong convergence of the tensor GUE model has been established 
in a qualitative sense, Theorem \ref{thm:graph} could be used as input to 
a bootstrapping argument to obtain a quantitative strong convergence 
theorem along the lines of Theorem~\ref{thm:hayes}. In another direction, 
strong convergence of the analogous tensor model where the GUE matrices 
are replaced by Haar unitary matrices can be deduced from 
Theorem~\ref{thm:graph} using the coupling method of Collins and Male 
\cite{CM14}. As these extensions do not require any new ideas, we omit the 
details.

\subsection{High-dimensional representations}

When we considered Haar-distributed random matrices in the classical 
compact groups of dimension $N$, we implicitly identified elements of the 
group with their fundamental representation as $N$-dimensional matrices. 
It is of considerable interest to understand strong convergence of other 
representations whose dimension may be much larger than $N$. The aim of 
this section is to prove the following result in this direction.

\begin{thm}
\label{thm:mageedlsalle}
Let $\boldsymbol{U}^N =  (U_1^N,\ldots,U_r^N)$ be i.i.d.\ 
Haar-distributed elements of $\mathrm{U}(N)$ and let
$\boldsymbol{u}=(u_1,\ldots,u_r)$ be free Haar unitaries.
Then
$$
	\lim_{N\to\infty}
	\sup_{1<\dim(\pi_N)\le \exp(N^{1/3-\delta})}
	\big|
	\|P(\pi_N(\boldsymbol{U}^N),\pi_N(\boldsymbol{U}^{N*}))\| 
	-
	\|P(\boldsymbol{u},\boldsymbol{u}^*)\|
	\big| =0\quad\text{a.s.}
$$
for every $\delta>0$, $D\in\mathbb{N}$, and
$P\in\mathrm{M}_D(\bC)\otimes\bC\langle 
\boldsymbol{u},\boldsymbol{u}^*\rangle$, where the supremum is taken
over all irreducible representations $\pi_N$ of $\mathrm{U}(N)$
with the stated dimensions.
\end{thm}

Theorem \ref{thm:mageedlsalle} improves a result of Magee and 
de la Salle \cite{magee2024quasiexp}, who prove the same statement for 
$\dim(\pi_N) \le \exp(N^{1/42-\delta})$. Our aim here is to showcase 
how the methods of this paper give rise to quantitative 
improvements.\footnote{%
	The direct analogue of Theorem \ref{thm:mageedlsalle} is stated in
	\cite[Corollary~1.2]{magee2024quasiexp} for random matrices and 	
	representations of $\mathrm{SU}(N)$ rather than $\mathrm{U}(N)$.
	The distinction is largely cosmetic, and 
	Theorem~\ref{thm:mageedlsalle} readily implies the analogous result for
	$\mathrm{SU}(N)$ as in \cite[\S 8]{magee2024quasiexp}.}

Recall that the 
distinct irreducible representations of $\mathrm{U}(N)$ are indexed 
by their highest weight vectors, which are $N$-tuples 
$\boldsymbol{z}=(z_1,\ldots,z_N)\in\mathbb{Z}^N$ such that $z_1\ge 
\cdots\ge z_N$. It will be convenient to parametrize $\boldsymbol{z}$ as
$$
	\boldsymbol{z} =
	(\lambda_1,\ldots,\lambda_{\ell(\lambda)},0,\ldots,0,
	-\mu_{\ell(\mu)},\ldots,-\mu_1),
$$
where $\lambda\vdash L$ and $\mu\vdash M$ are integer partitions 
with $\ell(\lambda)+\ell(\mu)\le N$; here and below, we denote by 
$\ell(\lambda)$ the number of elements of a partition $\lambda$. We denote 
the associated unitary representation of $\mathrm{U}(N)$ by 
$\pi_{\lambda,\mu}$, and we define for $N\ge L+M$
$$
	\pi_{L,M} := \bigoplus_{\substack{\lambda\vdash L \\
	\mu\vdash M}} \pi_{\lambda,\mu}.
$$
By a variant of Schur-Weyl duality \cite[\S 2.2]{Mag24}, we have 
$\dim(\pi_{L,M})\le N^{L+M}$. The significance of this definition is that, 
on the one hand, $\pi_{L,M}$ is stable 
(that is, the highest weight vectors of its irreducible components are
independent of $N$) and thus its spectral statistics have
rational expressions for $N\ge L+M$; while on the other hand,
every irreducible representation of
$\mathrm{U}(N)$ of dimension up to $e^{cR}$ is contained in
$\pi_{L,M}$ for some $L+M\le R$ up to a scalar factor.

\begin{lem}
\label{lem:pidimension}
Let $R<N$. Then every irreducible representation $\pi$ of 
$\mathrm{U}(N)$ of dimension $1<\dim(\pi) \le e^{cR}$ is of
the form $\pi(U) = \det(U)^k \pi_{\lambda,\mu}(U)$ for some
$k\in\mathbb{Z}$, $\lambda\vdash L$, $\mu\vdash M$ with
$1\le L+M \le R$, where $c$ is a universal constant.
\end{lem}

\begin{proof}
By \cite[Proposition 2.2]{magee2024quasiexp}, any $\pi$ with
$1<\dim(\pi)\le e^{cR}$ has highest weight 
$$
	(k+\lambda_1,\ldots,k+\lambda_{\ell(\lambda)},k,\ldots,k,
	k-\mu_{\ell(\mu)},\ldots,k-\mu_1)
$$
for some $k\in\mathbb{Z}$, $\lambda\vdash L$, $\mu\vdash M$ with
$1\le L+M\le R$. That this representation has the form given in the 
statement is classical, see, e.g., \cite[\S 13.2]{FH91}.
\end{proof}

In the following, we fix $L,M,N\in\bZ_+$ with $1\le L+M\le N$ 
and a noncommutative polynomial $P\in\mathrm{M}_D(\bC)\otimes\bC\langle
\boldsymbol{u},\boldsymbol{u}^*\rangle$ of degree $q_0$, and let
$$
	\xn := P(\pi_{L,M}(\boldsymbol{U}^N),
	\pi_{L,M}(\boldsymbol{U}^N)^*),\qquad\quad
	\xf := P(\boldsymbol{u},\boldsymbol{u}^*).
$$
Then we have the following analogue of Lemma \ref{lem:rationalencoding}.

\begin{lem}[Polynomial encoding]
\label{lem:rationalencodingMagee}
Let $Q:=(L+M)q_0$. For every $h \in \calP_q$, there is a rational 
function $\Psi_h:=\frac{f_h}{g_{Qq}}$ with 
$f_h,g_{Qq}\in\mathcal{P}_{\lfloor 4Qq(1+\log Qq)\rfloor}$
so that
$$
	N^{-(L+M)} \E[\trace h(\xn)] = \Psi_h(\tfrac{1}{N}) =
	\Psi_h(-\tfrac{1}{N})
$$
for all $N\in\bN$ such that $N>Qq$. Here $g_{Qq}$ is defined as in
\eqref{eq:defofgL}.
\end{lem}

\begin{proof}
The rational representation is an immediate consequence of the first part
of \cite[Theorem 3.1]{magee2024quasiexp}. That $\Psi_h(\tfrac{1}{N}) =
\Psi_h(-\tfrac{1}{N})$ is proved in Appendix \ref{app:magee}.
\end{proof}

With Lemma \ref{lem:rationalencodingMagee} in hand, we can now repeat all 
the arguments in sections 
\ref{sec:taylorstheoremHaarU}--\ref{sec:pfmainHaarU} identically in the 
present setting, except that we use the strong convergence result of 
\cite{magee2024quasiexp} as input to the bootstrapping argument (as in 
section \ref{sec:hayes}, the argument of section \ref{sec:guebasiccvg} 
fails here as $X^N$ is not $N$-dimensional). This yields the following.

\begin{thm}
\label{thm:magee2}
Let $\varepsilon\in[\frac{A}{c\sqrt{N}},1]$, and assume that 
$D\le e^{cN\varepsilon^2/A^2\log^2(N\varepsilon^2)}$. Then
$$
	\mathbf{P}\big[
	\|P(\pi_{L,M}(\boldsymbol{U}^N),
	\pi_{L,M}(\boldsymbol{U}^N)^*)\| \ge
	(1+\varepsilon)\|P(\boldsymbol{u},\boldsymbol{u}^*)\|
	\big]
	\le \frac{N^{L+M}}{c\varepsilon}
	e^{-cN\varepsilon^2/A^2\log^2(N\varepsilon^2)}.
$$
Here $c$ depends only on $q_0$ and $r$,
and $A:=(L+M)(1+\log(L+M))$.
\end{thm}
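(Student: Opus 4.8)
The plan is to reproduce, in the present setting, the whole chain of arguments developed for the $\mathrm{U}(N)$ ensemble in sections~\ref{sec:taylorstheoremHaarU}--\ref{sec:pfmainHaarU}, with Lemma~\ref{lem:rationalencodingMagee} playing the role of Lemma~\ref{lem:rationalencoding} as the sole model-specific input; the remark preceding the statement already asserts that this works, so the real task is to organize the transcription and to track constants correctly. Write $Q:=(L+M)q_0$, $A:=(L+M)(1+\log(L+M))$, $\tilde q_0:=q_0(1+\log q_0)$ and $\tilde q:=q(1+\log q)$. The one bookkeeping identity driving everything is that the numerator and denominator of $\Psi_h$ in Lemma~\ref{lem:rationalencodingMagee} have degree $\lfloor 4Qq(1+\log Qq)\rfloor$ while $Qq(1+\log Qq)\le A\,\tilde q\,\tilde q_0$, so that every occurrence of $\tilde q_0$ in the $\mathrm{U}(N)$ analysis gets replaced by $A\tilde q_0$; moreover $X^N$ has dimension $D\dim(\pi_{L,M})\le DN^{L+M}$, which is exactly what turns the prefactor $N$ of Theorem~\ref{thm:mainhaar} into $N^{L+M}$.

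First I would establish the master inequality: apply the rational Bernstein inequality (Lemma~\ref{lem:rationalbernstein}) to $\Psi_h=f_h/g_{Qq}$, using $\|X^N\|\le K$ a.s.\ together with the evenness $\Psi_h(\tfrac1N)=\Psi_h(-\tfrac1N)$ from Lemma~\ref{lem:rationalencodingMagee} --- this evenness is what Appendix~\ref{app:magee} supplies, and it is what makes Lemma~\ref{lem:rationalbernstein} (which controls $\Psi_h$ from its values at \emph{both} $\pm\tfrac1N$) applicable --- and then Taylor expand at $\tfrac1N$ as in the proof of Lemma~\ref{lem:ratesHaarU}. This yields linear functionals $\mu_m(h):=\Psi_h^{(m)}(0)/m!$ obeying the analogues of \eqref{eq:infiboundHaarU}--\eqref{eq:ratesHaarU} with $\tilde q_0\leftarrow A\tilde q_0$; by Lemma~\ref{lem:extensiontoadistribution} each $\mu_m$ extends to a compactly supported distribution, and Lemma~\ref{lem:zygmundwithlogs} upgrades this to the smooth asymptotic expansion, i.e.\ the analogue of Theorem~\ref{thm:smasympexUN} with an extra $A$ in the constants. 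Here I would also record $\mu_0(h)=(\tr\otimes\tau)(h(X_{\mathrm F}))$ --- this follows from the convergence of normalized traces furnished by the strong convergence result of \cite{magee2024quasiexp}, applied to the fixed representation $\pi_{L,M}$, which is a sum of nontrivial irreducibles since $L+M\ge1$ --- and in particular $\mu_0(h)=0$ whenever $h$ vanishes on $[-\|X_{\mathrm F}\|,\|X_{\mathrm F}\|]$.

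Next comes the bootstrapping step, the analogue of Proposition~\ref{prop:supportofinfsHaarU}: $\supp\mu_m\subseteq[-\|X_{\mathrm F}\|,\|X_{\mathrm F}\|]$ for every $m$. The proof of section~\ref{sec:proofofsupportofinfsGUE} transfers verbatim once two inputs are supplied. The first is concentration of measure: since $\pi_{L,M}$ is a fixed smooth unitary representation, $\|X^N\|$ is a Lipschitz function of $\boldsymbol U^N\in\mathrm U(N)^r$ with constant depending only on $P,L,M$ (the differential of $\pi_{L,M}$ is $O(L+M)$ uniformly in $N$), so \cite[Theorem~5.17]{Mec19} gives the analogue of Lemma~\ref{lem:HaarConcentration}. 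The second is \emph{qualitative} strong convergence $\mathrm{med}(\|X^N\|)=\|X_{\mathrm F}\|+o(1)$, which for the fixed polynomial-dimensional representation $\pi_{L,M}$ is precisely the theorem of Magee and de la Salle \cite{magee2024quasiexp}; this replaces the argument of section~\ref{sec:guebasiccvg}, which is unavailable here because $X^N$ is not $N$-dimensional --- exactly the situation already encountered for Hayes' model in section~\ref{sec:hayes}. With these two inputs, Corollary~\ref{cor:smallexpectations} and the induction of Proposition~\ref{prop:supportofinfsGUE} go through unchanged.

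Finally I would conclude as in section~\ref{sec:pfmainHaarU}: take $h$ to be the test function of Lemma~\ref{lem:testfunctions} with $m\leftarrow 2m'$, $\rho\leftarrow\|X_{\mathrm F}\|$, $\varepsilon\leftarrow\varepsilon\|X_{\mathrm F}\|$, $m':=\lceil(1+u)m\rceil$; then $\mu_k(h)=0$ for all $k$, so $\mathbf P[\|X^N\|\ge(1+\varepsilon)\|X_{\mathrm F}\|]\le\mathbf E[\trace h(X^N)]=D\dim(\pi_{L,M})\,\mathbf E[\tr h(X^N)]\le DN^{L+M}\,\mathbf E[\tr h(X^N)]$, and substituting the smooth expansion and optimizing $u=\tfrac1{1+\log m}$, $m=\lfloor N\varepsilon^2/(L'\log^2(N\varepsilon^2))\rfloor$ with $L'$ a suitable multiple of $(Cr)^{6q_0}A^2\tilde q_0^2$ gives the claimed bound, the constraint $D\le e^m$ producing the stated admissible range for $D$. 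The main obstacle --- and essentially the only place requiring genuine care, the rest being a mechanical copy of the $\mathrm U(N)$ proof --- is the constant-tracking: one must check that $A$ enters the exponent and the threshold on $D$ precisely quadratically (it sits inside $\tilde q_0^2$ in the choice of $L'$), that the $\varepsilon$-powers $\varepsilon^{u}$, $\varepsilon^{2u}$ stay controlled on $\varepsilon\in[\tfrac{A}{c\sqrt N},1]$ with $u=1/(1+\log m)$, and that the lower endpoint of the admissible $\varepsilon$-range --- which has the form $\tfrac{A}{c\sqrt N}$ rather than $\tfrac{1}{c\sqrt N}$ because of the factor $A$ in the effective degree --- is dictated by the same considerations as in section~\ref{sec:pfmainHaarU} (e.g.\ ensuring $m\ge2$).
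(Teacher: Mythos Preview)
Your proposal is correct and follows precisely the approach the paper takes: the paper's own proof consists of the single remark that the arguments of sections~\ref{sec:taylorstheoremHaarU}--\ref{sec:pfmainHaarU} carry over verbatim with Lemma~\ref{lem:rationalencodingMagee} in place of Lemma~\ref{lem:rationalencoding} and with the qualitative strong convergence of \cite{magee2024quasiexp} (for fixed $L,M$) used as the input to the bootstrapping argument, exactly as you outline. One minor imprecision worth noting is that the quantity with the rational encoding is $N^{-(L+M)}\E[\trace h(X^N)]$ rather than the true normalized trace, so $\mu_0(h)$ is only a positive multiple of $(\tr\otimes\tau)(h(X_{\mathrm F}))$; this does not affect anything, since you only use that $\mu_0(h)=0$ when $h$ vanishes on the spectrum of $X_{\mathrm F}$.
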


We can now conclude the proof of Theorem \ref{thm:mageedlsalle}.

\begin{proof}[Proof of Theorem \ref{thm:mageedlsalle}]
For any $1\le R\le N^{1/3}$, we can estimate
\begin{multline*}
	\mathbf{P}\Big[
	\max_{1\le L+M\le R}
	\|P(\pi_{L,M}(\boldsymbol{U}^N),
	\pi_{L,M}(\boldsymbol{U}^N)^*)\| \ge
	(1+\varepsilon)\|P(\boldsymbol{u},\boldsymbol{u}^*)\|
	\Big]
\\
	\le 
	\frac{R^2 N^R}{c\varepsilon}
	e^{-cN\varepsilon^2/R^2(1+\log R)^2\log^2(N\varepsilon^2)}
\end{multline*}
for $\varepsilon\in [\frac{R(1+\log R)}{c\sqrt{N}},1]$ using
Theorem \ref{thm:magee2} and a union bound. In particular,
if we choose $R\le N^{1/3-\delta}$ and $\varepsilon=N^{-\delta}$ for any 
$\delta>0$ sufficiently small, the right-hand side of this inequality is 
bounded by $e^{-cN^{1/3}/\log^4 N}$  and thus
$$
	\max_{1\le L+M\le N^{1/3-\delta}}
	\|P(\pi_{L,M}(\boldsymbol{U}^N),
	\pi_{L,M}(\boldsymbol{U}^N)^*)\| \le
	(1+o(1))\|P(\boldsymbol{u},\boldsymbol{u}^*)\|
	\quad\text{a.s.}
$$
as $N\to\infty$ by the Borel-Cantelli lemma. By
\cite[Lemma 5.13]{magee2024quasiexp}, this implies that
$$
	\lim_{N\to\infty}
	\max_{\substack{\lambda\vdash L,~\mu\vdash M\\
	1\le L+M\le N^{1/3-\delta}}}
	\big|
	\|P(\pi_{\lambda,\mu}(\boldsymbol{U}^N),
	\pi_{\lambda,\mu}(\boldsymbol{U}^{N})^*)\| 
	-
	\|P(\boldsymbol{u},\boldsymbol{u}^*)\|
	\big| =0\quad\text{a.s.}
$$
for every noncommutative polynomial $P$. In order to extend this result
to arbitrary irreducible representations of $\mathrm{U}(N)$ of
dimension at most $\exp(N^{1/3-\delta})$, it remains to account for
the scalar factor that appears in Lemma \ref{lem:pidimension}.

To this end, fix a noncommutative polynomial $P$, and define for every
$\boldsymbol{\theta}=(\theta_1,\ldots,\theta_r)\in [0,2\pi)^r$
the twisted noncommutative polynomial $P_{\boldsymbol{\theta}}$ as
$$
	P_{\boldsymbol{\theta}}
	(u_1,\ldots,u_r,u_1^*,\ldots,u_r^*) =
	P(e^{i\theta_1}u_1,\ldots,e^{i\theta_r} u_r,
	e^{-i\theta_1}u_1^*,\ldots,e^{-i\theta_r} u_r^*).
$$
It is readily seen that $\boldsymbol{\theta}\mapsto 
\|P_{\boldsymbol{\theta}}(U_1,\ldots,U_r,U_1^*,\ldots,U_r^*)\|$
has a uniformly bounded Lipschitz constant for all choices of
unitary operators $U_1,\ldots,U_r$.
Thus we obtain
$$
	\lim_{N\to\infty}
	\max_{\substack{\lambda\vdash L,~\mu\vdash M\\
	1\le L+M\le N^{1/3-\delta}}}
	\sup_{\boldsymbol{\theta}}
	\big|
	\|P_{\boldsymbol{\theta}}(\pi_{\lambda,\mu}(\boldsymbol{U}^N),
	\pi_{\lambda,\mu}(\boldsymbol{U}^{N})^*)\| 
	-
	\|P_{\boldsymbol{\theta}}(\boldsymbol{u},\boldsymbol{u}^*)\|
	\big| =0\quad\text{a.s.}
$$
by the Arzel\`a-Ascoli theorem. Furthermore, we note that
$\|P_{\boldsymbol{\theta}}(\boldsymbol{u},\boldsymbol{u}^*)\|=
\|P(\boldsymbol{u},\boldsymbol{u}^*)\|$ as 
$e^{i\boldsymbol{\theta}}\boldsymbol{u}=
(e^{i\theta_1}u_1,\ldots,e^{i\theta_r}u_r)$ are free Haar unitaries
for every $\boldsymbol{\theta}$. The conclusion now follows from
Lemma \ref{lem:pidimension} by 
choosing $e^{i\theta_j} = \det(U_j^N)^k$ for
$j\in[r]$.
\end{proof}

\begin{rem}
The proof of Theorem \ref{thm:magee2} is not 
independent of \cite{magee2024quasiexp}, as we have used the main result 
of that paper as input to the bootstrapping argument. We note, however, 
that the bootstrapping argument only requires strong convergence of 
$\pi_{K,L}$ as $N\to\infty$ for \emph{fixed} $K,L$. This is considerably 
simpler to achieve than the case of growing $K,L$; in particular, it does 
not require parts 2--3 of \cite[Theorem~3.1]{magee2024quasiexp} whose 
proof is based on delicate group-theoretic cancellations. (Alternatively, 
the main result of \cite{BC20} would also have sufficed for this purpose.)

This considerable simplification was made possible here by exploiting the 
fact that $\mathrm{U}(N)$ has strong concentration of measure properties. 
We emphasize, however, that there are many interesting situations where 
this is not the case. This issue arises in particular in the analogue of 
Theorem \ref{thm:mageedlsalle} where $\mathrm{U}(N)$ is replaced by the 
symmetric group $\mathrm{S}_N$ that was recently proved by Cassidy 
\cite{Cas24}, whose analysis requires the full strength of the methods 
developed in \cite{magee2024quasiexp}.
\end{rem}

\appendix

\section{An upper bound on the coefficient dimension}
\label{app:pisierexpn2}

The aim of this Appendix is to explain the observation, due to Pisier 
\cite{pisier2014random}, that strong convergence of the classical 
ensembles must fail for matrix coefficients of dimension $D=e^{O(N^2)}$. 
As this is not stated explicitly in \cite{pisier2014random}, we provide a 
self-contained proof. We focus on GUE/GOE/GSE matrices for simplicity.

\begin{lem}[Pisier]
Let $\boldsymbol{G}^N$ and $\boldsymbol{s}$ be defined as in Theorem 
\ref{thm:maingauss}. Then there exists $r\in\bN$ and matrices
$A_1^N,\ldots,A_r^N\in\mathrm{M}_{D_N}(\bC)$ with
$D_N = e^{O(N^2)}$ so that
$$
	\Bigg\|
	\sum_{i=1}^r A_i^N\otimes G_i^N\Bigg\|
	\ge
	(2+o(1))\Bigg\| \sum_{i=1}^r A_i^N\otimes s_i \Bigg\|
	\quad\text{a.s.}\quad\text{as}\quad N\to\infty.
$$
\end{lem}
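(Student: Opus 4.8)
The plan is to construct the coefficients $A^N_i$ explicitly as a direct sum over a net, carrying out the argument for GUE (the GOE and GSE cases are identical, using the corresponding form of Lemma~\ref{lem:subgaussiannorm} and the same Fock-space model below). Fix a constant $\kappa$ with $\mathbf{P}[\|G^N\|\ge\kappa]\le Ce^{-cN}$ as in Lemma~\ref{lem:subgaussiannorm}, fix a small $\varepsilon\in(0,\tfrac1{2\kappa})$, and let $\mathcal{N}_N$ be an $\varepsilon$-net, \emph{in the operator norm}, of the set $\{(W_1,\dots,W_r):W_i=W_i^*\in\mathrm{M}_N(\bC),\ \|W_i\|\le\kappa\ \forall i\}$. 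Since this is a symmetric convex body in the $rN^2$-dimensional real vector space of self-adjoint $r$-tuples, a volumetric estimate gives $|\mathcal{N}_N|\le(1+2\kappa/\varepsilon)^{rN^2}=e^{O(N^2)}$; it is essential here to net in the operator norm, since a Hilbert--Schmidt net would cost an extra $\log N$ in the exponent and only give $e^{O(N^2\log N)}$. For $\alpha=(W^\alpha_1,\dots,W^\alpha_r)\in\mathcal{N}_N$ I set $b^\alpha_i:=\tfrac1{\kappa\sqrt r}\,\overline{W^\alpha_i}$ (still self-adjoint), and define $A^N_i:=\bigoplus_{\alpha\in\mathcal{N}_N}b^\alpha_i\in\mathrm{M}_{D_N}(\bC)$ with $D_N=N|\mathcal{N}_N|=e^{O(N^2)}$. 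Because $\sum_iA^N_i\otimes x_i=\bigoplus_\alpha\sum_ib^\alpha_i\otimes x_i$, both norms in the statement equal the maximum over $\alpha$ of the corresponding block norm.

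For the free side I would use the Fock-space model $s_i=\ell_i+\ell_i^*$, where $\ell_1,\dots,\ell_r$ are the free creation operators, so that $\ell_i^*\ell_j=\delta_{ij}\id$. For self-adjoint matrices $a_i$ this gives $\|\sum_ia_i\otimes s_i\|\le 2\|\sum_ia_i\otimes\ell_i\|=2\|\sum_ia_i^*a_i\|^{1/2}$. Since $\overline{W^\alpha_i}$ is self-adjoint with $\|\overline{W^\alpha_i}\|\le\kappa$, we get $\|\sum_i(b^\alpha_i)^*b^\alpha_i\|=\tfrac1{\kappa^2r}\big\|\sum_i(\overline{W^\alpha_i})^2\big\|\le\tfrac1{\kappa^2r}\cdot r\kappa^2=1$, hence $\|\sum_ib^\alpha_i\otimes s_i\|\le 2$ for every $\alpha$, so $\|\sum_iA^N_i\otimes s_i\|\le 2$ for all $N$.

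For the GUE side I would test the $\alpha^*$-block against the maximally entangled unit vector $\xi:=\tfrac1{\sqrt N}\sum_{j=1}^Ne_j\otimes e_j\in\bC^N\otimes\bC^N$, which satisfies $\langle\xi,(A\otimes B)\xi\rangle=\tfrac1N\trace(A^\top B)$. Eventually a.s.\ (Lemma~\ref{lem:subgaussiannorm} and Borel--Cantelli) one has $\|G^N_i\|\le\kappa$ for all $i\in[r]$, so the net contains a random index $\alpha^*$ with $\|W^{\alpha^*}_i-G^N_i\|\le\varepsilon$ for all $i$. Using $(\overline{G^N_i})^\top=G^N_i$ (self-adjointness) and the elementary fact $\tr((G^N_i)^2)\to1$ a.s.,
$$
\Big\langle\xi,\Big(\sum_i\tfrac1{\kappa\sqrt r}\,\overline{G^N_i}\otimes G^N_i\Big)\xi\Big\rangle=\tfrac1{\kappa\sqrt r}\sum_{i=1}^r\tr\big((G^N_i)^2\big)\longrightarrow\tfrac{\sqrt r}{\kappa}\quad\text{a.s.},
$$
while the net perturbation contributes at most $\tfrac\varepsilon{\kappa\sqrt r}\sum_i\|G^N_i\|\le\varepsilon\sqrt r$. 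Hence $\|\sum_iA^N_i\otimes G^N_i\|\ge\|\sum_ib^{\alpha^*}_i\otimes G^N_i\|\ge(\tfrac1\kappa-\varepsilon)\sqrt r-o(1)$ a.s. With $\kappa$ and $\varepsilon$ now fixed, I would finally choose $r\in\bN$ so large that $(\tfrac1\kappa-\varepsilon)\sqrt r\ge 4$; then eventually a.s.\ $\|\sum_iA^N_i\otimes G^N_i\|\ge 4\ge 2\|\sum_iA^N_i\otimes s_i\|$, which is even stronger than the claimed factor of $2$, and Borel--Cantelli gives the a.s.\ statement.

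The heart of the argument—and the step I expect to be the main obstacle to get exactly right—is the choice of witness: after the scaling $\tfrac1{\kappa\sqrt r}$ the $b^\alpha_i$ are column contractions ($\|\sum_i(b^\alpha_i)^*b^\alpha_i\|\le 1$), yet $\sum_i\overline{G^N_i}\otimes G^N_i$ has quadratic form of order $\sqrt r$ at $\xi$; this is precisely what drives the GUE norm above the universal free bound of $2$, and it has no free counterpart because $\ell_i^*\ell_j=\delta_{ij}\id$ annihilates all cross terms. The remaining points requiring care are the operator-norm covering count (to stay in the $e^{O(N^2)}$ regime rather than $e^{O(N^2\log N)}$) and the routine a.s.\ upgrades, which follow from concentration of measure (Lemma~\ref{lem:concentrationaroundthemean}) and Borel--Cantelli.
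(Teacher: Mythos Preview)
Your proof is correct and follows essentially the same approach as the paper: an operator-norm $\varepsilon$-net of self-adjoint matrices of bounded norm, block-diagonal coefficients indexed by the net, the maximally entangled vector $\xi=\frac{1}{\sqrt N}\sum_j e_j\otimes e_j$ to witness the GUE lower bound $\|\sum_i \bar G^N_i\otimes G^N_i\|\gtrsim r$, and the free Khintchine/Fock-space bound $\|\sum_i a_i\otimes s_i\|\le 2\|\sum_i a_i^*a_i\|^{1/2}$ on the free side. The only differences are cosmetic: you net $r$-tuples directly and normalize by $\frac{1}{\kappa\sqrt r}$ so the free bound becomes $2$, whereas the paper nets single matrices (taking all $r$-tuples), does not normalize, and gets $6\sqrt r$ on the free side before choosing $r$ large.
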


\begin{proof}
Let $\mathcal{B}_N$ be an optimal $\varepsilon$-net 
of $\{M\in\mathrm{M}_N(\bC)_{\rm sa}:\|M\|\le 3\}$
with respect to the operator norm. It is classical 
that $\#\mathcal{B}_N \le (\frac{C}{\varepsilon})^{N^2}$.
We define $A_1^N,\ldots,A_r^N$ to be the block-diagonal matrices whose 
corresponding $N\times N$ blocks range over all $r$-tuples of elements of 
$\mathcal{B}_N$. Thus $D_N \le N(\frac{C}{\varepsilon})^{rN^2}$.

Now note that $\|G_i^N\|=2+o(1)$ a.s. 
Therefore, a.s.\ for all sufficiently large $N$, there are
$B_1^N,\ldots,B_r^N\in\mathcal{B}_N$ so that
$\|\bar G_i^N - B_i^N\|\le\varepsilon$, where $\bar M$ denotes the 
elementwise complex conjugate of $M$. By construction
$$
	\Bigg\|
	\sum_{i=1}^r A_i^N\otimes G_i^N\Bigg\| \ge
	\Bigg\|
	\sum_{i=1}^r B_i^N\otimes G_i^N\Bigg\| \ge
	\Bigg\|
	\sum_{i=1}^r \bar G_i^N\otimes G_i^N\Bigg\| -
	3r\varepsilon
$$
a.s.\ for all sufficiently large $N$. But note that
$$
	\Bigg\|
	\sum_{i=1}^r \bar G_i^N\otimes G_i^N\Bigg\| 
	\ge
	\Bigg\langle v_N,
	\Bigg(\sum_{i=1}^r \bar G_i^N\otimes G_i^N\Bigg)v_N
	\Bigg\rangle
	=
	\sum_{i=1}^r \tr(G_i^{N*}G_i^N) =
	(1+o(1))r
$$
as $N\to\infty$ a.s.\ by Lemma \ref{lem:waf} and concentration of 
measure,
where we defined $v_N := \frac{1}{\sqrt{N}}\sum_{j=1}^N e_j\otimes e_j$.
On the other hand, we can estimate
$$
	\Bigg\|\sum_{i=1}^r A_i^N\otimes s_i\Bigg\|
	\le 
	2\,\Bigg\|\sum_{i=1}^r (A_i^N)^2\Bigg\|^{1/2} \le
	6\sqrt{r}
$$
by the free Khintchine inequality \cite[eq.\ (9.9.8)]{Pis03}. The 
conclusion follows readily, for example, by choosing the parameters 
$\varepsilon=\tfrac{1}{6}$ and
$\sqrt{r}\ge 24$.
\end{proof}

The proof is readily adapted to achieve the same conclusion in the setting 
of Theorem \ref{thm:mainhaar}; then one may use \cite[eq.\ (9.7.1)]{Pis03} 
instead of the free Khintchine inequality. However, the situation for 
other ensembles may be even more restrictive. For example, for random 
permutations as in section \ref{sec:permu}, the set $\mathcal{B}_N$ can be 
replaced by $\mathrm{S}_N$ to show that strong convergence fails
already when $D_N=e^{O(N\log N)}$.

\section{Strong approximation of tensor models}
\label{app:tensor}

In this appendix we adopt the setting and notations of
section \ref{sec:graph}. We aim to prove Lemma \ref{lem:graphlimit}.
We begin by noting the following polynomial encoding.

\begin{lem}
\label{lem:bivpoly}
For every $P\in\mathrm{M}_D(\mathbb{C})\otimes
\mathbb{C}\langle\boldsymbol{\hat s}\rangle$, 
there is a polynomial $\Phi$ so that
$$
	\mathbf{E}[\mathop{\mathrm{tr}}
        P(\boldsymbol{\hat G}^{N,T})] =
	\Phi(\tfrac{1}{N^2},\tfrac{1}{T})
	\quad\text{for all }N,T\in\bN.
$$
\end{lem}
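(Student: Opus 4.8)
The plan is to reduce the statement to an exact Wick (genus‑expansion) computation and then organize the answer by set partitions. First I would expand $P=\sum_w A_w\otimes w$ as a finite sum over noncommutative monomials $w$ in the formal variables $\hat s_1,\dots,\hat s_V$, with $A_w\in\mathrm{M}_D(\mathbb{C})$; then $\E[\tr P(\boldsymbol{\hat G}^{N,T})]=\sum_w\tr(A_w)\,\E[\tr w(\boldsymbol{\hat G}^{N,T})]$, where $\tr$ denotes the normalized trace on $\mathrm{M}_D(\mathbb{C})$ and on $\mathrm{M}_{N^L}(\mathbb{C})$ respectively. Since there are finitely many monomials, it suffices to show that for each monomial $w=\hat s_{v_1}\cdots\hat s_{v_k}$ the quantity $\E[\tr w(\boldsymbol{\hat G}^{N,T})]$ is, as an exact identity for all $N,T\in\bN$, a polynomial in $\tfrac1{N^2}$ and $\tfrac1T$.

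To this end I would expand each factor as $\hat G_v^{N,T}=T^{-1/2}\sum_{t=1}^{T}Z_{v,t}$, where $Z_{v,t}$ is the tensor product over $[L]$ that places the independent GUE matrices $G_{v,t}^{N,1},\dots,G_{v,t}^{N,|K_v|}$ on the legs in $K_v$ (in the natural order $\sigma_v\colon K_v\to[|K_v|]$) and $\id_N$ on the legs of $[L]\backslash K_v$. This gives $\E[\tr w(\boldsymbol{\hat G}^{N,T})]=T^{-k/2}\sum_{t_1,\dots,t_k\in[T]}\E[\tr(Z_{v_1,t_1}\cdots Z_{v_k,t_k})]$. Because $Z_{v_1,t_1}\cdots Z_{v_k,t_k}$ is the tensor product over $\ell\in[L]$ of the word $W_\ell(\vec t):=\prod_{a:\ \ell\in K_{v_a}}G_{v_a,t_a}^{N,\sigma_{v_a}(\ell)}$ (ordered by $a$), and since two matrices of the independent family $(G_{v,t}^{N,i})$ occurring on distinct legs are necessarily distinct (if $v_a=v_b$ then $\sigma_{v_a}=\sigma_{v_b}$ is injective, so $\sigma_{v_a}(\ell)=\sigma_{v_b}(\ell')$ forces $\ell=\ell'$), the normalized trace factorizes: $\E[\tr(Z_{v_1,t_1}\cdots Z_{v_k,t_k})]=\prod_{\ell=1}^{L}\E[\tr W_\ell(\vec t)]$.

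Now I would invoke the genus expansion for words in independent GUE matrices, exactly as in the proof of Lemma~\ref{lem:polyencoding}: each $\E[\tr W_\ell(\vec t)]$ is a polynomial in $\tfrac1{N^2}$ whose value depends only on which positions of $W_\ell$ carry the same matrix — that is, on which $a,b$ with $\ell\in K_{v_a}=K_{v_b}$ have $(v_a,t_a)=(v_b,t_b)$ — and it vanishes unless every matrix occurs an even number of times. Consequently $\prod_{\ell}\E[\tr W_\ell(\vec t)]=:F_P\big(\tfrac1{N^2}\big)$ depends on $\vec t$ only through the set partition $P=P(\vec t)$ of $[k]$ into the level sets of $a\mapsto(v_a,t_a)$, and (since each $K_v\ne\varnothing$, so every block's matrix really appears on some leg) $F_P=0$ unless every block of $P$ has even cardinality. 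Grouping the sum over $\vec t$ by $P$, the number of $\vec t\in[T]^k$ with $P(\vec t)=P$ equals $\prod_{v\in[V]}(T)_{p_v(P)}$, where $(T)_p:=T(T-1)\cdots(T-p+1)$ and $p_v(P)$ is the number of blocks of $P$ contained in $\{a:v_a=v\}$; this is a polynomial in $T$ of degree $|P|=\sum_v p_v(P)$. Hence
\[
  \E[\tr w(\boldsymbol{\hat G}^{N,T})]
  =\sum_{P}T^{-k/2}\Big(\prod_{v\in[V]}(T)_{p_v(P)}\Big)F_P\big(\tfrac1{N^2}\big),
\]
a finite sum; for every $P$ contributing a nonzero $F_P$ all blocks have size $\ge2$, so $k$ is even and $|P|\le k/2$, which makes $T^{-k/2}\prod_{v}(T)_{p_v(P)}$ a polynomial in $\tfrac1T$ (only nonpositive integer powers of $T$ survive). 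This exhibits the required polynomial $\Phi$.

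The expansion and the leg‑wise factorization are routine; the one place that needs care is the power‑counting in $T$. The hard part will be keeping the $T$‑dependence under control: it is precisely the combination of the genus‑expansion parity constraint (all blocks of $P$ even, hence $|P|\le k/2$) with the falling‑factorial count of the $\vec t$'s that prevents the prefactor $T^{-k/2}$ from being overwhelmed. I would also remark that, in contrast to the rational encodings used for the Haar ensembles, the genus expansion is an exact finite identity valid for every $N$, so the resulting $\Phi$ is a genuine bivariate polynomial holding for all $N,T\in\bN$, not merely an asymptotic one.
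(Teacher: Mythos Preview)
Your proof is correct and follows essentially the same approach as the paper: reduce to monomials, expand the sum over $\vec t$, group terms by the set partition they induce, use the sign symmetry (equivalently the genus-expansion parity constraint) to force all blocks to have even size so that $|P|\le k/2$, and invoke the genus expansion for the $\tfrac1{N^2}$ dependence. The only cosmetic difference is that the paper groups by the partition of $[k]$ into level sets of $t_a$ alone (giving the single falling factorial $T(T-1)\cdots(T-|\pi|+1)$), whereas you group by the finer partition into level sets of $(v_a,t_a)$ (giving the product $\prod_v (T)_{p_v(P)}$); both choices yield a polynomial in $T$ of degree at most $k/2$ and hence the same conclusion.
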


\begin{proof}
Let $Z_{v,t}$ be independent random matrices whose distribution does not 
depend on $t$, and let $Z_v^T = T^{-1/2}\sum_{t=1}^T Z_{v,t}$.
In the following, we fix indices $v_1,\ldots,v_q$.
Then
$c(\pi(t_1,\ldots,t_q)):=\mathbf{E}[\tr Z_{v_1,t_1}\cdots Z_{v_q,t_q}]$ 
only depends on $(t_1,\ldots,t_q)$ through the partition 
$\pi(t_1,\ldots,t_q)\in\mathrm{P}([q])$
whose elements are $\{i\in[q]:t_i=t\}$. Thus
$$
	\E[\tr Z_{v_1}^T\cdots Z_{v_q}^T] =
	\frac{1}{T^{q/2}}
	\sum_{\pi\in \mathrm{P}([q])}
	T(T-1)\cdots (T-|\pi|+1)
	\,c(\pi).
$$
If in addition $Z_{v,t}$ has the same distribution as $-Z_{v,t}$, then
$c(\pi)=0$ unless $q$ is even and $|\pi|\le\frac{q}{2}$. It is then 
evident that $\E[\tr Z_{v_1}^T\cdots Z_{v_q}^T]$ is a polynomial of 
$\frac{1}{T}$.

The special case $Z_v^T = \hat G_v^{N,T}$ satisfies the above conditions, 
and moreover each $c(\pi)$ is a polynomial of $\frac{1}{N^2}$ as noted
in the proof of Theorem \ref{thm:hayes}. The conclusion follows in the 
case $P(\boldsymbol{\hat s})=\hat s_{v_1}\cdots \hat s_{v_q}$,
and extends to general $P$ by linearity.
\end{proof}

Lemma \ref{lem:bivpoly} readily implies the following weak convergence
statement.

\begin{cor}
\label{cor:graphlimitweak}
For every 
$P\in\mathrm{M}_D(\mathbb{C})\otimes
\mathbb{C}\langle\boldsymbol{s}\rangle$, we have
$$
        (\mathrm{tr}\otimes\tau)(P(\boldsymbol{\hat s}^T))=
        (1+o(1))\,(\mathrm{tr}\otimes\tau)(P(\boldsymbol{\hat s}))
        \quad\text{as}\quad T\to\infty.
$$
\end{cor}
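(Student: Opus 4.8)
The plan is to leverage the bivariate polynomial encoding of Lemma~\ref{lem:bivpoly}: since $\mathbf{E}[\mathrm{tr}\,P(\boldsymbol{\hat G}^{N,T})] = \Phi(\tfrac{1}{N^2},\tfrac{1}{T})$ for a polynomial $\Phi$, the quantity of interest is a value of a genuine polynomial, and the iterated limits $N\to\infty$ and $T\to\infty$ may be taken in either order. I would compute $\Phi(0,0)$ in both orders and compare.

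First, fix $T\in\bN$ and let $N\to\infty$. The model $\boldsymbol{\hat G}^{N,T}$ is a fixed linear combination of tensor products of independent GUE matrices supported on the $L$ tensor factors, i.e.\ a model of the type treated in Section~\ref{sec:hayes}; weak asymptotic freeness for such tensor GUE configurations (a classical consequence of Voiculescu's theorem, see \cite{CC21,hayes2020random}) gives $\mathbf{E}[\mathrm{tr}\,P(\boldsymbol{\hat G}^{N,T})]\to(\mathrm{tr}\otimes\tau)(P(\boldsymbol{\hat s}^T))$, while $\Phi(\tfrac{1}{N^2},\tfrac1T)\to\Phi(0,\tfrac1T)$ since $\Phi$ is a polynomial. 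Hence $(\mathrm{tr}\otimes\tau)(P(\boldsymbol{\hat s}^T)) = \Phi(0,\tfrac1T)$ for every $T$, which is a polynomial in $\tfrac1T$ and in particular converges to $\Phi(0,0)$ as $T\to\infty$.

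It remains to identify $\Phi(0,0)$ with $(\mathrm{tr}\otimes\tau)(P(\boldsymbol{\hat s}))$, which I would do by reversing the order of limits. Fix $N\in\bN$ and let $T\to\infty$: each $\hat G_v^{N,T}$ is a normalized sum of $T$ i.i.d.\ copies (indexed by $t$) of a fixed random matrix all of whose entries have finite moments of every order, and by Lemma~\ref{lem:graphcov} the means and covariances of these entries agree with those of the Gaussian matrix $\hat G_v^N$; the multivariate central limit theorem in its moment form therefore yields $\mathbf{E}[\mathrm{tr}\,P(\boldsymbol{\hat G}^{N,T})]\to\mathbf{E}[\mathrm{tr}\,P(\boldsymbol{\hat G}^N)]$ (the $\hat G_v^{N,T}$ being independent over $v$, so is the limit). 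Comparing with $\Phi(\tfrac{1}{N^2},\tfrac1T)\to\Phi(\tfrac{1}{N^2},0)$ gives $\Phi(\tfrac{1}{N^2},0)=\mathbf{E}[\mathrm{tr}\,P(\boldsymbol{\hat G}^N)]$, and letting $N\to\infty$ and invoking the weak convergence $\boldsymbol{\hat G}^N\to\boldsymbol{\hat s}$ of Charlesworth--Collins \cite{CC21} yields $\Phi(0,0)=(\mathrm{tr}\otimes\tau)(P(\boldsymbol{\hat s}))$. Combining with the previous paragraph, $(\mathrm{tr}\otimes\tau)(P(\boldsymbol{\hat s}^T))=\Phi(0,\tfrac1T)\to(\mathrm{tr}\otimes\tau)(P(\boldsymbol{\hat s}))$ as $T\to\infty$, which is the assertion of the corollary.

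The only step requiring genuine care is the convergence $\mathbf{E}[\mathrm{tr}\,P(\boldsymbol{\hat G}^{N,T})]\to\mathbf{E}[\mathrm{tr}\,P(\boldsymbol{\hat G}^N)]$ as $T\to\infty$ at fixed $N$: because the matrix entries are unbounded (products of Gaussians), one needs the moment form of the CLT rather than mere convergence in distribution, i.e.\ one must verify that all mixed moments of the entries of the normalized sums converge to those of the limiting Gaussian matrix. This is standard for i.i.d.\ sums whose summands have all moments finite and whose first two moments match the Gaussian target, but it is the one place where a short verification is needed; everything else reduces to continuity of the polynomial $\Phi$ and the already-established weak convergence statements.
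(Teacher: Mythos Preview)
Your proposal is correct and follows essentially the same approach as the paper: both arguments compute the double limit of $\mathbf{E}[\tr P(\boldsymbol{\hat G}^{N,T})]$ in the two orders (CLT plus Lemma~\ref{lem:graphcov} for $T\to\infty$ first, weak asymptotic freeness for $N\to\infty$ first) and invoke Lemma~\ref{lem:bivpoly} to justify the exchange. Your version is slightly more explicit in identifying the relevant values of $\Phi$ and in flagging that the moment form of the CLT is needed, but the logic is the same.
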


\begin{proof}
By Lemma \ref{lem:graphcov}, the central limit theorem, and weak
convergence of $\boldsymbol{\hat G}^N$ \cite{CC21}
$$
        \lim_{N\to\infty}\lim_{T\to\infty}
	\mathbf{E}[\mathop{\mathrm{tr}}
        P(\boldsymbol{\hat G}^{N,T})] =
        \lim_{N\to\infty}\mathbf{E}[\mathop{\mathrm{tr}}
        P(\boldsymbol{\hat G}^N)]
        = (\mathrm{tr}\otimes\tau)(P(\boldsymbol{\hat s})).
$$
On the other hand, Lemma \ref{lem:waf} yields
$$
        \lim_{T\to\infty}\lim_{N\to\infty}
	\mathbf{E}[\mathop{\mathrm{tr}}
        P(\boldsymbol{\hat G}^{N,T})] = 
        \lim_{T\to\infty}(\mathrm{tr}\otimes\tau)(P(\boldsymbol{\hat s}^T)).
$$
To conclude the proof, it remains to note that Lemma \ref{lem:bivpoly} 
enables us to exchange the order of the limits as
$N\to\infty$ and $T\to\infty$.
\end{proof}

To upgrade the weak convergence of Corollary \ref{cor:graphlimitweak} to 
norm convergence, we will use that the models $\boldsymbol{\hat s}^T$ 
satisfy a Haagerup inequality uniformly in $T$.

\begin{lem}
\label{lem:graphhaagerup}
There exist constants $C,a>0$ so that
$\|P(\boldsymbol{\hat s}^T)\| \le C q^a
\|P(\boldsymbol{\hat s}^T)\|_{L^2(\tau)}$
for every $T,q\in\mathbb{N}$ and noncommutative polynomial
$P\in\mathbb{C}\langle\boldsymbol{\hat s}\rangle$
of degree $q$.
\end{lem}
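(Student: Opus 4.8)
\textbf{Proof proposal for Lemma \ref{lem:graphhaagerup}.}

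The plan is to reduce the uniform (in $T$) Haagerup inequality for the tensor models $\boldsymbol{\hat s}^T$ to the classical Haagerup inequality for a free semicircular family, by exhibiting $\boldsymbol{\hat s}^T$ as the image of a (larger) free semicircular family under a trace-preserving $*$-homomorphism. Recall that each $\hat s_v^T = \frac{1}{\sqrt T}\sum_{t=1}^T (s_{v,t}\otimes\cdots\otimes s_{v,t})\otimes\id$ lives in $\bigotimes_{\ell\in[L]}\mathcal{A}$, where $(s_{v,t})_{v,t}$ is a free semicircular family generating $\mathcal{A}$. The key structural observation is that each factor $\hat s_v^T$ is, up to the ampliation by $\id$ on the factors outside $K_v$, a sum of elementary tensors built from the \emph{same} free semicircular variables appearing diagonally in several tensor legs. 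The operators $\hat s_v^T$ therefore generate a $C^*$-subalgebra of $\bigotimes_{\ell\in[L]}\mathcal{A}$ that sits inside the subalgebra generated by the variables $\{s_{v,t}\otimes\cdots\otimes s_{v,t}\otimes\id\}_{v,t}$ (appropriately placed); call this family $\boldsymbol{\sigma}^T = (\sigma_{v,t})_{v\in[V],\,t\in[T]}$.

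First I would establish that $\boldsymbol{\sigma}^T$ itself satisfies a Haagerup inequality with constants \emph{independent of $T$}. This is where the combinatorial heart lies: each $\sigma_{v,t}$ is a tensor product of free semicirculars, so the joint $*$-distribution of $\boldsymbol{\sigma}^T$ with respect to $\tr\otimes\tau^{\otimes L}$ is governed by the graph $\Gamma$ exactly as in the definition of a $\Gamma$-independent semicircular family — indeed $\boldsymbol{\sigma}^T$ is (a piece of) a $\Gamma'$-independent semicircular family for a graph $\Gamma'$ on $VT$ vertices whose structure is dictated by $\Gamma$. A Haagerup-type inequality for $\Gamma$-independent (more generally, for mixtures of free and classically independent) semicircular families is available with a constant depending only on the \emph{local} structure of the graph — and here each vertex $(v,t)$ has a bounded neighborhood structure governed solely by $\Gamma$ (the edges $\{(v,t),(w,t')\}$ are present precisely when $\{v,w\}\in E$, for all $t,t'$), so the relevant operator-space constant is bounded in terms of $V$ and $L$ only. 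One clean way to see this: the operators $\sigma_{v,t}\otimes\cdots$ can be realized on a full Fock-space-type construction where the "non-free" directions are handled by an auxiliary classically-independent tensor copy, and the standard Haagerup argument (bounding the norm of the sum of length-$q$ words by $Cq$ times the $\ell^2$ norm of the coefficients, via the creation/annihilation operator decomposition) goes through with a constant controlled by the maximal multiplicity of any classical-independence relation, which is $|E|\le \binom{V}{2}$ — crucially independent of $T$.

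Next, I would transfer this to $\boldsymbol{\hat s}^T$. Since $\hat s_v^T = \frac{1}{\sqrt T}\sum_{t=1}^T \sigma_{v,t}$ with the $\sigma_{v,t}$ (for fixed $v$) being $T$ free copies of one another and mutually orthogonal in $L^2$, a noncommutative polynomial $P(\boldsymbol{\hat s}^T)$ of degree $q$ expands as a polynomial in $\boldsymbol{\sigma}^T$ of degree $q$ with the normalizing factors $T^{-1/2}$ exactly compensating the number of terms in each monomial (so that the $L^2$ norm is preserved up to absolute constants — this is the standard "martingale/CLT" normalization, and one checks $\|P(\boldsymbol{\hat s}^T)\|_{L^2}\asymp$ the appropriate $\ell^2$-norm of coefficients uniformly in $T$). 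Applying the Haagerup inequality for $\boldsymbol{\sigma}^T$ to this expansion, and then re-summing, yields $\|P(\boldsymbol{\hat s}^T)\|\le Cq^a\|P(\boldsymbol{\hat s}^T)\|_{L^2(\tau)}$ with $C,a$ depending only on $V,L$ (hence on the model, not on $T$); one can take $a=1$. The main obstacle I anticipate is carefully bookkeeping the $\ell^2$-isometry between coefficient sequences and $L^2$-norms through the rescaling by $T^{-1/2}$ and simultaneously through the passage $\boldsymbol{\hat s}^T \rightsquigarrow \boldsymbol{\sigma}^T$, and verifying that the Haagerup constant for the mixed free/classical Fock construction really does not degrade as $T\to\infty$ — but since classically independent copies only inflate the constant multiplicatively by the number of independence relations (which is fixed), this is manageable. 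Alternatively, one may cite an existing uniform Haagerup inequality for $\Gamma$-independent families if available in the literature and simply perform the rescaling step, which shortens the argument considerably.
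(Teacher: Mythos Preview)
Your plan has two genuine gaps.

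First, the variables $\sigma_{v,t}=s_{v,t}\otimes\cdots\otimes s_{v,t}\otimes\id$ are \emph{not} semicircular when $|K_v|\ge 2$: a tensor product of semicirculars has a different distribution. So $\boldsymbol{\sigma}^T$ is not a $\Gamma'$-independent semicircular family, and any Haagerup inequality for such families does not apply directly.

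Second, and more seriously, your justification for $T$-independence of the constant is not sound. The graph $\Gamma'$ you describe has edges $\{(v,t),(w,t')\}$ for \emph{all} $t,t'$ whenever $\{v,w\}\in E$, so every vertex has degree of order $T$; the ``local structure'' is not bounded. Your claim that the constant is controlled by ``the maximal multiplicity of any classical-independence relation, which is $|E|$'' is therefore unsupported. You would need a concrete Haagerup inequality for this mixed structure with an explicit constant, and explain precisely why it does not degrade with $T$; as written this is the crux of the lemma and is assumed rather than proved.

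The paper avoids both problems by a different and much cleaner route. Rather than analyzing the $\Gamma$-structure, it further approximates each semicircular $s_{v,t}$ by a normalized sum $s_{v,t}^S=\frac{1}{\sqrt{2S}}\sum_{s=1}^S(\lambda(g_{v,t,s})+\lambda(g_{v,t,s})^*)$ in $C^*_{\rm red}(\mathrm{F}_R)$ with $R=VTS$, and works in the $L$-fold tensor product $\bigotimes_{\ell\in[L]}C^*_{\rm red}(\mathrm{F}_R)$. The Haagerup inequality for $\mathrm{F}_R$ has a constant independent of the rank $R$, and its stability under direct products (Jolissaint) gives a constant depending only on $L$. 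Hence one gets $\|P(\boldsymbol{\hat s}^{T,S})\|_{L^{2p}}\le Cq^a\|P(\boldsymbol{\hat s}^{T,S})\|_{L^2}$ with $C,a$ independent of $T,S$; taking $S\to\infty$ via the free CLT and then $p\to\infty$ yields the lemma. The point is that the tensor-product structure over $[L]$, not the $\Gamma$-structure, is what controls the constant.
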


\begin{proof}
Define in $\bigotimes_{\ell\in[L]}C^*_{\rm red}(\mathrm{F}_R)\simeq
\bigotimes_{\ell\in K_v}C^*_{\rm red}(\mathrm{F}_R)\otimes
\bigotimes_{\ell\in [L]\backslash K_v} C^*_{\rm red}(\mathrm{F}_R)$ 
$$
        \hat s_v^{T,S} :=
        \frac{1}{\sqrt{T}} 
        \sum_{t=1}^T
        \big(s_{v,t}^S \otimes\cdots \otimes
        s_{v,t}^S\big)\otimes \id
$$
where $s_{v,t}^S := \frac{1}{\sqrt{2S}}\sum_{s=1}^S(\lambda(g_{v,t,s})+ 
\lambda(g_{v,t,s})^*)$, and let 
$\boldsymbol{\hat s}^{T,S}=(\hat s_v^{T,S})_{v\in 
V}$. Here $R=VTS$, and $\{g_{v,t,s}:v\in[V],t\in[T],s\in[S]\}$ are the 
free generators of $\mathrm{F}_R$.

By the Haagerup inequality for the free group $\mathrm{F}_R$ \cite[Lemma 
1.4]{Haa78} and stability of Haagerup inequalities under direct 
products \cite[Lemma 2.1.2]{Jol90} (see also \cite{CHR13}), there exist 
$C,a>0$ such that
for all $p,T,S\in\mathbb{N}$ and 
$P\in\mathbb{C}\langle\boldsymbol{\hat s}\rangle$
of degree $q$
$$
        \|P(\boldsymbol{\hat s}^{T,S})\|_{L^{2p}(\tau)} \le Cq^a
        \|P(\boldsymbol{\hat s}^{T,S})\|_{L^2(\tau)}.
$$
As $\boldsymbol{\hat s}^{T,S}$ converges weakly to
$\boldsymbol{\hat s}^{T}$ as $S\to\infty$ by the free central limit 
theorem \cite[Theorem 8.17]{NS06},
the conclusion follows by taking $S\to\infty$ and then $p\to\infty$.
\end{proof}

We can now complete the proof of Lemma \ref{lem:graphlimit}.

\begin{proof}[Proof of Lemma \ref{lem:graphlimit}]
We first note that by Corollary \ref{cor:graphlimitweak},
$$
        \|P(\boldsymbol{\hat s}^{T})\| \ge
        \|P(\boldsymbol{\hat s}^{T})\|_{L^{2p}({\tr\otimes\tau})} =
        (1+o(1))
        \|P(\boldsymbol{\hat s})\|_{L^{2p}({\tr\otimes\tau})}
        \quad\text{as }T\to\infty
$$
for every $p\in\mathbb{N}$. Taking $p\to\infty$ yields the lower bound.

Next, we apply Lemma \ref{lem:graphhaagerup} as in the
proof of \cite[Theorem 4.1]{BCSV23} to estimate
$$
        \|P(\boldsymbol{\hat s}^{T})\|
        \le
        D^{3/4p}
        (Cpq_0)^{a/2p}
        \|P(\boldsymbol{\hat s}^{T})\|_{L^{4p}({\tr\otimes\tau})} 
$$
for every $p\in\mathbb{N}$, where $q_0$ is the degree of $P$.
The upper bound follows by first taking $T\to\infty$ on the right-hand 
side using
Corollary \ref{cor:graphlimitweak}, and then $p\to\infty$.
\end{proof}

\section{Supersymmetric duality for stable characters of $\mathrm{U}(N)$}
\label{app:magee}

Lemma \ref{lem:rationalencodingMagee} relies on an unpublished result of 
Magee which states that every stable representation of $\mathrm{U}(N)$ 
exhibits a form of supersymmetric duality. We are grateful to 
Michael Magee for allowing us to include the proof here.

\begin{lem}[Magee]
\label{lem:magee}
Fix $L,M\in\bZ_+$ with $L+M\ge 1$, let $\lambda\vdash L$ and 
$\mu\vdash M$, and let $w(\haar_1,\ldots,\haar_r)$ be any word
in i.i.d.\ Haar-distributed elements of $\mathrm{U}(N)$. 
Then there exists a rational function $\Psi$ such that
\begin{alignat*}{2}
	&\E[\trace \pi_{\lambda,\mu}(w(\haar_1,\ldots,\haar_r))]
	&&= \Psi(\tfrac{1}{N}),\\
	&\E[\trace \pi_{\bar\lambda,\bar\mu}(w(\haar_1,\ldots,\haar_r))]
	&&= (-1)^{L+M}\Psi(-\tfrac{1}{N})
\end{alignat*}
for all $N\ge L+M$, where $\bar\lambda,\bar\mu$ denote the conjugate 
partitions of $\lambda,\mu$.
\end{lem}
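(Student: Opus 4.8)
The plan is to reduce the statement to two facts of independent origin: the theory of Koike's universal (rational) characters of $\mathrm{GL}(N)$ on the algebraic side, and the rationality and parity of $\mathrm{U}(N)$ word integrals on the probabilistic side. Recall that for $N\ge\ell(\lambda)+\ell(\mu)$ the character of the rational irreducible representation $\pi_{\lambda,\mu}$ of $\mathrm{U}(N)$ is obtained by evaluating Koike's universal character $s_{(\lambda;\mu)}$ at the eigenvalues of $g$; equivalently, $\trace\pi_{\lambda,\mu}(g)$ is a \emph{universal} (i.e.\ $N$-independent) polynomial $\Theta_{\lambda,\mu}$ in the power traces $t_k:=\trace(g^k)$, $k\in\bZ\setminus\{0\}$, in which only monomials $\prod_i t_{k_i}$ with $\sum_i k_i=L-M$ appear (the $\mathrm{U}(1)$-weight constraint). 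Since $\ell(\lambda),\ell(\bar\lambda)\le L$ and $\ell(\mu),\ell(\bar\mu)\le M$, both $\Theta_{\lambda,\mu}$ and $\Theta_{\bar\lambda,\bar\mu}$ will be valid whenever $N\ge L+M$.

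First I would record the two inputs. The \emph{algebraic input} is a duality for universal characters: the standard involution $\omega$ of the ring of symmetric functions, extended to the power traces by $\omega(t_k)=(-1)^{k-1}t_k$ for all $k\in\bZ\setminus\{0\}$, satisfies $\omega(s_{(\lambda;\mu)})=s_{(\bar\lambda;\bar\mu)}$; at the level of the polynomials this reads $\Theta_{\bar\lambda,\bar\mu}(\{t_k\})=\Theta_{\lambda,\mu}(\{(-1)^{k-1}t_k\})$. I would deduce this from $\omega(s_{\nu/\xi})=s_{\bar\nu/\bar\xi}$ together with Koike's expansion of $s_{(\lambda;\mu)}$ as an alternating sum of products of skew Schur functions in the variables and their reciprocals, after reindexing the sum by $\xi\mapsto\bar\xi$ and using that conjugation is an involution. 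The \emph{probabilistic input} is that for i.i.d.\ Haar $\boldsymbol U^N$ in $\mathrm{U}(N)$ and any words $w_1,\dots,w_p$, the function $N\mapsto\E\big[\prod_{i=1}^p\trace(w_i(\boldsymbol U^N))\big]$ agrees for large $N$ with a rational function $R(N)$ whose Laurent expansion at infinity has the form $N^{p}\sum_{j\ge 0}c_jN^{-2j}$; in particular $R(-N)=(-1)^{p}R(N)$. This is contained in \cite{magee2019matrix} (cf.\ also \cite{magee2024matrix}), and could alternatively be reproved from the Weingarten calculus recalled in Definition~\ref{defn:unitaryweingarten} and Lemma~\ref{lem:denominatorsofwg}.

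Granting these, the conclusion would follow by a short computation. Writing $\Theta_{\lambda,\mu}=\sum_{\boldsymbol k}c^{(\lambda,\mu)}_{\boldsymbol k}\prod_i t_{k_i}$ over multisets $\boldsymbol k=(k_1,\dots,k_p)$ of nonzero integers with $\sum_i k_i=L-M$, one sets
$$
	\Psi(\tfrac1N):=\E[\trace\pi_{\lambda,\mu}(w)]
	=\sum_{\boldsymbol k}c^{(\lambda,\mu)}_{\boldsymbol k}\,
	\E\Big[\textstyle\prod_i\trace(w^{k_i})\Big]
$$
(using $\trace(w^{-k})=\overline{\trace(w^{k})}$), which by the probabilistic input is a rational function of $N$ for $N\ge L+M$; this gives the first displayed identity. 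Applying the probabilistic input termwise and then the algebraic input --- which, since $\sum_i k_i=L-M$, yields $\prod_i(-1)^{k_i-1}=(-1)^{(L-M)-p}=(-1)^{L+M}(-1)^{p}$ and hence $c^{(\bar\lambda,\bar\mu)}_{\boldsymbol k}=(-1)^{L+M}(-1)^{p}c^{(\lambda,\mu)}_{\boldsymbol k}$ --- one obtains
$$
	\Psi(-\tfrac1N)
	=\sum_{\boldsymbol k}(-1)^{p}c^{(\lambda,\mu)}_{\boldsymbol k}\,
	\E\Big[\textstyle\prod_i\trace(w^{k_i})\Big]
	=(-1)^{L+M}\sum_{\boldsymbol k}c^{(\bar\lambda,\bar\mu)}_{\boldsymbol k}\,
	\E\Big[\textstyle\prod_i\trace(w^{k_i})\Big]
	=(-1)^{L+M}\,\E[\trace\pi_{\bar\lambda,\bar\mu}(w)],
$$
which is the second identity.

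The hard part will be the algebraic input: verifying $\omega(s_{(\lambda;\mu)})=s_{(\bar\lambda;\bar\mu)}$ for the composite (rational) Schur functions, rather than only for ordinary Schur functions, which rests on Koike's determinantal/summation formula and is the one genuinely nontrivial external ingredient. The remaining obstacle is purely bookkeeping: checking that the parity statement of \cite{magee2019matrix} applies verbatim to products of traces of powers of a single word, and tracking the signs consistently with the $\mathrm{U}(1)$-weight $L-M$.
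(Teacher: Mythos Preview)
Your proposal is correct and follows essentially the same approach as the paper. The paper's proof makes the ``algebraic input'' explicit by expanding the Koike character via $s_\beta=\frac{1}{|\beta|!}\sum_\sigma\chi_\beta(\sigma)p_\sigma$ and Littlewood--Richardson coefficients, so that your identity $\omega(s_{(\lambda;\mu)})=s_{(\bar\lambda;\bar\mu)}$ becomes the concrete sign relations $\chi_{\bar\beta}(\sigma)=(-1)^{|\beta|-c(\sigma)}\chi_\beta(\sigma)$ and $c^\lambda_{\alpha,\beta}=c^{\bar\lambda}_{\bar\alpha,\bar\beta}$; the probabilistic input and final sign count are identical to yours.
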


Note that the duality statement in Lemma \ref{lem:rationalencodingMagee} 
follows directly from Lemma~\ref{lem:magee}, as both $\pi_{\lambda,\mu}$ 
and $\pi_{\bar\lambda,\bar\mu}$ appear in $\pi_{L,M}$ with multiplicity 
one.

\begin{proof}[Proof of Lemma \ref{lem:magee}]
For every permutation $\sigma$ with cycle type
$(\sigma_1,\ldots,\sigma_r)$, let
$$
	p_\sigma(U) := \trace[U^{\sigma_1}]\trace[U^{\sigma_2}]
	\cdots\trace[U^{\sigma_r}].
$$
By combining \cite[eq.\ (0.3)]{Koi89}
and \cite[p.\ 78, eq.\ (12)]{Ful97}, we can express the character 
$\trace \pi_{\lambda,\mu}(U)$ 
for any $U\in\mathrm{U}(N)$ by the explicit formula
$$
	\trace \pi_{\lambda,\mu}(U) = 
	\sum_{\alpha,\beta,\gamma}
	\frac{
	(-1)^{|\alpha|} c^\lambda_{\alpha,\beta} c^\mu_{\bar\alpha,\gamma}
	}{|\beta|! |\gamma|!}
	\sum_{\sigma\in\mathrm{S}_{|\beta|}}
	\sum_{\sigma'\in\mathrm{S}_{|\gamma|}}
	\chi_\beta(\sigma)\, 
	\chi_\gamma(\sigma')\, 
	p_\sigma(U)\,
	p_{\sigma'}(U^*),
$$
where $c^\lambda_{\alpha,\beta}$ are Littlewood-Richardson coefficients,
$\chi_\beta$ is the character of $\mathrm{S}_{|\beta|}$ associated
to $\beta$,
and the sum is over all integer partitions $\alpha,\beta,\gamma$;
here and below, we write $|\alpha|:=k$ for $\alpha\vdash k$.
The sum is finite as $c^\lambda_{\alpha,\beta}=0$ unless
$|\alpha|+|\beta|=|\lambda|$.

Denote $W^N := w(\haar_1,\ldots,\haar_r)$.
Then \cite[Proposition 1.1 and Remark 1.9]{magee2019matrix}
imply that for every $\sigma\in\mathrm{S}_k$ and 
$\sigma'\in\mathrm{S}_l$, there is a rational function 
$\Psi_{\sigma,\sigma'}$ so that
$$
	\E[p_\sigma(W^N)\,p_{\sigma'}(W^{N*})]
	 = \Psi_{\sigma,\sigma'}(\tfrac{1}{N})
	 = (-1)^{c(\sigma)+c(\sigma')}
		\Psi_{\sigma,\sigma'}(-\tfrac{1}{N})
$$
for all $N\ge k+l$,
where $c(\sigma)$ denotes the number of cycles of $\sigma$.

Now recall that for every $\sigma\in\mathrm{S}_k$ and $\alpha\vdash k$, 
we have
$$
	\chi_{\bar\alpha}(\sigma) = \mathrm{sgn}(\sigma)\chi_\alpha(\sigma) =
	(-1)^{k-c(\sigma)}\chi_\alpha(\sigma).
$$
The proof is concluded by noting that
\begin{multline*}
	(-1)^{L+M}
	\sum_{\alpha,\beta,\gamma}
	\frac{
	(-1)^{|\alpha|} c^\lambda_{\alpha,\beta} c^\mu_{\bar\alpha,\gamma}
	}{|\beta|! |\gamma|!}
	\sum_{\sigma\in\mathrm{S}_{|\beta|}}
	\sum_{\sigma'\in\mathrm{S}_{|\gamma|}}
	\chi_\beta(\sigma)\, 
	\chi_\gamma(\sigma')\, 
	\Psi_{\sigma,\sigma'}(-\tfrac{1}{N}) = \\
	\sum_{\alpha,\beta,\gamma}
	\frac{
	(-1)^{|\alpha|} c^\lambda_{\alpha,\beta} c^\mu_{\bar\alpha,\gamma}
	}{|\beta|! |\gamma|!}
	\sum_{\sigma\in\mathrm{S}_{|\beta|}}
	\sum_{\sigma'\in\mathrm{S}_{|\gamma|}}
	\chi_{\bar\beta}(\sigma)\, 
	\chi_{\bar\gamma}(\sigma')\, 
	\Psi_{\sigma,\sigma'}(\tfrac{1}{N})
	= \E[\trace \pi_{\bar\lambda,\bar\mu}(W^N)],
\end{multline*}
where we used $L=|\alpha|+|\beta|$,
$M=|\alpha|+|\gamma|$, and that
$c^\lambda_{\alpha,\beta}=c^{\bar\lambda}_{\bar\alpha,\bar\beta}$
(cf.\ \cite[p.\ 62]{Ful97}).
\end{proof}

\subsection*{Acknowledgments}

We thank Pierre Deligne, Michael Magee, F\'elix Parraud, and Mikael de la 
Salle for helpful discussions, and the referee for helpful 
suggestions. We are grateful to Michael Magee for 
allowing us to include the argument of Appendix~\ref{app:magee} in this 
paper, and to Mikael de la Salle for suggesting Remark \ref{rem:cmale}. 

This work was completed while RvH was a member of the Institute for 
Advanced Study in Princeton, NJ, which is gratefully acknowledged for 
providing a fantastic mathematical environment.

CFC was supported in part 
by a Simons-CIQC postdoctoral fellowship through NSF grant QLCI-2016245. 
JGV was supported in part by Joel A.\ Tropp through NSF grant FRG-1952777, 
ONR grant N-00014-24-1-2223, and the Carver Mead New Adventures 
Fund, and in part by Caltech IST and a Baer--Weiss CMI Fellowship. RvH 
was supported in part by NSF grant DMS-2347954.

%\filbreak

\bibliographystyle{abbrv}
\bibliography{ref}

\end{document}